\numberwithin{equation}{section}
\definecolor{airforceblue}{rgb}{0.36, 0.54, 0.66}
\definecolor{amethyst}{rgb}{0.6, 0.4, 0.8}
\definecolor{applegreen}{rgb}{0.55, 0.71, 0.0}
\def\corEE{\textcolor{amethyst}}
\def\corO{}
\def\corE{}
\def\corA{}
\def\corEE{}
\def\corAB{}
\def\corABa{}
\def\corOZ{}
\def\corABrev{\textcolor{black}}
\definecolor{purple}{rgb}{0.9,0,0.8}
\newcommand{\abbr}[1]{{\sc\lowercase{#1}}}
\begin{document}

\newtheorem{theorem}{Theorem}
\newtheorem{proposition}[theorem]{Proposition}
\newtheorem{conjecture}[theorem]{Conjecture}
\newtheorem{corollary}[theorem]{Corollary}
\newtheorem{lemma}[theorem]{Lemma}
\theoremstyle{definition}
\newtheorem{dfn}{Definition}
\newtheorem{assumption}[theorem]{Assumption}
\newtheorem{claim}[theorem]{Claim}
\newtheorem{remark}[theorem]{Remark}

\numberwithin{theorem}{section}
\numberwithin{dfn}{section}


\newcommand{\R}{\mathbb{R}}
\newcommand{\T}{\mathcal{T}}
\newcommand{\C}{\mathbb{C}}
\newcommand{\D}{\mathbb{D}}
\newcommand{\G}{\mathcal{G}}
\newcommand{\Z}{\mathbb{Z}}
\newcommand{\Q}{\mathbb{Q}}
\newcommand{\E}{\mathbb E}
\renewcommand{\P}{\mathbb P}
\newcommand{\N}{\mathbb N}
\newcommand{\gd}{\mathfrak{d}}
\newcommand{\gb}{\mathfrak{b}}
\newcommand{\gL}{\mathfrak{L}}
\newcommand{\vep}{\varepsilon}
\newcommand{\cS}{\mathcal{S}}
\newcommand{\cN}{\mathcal{B}}
\newcommand{\supp}{{\mbox{\rm Supp }}}

\newcommand{\barray}{\begin{eqnarray*}}
\newcommand{\earray}{\end{eqnarray*}}

\newcommand{\dvec}[1]{ \llbracket #1 \rrbracket}

\newcommand{\Def}{:=}


\DeclareDocumentCommand \Pr { o }
{%
\IfNoValueTF {#1}
{\operatorname{Pr}  }
{\operatorname{Pr}\left[ {#1} \right] }%
}
\newcommand{\Prob}{\Pr}
\newcommand{\Exp}{\mathbb{E}}
\newcommand{\expect}{\mathbb{E}}
\newcommand{\1}{\one}
\newcommand{\Pto}{\overset{\mathbb{P}}{\to} }
\newcommand{\weakto}{\Rightarrow}
\newcommand{\lawequals}{\overset{\mathcal{L}}{=}}
\newcommand{\prob}{\Pr}
\newcommand{\pr}{\Pr}
\newcommand{\filt}{\mathscr{F}}
\newcommand{\ohadI}{\mathbbm{1}}
\DeclareDocumentCommand \one { o }
{%
\IfNoValueTF {#1}
{\ohadI }
{\ohadI\left\{ {#1} \right\} }%
}
\newcommand{\sgn}{\operatorname{sgn}}
\newcommand{\Bernoulli}{\operatorname{Bernoulli}}
\newcommand{\Binomial}{\operatorname{Binom}}
\newcommand{\Binom}{\Binomial}
\newcommand{\Poisson}{\operatorname{Poisson}}
\newcommand{\Exponential}{\operatorname{Exp}}

\newcommand{\Var}{\operatorname{Var}}
\newcommand{\Cov}{\operatorname{Cov}}


\newcommand{\Id}{\operatorname{Id}}
\newcommand{\diag}{\operatorname{diag}}
\newcommand{\tr}{\operatorname{tr}}
\newcommand{\proj}{\operatorname{proj}}
\newcommand{\Span}{\operatorname{span}}


\newcommand{\nicefrac}{\frac}
\newcommand{\half}{\frac12}
\DeclareDocumentCommand \JB { O{n} O{\lambda} } {J_{{#1}}({#2})}

\DeclareDocumentCommand \LP { O{\ESD} } {U_{ {#1} }}
\newcommand{\LPL}{ \LP[{\mu_N}] }

\newcommand{\ESD}{ L_N^{\model} }

\newcommand{\model}{\mathcal{M}}
\newcommand{\SVD}{\Sigma}
\newcommand{\LL}{\mathcal{L}}
\newcommand{\PI}{\Pi}
\DeclareDocumentCommand \PG { O{n} }
{
\mathfrak{S}_{{ #1 }}
}
\newcommand{\RS}{\mathcal{C}}

\newcommand{\TODO}[1]{ {\bf TODO: #1} }

\newcommand{\row}{X}
\newcommand{\col}{Y}
\newcommand{\srow}{x}
\newcommand{\scol}{y}
\newcommand{\csrow}{w}
\newcommand{\cscol}{z}
\newcommand{\COMP}[1]{ \check{#1} }

\title[Regularization of non-normal matrices] {Regularization of non-normal matrices by Gaussian noise - the banded Toeplitz
and twisted Toeplitz cases}
\author[A.\ Basak]{Anirban Basak$^*$}\thanks{${}^*$Partially supported by
 grant 147/15 from the Israel Science Foundation, a US-Israel BSF grant, and ICTS--Infosys Excellence Grant}
 \address{$^*$International Centre for Theoretical Sciences
 \newline\indent Tata Institute of Fundamental Research
 \newline\indent Bangalore 560089, India
 \newline\indent and
 \newline\indent Department of Mathematics, Weizmann Institute of Science
 \newline\indent POB 26, Rehovot 76100, Israel}
 \author[E.\ Paquette]{Elliot Paquette$^\ddagger$}
 \address{$^\ddagger$Department of Mathematics, The Ohio State University
 \newline\indent Tower 100, 231 W 18th Ave, Columbus, Ohio 43210, USA}
\author[O.\ Zeitouni]{Ofer Zeitouni$^{\mathsection}$}\thanks{${}^{\mathsection}$Partially supported by
 grant 147/15 from the Israel Science Foundation and by a US-Israel BSF grant}
 \address{$^{\mathsection}$Department of Mathematics, Weizmann Institute of Science
 \newline\indent POB 26, Rehovot 76100, Israel
 \newline \indent and
 \newline\indent Courant Institute, New York University
 \newline \indent 251 Mercer St, New York, NY 10012, USA}


\begin{abstract}
  We consider the spectrum of
  additive, polynomially vanishing  random
  perturbations of deterministic matrices, as follows.
  Let $M_N$ be a deterministic $N\times N$ matrix, and
  let $G_N$ be \corEE{a complex Ginibre} matrix. We consider
  the matrix $\model_N=M_N+N^{-\gamma}G_N$, where $\gamma>1/2$. With $L_N$
  the empirical measure of eigenvalues of $\model_N$, we
  provide a general deterministic equivalence theorem that ties $L_N$ to
  the singular values of $z-M_N$, with $z\in \C$. We then compute the limit
  of $L_N$ when $M_N$ is an upper triangular
  Toeplitz matrix of finite symbol: if
  $M_N=\sum_{i=0}^{\corAB{\mathfrak{d}}} a_i J^i$ where $\corAB{\mathfrak{d}}$ is fixed, $a_i\in \C$ are deterministic scalars and $J$ is the nilpotent
  matrix $J(i,j)={\bf 1}_{j=i+1}$, then
  $L_N$ converges, as $N\to\infty$, to the law of $\sum_{i=0}^{\corAB{\mathfrak{d}}} a_i U^i$
  where $U$ is a uniform random variable \corAB{on the unit circle in the complex plane}.
  We also consider the case of slowly varying diagonals (twisted
  Toeplitz matrices), and, when $\corAB{\mathfrak{d}}=1$,
  also
  of i.i.d.~entries on the diagonals in $M_N$.
\end{abstract}

\maketitle

\section{Introduction}
Write $G_{N}$ for an $N \times N$ random matrix whose
entries are i.i.d.~standard \emph{complex} Gaussian variables
(a \textit{Ginibre} matrix), and
let $\{M_N\}_{N=1}^{\infty}$ be a sequence of deterministic
$N \times N$ matrices.
Consider a noisy counterpart
given by
\begin{equation}
  \label{eq-1}
  \model_N:=M_N+N^{-\gamma}G_N,
\end{equation}
where $\gamma\in (1/2,\infty)$ is fixed, noting that
by standard estimates, see \cite[Corollary 1.2]{gordon},
\begin{equation}
  \label{eq-gordon}
  \|N^{-\gamma} G_N\|\to_{N\to\infty} 0\quad  \mbox{\rm a.s.},
\end{equation}
where
$\|\cdot \|$ denotes the operator norm.
Let $\lambda_i, i=1,\ldots,N$
denote the eigenvalues of $\model_N$, and let
\begin{equation}
  \label{eq-defln}
  L_N:=N^{-1}\sum_{i=1}^N \delta_{\lambda_i}
\end{equation}
denote the associated
empirical measure.
In this paper, we study the convergence of $L_N$ for a class of matrices $M_N$. Discussions of background and related approaches are deferred to subsections 
\ref{subsec-1.3} and \ref{subsec-1.4}.

\subsection{Main results}
 For a
probability measure $\mu$ on $\mathbb{C}$ which integrates the
log function at infinity, and $z\in \mathbb{C}$,
denote the \textit{Logarithmic potential} associated with $\mu$ by
\begin{equation}
  \label{eq-logpot}
  \LL_\mu(z):=\int \log|z-y|
\mu(dy).
\end{equation}
The importance of the logarithmic potentials lies in the
fact that the pointwise convergence of $\LL_{\mu_n}(z)$ to a limit $\LL_\mu(z)$
implies the
weak convergence $\mu_n\to\mu$.

Our first main result is a deterministic equivalence theorem for
$\LL_{L_N}(z)$.
We formulate here a simplified version under more stringent
conditions than necessary, and refer to Theorem  \ref{thm:logdet}
for the general statement,
which also has an explicit description
of the functions $g_N$ appearing in the statement of Theorem \ref{thm:logdet-simp}. \corEE{Let $\Id = \Id_N$ stand for
the identity matrix of dimension $N$.
Let $J=J_N$ denote the nilpotent matrix with $J_{ij}={\bf 1}_{j=i+1}$
for $1 \leq i < j \leq N$.}
\begin{theorem}
  \label{thm:logdet-simp}
 Fix $\gamma>1/2$. \corABrev{Fix $z \in \C$ and
 let} $\corAB{s_N(z)}$ denote the number of singular
 values of $M_N-z\Id$ smaller than $N^{-\gamma+1/2+\delta_N}$, where
 $0<\delta_N\to_{N\to\infty} 0$. 
 Suppose
 $\corAB{s_N(z)}\log N/ N\to_{N\to \infty} 0$.
 Then, there exist explicit, deterministic functions $g_N(z)$ so that
 \[
 |\LL_{L_N}(z)- g_N(z)|\to_{N\to\infty} 0, \qquad \text{ \corABrev{in probability}}.\]
 \end{theorem}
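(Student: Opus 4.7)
Starting from the identity
$$\LL_{L_N}(z) = \frac{1}{N}\log|\det(z\Id - \model_N)| = \frac{1}{N}\sum_{i=1}^N \log \sigma_i(z\Id - \model_N),$$
where $\sigma_1 \le \cdots \le \sigma_N$ denote singular values, the plan is to compare the singular values of $z\Id - \model_N = (z\Id - M_N) - N^{-\gamma} G_N$ to those of $z\Id - M_N$, splitting indices by whether the corresponding singular value of $z\Id - M_N$ lies above or below the threshold $\tau_N := N^{-\gamma+1/2+\delta_N}$. Write $\mathcal{I} := \{i : \sigma_i(z\Id - M_N) > \tau_N\}$; by hypothesis $|\mathcal{I}^c| = s_N(z)$ and $s_N(z)\log N/N \to 0$.

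First I would apply the Gaussian operator-norm bound $\|N^{-\gamma}G_N\| \le 3 N^{-\gamma+1/2}$, which holds with probability $1-e^{-cN}$ and is compatible with \eqref{eq-gordon}. Combined with Weyl's singular-value inequality, this gives
$$\bigl|\sigma_i(z\Id - \model_N) - \sigma_i(z\Id - M_N)\bigr| \le 3 N^{-\gamma+1/2}$$
uniformly in $i$. For $i \in \mathcal{I}$ the ratio $\sigma_i(z\Id - \model_N)/\sigma_i(z\Id - M_N)$ is therefore $1 + O(N^{-\delta_N})$, so the sum over large indices matches between $\model_N$ and $M_N$ up to $o(1)$. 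For $i \in \mathcal{I}^c$ the same Weyl bound furnishes the upper estimate $\sigma_i(z\Id - \model_N) \le 4\tau_N$, whence
$$\frac{1}{N}\sum_{i\in \mathcal{I}^c}\log \sigma_i(z\Id - \model_N) \le \frac{s_N(z)}{N}\log(4\tau_N) = O\!\left(\frac{s_N(z)\log N}{N}\right) = o(1).$$

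The matching lower bound is the principal obstacle: one must rule out that any singular value of $z\Id - \model_N$ is super-polynomially small. Here the rotational invariance of the complex Ginibre law (so that $U^{*}G_NV$ is again Ginibre for any deterministic unitaries $U,V$) leads to a Sankar--Spielman--Teng-type estimate
$$\Pr\bigl(\sigma_{\min}(z\Id - \model_N) \le N^{-K}\bigr) \le N^{-2K + C(\gamma)},$$
which for $K$ large is summable in $N$, so by Borel--Cantelli the event $\{\sigma_{\min}(z\Id - \model_N) \ge N^{-K}\}$ holds for all large $N$ almost surely. The trivial monotonicity $\sigma_i \ge \sigma_{\min}$ then yields
$$\frac{1}{N}\sum_{i \in \mathcal{I}^c}\log \sigma_i(z\Id - \model_N) \ge -\frac{K s_N(z)\log N}{N} = o(1).$$

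Assembling the three estimates, a valid choice is
$$g_N(z) := \frac{1}{N}\sum_{i=1}^N \log\max\!\bigl\{\sigma_i(z\Id - M_N),\, \tau_N\bigr\},$$
which is deterministic and explicit. The convergence $|\LL_{L_N}(z)-g_N(z)|\to 0$ follows because the contribution to $g_N$ from $i\in \mathcal{I}^c$ is $\frac{s_N(z)}{N}\log \tau_N = o(1)$ by the same hypothesis. The hard step is the polynomial-in-$N$ lower bound on $\sigma_{\min}(z\Id - \model_N)$ in the presence of the arbitrary deterministic shift $z\Id - M_N$: without it, the few small singular values could contribute arbitrarily negatively and destroy the equivalence. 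Rotational invariance of the complex Ginibre distribution is precisely what makes this estimate accessible, and is the chief reason the theorem is naturally formulated for complex (rather than real) Gaussian perturbations.
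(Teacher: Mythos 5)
Your proof is correct under the same (implicit) reading of the hypotheses as the paper's, but it takes a genuinely different route from what the paper does. The paper first uses unitary invariance of the Ginibre law to replace $M_N - z\Id$ by its diagonal matrix of singular values, then applies a Schur-complement decomposition (equation \eqref{eq:schur}) into a ``small'' block $\widetilde S_N(z)$ of size $N^*$ and a ``big'' block $B_N(z) + X_4$; the big block is controlled by an exact first- and second-moment computation of $\det(B_N(z)+X_4)$ via Cauchy--Binet (Lemma~\ref{lem:chebyshev}), and the small Schur complement is sandwiched between determinant estimates for shifted Gaussian matrices (Proposition~\ref{lem:lb} and Lemma~\ref{lem:hadamard}). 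You instead compare $\sigma_i(z\Id-\model_N)$ to $\sigma_i(z\Id-M_N)$ index-by-index using Weyl's singular-value inequality plus Gordon's norm bound, and control the few small perturbed singular values from below by a single appeal to a Sankar--Spielman--Teng/GKZ-type smallest-singular-value estimate. Your route is shorter and more elementary. What the paper's heavier machinery buys is the full Theorem~\ref{thm:logdet} with $\alpha>0$: there the contribution of the small singular values is the nonzero constant $-\alpha(\gamma-\tfrac12)$, and your crude lower bound $\sigma_i\ge\sigma_{\min}\ge N^{-K}$ with a large arbitrary $K$ gives only $-K\alpha$, which is not tight; the paper's two-sided control of $\det\widetilde S_N(z)$ pins down the exponent $N^{-\gamma N^*}\sqrt{N^*!}$ up to sub-exponential factors and hence recovers the correct constant. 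One shared caveat: your bound $\tfrac1N\sum_{i\in\mathcal I}|\log(\sigma_i(\model_N(z))/\sigma_i(M_N(z)))| = O(N^{-\delta_N})$ needs $N^{\delta_N}\to\infty$ (equivalently $\delta_N\log N\to\infty$) to be $o(1)$; this matches the condition $\varepsilon_N\to 0$ in Theorem~\ref{thm:logdet}, so it should be read into the simplified statement, but it is worth flagging explicitly.
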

 The importance of Theorem \ref{thm:logdet-simp} (and its more elaborate
 version, Theorem \ref{thm:logdet}) lies in the fact that it reduces
 the question of weak convergence of the random empirical measure $L_N$
 to computations involving the deterministic matrices $M_N$. Still,
 these computations are, in general, non-trivial. The other results in
 this paper are instances in which these computations can be carried through
 and the limit of $L_N$ can be described explicitly.

Our second main result deals with upper triangular Toeplitz matrices
of finite symbol, that is banded upper triangular Toeplitz matrices.
\begin{theorem}
  \label{theo-1}
  Let $a_i, i=0,1,\ldots,\gd$ be complex (deterministic) numbers.  Let
  $M_N:=\sum_{i=0}^\gd a_i J^i$, and let $\model_N$ and $L_N$
  be as in \eqref{eq-1} and \eqref{eq-defln}.
  Then $L_N$ converges weakly in probability to the law of $\sum_{i=0}^\gd
  a_i U^i$, where $U$ is uniformly distributed on the unit circle.
\end{theorem}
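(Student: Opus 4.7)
The plan is to apply Theorem~\ref{thm:logdet-simp} pointwise in $z\in\C$ (off a Lebesgue-null bad set) and to identify the limit of $g_N(z)$ with the logarithmic potential of the claimed limit $\mu := \operatorname{law}(p(U))$, where $p(w) := \sum_{i=0}^\gd a_i w^i$. Pointwise a.e.\ convergence $\LL_{L_N}(z)\to\LL_\mu(z)$, together with tightness of $\{L_N\}$ coming from $\|M_N\|\le C$ and \eqref{eq-gordon}, then yields weak convergence of $L_N$ to $\mu$ in probability via the standard logarithmic-potential criterion. Jensen's formula applied to $p(w)-z = a_\gd\prod_{k=1}^\gd(w-w_k(z))$ identifies the target as
\begin{equation*}
\LL_\mu(z) = \int_0^{2\pi}\log|p(e^{i\theta})-z|\,\frac{d\theta}{2\pi} = \log|a_\gd| + \sum_{k=1}^\gd \log^+|w_k(z)|.
\end{equation*}

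For the small-singular-values hypothesis, fix $z$ outside the image of the unit circle under $p$. For each decaying root with $|w_k(z)|<1$, take the test vector $v_k:=(1,w_k,w_k^2,\ldots,w_k^{N-1})^T$; using $M_N=\sum_i a_i J^i$ one computes $((M_N - z\Id)v_k)_i = w_k^{i-1}(p(w_k)-z) = 0$ for $i \le N-\gd$, while the last $\gd$ entries are bounded by a constant times $|w_k|^{N-\gd}$. For distinct $w_k$ the $\{v_k\}$ are linearly independent (Vandermonde), and a quantitative version of the min--max principle then produces at least $\#\{k: |w_k(z)|<1\}$ singular values of $M_N - z\Id$ of order $|w_k|^N$, which are far below $\epsilon_N := N^{-\gamma+1/2+\delta_N}$. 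A matching upper bound $s_N(z)\le \gd = O(1)$, ruling out any other small singular values, will be obtained by a dimension/duality argument applied to the span of reversed test vectors for the growing roots of $p(w)-z$; either way $s_N(z)\log N/N \to 0$.

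For the deterministic equivalent $g_N(z)$, the key ingredient is that $M_N - z\Id$ is upper triangular with constant diagonal $a_0-z$, so $|\det(M_N - z\Id)| = |a_0-z|^N$, and hence the \emph{exact} identity $\frac{1}{N}\sum_i\log\sigma_i(M_N-z\Id) = \log|a_0-z|$ holds. Granting that $g_N$ in this regime is essentially $\frac{1}{N}\sum_i\log\max(\sigma_i,\epsilon_N)$ (the Ginibre noise effectively thresholds singular values at scale $\epsilon_N$), and sharpening the earlier estimate to the asymptotic $\sigma_{\text{small},k}\asymp |w_k|^N$, we conclude
\begin{equation*}
g_N(z) = \log|a_0-z| - \sum_{k:|w_k(z)|<1}\log|w_k(z)| + o(1) = \LL_\mu(z) + o(1),
\end{equation*}
where the final equality combines $\log|a_0-z| = \log|a_\gd| + \sum_k\log|w_k(z)|$ (evaluate $p(w)-z$ at $w=0$) with $\log^+|w| = \log|w| - \min(\log|w|,0)$.

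The principal obstacle is the matching \emph{lower} bound $\sigma \gtrsim |w_k|^N/\operatorname{poly}(N)$ on each small singular value: the upper bound from the test vectors is routine, but preventing a hidden cancellation that drives some singular value much smaller (thereby contaminating $\sum\log\sigma_i$ below the cutoff) requires control of the full kernel geometry of $M_N - z\Id$. If a direct estimate proves awkward, a cleaner alternative is to compute $\frac{1}{N}\log\det((M_N-z\Id)(M_N-z\Id)^* + t^2\Id)$ via Szeg\H{o}-type asymptotics for banded Toeplitz matrices with uniform control in small $t$, and then send $t\to 0$ using the $O(1)$ count of small singular values established above.
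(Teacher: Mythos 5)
Your plan is in the right spirit and, up to the acknowledged gap, structurally parallels the paper's proof. You correctly reduce to the deterministic equivalent (Theorem~\ref{thm:logdet-simp}/Theorem~\ref{thm:logdet}), identify the Vandermonde-type test vectors $(1,w_k,\dots,w_k^{N-1})^T$ as approximate small singular vectors, and use the exact identity $|\det(M_N-z\Id)|=|a_0-z|^N$ together with $\log|a_0-z|=\log|a_\gd|+\sum_k\log|w_k(z)|$ to convert the sum over large singular values into the claimed $\LL_\mu(z)$. This is genuinely simpler than the paper's actual route: the paper derives Theorem~\ref{theo-1} as a corollary of the twisted Toeplitz Theorem~\ref{thm:finite-off-diag} via the piecewise-constant reduction of Section~\ref{sec:pf-pc-constant}, which forces a partition into $O(N^{1-\delta_3})$ blocks even when the symbol is constant; your single-block treatment avoids this machinery. (A minor point: the bound $s_N(z)\le\gd$ is plausible but not automatic; the paper's single-block estimate for $\gd=1$, Corollary~\ref{cor:Jordan}, only yields $\sigma_{N-1}\gtrsim N^{-1}$, which is still enough since one only needs $s_N\log N/N\to 0$.)

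The genuine gap is the one you flag: the lower bound on the product $\prod_{k}\sigma_{N-k}(M_N-z\Id)$ over the small singular values. Showing that no hidden cancellation produces a singular value much below $|w_k|^N$ is the technical heart of the paper; it is handled for $\gd=1$ by Lemmas~\ref{lem:localstructure}--\ref{lem:orthW2} and Proposition~\ref{prop:lb}, and for general $\gd$ by Lemma~\ref{lem:not-small-norm} (transfer-matrix iteration to control components orthogonal to $\cS=\Span\{{\bm w}^j_\ell\}$) together with Lemmas~\ref{lem:det-W-bd}--\ref{lem:sing-prod-ubd} and Lemma~\ref{lem:lb-prod-sing} (a Hadamard--plus--Gram-matrix argument over an orthonormal completion). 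Your phrase ``control of the full kernel geometry'' names exactly this, but the proposal supplies no mechanism. Moreover, the Szeg\H{o} fallback does not actually close the gap: at $t=0$ the regularized log-determinant you propose is already known exactly, $\frac{1}{N}\log\det\bigl((M_N-z\Id)(M_N-z\Id)^*\bigr)=2\log|a_0-z|$, and what one needs is the split between the contribution of the $O(1)$ small singular values and the rest; a Szeg\H{o}-type limit for fixed $t>0$ does not control the $t\to 0$ limit without an independent lower bound on the smallest singular values, which is the very thing in question (and the regularized matrix is not itself Toeplitz, so strong Szeg\H{o} does not apply directly). So: correct blueprint, correct identification of the crux, but the proof is incomplete precisely at the paper's main technical lemma.
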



\begin{figure}[t]
\begin{center}
  \includegraphics{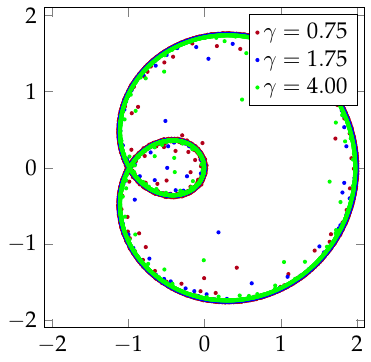}
  \includegraphics{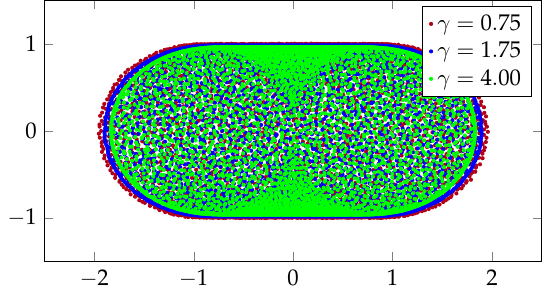}
  \includegraphics{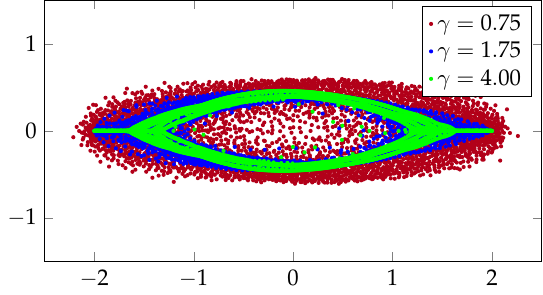}
\caption{
 The eigenvalues of $\model_N$, with $N=4000$ and various $\gamma$. On the top
 left,
 $M_N=J+J^2$. On the top right, $M_N=D_N+J$ with $D_N(i,i)=-1+2i/N$. On the
 bottom, $M_N=D_N+J$ with
 $D_N$ i.i.d.  uniform on $[-2,2]$.
}
\label{fig:1}
\end{center}
\end{figure}
\noindent
A generalization of Theorem \ref{theo-1} to the twisted Toeplitz setup
appears in Section \ref{sec:tt}, see Theorem \ref{thm:finite-off-diag} there. As the next theorem shows,
in the case of two diagonals in $M_N$ more can be said.
\corOZ{For $x_1, x_2 \in \R$ we denote $x_1 \vee x_2:=\max\{x_1,x_2\}$.}
\begin{theorem}\label{theo-2} Let $D_N$ be a diagonal matrix with
  entries $d_i$, set $M_N=D_N+J$ and let $\model_N$ be as in \eqref{eq-1}.\\
  a) Let $d_i$ be i.i.d.~random variables of law $\nu$ supported on a subset of a simply connected compact set with Lebesgue area $0.$
  Then $L_N$ converges weakly in probability to a measure $\mu$ characterized
  by $\LL_\mu(z)=(\E_\nu \log |z-d_1|)\vee 0$.\\
  b) Let $f:[0,1]\to \mathbb{C}$ be H\"{o}lder-continuous
  and set $d_i=f(i/n)$. Then
  $L_N$ converges weakly in probability to a probability measure $\mu$ satisfying
  $$ \mu = \int_0^1 \mbox{\rm unif}_{f(z),1} dz,$$
  where $\mbox{\rm unif}_{a,b}$ denotes the uniform law on a circle \corAB{in the complex plane} of radius $b$ and center $a$.
\end{theorem}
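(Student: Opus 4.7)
Both parts proceed by invoking Theorem \ref{thm:logdet-simp} to reduce weak convergence of $L_N$ to pointwise convergence of the deterministic function $g_N(z)$ for a.e.\ $z \in \mathbb{C}$, together with verification of $s_N(z)\log N/N \to 0$. Since $T := M_N - z \Id$ is upper bidiagonal with diagonal $w_i := d_i - z$ and unit superdiagonal, the matrix $TT^*$ is tridiagonal, and the principal-minor determinants $p_n := \det(TT^* + \eta^2 \Id)_{[1..n]}$ satisfy the three-term recursion
$$ p_n = (|w_n|^2 + 1 + \eta^2)\, p_{n-1} - |w_n|^2\, p_{n-2}, \qquad 1 \le n < N, $$
(with a modified relation at $n = N$) starting from $p_0 = 1$, $p_1 = |w_1|^2 + 1 + \eta^2$. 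Here $g_N(z) = \tfrac{1}{2N}\log p_N + o(1)$ with cutoff $\eta = \eta_N = N^{-\gamma+1/2+o(1)}$, so the central task is to evaluate the exponential growth rate of $p_N$ in the joint limit $N \to \infty$, $\eta_N \to 0$.

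The recursion is driven by the $2 \times 2$ cocycle
$$ M_n(\eta) = \begin{pmatrix} |w_n|^2 + 1 + \eta^2 & -|w_n|^2 \\ 1 & 0 \end{pmatrix}, \qquad \binom{p_n}{p_{n-1}} = M_n(\eta) \binom{p_{n-1}}{p_{n-2}}. $$
The key structural observation is that at $\eta = 0$ the vector $(1,1)^\top$ is fixed by every $M_n(0)$; the change of basis to $\{(1,1)^\top,(1,0)^\top\}$ conjugates $M_n(0)$ to the upper triangular matrix
$$ M'_n(0) = \begin{pmatrix} 1 & 1 \\ 0 & |w_n|^2 \end{pmatrix}. $$
In the frozen (constant $|w|=|c|$) regime, $M(\eta\to 0^+)$ has top eigenvalue $\max(1,|c|^2)$, whose logarithm gives the \emph{local} Lyapunov exponent $\lambda(c) := 2(\log|c| \vee 0)$. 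This local exponent is the basic building block in both parts.

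In part (a), for i.i.d.~non-degenerate $w_n$, the upper-triangular structure makes the Lyapunov exponents of the product equal to the diagonal averages $\gamma_1 = \E\log 1 = 0$ and $\gamma_2 = \E\log|w|^2 = 2\LL_\nu(z)$; hence the top Lyapunov exponent is $\gamma_+ = \max(0, 2\LL_\nu(z)) = 2(\LL_\nu(z) \vee 0)$, giving $g_N(z) \to \LL_\nu(z) \vee 0 = \LL_\mu(z)$. For part (b), an adiabatic argument based on partitioning $\{1,\ldots,N\}$ into blocks of size $m_N$ with $\log N \ll m_N \ll N$ shows that the log-growth rate is the integral of the frozen local Lyapunov exponent: $\tfrac{1}{N}\log p_N \to \int_0^1 \lambda(f(t)-z)\,dt$, so $g_N(z) \to \int_0^1 (\log|z - f(t)| \vee 0)\,dt$, which is precisely the logarithmic potential of $\int_0^1 \mbox{\rm unif}_{f(t),1}\,dt$. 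The hypothesis $s_N(z)\log N/N \to 0$ is checked using explicit approximate-null-vectors $x = (1, -w_1, w_1 w_2, \ldots)^\top$, yielding $s_N(z) = O(1)$ in (a) and $s_N(z) = O(N/m_N) = o(N/\log N)$ in (b). The main technical obstacle is the adiabatic step of part (b)---uniformly controlling the frozen cocycle's top eigenvalue and its invariant direction as $\eta_N \to 0$, and ensuring that block-boundary couplings and the transition regions $\{t : |f(t)-z| \approx 1\}$ do not introduce subdominant corrections that affect the leading order.
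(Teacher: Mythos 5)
Your proposal takes a genuinely different route from the paper, and the core observation is elegant: that $TT^*$ is tridiagonal, so $\det(TT^* + \eta^2\Id)$ obeys a scalar three-term recurrence driven by a $2\times 2$ cocycle whose $\eta=0$ limit is conjugate to the upper-triangular matrix $\begin{psmallmatrix}1 & 1 \\ 0 & |w_n|^2\end{psmallmatrix}$, with fixed vector $(1,1)^\top$. This explains the appearance of $(\cdot)\vee 0$ transparently through the two diagonal Lyapunov exponents $0$ and $\E\log|w|^2$, and it is closely allied with the Thouless-formula viewpoint that the paper itself discusses in Subsection 1.2. By contrast, the paper works with explicit approximate singular vectors ${\bm v}^{i,j}$, controls products of small singular values via Theorem~\ref{thm:DJssv} and the quantities $\mathcal{D}_\pm$, and then feeds this into Theorem~\ref{thm:logdet} (Corollaries~\ref{cor:iidssv}--\ref{cor:holder}). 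Your cocycle approach avoids the block construction entirely and unifies (a) and (b) conceptually; the paper's approach produces quantitative rigidity bounds on individual small singular values that yours does not.

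However, as written the proposal is not a proof; there are two genuine gaps. First, the identification $g_N(z) = \tfrac{1}{2N}\log\det(TT^* + \eta_N^2\Id) + o(1)$ is asserted, not derived. The paper's $g_N$ is $\tfrac{1}{N}\log|\det B_N(z)| - \alpha(\gamma-\tfrac12)$ where $B_N(z)$ is a sharp truncation at a slowly varying, $i$-dependent threshold of order $\varepsilon_N^{-1}N^{-\gamma}(N-i+1)^{1/2}$, while your $p_N$ softly regularizes every singular value by $\sqrt{\sigma_i^2 + \eta^2}$. Matching the two requires controlling the number of singular values in a window around $\eta_N$ and the tail contribution $\sum_{\sigma_i < \eta_N}\log(\eta_N/\sigma_i)$; this is precisely the bookkeeping that Lemmas~\ref{lem:det-B_N} through~\ref{lem:hadamard} perform, and you cannot skip it by reparametrizing the regularization. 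Second, and more seriously, the adiabatic step of part (b) is announced but not carried out---you flag it yourself as the main technical obstacle. Controlling block-boundary couplings and, in particular, the contribution of the transition regions $\{t : |f(t)-z|\approx 1\}$ (where the local Lyapunov exponent crosses zero and the frozen cocycle degenerates) is exactly where the paper invests effort: the partition in Corollary~\ref{cor:holderssv} is constructed via the level crossings of $\log|f|$ so that each block is either expanding or contracting, and the H\"older continuity is used quantitatively to bound $\mathfrak{D}$. Without an analogous quantitative input, the adiabatic heuristic does not deliver the leading-order result. Finally, the claim $s_N(z)=O(1)$ in (a) is stronger than what is needed or obviously true; the paper only establishes $N^*\leq L=O(N^{1-\delta})$, which suffices for the hypothesis $s_N(z)\log N/N\to 0$, and you should make the same weaker claim unless you can justify $O(1)$.
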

\noindent
See Section \ref{sec-d=1} for details and further examples, and note
that Theorem \ref{theo-2} a) is Corollary \ref{cor:iid}, while
Theorem \ref{theo-2} b) is Corollary \ref{cor:holder}.

An illustration of Theorems \ref{theo-1} and
\ref{theo-2} is provided in Figure \ref{fig:1}.

\begin{remark}
We chose to consider  throughout the paper only the case of perturbation
matrices $G_N$ which are complex Ginibre  matrices.
We believe that the results should carry over in a rather
straightforward way
to the case of real Ginibre
matrices, and with a significant effort to the i.i.d.~setup,
\corOZ{in the same spirit as \cite{W}.} To avoid additional
technicalities, we did not pursue these extensions here.
\end{remark}
\begin{figure}[t]
\begin{center}
  \includegraphics{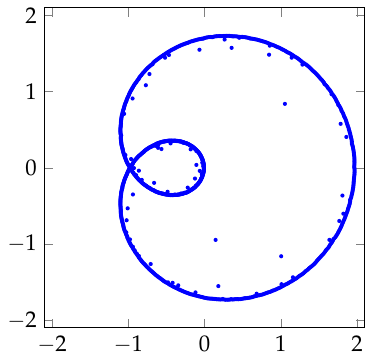}
  \includegraphics{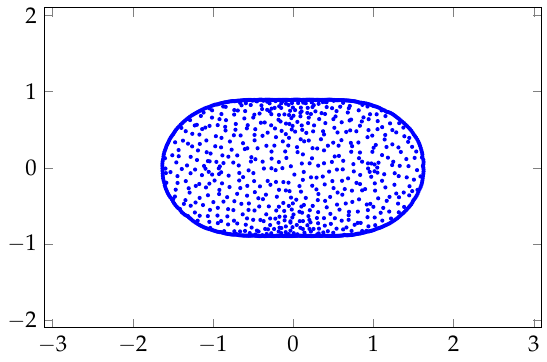}
  \includegraphics{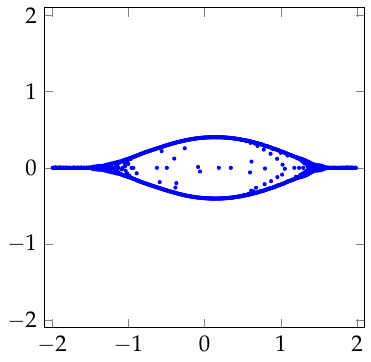}
\caption{
 Numerical evaluation of eigenvalues of $U_N M_N U_N^*$ with
 $U_N$ Haar-distributed unitary and $N=1000$.
  On the top left,
 $M_N=J+J^2$. On the top right, $M_N=D_N+J$ with $D_N(i,i)=-1+2i/N$. On the
 bottom, $M_N=D_N+J$ with
 $D_N$ i.i.d.  uniform on $[-2,2]$, $N=4000$.
The computation was performed in Scipy with
a 64-bit floating point precision.
Note the similarity with Figure \ref{fig:1}.
}
\label{fig:2}
\end{center}
\end{figure}

\subsection{\corEE{A Thouless--type formula}}

Both Theorems \ref{theo-1} and \corABa{\ref{theo-2}(a)} can also be formulated in terms of Lyapunov exponents.  Consider the vector space
\[
V:= \{ x \in \R^N : ((M_N - z\Id)x)_j = 0, 1 \leq j \leq N-\gd \}.
\]
This space is $\gd$--dimensional.  Further, to find $x \in V,$ having chosen $x_1,x_2,\dots, x_{\gd},$ one can solve for the remaining entries of $x$
using the equations $(M_N - z\Id)x)_j = 0$ for $1 \leq j \leq N-\gd$ to propagate the solution.  Concisely, we  can find $\gd \times \gd$ \emph{transfer} matrices $(T_j(z))_{1}^{N-\gd}$
so that for all $1 \leq j \leq N-\gd$
\[
(x_\ell)_{\ell = j+1}^{j+\gd} = T_j(z) \cdot (x_\ell)_{\ell = j}^{j+\gd-1}.
\]
For details, see Definition \ref{dfn:transfer-matrix}; for the exposition here, the explicit form of this matrix will not be necessary.

In the setup of Theorem \ref{theo-1}, these matrices will all be identical.  In the setup of Theorem \ref{theo-2} part (a), the matrices will be i.i.d.\ scalars. In either case, the sequence  $(T_j(z))_{1}^{N-\gd}$ is stationary, and we can consider the Lyapunov spectra as the set of values
\[
\{\mu_1(z), \mu_2(z), \ldots , \mu_\gd(z)\} := \left\{ \lim_{n \to \infty} \frac{1}{n} \log \| T_n(z) T_{n-1}(z) \cdots T_1(z) v\| : v \in \C^\gd\right\}
\]

In the setup of Theorem \ref{theo-2} part (a), there is a single Lyapunov eigenvalue, given by $\mu_1(z) = \Exp \log |d_1 - z|.$  This allows us to write that
\[
  \LL_{L_N}(z) \to \corOZ{\mu_1(z)} \vee 0.
\]
In the setup of Theorem \ref{theo-1}, if we set $P(x) = \sum_{i=0}^{\gd} a_i x^i$ we \corABa{have that} 
\[
\LL_{L_N}(z) \to \int_0^{2\pi} \log |P(e^{i\theta})-z|\frac{d\theta}{2\pi}.
\]
On the other hand, factorizing $P(x) - z = a_{\gd} \prod_{i=1}^\gd (x-\lambda_i(z)),$ we can write
\[
\int_0^{2\pi} \log |P(e^{i\theta})-z|\frac{d\theta}{2\pi}
= \sum_{i=1}^\gd  ((\log |\lambda_i(z)|) \vee 0)+\log |a_\gd|.
\]
Furthermore, in the Toeplitz case, the eigenvalues of $T_j(z) = T_1(z)$ are just the roots of the symbol $P(x),$ and the Lyapunov spectra are nothing but $\log | \lambda_i(z)|$ for $1 \leq i \leq \gd.$  Hence, we have that in both Theorem \ref{theo-2} part (a) and Theorem \ref{theo-1},
\[
\LL_{L_N}(z) \to \sum_{i=1}^\gd ( \mu_i(z) \vee 0)+\log|a_\gd|.
\]

A \corABa{similar result had} appeared previously in the study of the Thouless formula for \corOZ{the strip \cite[Theorem 2.4]{CraigSimon}.}
For the twisted Toeplitz cases (such as in Theorem \ref{theo-2} part (b)), the formula must be replaced by an average over local Lyapunov exponents.

\subsection{Connection to pseudospectra}
\label{subsec-1.3}
\par
The fact that the spectrum of non--normal matrices and operators
is not stable with respect
to perturbations is well known, see e.g. \cite{trefethen} for a
comprehensive account and \cite{DH}
for a recent study.
To illustrate the issue,
we attach in Figure \ref{fig:2} an
actual simulation of $U_N M_N U_N^*$ where $M_N$ is as in Figure
\ref{fig:1} and $U_N$
is a random Haar-distributed
unitary matrix. While the spectrum of $M_N$ is real, the numerical
simulations produce errors that make the spectrum look similar to
the one for the noisy perturbed model $\model_N$, compare with
Figure \ref{fig:1}. See \cite{Trefethen1991} for early 
examples of the same phenomenon.

The $\epsilon$--\emph{pseudospectrum}, defined by
  \[
    \Lambda_\epsilon(M_N): = \left\{ z \in \C : \sigma_N(M_N-z\Id) \leq \epsilon \right\},
  \]
  with $\sigma_N(\cdot)$ the smallest singular value,
  is a type of worst--case quantification of the instability of the spectrum.
  See \cite{Trefethen1991} for the original formulation and
  \cite{trefethen} for an extensive background
and applications in numerical analysis and beyond.

  In the literature on pseudopsectra, an outsize importance is placed on \emph{exponentially good pseudospectra}, i.e.\ $\Lambda_\epsilon$ where $\epsilon < e^{-\delta N}$ for some $\delta.$  Of particular relevance here, simulations of randomly perturbed non--normal matrices suggest that their spectra concentrate on sets that strongly resemble exponentially good pseudospectral level lines,
  see e.g. \cite{TrefethenReichel}. In particular, in the upper-triangular
 Toeplitz case, these curves
 are precisely the image of the unit circle in the complex
 plane by the Toeplitz symbol.

  Furthermore, all of the models of non--normal matrices that we consider have been, not coincidentally, the subjects of study from the point of view of exponentially good pseudospectra.
  The work \cite{Trefethen1991} describes many examples of non--normal matrices and gives plots of pseudospectral level lines adjacent their perturbed eigenvalues.   The top two plates in Figures \ref{fig:1} and \ref{fig:2} are Examples 2 and 4 from \cite{Trefethen1991}.  Subsequent work \cite{TrefethenReichel} proved, using transfer matrices, some estimates for the locations of the $\epsilon$-pseudospectrum and exponentially good pseudospectrum 
of large Toeplitz matrices, and showed in the upper triangular case  that the latter converges to the spectrum of the limit Toeplitz operator, namely to the the image of the unit circle by the symbol; our Theorem 
\ref{theo-1}
 shows that indeed, for upper triangular symbols of finite support and under small Gaussian perturbations, the empirical measure converges to a limit with precisely this support.  The work \cite{TrefethenContediniEmbree}, motivated in part by the Hatano--Nelson model, considers the pseudospectra of random bidiagonal matrices, identifying four regions of distinct pseudospectral growth.   Finally \cite{TrefethenChapman} computes the exponentially good pseudospectrum of some classes of twisted Toeplitz matrices, including the top--right example of Figures \ref{fig:1} and \ref{fig:2} (the ``Wilkinson'' matrix). See also 
\cite{Trefcont} for related results in the continuous setup.

As we shall see in Section~\ref{sec:det-equiv}, adding small
Gaussian noise to $M_N - z\Id$ roughly has the effect of boosting any exponentially small singular values to the order of unity.  Hence in situations in which there are only a few singular values of $M_N -z\Id$ that are exponentially small, the log potential at $M_N -z\Id + N^{-\gamma}G_N$ can be approximated by computing the log potential of $M_N -z\Id$ and subtracting from it the contribution of exponentially small singular values.  Indeed, if the exponential growth rates of these extremal singular values are harmonic as functions of $z$ away from the spectra, then a discontinuity in the Laplacian of the log--potential occurs exactly where the exponential growth of extremal singular values changes signs.  In particular, an exponentially good pseudospectral level line would be contained in the limiting spectral support of $M_N - z\Id + N^{-\gamma}G_N.$ See Figure \ref{fig:3} for an illustration.
\begin{figure}[t]
\begin{center}
  \includegraphics[width=2.75in]{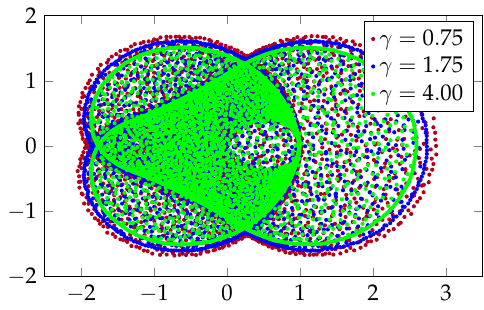}
  \includegraphics[width=3.25in]{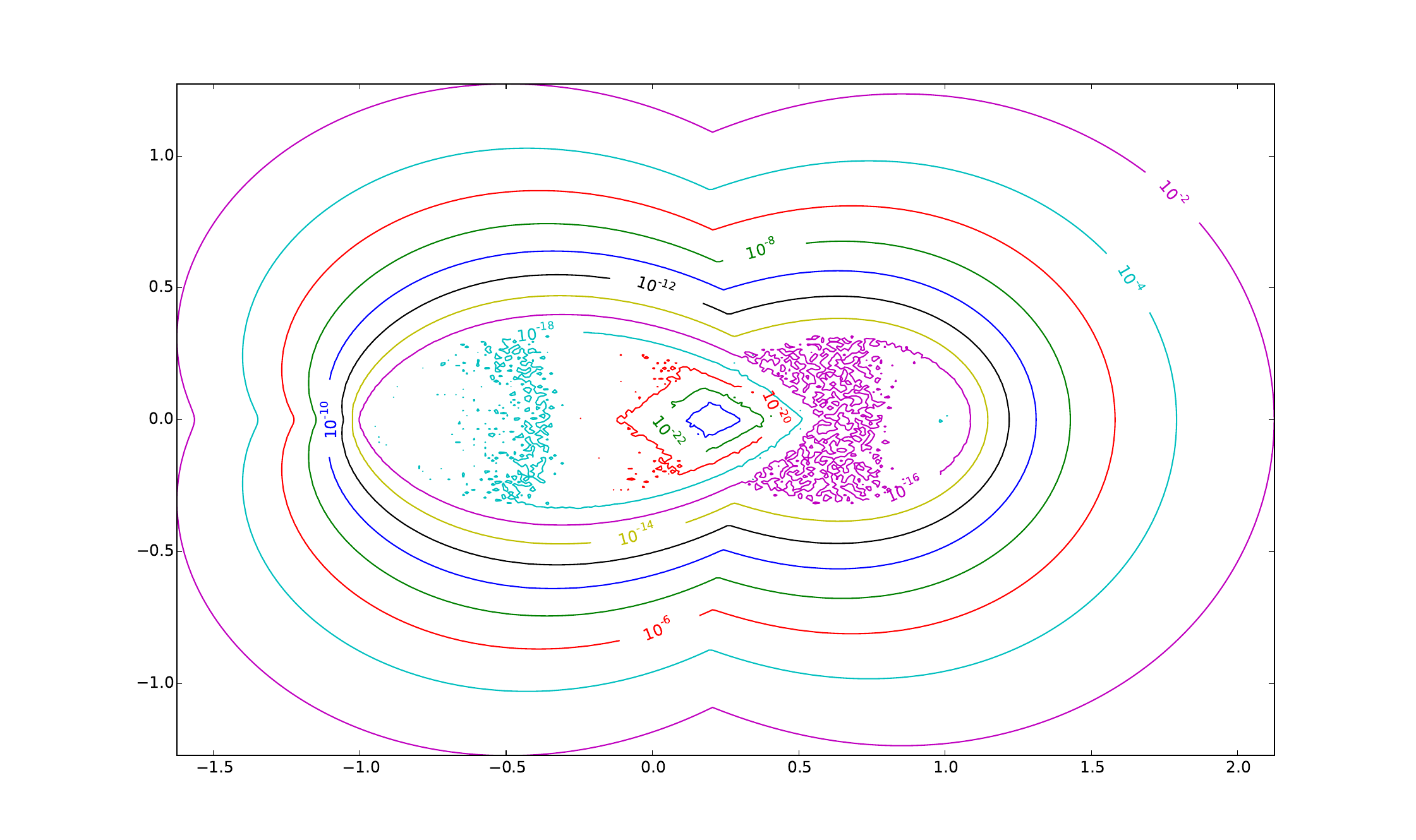}
\caption{
  The matrix $M_N = D_N - D_NJ + J^2,$ with $D_N(i,i)=-1+2i/N.$
  The leftmost image shows eigenvalues of $M_N + N^{-\gamma}G_N$ for $N=4000.$  Theorem \ref{thm:finite-off-diag} gives the distributional convergence of the spectra of this matrix on adding Gaussian noise.
  The rightmost image shows pseudospectral level lines for $N=100.$ The levels displayed are $100^{-k},$ for $k$ ranging from $1$ to $12.$
  Pseudospectral lines generated by Andr\'e Gaul's pseudopy package, based off Eigtool by Thomas G. Wright.
 }
\label{fig:3}
\end{center}
\end{figure}

However, as a consequence of the theorems we show in this paper, we see that pseudospectrum alone is in general
not sufficient for understanding the limiting spectral distribution of randomly perturbed non--normal matrices, except for special cases, e.g. when
only one singular value of $M_N - z\Id$ is exponentially small, or in the
Toeplitz upper-triangular case.  

\subsection{Previous results on typical perturbations and strategy}
\label{subsec-1.4}
Our work is an attempt
to address the same issue from a ``typical'' perturbation point of view, and
in this sense continues the line of research initiated in \cite{GWZ,W} and
\cite{FPZ}, which we now describe.

In \cite{GWZ}, the authors consider the case where $M_N$ converges in
$*$-moments, that is, there exists an operator $a$ in a non-commutative
probability space $(\mathcal{A}, \tr)$ so that for any non commutative
polynomial $P$, $N^{-1}\tr P(M_N, M_N^*)\to \tr P(a,a^*)$. Under a regularity
assumption on $a$ and the existence of polynomially vanishing perturbations
of $M_N$ with empirical measure converging to the spectral
measure $\mu_a$ associated with $a$,
they show that $L_N$ converges to $\mu_a$ in probability.
They further show that their assumptions are satisfied
when $M_N=J$. The paper \cite{W} shows that this result is stable in the sense
that replacing $G_N$ by a matrix with i.i.d. entries satisfying mild assumptions
does not change the result. The proofs in \cite{GWZ} and \cite{W} controls
the log potential of $\model_N$ by methods inspired by free probability, and in particular breaks down if the $*$-moment limit does not coincide
with the limit of $L_N$. Further, in \cite{GWZ} one can
find an example of a matrix $M_N$ (with only one non-zero diagonal)
where the latter is indeed the case - namely, $M_N(i,j)=J(i,j)\cdot
{\bf 1}_{j\neq 0 \mod \log N}$.

In \cite{FPZ}, the authors consider the latter situation and prove a limit
theorem, where the limit does depend on $\gamma$. The method of proof is very
different from \cite{GWZ,W} - it involves a combinatorial analysis of
$\det \model_N(z)$ where $\model_N(z):=\model_N-z\Id$. \corAB{Noting that $\LL_{L_N}(z)=
\frac1N \log |\det \model_N(z)|$, concentration
of measure arguments then identify the terms contributing to the determinant.}

Our approach in this paper is related to \cite{FPZ} in that we also compute
$|\det(\model_N(z))|$. However, our starting point is to relate the latter
to a truncation of $M_N(z)=M_N-z\Id$, where the lowest lying singular values
of $M_N(z)$ are eliminated (we refer to this as a ``deterministic
equivalent model'', using terminology borrowed from \cite{hachem}).
The level of truncation depends on $\gamma$, which parametrizes the
strength of the perturbation.
Once this step has been established, we can study the small singular values
of $M_N(z)$ using transfer-matrix techniques, in case
$M_N$ is Toeplitz with a finite symbol or a
slowly-varying version of such a matrix. This analysis was not present
in \cite{GWZ,FPZ,W} and seems to be new in the context of the stability that
we study.

We note that other approaches to the study of perturbations of non-normal
operators exist. In particular, Sj\"{o}strand and Vogel \cite{SV},
\cite{SV1}
identify the limit of the empirical value of a random perturbation of
a  banded Toeplitz matrix
with two non-zero diagonals, one above and one below the main
diagonal. Their methods, which are quite different
from ours, are limited to $\gamma>5/2$, and yield more quantitative
estimates on the empirical measure and its outliers.

The structure of the paper is as follows. In the next section, we introduce
and prove the deterministic equivalence. Various standard
algebraic
facts needed for the proofs are collected in Appendix \ref{sec:prelim}.
Section
\ref{sec-d=1} presents the analysis of the deterministic equivalent model in
the case where only two diagonals are present; the latter restriction simplifies
the analysis because transfer matrices reduce to a scalar in that case. Section \ref{sec:tt} treats the case of \corAB{$\gd>1$}, and reduces the twisted Toeplitz
case to
piecewise-constant twisted Toeplitz matrices,
described in Theorem \ref{thm:finite-off-diag-pc}.
The proof of the latter appears in
Section \ref{sec:pf-pc-constant}.\\[0.2em]

\subsection{Notation}
\corOZ{We use throughout the following standard notation.}
\corAB{For two real-valued sequences $\{a_n\}$ and $\{b_n\}$ we write
$a_n=o(b_n)$ if $\limsup_{n\to\infty} b_n/a_n=0$,
$a_n=O(b_n)$ if $\limsup_{n\to\infty} |b_n|/|a_n|<\infty$,
 and
 $a_n \gg b_n$ if $\limsup_{n \to \infty} |a_n|/|b_n| =\infty$.}
\corOZ{For any $x \in \R$, we denote $x_-:=-\min\{x, 0\}$ and $x_+:=\max\{x,0\}$.}
\corAB{For any $x >0$, we denote $\log_+(x):=(\log(x))_+$.}
\corOZ{For $x_1, x_2 \in \R$ we denote $x_1 \vee x_2:=\max\{x_1,x_2\}$ and
 $x_1 \wedge x_2=\min\{x_1,x_2\}$.}
 \corOZ{We use $e_j$ to denote the standard unit vector, all of whose entries are $0$ except for $1$ at the $j$-th entry.}
\corOZ{For two random variables $X_i, i=1,2$, we write $X_1 \lawequals X_2$ to denote that $X_1$ and $X_2$ have the same law.} \corABa{For any matrix $M$,
  denote by $\|M\|_2:=[{\rm Tr}(M M^*)]^{1/2}$ its Hilbert-Schmidt norm,
and for any vector $v$ denote by  $\|v\|_2$ its Euclidean norm}.

\subsection*{Acknowledgement}
We thank Nick Trefethen for many useful discussions and examples that
motivated this study, and in particular for urging us to consider the
twisted Toeplitz case. We also thank him for his comments on a previous version
of this article. \corABrev{We are grateful to the anonymous referee for thoughtful comments which led to the improvement of the presentation and clarity of this paper.}

\section{The deterministic equivalent}\label{sec:det-equiv}

Let \corA{$\SVD_N(M_N(z))$} be the diagonal matrix of singular values of \corA{$M_N(z):=M_N-z\Id$}. Suppose the entries of $\SVD_N(M_N(z))$ are arranged to be non-decreasing going down the diagonal. \corA{That is, $\Sigma_{ii}(z)=\sigma_{N-i+1}(M_N(z))$ for $i \in [N]:=\{1,2,\ldots,N\}$, where $\Sigma_{ii}(z)$ is the $i$-th diagonal entry of $\SVD_N(M_N(z))$, and $\sigma_i(M_N(z))$ is the $i$-th largest singular value of $M_N(z)$.} By invariance of the Gaussian matrix,
\[
  \det( z\Id - \model_N ) \lawequals \det( \corA{\SVD_N(M_N(z))} + N^{-\gamma}G_N).
\]
 Suppose that \corA{$\SVD_N(M_N(z))$} is decomposed into two blocks of sizes $N_1+N_2 = N,$ so that
\begin{equation}\label{eq:truncation-dfn}
 \corA{ \SVD_N(M_N(z)) := \begin{pmatrix}
    S_N(M_N(z)) & \\
    & B_N(M_N(z))
  \end{pmatrix},}
  \text{ and }
  N^{-\gamma}G_N := \begin{pmatrix}
    X_1 & X_2 \\
    X_3 & X_4 \\
  \end{pmatrix}.
\end{equation}
For ease of writing, when the matrix $M_N$ is clear from the context, we simply write $\SVD_N(z), S_N(z)$, and $B_N(z)$ instead of $\SVD_N(M_N(z)), S_N(M_N(z))$, and $B_N(M_N(z))$.  Now by the Schur complement formula,
\begin{equation}\label{eq:schur}
\det( \SVD_N(z) + N^{-\gamma}G_N)
\lawequals
\det(B_N(z) + X_4) \cdot \det(\widetilde{S}_N(z)),
\end{equation}
where
\begin{equation}\label{eq:tilde-schur}
\widetilde{S}_N(z):=S_N(z) + X_1 \corAB{-} X_2 ( B_N(z) +X_4 )^{-1} X_3.
\end{equation}
\corABrev{(Since the entries of $X_4$ are i.i.d.~Gaussian, the matrix $(B_N(z)+X_4)$ is  a.s.  invertible  and hence $\widetilde S_N(z)$ is well defined.)  The decomposition in \eqref{eq:schur}} proves useful when we choose the decomposition so that the entries of \corA{$B_N(z)$} are somewhat large with respect to the noise. For such a decomposition as we will see later (see Theorem \ref{thm:logdet}) the log determinant of $B_N(z)$ correctly characterizes the log-potential of the limiting spectral distribution of $\model_N$. So we need to define $B_N(z)$ appropriately.

Fix a sequence of $\{\varepsilon_N\}$ going down to zero.
We define $N^*:=N^*(z,\gamma,\varepsilon_N)$ as the largest integer $i$ so that
\begin{equation}\label{eq:N*-dfn}
  \Sigma_{ii}(z) < \corA{\varepsilon_N^{-1}} N^{-\gamma}(N-i\corAB{+1})^{1/2}.
\end{equation}
If no such $1 \leq i \leq N$ exists
then
we let $N^*=1$. Now set $N_1 = N^*$. This defines $B_N(z)$. With this choice of $B_N(z)$ we have the following result.

\begin{theorem} \label{thm:logdet}
\corABrev{Fix $z \in \C$.} Suppose that $N^* \log N / N \to \alpha < \infty.$  Then for any $\{\varepsilon_N\}$ that tends to $0$ slowly enough that $\log(\varepsilon_N^{-1})/\log N \to 0,$
  \begin{equation}\label{eq:log-det-conv}
    \frac{1}{N} \log| \det( \SVD_N(z) + N^{-\gamma}G_N)|
    -
    \frac{1}{N} \log| \det( B_N(z)|
    \to  -\alpha(\gamma - \tfrac{1}{2}),
  \end{equation}
in probability, as $N\to\infty$. If $\alpha = 0,$ we may take $\varepsilon_N = N^{-\eta}$ for any $\eta > 0.$
\end{theorem}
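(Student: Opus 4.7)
The plan is to use the Schur complement identity \eqref{eq:schur}--\eqref{eq:tilde-schur} to factor
\[
\det(\SVD_N(z)+N^{-\gamma}G_N) \lawequals \det(B_N(z)+X_4)\cdot\det(\widetilde{S}_N(z))
\]
and control the two factors separately. The large block $B_N+X_4$ should contribute $\log|\det B_N|$ up to $o(N)$, while the small block $\widetilde{S}_N$, of size $N^*$, should contribute the polynomial correction $-\alpha(\gamma-\tfrac{1}{2})N + o(N)$ thanks to the Gaussian regularization of its small singular values; establishing the latter sharply is the main obstacle.

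For the large block, the definition of $N^*$ forces $\|B_N^{-1}\| = O(\varepsilon_N N^{\gamma}/\sqrt{N-N^*})$. Gordon's bound \eqref{eq-gordon} gives $\|X_4\|_2 = O(N^{-\gamma}(N-N^*))$ with overwhelming probability, so $\|B_N^{-1}X_4\|_2 = O(\varepsilon_N\sqrt{N})$ and in particular $\|B_N^{-1}X_4\| \ll 1$. Using $|\log|1+\mu_k|| \leq C|\mu_k|$ on the eigenvalues of $B_N^{-1}X_4$ and Cauchy--Schwarz,
\[
\bigl|\log|\det(I+B_N^{-1}X_4)|\bigr| \;\leq\; C\sqrt{N-N^*}\,\|B_N^{-1}X_4\|_2 \;=\; O(\varepsilon_N N)\;=\;o(N),
\]
so $\tfrac{1}{N}\log|\det(B_N+X_4)| = \tfrac{1}{N}\log|\det B_N|+o(1)$.

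For the upper bound on the small block, I would combine $\|S_N\| \leq \varepsilon_N^{-1}N^{-\gamma+1/2}$ (definition of $N^*$), $\|X_1\| = O(N^{-\gamma+1/2})$, and $\|X_2(B_N+X_4)^{-1}X_3\| = O(\varepsilon_N N^{-\gamma+1/2})$ (using $\|X_2\|,\|X_3\| = O(N^{-\gamma}\sqrt{N})$ together with the bound on $(B_N+X_4)^{-1}$ above) to get $\|\widetilde{S}_N\| \leq C\varepsilon_N^{-1}N^{-\gamma+1/2}$ with high probability. Since $\widetilde{S}_N$ has size $N^*$,
\[
\log|\det\widetilde{S}_N| \;\leq\; N^*\log\|\widetilde{S}_N\| \;\leq\; N^*\bigl[(\tfrac{1}{2}-\gamma)\log N + \log\varepsilon_N^{-1}+O(1)\bigr],
\]
and dividing by $N$, using the assumptions $N^*\log N/N \to \alpha$ and $\log\varepsilon_N^{-1}/\log N \to 0$, gives $\tfrac{1}{N}\log|\det\widetilde{S}_N| \leq -\alpha(\gamma-\tfrac{1}{2})+o(1)$.

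The matching lower bound is the hard step. Conditionally on $(X_2,X_3,X_4)$, $\widetilde{S}_N$ is a complex Ginibre matrix $X_1$ (variance $N^{-2\gamma}$, size $N^*$) plus the deterministic shift $A := S_N - X_2(B_N+X_4)^{-1}X_3$, and $\|A\|$ can exceed $\|X_1\|$ by a factor of $\varepsilon_N^{-1}$, so Weyl-type perturbation bounds on individual singular values of $\widetilde{S}_N$ are too crude. The remedy is that complex Gaussian noise regularizes the determinant: by Gaussian invariance one may rotate so that $A$ is diagonal with entries $s_1,\dots,s_{N^*}$, and then either from the explicit joint density of singular values of $X_1+A$, or from the moment identity
\[
\mathbb{E}|\det(X_1+A)|^2 \;=\; \sum_{k=0}^{N^*} k!\,N^{-2\gamma k}\,e_{N^*-k}(s_1^2,\dots,s_{N^*}^2)
\]
combined with a Paley--Zygmund-style concentration argument, I would derive a small-ball estimate of the form $\Pr[\log|\det(X_1+A)| < -\gamma N^*\log N + \tfrac{1}{2}N^*\log N^* - KN^*] \leq e^{-cKN^*}$ uniformly in $A$. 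Taking $K\to\infty$ slowly and unconditioning produces $\tfrac{1}{N}\log|\det\widetilde{S}_N| \geq -\alpha(\gamma-\tfrac{1}{2})+o(1)$, matching the upper bound. When $\alpha=0$ the hypothesis $N^*\log N/N \to 0$ reduces both bounds to $o(1)$, and the cruder estimate $\sigma_{\min}(\widetilde{S}_N) \geq N^{-K}$ with high probability (from standard anti-concentration of the least singular value of a shifted Gaussian matrix) is all that is needed; this is why the weaker condition $\varepsilon_N=N^{-\eta}$ is permissible in that regime.
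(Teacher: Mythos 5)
Your overall architecture matches the paper's: the Schur complement factorization \eqref{eq:schur}--\eqref{eq:tilde-schur}, then showing (a) $\det(B_N+X_4)$ contributes $\log|\det B_N|+o(N)$, (b) the Schur complement has $\log|\det\widetilde S_N| \leq -\alpha(\gamma-\tfrac12)N+o(N)$, and (c) a matching lower bound. Steps (a) and (b) you handle by a slightly different but equally valid route: for (a) you bound $\log|\det(I+B_N^{-1}X_4)|$ via the Hilbert--Schmidt norm and the smallness of $\|B_N^{-1}X_4\|$, whereas the paper expands via a Cauchy--Binet identity (Lemma \ref{lem:cauchy-binet}) and bounds the variance (Lemma \ref{lem:det-B_N}); for (b) you use the operator norm of $\widetilde S_N$, whereas the paper uses per-row $\ell_2$ bounds plus Hadamard (Lemmas \ref{lem:schur-row-norm} and \ref{lem:hadamard}). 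These are cosmetic differences.

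The genuine gap is in the lower bound, which you correctly flag as the hard step. Your proposed methods --- Paley--Zygmund on the second moment, or an unspecified computation with the joint density of the singular values of the shifted Ginibre matrix $X_1+A$ --- do not clearly produce the \emph{exponential} small-ball estimate, uniformly in the shift $A$, that convergence in probability requires. Paley--Zygmund only yields a lower bound on $\P[|\det| \geq \theta\,\E|\det|]$ of order $(\E|\det|^2)^{-1}(\E|\det|)^2$, which is constant at best, not exponentially close to $1$; and for a fourth-moment refinement you would need to control $\E|\det(X_1+A)|^4$, which is no longer a clean elementary-symmetric expression. What the paper actually uses is the comparison inequality (Lemma \ref{lem:sd}, from \cite{FPZ,Sn}): for $E$ a standard Gaussian matrix and $M$ deterministic, $\P[|\det(E+M)|\leq t] \leq \P[|\det E|\leq t]$. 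This single step eliminates the shift $A$ entirely and reduces the problem to the pure complex Ginibre case, where $|\det E|^2$ factors as a product of independent $\chi^2_r$-variables (Lemma \ref{lem:Gdet_lb}), giving the small-ball estimate with rate $e^{-c\widetilde N}$, $\widetilde N = N^*\vee\lceil\sqrt N\rceil$. The $\vee\lceil\sqrt N\rceil$ is not decorative: when $N^*$ is small the Markov bound on $\prod\chi_r^{-2}$ is applied with a larger cutoff $N_0'\geq N^*$ to extract a useful tail probability, and your $e^{-cKN^*}$ formulation would degenerate in that regime. Without this comparison lemma (or an equivalent), the lower bound as you sketch it does not go through.
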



The proof of Theorem \ref{thm:logdet} requires a two-fold argument. First we show that the truncation level chosen above assures that $X_4$ is negligible with respect to $B_N(z)$. In particular, we obtain the following result.
\begin{lemma}\label{lem:det-B_N}
For any given sequence of $\{\varepsilon_N\}$, such that $\varepsilon_N \in (0,1)$ for all $N$, we have
\begin{equation}\label{eq:det-exp}
\E \det (B_N(z)+X_4)= \det(B_N(z)).
\end{equation}
and
  \begin{equation}
    \label{eq-2.7}
    \Var \det(B_N(z) + X_4) \leq \frac{\varepsilon_N^2}{1-\varepsilon_N^2}\left( \det(B_N(z))\right)^2.
  \end{equation}
  \label{lem:chebyshev}
\end{lemma}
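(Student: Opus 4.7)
The two claims hinge on a single clean factorization. Since $B_N(z)$ is diagonal with positive entries (its smallest entry exceeds $\varepsilon_N^{-1} N^{-\gamma}$ by the definition of $N^*$), I would first write
\[
\det(B_N(z) + X_4) = \det(B_N(z)) \cdot \det(\Id_{N_2} + A), \qquad A := B_N(z)^{-1} X_4,
\]
where $A$ has independent complex Gaussian entries; in row $i$, each $A_{ij}$ has mean $0$ and variance $\Sigma_{N^*+i,\,N^*+i}(z)^{-2} N^{-2\gamma}$. The calculation for both parts is then carried out through the principal-minor expansion
\[
\det(\Id_{N_2} + A) = \sum_{S \subseteq [N_2]} \det(A_S),
\]
where $A_S$ is the principal submatrix indexed by $S$ and $\det(A_\emptyset)=1$.

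For the expectation identity \eqref{eq:det-exp}, I would expand each $\det(A_S)$ with $S \neq \emptyset$ by the Leibniz formula: in every signed monomial $\prod_{i \in S} A_{i,\pi(i)}$ the matrix positions $(i,\pi(i))$ are pairwise distinct, hence the factors are independent and each has mean zero. Thus $\mathbb{E}[\det(A_S)] = 0$ for all $S \neq \emptyset$, and only the $S=\emptyset$ term contributes, giving $\mathbb{E}[\det(\Id_{N_2} + A)] = 1$ and, after multiplying by $\det(B_N(z))$, identity \eqref{eq:det-exp}.

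For the variance bound \eqref{eq-2.7}, I would square the expansion and apply the complex Wick formula to each cross-moment $\mathbb{E}[\det(A_S)\overline{\det(A_T)}]$. Each factor $A_{i,\pi(i)}$ must be paired with some $\overline{A_{j,\tau(j)}}$ at an identical matrix position; since positions within a single Leibniz monomial are distinct, this forces $S = T$ and $\pi = \tau$, and the remaining contribution factorizes as a product of variances. Summing over the $|S|!$ permutations then gives $\mathbb{E}|\det(A_S)|^2 = |S|!\, N^{-2\gamma|S|} \prod_{i \in S} \Sigma_{N^*+i,\,N^*+i}(z)^{-2}$, and consequently
\[
\Var[\det(\Id_{N_2}+A)] = \sum_{k=1}^{N_2} k!\, N^{-2\gamma k} \sum_{|S|=k} \prod_{i \in S} \Sigma_{N^*+i,\,N^*+i}(z)^{-2}.
\]

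To close the estimate, I would exploit how $N^*$ was chosen. Applying the defining inequality of $N^*$ at $i = N^* + 1$ yields $\Sigma_{N^*+1,\,N^*+1}(z) \geq \varepsilon_N^{-1} N^{-\gamma}\sqrt{N_2}$, and the non-decreasing order of singular values extends this to the \emph{uniform} bound $\Sigma_{N^*+i,\,N^*+i}(z)^{-2} N^{-2\gamma} \leq \varepsilon_N^2 / N_2$ for every $i \in [N_2]$. Substituting and using $\binom{N_2}{k} k! \leq N_2^k$ collapses the $k$-th slice of the sum into $\varepsilon_N^{2k}$, and summing the resulting geometric series gives
\[
\Var[\det(\Id_{N_2}+A)] \leq \sum_{k \geq 1} \varepsilon_N^{2k} = \frac{\varepsilon_N^2}{1-\varepsilon_N^2}.
\]
Multiplying by $\det(B_N(z))^2$ then recovers \eqref{eq-2.7}. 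The only delicate step is the Wick bookkeeping in the second moment; it is kept tractable precisely because $B_N(z)$ is diagonal, which preserves the row-wise independence of the entries of $A$ and restricts the surviving pairings to matched permutations on equal index sets.
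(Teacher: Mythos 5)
Your proof is correct and takes essentially the same route as the paper. The paper expands $\det(B_N(z)+X_4)$ directly via the sum-over-submatrices determinant formula (their Lemma on $\det(A+B)$), observes that since $B_N(z)$ is diagonal only principal-block terms survive, and then bounds $\prod_{i\in S}\Sigma_{ii}(z)^2$ from above by $|\det B_N(z)|^2/[(\varepsilon_N^{-1}N^{-\gamma})^2(N-N^*)]^k$; you normalize by $\det B_N(z)$ first and expand $\det(\Id+A)$ by principal minors, arriving at the identical term-by-term computation with the same uniform lower bound $\Sigma_{N^*+i,N^*+i}(z)\ge\varepsilon_N^{-1}N^{-\gamma}\sqrt{N_2}$, so the two arguments coincide modulo this cosmetic change of when the normalization happens.
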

\begin{proof}
  Since the entries of $X_4$ are independent with zero mean, \eqref{eq:det-exp}
  follows from \corAB{Lemma \ref{lem:cauchy-binet}}.
  To compute the second moment, we \corAB{again use Lemma \ref{lem:cauchy-binet}}
  and the fact that entries of $X_4$ are independent with zero mean and variance $N^{-2\gamma}$ to obtain
  \begin{align*}
    \Exp| \det(B_N(z) + X^4)|^2
  & \,  =  \sum_{k=0}^{N-N^*} \sum_{\substack{ S \subset [N]\setminus[N^*] \\ |S| = N-N^*-k}}
    \Exp |\det X^4[\COMP{S}]|^2 \cdot \left(
    \prod_{i \in S} \left|\Sigma_{ii}(z)\right|^2\right)\\
    & \, =
    \sum_{k=0}^{N-N^*}
    (k!)N^{-2\gamma k}
    \sum_{\substack{ S \subset [N]\setminus[N^*] \\ |S| = N-N^*-k}} \prod_{i \in S} \left|\Sigma_{ii}(z)\right|^2,
  \end{align*}
  \corAB{where $\COMP{S}:=([N]\setminus [N^*])\setminus S$}. As the diagonal entries of $\SVD_N(z)$ are arranged in non-decreasing order, recalling the definition of $B_N(z)$ and $N^*$ we find that
\[
\prod_{i \in S}| \Sigma_{ii}(z)|^2 \le \frac{|\det B_N(z)|^2}{\left( \varepsilon_N^{-1} N^{-\gamma}\right)^{2k} (N-N^*)^k},
\]
for any $S\subset [N]\setminus[N^*]$ with $|S| = N-N^*-k$. Therefore,
  \begin{align*}
    \Exp |\det(B_N(z) + X^4)|^2
   & \,  \leq
    \sum_{k=0}^{N-N^*}
    \binom{N-N^*}{k}(k!)N^{-2\gamma k}
    \frac{| \det(B_N(z) )|^2}{\left( \varepsilon^{-1} N^{-\gamma}\right)^{2k} (N-N^*)^k}\\
  & \, \leq \frac{1}{1-\varepsilon_N^2}   | \det(B_N(z) )|^2.
  \end{align*}
The last display together with
\eqref{eq:det-exp}
yields
\eqref{eq-2.7}.
\end{proof}

\begin{remark}
  \corAB{Bounding higher (centered) moments of $\det (B_N(z)+X_4)$ and applying}
  \corOZ{the}
  \corAB{Borel-Cantelli lemma one can strengthen the conclusion of Theorem \ref{thm:logdet}} \corOZ{and show}
  \corAB{that \eqref{eq:log-det-conv} holds almost surely. This in turn shows that the conclusions of the main theorems of the paper, such as Theorems \ref{theo-1}, \ref{theo-2}, and \ref{thm:finite-off-diag-pc} hold almost surely. We do not pursue this direction here.}
\end{remark}

Lemma \ref{lem:det-B_N} shows that if $\varepsilon_N \downarrow 0$ then
the log determinant of $B_N(z)+X_4$ is asymptotically the
same as that of $B_N(z)$.
To establish Theorem \ref{thm:logdet} we also need to show
that the log-determinant of the Schur complement,
$\log \det(\widetilde{S}_N(z))$,
is asymptotically negligible,
see \eqref{eq:schur}.  To this end, we obtain the following
lower bound.
\begin{proposition} \label{lem:lb}
 Set
  $\widetilde{N}:= N^* \vee \corAB{\lceil \sqrt{N}\rceil}$. 
 If $N^* \le N/2$, then there exist absolute constants $c_2, c_1 > 0$ so that
  \begin{equation}\label{eq:schur-det-lbd}
    \mathbb{P}[|\det(\widetilde{S}_N(z))| \leq N^{-\gamma N^*}\sqrt{ ({N}^*!)} \corA{e^{-c_1\widetilde{N}}}] \leq \corA{e^{-c_2\widetilde{N}}}.
  \end{equation}
 \end{proposition}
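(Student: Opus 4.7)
My plan is to reduce, via conditioning on the $\sigma$-algebra $\mathcal{G}:=\sigma(X_2,X_3,X_4)$, the determinant $\det\widetilde S_N(z)$ to that of a shifted complex Ginibre matrix, and then apply a column-wise Gram--Schmidt lower bound together with sub-exponential concentration for sums of logarithms of Gamma variables. Since $X_4$ is absolutely continuous, $B_N(z)+X_4$ is almost surely invertible, and on this full-measure event $\widetilde S_N(z)=X_1+A$, where $A:=S_N(z)-X_2(B_N(z)+X_4)^{-1}X_3$ is $\mathcal{G}$-measurable; conditionally on $\mathcal{G}$, the entries of $X_1$ are i.i.d.\ $\mathcal{N}_\C(0,N^{-2\gamma})$, independent of $A$.

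By the QR factorization, $|\det\widetilde S_N(z)|^2=\prod_{k=1}^{N^*}d_k^2$, where $d_k$ is the distance from the $k$-th column $\widetilde s_k$ of $\widetilde S_N(z)$ to $V_k:=\Span(\widetilde s_1,\ldots,\widetilde s_{k-1})$. I would process the columns from left to right: conditional on $\mathcal{F}_{k-1}:=\sigma(\mathcal{G},(X_1)_1,\ldots,(X_1)_{k-1})$, the subspace $V_k$ is a.s.\ $(k-1)$-dimensional, and $\widetilde s_k=(X_1)_k+a_k$, with $(X_1)_k$ a complex Gaussian independent of $\mathcal{F}_{k-1}$ and $a_k$ an $\mathcal{F}_{k-1}$-measurable vector. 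Using that a noncentral complex chi-square distribution stochastically dominates its central counterpart, $d_k^2$ conditionally dominates $\|P_{V_k^\perp}(X_1)_k\|^2$, whose conditional distribution is $N^{-2\gamma}\cdot\mathrm{Gamma}(N^*-k+1,1)$. Iterating this conditional stochastic domination (via an inverse-CDF coupling) produces independent random variables $\xi_m\sim\mathrm{Gamma}(m,1)$, $m=1,\ldots,N^*$, such that
$$|\det\widetilde S_N(z)|^2 \,\succeq\, N^{-2\gamma N^*}\prod_{m=1}^{N^*}\xi_m,$$
where $\succeq$ denotes stochastic domination.

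It then suffices to establish a lower-tail estimate for $T:=\sum_{m=1}^{N^*}\log\xi_m$. The mean is $\E T=\sum_{m=1}^{N^*}\psi(m)=\log(N^*!)+O(\log N^*)$ by the standard digamma asymptotic $\psi(m)=\log m+O(1/m)$, and the variance $\sum_{m=1}^{N^*}\psi'(m)=O(\log N^*)$. Each centered term $\log\xi_m-\psi(m)$ is sub-exponential with a universal absolute-constant parameter, the worst case being $m=1$ where $-\log\xi_1$ is a standard Gumbel random variable with exponential lower tail. Bernstein's inequality for sums of independent sub-exponential random variables then yields
$$\P\bigl[T\le\log(N^*!)-c_1\widetilde N\bigr]\le\exp(-c_2\widetilde N),$$
where $\widetilde N\ge\sqrt N\ge\log N^*$ is used both to absorb the $O(\log N^*)$ slack in the mean and to place the deviation in the linear (sub-exponential) regime of Bernstein's bound. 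Exponentiating and taking square roots yields the claimed bound.

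The main technical obstacle is the concentration of $T$ at the small-$m$ end of the product, where $\log\xi_m$ has a heavy (sub-exponential) lower tail and Gaussian concentration is unavailable. This is resolved because the sub-exponential parameter of $\log\xi_m-\psi(m)$ is bounded by an absolute constant for all $m\ge 1$; combined with the total variance $O(\log N^*)$, this ensures that deviations at the scale $\widetilde N\ge\sqrt N$ incur only exponentially small probability $\exp(-c_2\widetilde N)$, as required.
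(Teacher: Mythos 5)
Your proposal is correct, and the overall strategy is the same as the paper's: exploit Gaussian rotation invariance and conditional independence to reduce $\det(\widetilde S_N(z))$ to the determinant of a (possibly shifted) Ginibre block, pass to the product-of-independent-chi-squares representation, and prove a lower-tail bound for the resulting log-sum. The paper executes this in two steps by citing black-box results: Lemma~\ref{lem:sd} (a shift can only raise $|\det|$ stochastically, quoted from \cite[Lemma 4.4]{FPZ} or \cite[Theorem~4]{Sn}) reduces the problem to $\det X_1$, and then Lemma~\ref{lem:Gdet_lb} (adapted from \cite[Lemma~2.5]{FPZ}) gives the lower tail for the Ginibre determinant via the fixed-$K$ split $\prod_{r\le K}\chi_r\cdot\prod_{r>K}\chi_r$ and Markov bounds on $\E F_K^{-1}$ and $\E L_K^{-2}$. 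What you do differently is to unpack both lemmas into a single self-contained Gram--Schmidt/coupling argument: the noncentral-dominates-central chi-square observation is exactly the content of Lemma~\ref{lem:sd} in this setting, and it simultaneously produces the $\prod\xi_m$ representation used in Lemma~\ref{lem:Gdet_lb}; you then replace the truncation-plus-Markov concentration step by a Bernstein/Chernoff bound for $\sum_m(\log\xi_m-\psi(m))$, using that $\psi'(m+\lambda)$ is bounded by an absolute multiple of $\psi'(m)$ on $|\lambda|\le 1/2$ so the MGF is controlled uniformly, and that $\sum_m\psi'(m)=O(\log N^*)=o(\widetilde N)$ places the deviation in the linear regime. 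Both concentration arguments are elementary and give the same exponential rate in $\widetilde N$; yours is somewhat cleaner to state, the paper's is shorter given the cited lemmas. One small wording issue: ``$d_k^2$ conditionally dominates $\|P_{V_k^\perp}(X_1)_k\|^2$'' should be read as domination of conditional laws (the two quantities share the same randomness), which is what your inverse-CDF coupling correctly implements. Also worth flagging that the generic form of Bernstein for sub-exponential sums uses $\sum\|Y_m\|_{\psi_1}^2$ rather than $\sum\mathrm{Var}(Y_m)$; here both are $O(\log N^*)$ (since $\|Y_m\|_{\psi_1}\asymp m^{-1/2}$), but the cleanest route is the direct MGF computation via $g_m''(\lambda)=\psi'(m+\lambda)\le 3\psi'(m)$ for $|\lambda|\le 1/2$, which avoids any ambiguity about the variance proxy.
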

 Before bringing the proof of Proposition \ref{lem:lb}, we recall the
following lemma, which
is proved in \cite[Lemma 4.4]{FPZ}
(in the real case, but the proof carries over to the complex case).
An alternative
proof can be given based on \cite[Theorem 4]{Sn}.
\begin{lemma}
  \label{lem:sd}
  Suppose that $E$ is a $Q \times Q$ standard Gaussian matrix.  Then for any $Q \times Q$ matrix $M$ independent of $E$ and all $t \geq 0,$
  \[
    \P[ |\det(E + M)| \leq t] \leq \P[ |\det E| \leq t].
  \]
\end{lemma}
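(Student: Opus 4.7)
The plan is to condition on $M$ (so that $M$ becomes deterministic, by the independence hypothesis) and then to compute $|\det(E+M)|$ row-by-row via the Gram--Schmidt process. Let $r_1,\ldots,r_Q$ be the rows of $E$, which are i.i.d.\ standard complex Gaussian vectors in $\C^Q$, and write $\tilde r_k = r_k + m_k$ for the rows of $E+M$, where $m_k$ is the $k$-th row of $M$. The Gram determinant identity
\[
|\det A| = \prod_{k=1}^Q \operatorname{dist}\bigl(a_k, \operatorname{span}(a_1,\ldots,a_{k-1})\bigr),
\]
valid for any $Q \times Q$ matrix $A$ with rows $a_1,\ldots,a_Q$, yields $|\det(E+M)| = \prod_k \tilde d_k$ and $|\det E| = \prod_k d_k$ for the corresponding row-by-row distances $\tilde d_k, d_k$.

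Conditionally on $r_1,\ldots,r_{k-1}$ (equivalently, on $\tilde r_1,\ldots,\tilde r_{k-1}$, since the $m_j$ are deterministic), let $\pi_k$ denote orthogonal projection onto $\operatorname{span}(\tilde r_1,\ldots,\tilde r_{k-1})^\perp$, which has rank $Q-k+1$. Since $r_k$ is independent of the conditioning and standard complex Gaussian, $\pi_k r_k$ is a standard complex Gaussian vector in the range of $\pi_k$; identifying this range with $\C^{Q-k+1}$ via an orthonormal basis gives
\[
\tilde d_k = \|\pi_k r_k + \pi_k m_k\| \;\lawequals\; \|G + c_k\|,
\]
where $G$ is standard complex Gaussian in $\C^{Q-k+1}$ and $c_k \in \C^{Q-k+1}$ depends only on the conditioning. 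The same computation with $m_k=0$ shows $d_k \lawequals \|G_k\|$ (with $G_k$ standard complex Gaussian in $\C^{Q-k+1}$); moreover $d_1,\ldots,d_Q$ are mutually independent, since the conditional law of $d_k$ does not depend on $r_1,\ldots,r_{k-1}$ at all.

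The key analytic input is Anderson's inequality: the density $g\mapsto \pi^{-d}e^{-\|g\|^2}$ on $\C^d \cong \R^{2d}$ is log-concave and rotationally symmetric, so integration over a Euclidean ball of any fixed radius is maximised when the ball is centred at the origin. Hence
\[
\P[\|G + c\|\le s] \le \P[\|G\|\le s] \qquad \text{for all } c\in\C^d,\ s\ge 0,
\]
so the conditional law of $\tilde d_k$ stochastically dominates the (unconditional) law of $d_k$.

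To aggregate these conditional comparisons into a comparison of full products, I would run a backward induction on $k$ on the statement $\tilde\Phi_k(s;r_1,\ldots,r_{k-1}) \le \Phi_k(s)$, where
\[
\tilde\Phi_k(s;\,\cdot\,) := \P\Bigl[\prod_{j=k}^Q \tilde d_j \le s \,\Bigm|\, r_1,\ldots,r_{k-1}\Bigr], \qquad \Phi_k(s) := \P\Bigl[\prod_{j=k}^Q d_j \le s\Bigr].
\]
The inductive step reads
\[
\tilde\Phi_{k-1}(s;\,\cdot\,) = \E\bigl[\tilde\Phi_k(s/\tilde d_{k-1};\,\cdot\,) \,\bigm|\, r_1,\ldots,r_{k-2}\bigr] \le \E\bigl[\Phi_k(s/\tilde d_{k-1}) \,\bigm|\, r_1,\ldots,r_{k-2}\bigr] \le \E[\Phi_k(s/d_{k-1})] = \Phi_{k-1}(s);
\]
the first inequality is the inductive hypothesis, and the second follows because $x\mapsto \Phi_k(s/x)$ is non-increasing on $\R_+$, combined with the conditional stochastic dominance of $\tilde d_{k-1}$ over $d_{k-1}$ established in the previous paragraph. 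Taking $k=1$ with trivial conditioning gives the lemma. The main obstacle is precisely this aggregation: Anderson's inequality is immediate from rotational symmetry of the Gaussian density, but transferring a sequence of conditional dominances in a dependent multiplicative structure into dominance of the whole product requires the backward induction and uses crucially that $\Phi_k(s/\cdot)$ is monotone.
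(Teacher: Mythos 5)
Your proof is correct. The paper does not reproduce an argument for this lemma --- it cites \cite[Lemma 4.4]{FPZ} (with the remark that the real-case proof carries over to the complex case) --- and your route is essentially that proof: decompose $|\det(\cdot)|$ as the product of distances of rows to the span of the preceding rows, apply Anderson's inequality to each conditional factor $\|G+c_k\|$ versus $\|G\|$, and aggregate the conditional stochastic dominances over the multiplicative structure by backward induction using the monotonicity of $x\mapsto\Phi_k(s/x)$.
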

We will also need the following lemma, whose
proof is an adaptation of the proof of \cite[Lemma 2.5]{FPZ}.
\begin{lemma}
  \label{lem:Gdet_lb}
  Let $E$ be an $N_0\times N_0$ matrix of
  independent complex standard Gaussians.
  There are absolute constants $c'_1 >0$ and $c'_2>0$ so that for all
  $N_0' \geq N_0$,
  \[
    \P[
      |\det E| \leq \sqrt{N_0!} e^{-c'_1 N_0'}
    ] \leq \frac{1}{c'_2}e^{-c'_2N_0'}.
  \]
\end{lemma}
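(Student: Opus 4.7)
The plan is to reduce to a product of independent scalar random variables via the QR (Gram--Schmidt) decomposition of $E$. Since $E$ is complex Ginibre, applying Gram--Schmidt to the columns of $E$ yields the classical distributional identity
$$|\det E|^2 \;\stackrel{\mathcal{L}}{=}\; \prod_{k=1}^{N_0} Y_k,$$
where the $Y_k$ are independent and $Y_k \sim \Gamma(k,1)$ (each $Y_k$ is the squared norm of a complex standard Gaussian vector of dimension $N_0-k+1$, reindexed). This identity calibrates the scale: $\E|\det E|^2 = N_0!$, so the lemma is exactly a lower-tail deviation bound of the right order.

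Given the identity, I would prove the lemma by a standard exponential Markov / Chernoff bound on $\sum_{k=1}^{N_0} \log Y_k$. Fix $s \in (0,1)$ (ultimately $s = 1/2$) and write, for any $t \geq 0$,
$$\P\bigl[\,|\det E|^2 \leq N_0!\, e^{-t}\,\bigr] \;=\; \P\Bigl[\prod_{k=1}^{N_0} Y_k^{-s} \geq (N_0!)^{-s} e^{st}\Bigr] \;\leq\; (N_0!)^{s}\,e^{-st}\prod_{k=1}^{N_0}\E\bigl[Y_k^{-s}\bigr].$$
The negative moments are explicit: $\E Y_k^{-s} = \Gamma(k-s)/\Gamma(k)$, which is finite for all $s \in (0,k)$. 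Setting $s=1/2$ and using Stirling's formula (plus a direct check at $k=1$, where the ratio equals $\sqrt{\pi}$) furnishes an absolute constant $C$ with $\Gamma(k-1/2)/\Gamma(k) \leq C k^{-1/2}$ for every $k \geq 1$. Consequently $\prod_{k=1}^{N_0} \E Y_k^{-1/2} \leq C^{N_0}(N_0!)^{-1/2}$, and the Chernoff bound collapses to
$$\P\bigl[\,|\det E|^2 \leq N_0!\, e^{-t}\,\bigr] \;\leq\; C^{N_0}\, e^{-t/2}.$$

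Setting $t = 2 c_1' N_0'$ and using the hypothesis $N_0' \geq N_0$ yields a bound of the form $\exp\bigl((\log C - c_1')N_0'\bigr)$, so any absolute $c_1' > \log C$ produces the conclusion with, e.g., $c_2' = (c_1'-\log C) \wedge (c_1'-\log C)^{-1}$. I do not expect any serious obstacle: the argument is morally the complex counterpart of the proof of \cite[Lemma 2.5]{FPZ} referenced in the paper, and the complex setting is in fact slightly cleaner because $\E Y_k^{-s}$ is finite on the full interval $s \in (0,1)$ rather than only $s \in (0,1/2)$. The only mildly delicate point is establishing the uniform-in-$k$ negative-moment bound $\Gamma(k-1/2)/\Gamma(k) \leq C k^{-1/2}$, but this is an elementary consequence of Stirling and the monotonicity of the Gamma function away from its poles.
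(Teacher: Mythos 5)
Your proof is correct, and it is genuinely simpler than the paper's. Both arguments start from the same distributional identity $|\det E|^2 \lawequals \prod_{k=1}^{N_0} Y_k$ with $Y_k \sim \Gamma(k,1)$ independent, but then diverge. The paper, adapting the proof of \cite[Lemma 2.5]{FPZ} (which was written for the \emph{real} Ginibre case), splits the product at a large fixed $K$: it bounds $\P[\prod_{r\le K}\chi_r \le 2^{-N_0'}]$ directly as exponentially small, and separately controls the remainder $L_K = \prod_{r>K}\chi_r$ via $\E L_K^{-2}$ plus Markov. You instead apply a single exponential Chernoff bound with fractional negative exponent $s=1/2$ to the whole product. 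This works precisely because in the complex case $\E Y_k^{-1/2} = \Gamma(k-\tfrac12)/\Gamma(k)$ is finite for every $k\geq 1$ (including $k=1$), which you correctly observe; the factor $\prod_{k=1}^{N_0} k^{-1/2} = (N_0!)^{-1/2}$ then cancels the $(N_0!)^{1/2}$ from Markov, leaving $C^{N_0}e^{-t/2}$. In the real case this approach breaks down at $k=1$: there $\chi_1^2 \sim \Gamma(\tfrac12, 2)$, so $\E(\chi_1^2)^{-s}$ diverges at $s=1/2$, and the split-off-$K$-factors device of FPZ is genuinely needed. The paper carried that device over to the complex setting unnecessarily; your route exploits the extra integrability available in the complex case to avoid it. One cosmetic note: the uniform bound $\Gamma(k-\tfrac12)/\Gamma(k)\le Ck^{-1/2}$ you invoke can be obtained directly from Wendel's inequality $\Gamma(x+s)/\Gamma(x+1)\le x^{s-1}$ with $x=k-1$, $s=\tfrac12$ (for $k\ge 2$), together with the explicit value $\sqrt{\pi}$ at $k=1$; no asymptotic Stirling estimate is required.
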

\begin{proof}
  \corOZ{We begin by recalling from \cite{G} that if $E$ is a complex Ginibre matrix}
  \corAB{of dimension $N \times N$ matrix, then
\[
|\det E|^2 \lawequals \corABrev{2^{-N} \prod_{r=1}^N \chi_{2r}^2},
\]
where $\chi_r^2$ are independent}
\corOZ{chi-square random variables with $r$ degrees of freedom, i.e.\ they
have} \corAB{the distribution of the square of the
length of an $r$-dimensional standard \corABrev{real} Gaussian vector.} Now fix a large integer $K$ and denote
\[
F_K:= \prod_{r=1}^K \corABrev{\chi_{2r}}.
\]
Then, $F_K \le 2^{-N_0'}$ with exponentially (in $N_0'$)
small probability. Then,
proceeding as in the proof of \cite[Lemma 2.5]{FPZ}, we find
that
\[
\corABrev{\E L_K^{-2} \le \frac{K!}{N_0!}}
\]
for a sufficiently large $K$, where
\[
L_K:=\prod_{r=K+1}^{N_0} \corABrev{\chi_{2r}}.
\]
The rest follows from Markov's inequality.
\end{proof}
\begin{proof}[Proof of Proposition \ref{lem:lb}]
 By Lemma~\ref{lem:sd}, for all $t \geq 0$
 \[
   \P[|\det(\widetilde{S}_N(z))| \leq t] \leq \P[|\det(X_1)| \leq t],
 \]
 where $X_1$
 is an $N^* \times N^*$ matrix of \corAB{i.i.d.~complex
 Gaussians of variance $N^{-2\gamma}$}.  Hence, the desired conclusion follows from Lemma~\ref{lem:Gdet_lb}.
 \end{proof}
We turn to finding an upper bound on the determinant of
$\widetilde{S}_N(z)$. To this end, we first derive an
upper bound on the norm of the inverse of $B_N(z) + X_4$.
\begin{lemma} \label{lem:opbound}
Fix $\{\varepsilon_N\}$
such that $\varepsilon_N <1/8$ for all $N$, and
assume that
$N^* \le N/2$. Then,
  \[
\mathbb{P}\left(    \| (B_N(z) + X_4)^{-1} \| \leq 2\varepsilon_N N^{\gamma} (N-N^*)^{-1/2}\right) \ge 1 -\exp(-cN),
  \]
for some absolute constant $c$.
\end{lemma}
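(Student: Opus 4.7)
The plan is to exploit the fact that $B_N(z)$ is diagonal (being a bottom block of $\Sigma_N(z)$) and invoke a standard operator norm tail bound for Gaussian matrices to treat $X_4$ as a perturbation.

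First, I would extract from the definition of $N^*$ the lower bound on the spectrum of $B_N(z)$. Since $\Sigma_{ii}(z)$ is non-decreasing in $i$, and $N^*$ is the \emph{largest} index at which the inequality in \eqref{eq:N*-dfn} fails, every diagonal entry of $B_N(z)$ satisfies $\Sigma_{ii}(z) \geq \varepsilon_N^{-1} N^{-\gamma} (N - i + 1)^{1/2}$ for $i > N^*$. Taking $i = N^* + 1$ (the smallest such index) yields
\[
\sigma_{\min}(B_N(z)) \;\geq\; \varepsilon_N^{-1} N^{-\gamma} (N - N^*)^{1/2}.
\]

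Second, I would control $\|X_4\|$. The block $X_4$ is an $(N-N^*)\times(N-N^*)$ matrix of i.i.d.\ complex Gaussians of variance $N^{-2\gamma}$, so by standard concentration (e.g., Gordon's or Davidson--Szarek type bounds, already cited as \cite{gordon} in \eqref{eq-gordon}), there is an absolute constant $c_0 > 0$ such that
\[
\P\bigl( \|X_4\| \leq 2 N^{-\gamma} (N-N^*)^{1/2} \bigr) \;\geq\; 1 - e^{-c_0 (N-N^*)}.
\]
Under the assumption $N^* \leq N/2$, the exponent is at least $c_0 N / 2$, giving a bound of the desired form $1 - e^{-cN}$.

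Third, Weyl's inequality for singular values yields $\sigma_{\min}(B_N(z) + X_4) \geq \sigma_{\min}(B_N(z)) - \|X_4\|$ on the high-probability event above, hence
\[
\sigma_{\min}(B_N(z) + X_4) \;\geq\; N^{-\gamma}(N-N^*)^{1/2}\bigl(\varepsilon_N^{-1} - 2\bigr) \;\geq\; \tfrac{1}{2}\varepsilon_N^{-1} N^{-\gamma}(N-N^*)^{1/2},
\]
where the last inequality uses $\varepsilon_N < 1/8$ (so $\varepsilon_N^{-1} > 8$ and $\varepsilon_N^{-1} - 2 \geq \tfrac12 \varepsilon_N^{-1}$). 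Inverting gives $\|(B_N(z) + X_4)^{-1}\| \leq 2\varepsilon_N N^{\gamma} (N-N^*)^{-1/2}$, as claimed.

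I do not expect any serious obstacle here: the diagonal structure of $B_N(z)$ turns the "smallest singular value" question into a scalar minimum, the Gaussian operator norm bound is classical with an exponent linear in the dimension, and the condition $\varepsilon_N < 1/8$ is precisely what makes the deterministic lower bound dominate the noise term after the Weyl subtraction. The only mildly delicate point is keeping track of which exponent ($N$ versus $N - N^*$) governs the probability, which is handled by the hypothesis $N^* \leq N/2$.
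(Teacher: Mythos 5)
Your approach matches the paper's exactly: lower bound on $\sigma_{\min}(B_N(z))$ from the definition of $N^*$, a Gaussian operator-norm tail bound for $\|X_4\|$, and Weyl's inequality to combine them. There is, however, a quantitative slip in the middle step that makes your claimed probability bound false as written.

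The entries of $X_4$ are i.i.d.\ standard complex Gaussians scaled by $N^{-\gamma}$, so $\E|(X_4)_{ij}|^2 = N^{-2\gamma}$, and the operator norm of an $(N-N^*)\times(N-N^*)$ such matrix concentrates at $2 N^{-\gamma}\sqrt{N-N^*}$ (this is the Bai--Yin / Marchenko--Pastur edge). Your claim that
\[
\P\bigl(\|X_4\| \le 2 N^{-\gamma}(N-N^*)^{1/2}\bigr) \ge 1 - e^{-c_0(N-N^*)}
\]
therefore cannot hold: the threshold $2N^{-\gamma}\sqrt{N-N^*}$ is the \emph{typical value}, so the probability of exceeding it is bounded away from zero, not exponentially small. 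You need a constant strictly above the edge. The paper splits $X_4$ into real and imaginary parts, applies Gordon's theorem to each, and gets $\E\|X_4\| \le 2\sqrt{2}N^{-\gamma}\sqrt{N-N^*}$; Gaussian concentration for the Lipschitz functional $\|\cdot\|$ then gives $\|X_4\| \le 4 N^{-\gamma}\sqrt{N-N^*}$ except with probability $e^{-cN}$. With the constant $4$ in hand, Weyl gives $\sigma_{\min}(B_N+X_4) \ge (\varepsilon_N^{-1}-4)N^{-\gamma}\sqrt{N-N^*}$, and the inequality $\varepsilon_N^{-1}-4 \ge \tfrac12 \varepsilon_N^{-1}$ requires $\varepsilon_N \le 1/8$ — which is exactly the stated hypothesis. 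With your constant $2$ you would only need $\varepsilon_N \le 1/4$, which is a sign you dropped a factor. Aside from this constant, your proof is the paper's.
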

\begin{proof}
Gordon's theorem for Gaussian matrices (see \cite[Corollary 1.2]{gordon})
and the triangle inequality give
\[
\E\|X_4\| \le 2\sqrt{2}N^{-\gamma}\sqrt{N-N^*}.
\]
Since $A \mapsto \|A\|$ is a $1$-Lipschitz function,
the standard concentration inequality for Gaussian
random variables applies and yields
\[
\mathbb{P}\left(\|X_4\| \ge 4 N^{-\gamma}\sqrt{N-N^*}\right) \le \exp(-2c(N-N^*)) \le \exp(-cN),
\]
for some absolute constant $c$. On the other hand, by our definition,
see \eqref{eq:N*-dfn},
\[
\sigma_{\min}(B_N(z)) \ge \varepsilon_N^{-1} N^{-\gamma} \sqrt{N-N^*},
\]
where $\sigma_{\min}(B)$ denotes the minimum singular value of $B$. Since $\varepsilon_N < 1/8$, Weyl's inequality
(see \cite[Theorem 3.3.16(c)]{horn37topics}) gives that
\[
\sigma_{\min}(B_N(z)+X^4) \ge (2\varepsilon_N)^{-1} N^{-\gamma} \sqrt{N-N^*},
\]
with probability at least $1-\exp(-cN)$. This completes the proof.
\end{proof}
Building on Lemma \ref{lem:opbound} and using a
standard concentration inequality we have the following result.
\begin{lemma}\label{lem:schur-row-norm}
  Fix $\{\varepsilon_N\}$
such that $\varepsilon_N <1/8$ for all $N$, and
assume that
$N^* \le N/2$.
Then there exist absolute constants
$c'$ and $C'$ such that the $\ell_2$-norm of each of the rows
of $X_2 ( B_N(z) +X_4 )^{-1} X_3$ is at most
\[
    C'\varepsilon_N N^{-\gamma+1/2},
\]
with probability at least
\(
1- \exp(-c'N).
\)
\end{lemma}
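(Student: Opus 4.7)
The plan is to bound every row's $\ell_2$-norm at once through the operator norm of the product matrix $Y:=X_2(B_N(z)+X_4)^{-1}X_3$, since for every $i$,
\[
\|e_i^T Y\|_2 \le \|Y\|_{\mathrm{op}} \le \|X_2\|_{\mathrm{op}}\cdot \|(B_N(z)+X_4)^{-1}\|_{\mathrm{op}}\cdot \|X_3\|_{\mathrm{op}}.
\]
The middle factor is already handled by Lemma~\ref{lem:opbound}: on an event of probability at least $1-\exp(-cN)$, it is at most $2\varepsilon_N N^{\gamma}(N-N^*)^{-1/2}$.

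For the outer two factors, $X_2$ is an $N^*\times(N-N^*)$ matrix and $X_3$ is $(N-N^*)\times N^*$, both with i.i.d.\ complex Gaussian entries of variance $N^{-2\gamma}$. Gordon's theorem (applied in the same way as in the proof of Lemma~\ref{lem:opbound}) gives $\Exp\|X_2\|_{\mathrm{op}},\Exp\|X_3\|_{\mathrm{op}}\le \sqrt{2}\,N^{-\gamma}(\sqrt{N^*}+\sqrt{N-N^*})\le 2\sqrt{2}\,N^{-\gamma}\sqrt{N}$, and since $A\mapsto \|A\|_{\mathrm{op}}$ is $1$-Lipschitz with respect to the Hilbert--Schmidt norm, standard Gaussian concentration yields
\[
\|X_2\|_{\mathrm{op}},\,\|X_3\|_{\mathrm{op}} \le C N^{-\gamma}\sqrt{N}
\]
on an event of probability at least $1-\exp(-cN)$ for an absolute constant $C$.

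Finally, combining these three estimates on their common intersection (still of probability at least $1-\exp(-c'N)$), and using $N^*\le N/2$ so that $\sqrt{N/(N-N^*)}\le \sqrt{2}$, we obtain
\[
\|Y\|_{\mathrm{op}} \le C N^{-\gamma}\sqrt{N}\cdot 2\varepsilon_N N^{\gamma}(N-N^*)^{-1/2}\cdot C N^{-\gamma}\sqrt{N} = 2C^{2}\,\varepsilon_N\sqrt{\tfrac{N}{N-N^*}}\,N^{-\gamma+1/2}\le C'\,\varepsilon_N N^{-\gamma+1/2},
\]
which bounds each row norm simultaneously. No step here is especially delicate: the real work is concentrated in Lemma~\ref{lem:opbound} (inversion of $B_N(z)+X_4$), and the only mild care needed is to absorb the various failure probabilities of Gordon's inequality for $X_2$ and $X_3$ into a single exponentially small term, which is immediate because all three events individually have probability at least $1-\exp(-cN)$.
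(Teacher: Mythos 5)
Your argument is correct and gives the stated bound, but it takes a genuinely different route from the paper. You control all the row norms at once by bounding the operator norm $\|X_2(B_N(z)+X_4)^{-1}X_3\|$, using Gordon's theorem plus Gaussian concentration separately for $\|X_2\|$ and $\|X_3\|$, together with Lemma~\ref{lem:opbound} for the middle factor. The paper instead bounds a single row directly: it uses the unitary invariance of $X_2$ and $X_3$ to write the product in law as $X_2 D X_3$ with $D$ diagonal, then observes that a row of this is distributed as $N^{-\gamma}(D\bm x)^t X_3$, applies rotation invariance again to reduce to $\|D\bm x\|_2$ times a single row of $X_3$, and finishes with stochastic domination and chi-variable tail bounds. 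Your operator-norm approach is somewhat coarser — it effectively replaces the factor $\sqrt{N^*}$ that the paper's row-wise argument produces by $\sqrt{N}$ — but since both are used only to establish the stated bound $C'\varepsilon_N N^{-\gamma+1/2}$ (and the paper itself discards the extra precision), there is no loss. Your route is arguably more robust (no distributional identities needed), at the price of invoking two additional concentration estimates; the paper's route is sharper and isolates the randomness of $X_2$, $X_3$ into explicit chi-variables. Both are legitimate.
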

\begin{proof}
    By the rotation invariance of $X_2$ and $X_3,$
    \[
      X_2 ( B_N(z) +X_4 )^{-1} X_3
      \lawequals
      X_2 D X_3,
    \]
    where $D$ is a diagonal matrix with entries
    equal to the singular values of $( B_N(z) +X_4 )^{-1}.$  Hence,
    a row of \corABrev{$X_2( B_N(z) +X_4 )^{-1}X_3$} is equal in distribution to
    \[
      N^{-\gamma}(D {\bm x})^t X_3,
    \]
    where ${\bm x}$ is an $(N-N^*)$ dimensional standard complex
    Gaussian vector independent of $X_3$ \corAB{and for any vector ${\bm y}$ the notation ${\bm y}^t$ denotes its transpose}.  From the
    rotation invariance of $X_3$, it follows that
    \[
      (D {\bm x})^t X_3 \lawequals \|D {\bm x}\|_2 e_1^t X_3.
    \]
    The law of $\|D {\bm x}\|_2$ is stochastically dominated by the law
    of $\|D\| \|{\bm x}\|_2$, and so we conclude that the law of \corAB{the $\ell_2$-norm of} a
    row of $X_2( B_N(z) +X_4 )^{-1}X_3$ is stochastically dominated
    \corABrev{by $\frac14 \cdot N^{-2\gamma}\chi_{2N^*} \cdot  \chi_{2(N-N)*}\cdot \|( B_N(z) +X_4 )^{-1}\|,$
    where again $\{\chi_r\}$ are random variables distributed as the
    length of an $r$-dimensional standard real Gaussian vector}.
    Applying Lemma~\ref{lem:opbound} and standard tail bounds for
    $\chi$--variables, the result follows.
\end{proof}
Using Lemma \ref{lem:schur-row-norm} we now find an upper bound on the determinant of the Schur complement.

\begin{lemma}\label{lem:hadamard}
  Fix $\{\varepsilon_N\}$
such that $\varepsilon_N\to 0$ and $\varepsilon_N <1/8$ for all $N$, and
assume that
$N^* \le N/2$.
Then there are absolute constants $\bar{c}$ and $\bar{C}$ such that
  \[
    |\det(\widetilde{S}_N(z))|
    <
    \bar{C}^{N^*} N^{-\gamma N^*} \varepsilon_{N}^{-N^*} \left(\corE{ N }\right)^{N^*/2}
  \]
  with probability at least $1- \corE{\exp(-\bar{c} N)}$.
\end{lemma}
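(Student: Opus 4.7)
The plan is to bound $|\det(\widetilde{S}_N(z))|$ via Hadamard's inequality applied to rows of $\widetilde{S}_N(z)$, then to control the $\ell_2$-norm of each row separately. Since
\[
\widetilde{S}_N(z) = S_N(z) + X_1 - X_2(B_N(z)+X_4)^{-1} X_3
\]
is an $N^* \times N^*$ matrix, Hadamard gives $|\det(\widetilde{S}_N(z))| \leq \prod_{i=1}^{N^*} \|\widetilde{S}_N(z) e_i\|_2$, so it suffices to show that with high probability every row of $\widetilde{S}_N(z)$ has norm at most $\bar C \varepsilon_N^{-1} N^{-\gamma+1/2}$ for some absolute constant $\bar C$.

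I would bound the three pieces separately by the triangle inequality. First, $S_N(z)$ is diagonal with entries $\Sigma_{ii}(z)$ for $i \leq N^*$, and by the definition \eqref{eq:N*-dfn} of $N^*$ each such entry satisfies $\Sigma_{ii}(z) \leq \varepsilon_N^{-1} N^{-\gamma}(N-i+1)^{1/2} \leq \varepsilon_N^{-1} N^{-\gamma} N^{1/2}$, contributing at most $\varepsilon_N^{-1} N^{-\gamma+1/2}$ to each row norm. Second, every row of the $N^* \times N^*$ Gaussian matrix $X_1$ has norm distributed as $N^{-\gamma} \chi_{N^*}$, which by standard $\chi$--tail bounds is at most $2 N^{-\gamma} \sqrt{N^*} \leq 2 N^{-\gamma+1/2}$ with probability at least $1 - e^{-c N^*}$; a union bound over the $N^* \leq N/2$ rows keeps the overall failure probability exponentially small in $N$. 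Third, Lemma~\ref{lem:schur-row-norm} controls each row of $X_2(B_N(z)+X_4)^{-1} X_3$ by $C' \varepsilon_N N^{-\gamma+1/2}$ with probability at least $1 - e^{-c' N}$.

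Combining these three bounds with a final union bound, every row of $\widetilde{S}_N(z)$ has norm at most $(\varepsilon_N^{-1} + 2 + C' \varepsilon_N) N^{-\gamma+1/2} \leq \bar{C} \varepsilon_N^{-1} N^{-\gamma+1/2}$ off an event of probability at most $\exp(-\bar c N)$, since $\varepsilon_N \to 0$ means the $S_N$ term dominates. Taking the product over the $N^*$ rows yields
\[
|\det(\widetilde{S}_N(z))| \leq \bar{C}^{N^*} \varepsilon_N^{-N^*} N^{-\gamma N^*} N^{N^*/2},
\]
which is the stated bound.

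The main routine obstacle is the union bound on the $X_1$ rows: one must ensure that the Gaussian concentration for each $\chi_{N^*}$ variable is strong enough for the union bound over $N^*$ rows to remain $\exp(-\bar c N)$; this is fine since the tail decays like $e^{-cN^*}$ and $N^* \lesssim N$ forces no worse than polynomial loss in the exponent when one chooses the constant in the tail bound generously. Conceptually, the heart of the argument is that all three contributions to a row of $\widetilde{S}_N(z)$ are of order at most $\varepsilon_N^{-1} N^{-\gamma+1/2}$, with the deterministic $S_N(z)$ part being the dominant one precisely by the choice of the truncation level in \eqref{eq:N*-dfn}.
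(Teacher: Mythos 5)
Your proof follows the same route as the paper: bound each row norm of $\widetilde S_N(z)$ by the triangle inequality applied to the three pieces $S_N(z)$, $X_1$, and $X_2(B_N(z)+X_4)^{-1}X_3$, then apply Hadamard's inequality. The ingredients and the final inequality are exactly those of the paper.

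One small but real slip in your handling of $X_1$: you state that a single row norm $N^{-\gamma}\chi_{N^*}$ exceeds $2N^{-\gamma}\sqrt{N^*}$ with probability at most $e^{-cN^*}$, and then claim the union bound over $N^*\le N/2$ rows stays exponentially small in $N$. That does not follow: if $N^*$ is small (say $O(\log N)$), then $N^* e^{-cN^*}$ is not $\exp(-\bar c N)$, and no generous choice of constant fixes it. The correct observation is that you are comparing $\chi_{N^*}$ to the larger threshold $2\sqrt{N}$, not to $2\sqrt{N^*}$. Since $N^*\le N/2$, the threshold exceeds the typical value by at least $(2-1/\sqrt 2)\sqrt N$, so Gaussian concentration for the $1$-Lipschitz norm map gives the single-row tail $e^{-cN}$ directly; the union bound over at most $N$ rows is then harmless. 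With that correction your argument matches the paper's.
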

\begin{proof}
Note that the rows of $X_1$ have $\ell_2$ norm at most
$2N^{1/2-\gamma}$
with probability at least $1-\exp(-\bar{c}_1N)$ where
$\bar{c}_1$ is some absolute constant.
By  construction of the diagonal matrix $S_N(z)$,  its entries (and hence
the $\ell_2$ norm of its rows)
are bounded by \corAB{$\varepsilon_N^{-1} N^{-\gamma+1/2}$}.
It follows from these facts and
Lemma \ref{lem:schur-row-norm} that the
$\ell_2$ norm of the rows of $\widetilde{S}_N(z)$ are bounded
by a constant multiple of $\varepsilon_N^{-1} N^{1/2-\gamma}$,
with probability at least $1-\exp(-\bar{c}N)$.
Hadamard's bound on the determinant yields the desired conclusion.
\end{proof}
Equipped with all the ingredients we are now ready to prove Theorem \ref{thm:logdet}.
\begin{proof}[Proof of Theorem \ref{thm:logdet}]
From Lemma \ref{lem:det-B_N}, upon using Markov's inequality we find
\begin{equation}
\mathbb{P} \left(\left| \frac{\det(B_N(z)+X_4)}{\det(B_N(z))}-1 \right| \ge \frac{1}{2} \right) \le \frac{4\varepsilon_N^2}{1-\varepsilon_N^2}. \notag
\end{equation}
Therefore we see that there is a set $\Omega_{N,1}$ such that on $\Omega_{N,1}$ we have
\begin{equation}\label{eq:non-schur-bd}
\left|\frac{1}{N} \log \det(B_N(z)+X_4) - \frac{1}{N}\log \det B_N(z)\right| \le  \corAB{\frac{2}{N}} \left| \frac{\det(B_N(z)+X_4)}{\det(B_N(z))}-1\right| \le \corAB{\frac{{1}}{N}},
\end{equation}
and
\begin{equation}\label{eq:non-schur-prob}
\mathbb{P}(\Omega_{N,1}^c) \le \frac{4\varepsilon_N^2}{1-\varepsilon_N^2}.
\end{equation}
Since $\gamma >1/2$, from Proposition \ref{lem:lb} and Lemma \ref{lem:hadamard} we also obtain that
\begin{align}
\left|
\frac{1}{N} \log |\det(\widetilde{S}_N(z))|
-
\frac{N^*\log N}{N}(-\gamma + \tfrac{1}{2})
\right| & \, \le\frac{N^* }{N} \left[{\log\bar{C}}+ {\log \left( \frac{1}{\varepsilon_N}\right)}+\frac{1}{2}{\log\frac{N}{N^*}}\right]+\corAB{(c_1+1)} \frac{\widetilde{N}}{N} \notag\\
& \, \le \frac{N^* \log N}{N} \left[\frac{\log\bar{C}+\log \left( \frac{1}{\varepsilon_N}\right)+\frac{1}{2}\log (N/N^*)}{\log N} + \gamma\right]+ \corAB{(c_1+1)} \frac{\widetilde{N}}{N} \label{eq:schur-bound}
\end{align}
on an event $\Omega_{N,2}$ such that
\begin{equation}\label{eq:schur-bd}
\mathbb{P}(\Omega_{N,2}^c) \le \exp(-\bar{c}N)+ \exp(-c_2 \widetilde{N}).
\end{equation}
Hence when $N^* \log N / N \to \alpha < \infty,$ then taking $\corAB{|\log(\varepsilon_N)|} = o(\log N),$ the desired conclusion holds.
With $\varepsilon_N=N^{-\eta}$ and $N^*=o(N/\log N)$ we deduce from \eqref{eq:schur-bound} that
\begin{equation}\label{eq:schur-prob}
\left|\frac{1}{N} \log \det(\widetilde{S}_N(z))\right| =o(1),
\end{equation}
on the event $\Omega_{N,2}$. Now combining \eqref{eq:non-schur-bd}-\eqref{eq:non-schur-prob} and \eqref{eq:schur-bd}-\eqref{eq:schur-prob} we see that the convergence in \eqref{eq:log-det-conv} holds in probability.
\end{proof}

\section{Bidiagonal matrices: rigidity theorems for small singular vectors of $D_N+J$}
\label{sec-d=1}

In this section, we develop estimates for the small
singular values of the bidiagonal matrix $M_N = D_N+J$
where \corABrev{$D_N=-\diag(d_1,d_2, \dots, d_N)$} is a diagonal matrix. \corOZ{These}
\corAB{are then used to prove Theorem \ref{theo-2}}. \corAB{For all $i \leq j$, let $\mathcal{D}^{i,j}$ be defined by}
\[
  \mathcal{D}^{i,j} := \prod_{i \leq \ell < j} d_\ell.
\]
Note that $\mathcal{D}^{i,i}=1$ by this definition.
For all $i \leq j$ define a vector by, for each $k,$
\begin{equation}\label{eq:vdefine}
  {\bm v}^{i,j}_k
  :=
  \begin{cases}
    0 &   k < i, \\
    \mathcal{D}^{i,k} & i \leq k \leq j, \\
    0 &   j < k.
  \end{cases}
\end{equation}
The point of these vectors is that they solve $(M_N {\bm v}^{i,j})_k = 0$ for $k$ between the boundaries.   Precisely:
\begin{equation}
  (M_N {\bm v}^{i,j})_k
  = \begin{cases}
    1 & k = i-1 \\
    -\mathcal{D}^{i,j+1} & k = j \\
    0 & \text{otherwise}.
  \end{cases}
  \label{eq:MNv}
\end{equation}
Hence if $\mathcal{D}^{i,j+1}$ is much smaller than some $\mathcal{D}^{i,k} \gg 1$ for $i < k \leq j,$  then this will be nearly a small singular vector, \corAB{i.e.~a singular vector corresponding to a small singular value}.

Note that for $i \leq k \leq j-1,$ we have the identity
\begin{equation}
  \label{eq:iteration}
  {\bm v}^{i,j}_{k+1}
  = d_k
  {\bm v}^{i,j}_{k}.
\end{equation}

\corAB{Using the vectors ${\bm v}^{i,j}$ we now construct approximate small singular vectors. Fix an integer $L:=L_N$, the choice of which will be determined later. Consider} integers
\[
  0 = i_1 < i_2 < i_3 < \dots < \corAB{i_L < i_{L+1}} = N.
\]
The vectors $\left\{ {\bm w}^j = {\bm v}^{i_j+1, i_{j+1}}\|{\bm v}^{i_j+1, i_{j+1}}\|_2^{-1} , j=1,2,\ldots,\corAB{L}\right\}$ have disjoint supports and are therefore orthogonal.  We use these vectors as approximations for the small singular vectors and quantify the approximation. To this end, define
\begin{equation}
  \label{eq:ddef}
    \begin{aligned}
      \mathcal{D}_{+}^{i_j,i_{j+1}}
      &:=
      \max_{i_j < s \leq r \leq i_{j+1}}
      \sum_{p=s}^{r} \bigl\{|\mathcal{D}^{p,r}|+ |\mathcal{D}^{s,p}|\bigr\}, \text{ and} \\
      \mathcal{D}_{-}^{i_j,i_{j+1}}
      &:=
      \max_{i_{j} < s\leq r \leq i_{j+1}}
      \sum_{p=s}^r \biggl\{\frac{1}{|\mathcal{D}^{p,r}|}+\frac{1}{|\mathcal{D}^{s,p}|}\biggr\}.
    \end{aligned}
  \end{equation}
  Provided the entries $\left\{ |d_k| : i_j \leq k \leq i_{j+1} \right\}$ are consistently larger than $1$ or consistenly smaller than $1$, at least one of these quantities will be close to $1.$
  Even in the case of independent $d_i$s, in which there may exist a relatively long string of diagonal entries with possibly atypical magnitude, it is unlikely that both of these will be large.
  We then let
\begin{equation}
  \mathfrak{D} := \max_{1 \leq j \leq \corOZ{L}}
  \left[
  \min\left\{
      \mathcal{D}_{+}^{i_j,i_{j+1}},
      \mathcal{D}_{-}^{i_j,i_{j+1}}
    \right\}\right]\geq 1.
  \label{eq:uniform}
\end{equation}
When $\mathfrak{D}$ is small, the approximation will be good.
Let $\pi_j$ denote the coordinate projection from $\C^N$ to the coordinates that support ${\bm w}^j.$  Our main result in this section on singular values of $M_N$ is the following.
\begin{theorem}
The $(\corAB{L} +1)$-st smallest singular value satisfies
\[
\sigma_{N-\corAB{L}}(M_N) \geq \mathfrak{D}^{-1}.
\]
There is an absolute constant $C> 1$ so that the product of the $\corAB{L}$ smallest singular values of $M_N$ satisfies
\begin{equation}\label{eq:thm31bd}
    (C\|M_N\| \mathfrak{D} \sqrt{\corAB{L}})^{-\corAB{L}}
    \prod_{k=1}^{\corAB{L}} \|\pi_k M_N {\bm w}^k\|_2
    \leq
    \prod_{k=0}^{\corAB{L}-1} \sigma_{N-k}(M_N)
    \leq
    \prod_{k=1}^{\corAB{L}} \|\pi_k M_N {\bm w}^k\|_2.
\end{equation}
\label{thm:DJssv}
\end{theorem}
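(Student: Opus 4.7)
The plan is to exploit the scalar transfer structure of the recurrence $x_{k+1} = -d_k x_k + y_k$ (where $y = M_N x$) together with the fact that ${\bm v}^{i,j}$ is the explicit pure-recurrence solution on the interval $[i,j]$ from \eqref{eq:MNv}. The quantities $\mathcal{D}_{\pm}^{i_j,i_{j+1}}$ bound the $\ell^2\to\ell^2$ norm of the Duhamel error map $y|_{I_j} \mapsto x|_{I_j} - \alpha_j {\bm v}^{i_j+1,i_{j+1}}$ obtained by propagating the recurrence forward from the left endpoint (respectively backward from the right endpoint) of $I_j$, and $\mathfrak{D}$ picks whichever direction is smaller on each interval.

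For $\sigma_{N-L}(M_N) \geq \mathfrak{D}^{-1}$, I would invoke Courant--Fischer and show that the orthogonal complement of $\Span\{{\bm w}^j\}_{j=1}^{L}$, substituting the backward-propagated analogue of ${\bm w}^j$ on intervals where $\mathcal{D}_{-}^{i_j,i_{j+1}} < \mathcal{D}_{+}^{i_j,i_{j+1}}$, is an $(N-L)$-dimensional subspace on which $\|M_N x\|_2 \geq \mathfrak{D}^{-1}\|x\|_2$. For $x$ orthogonal to these vectors, decompose $x = \sum_j x|_{I_j}$; on each $I_j$ the recurrence produces $x|_{I_j} = \alpha_j (\pm {\bm v}^{i_j+1,i_{j+1}}) + \mathrm{err}^{(j)}$ with $\mathrm{err}^{(j)}$ an explicit linear image of $y|_{I_j}$. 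The matrix implementing the Duhamel map has $\ell^1\to\ell^1$ and $\ell^\infty\to\ell^\infty$ norms both dominated by $\mathcal{D}_\pm^{i_j,i_{j+1}}$ (precisely the row- and column-sums appearing in the definition), hence $\|\mathrm{err}^{(j)}\|_2 \leq \mathcal{D}_\pm^{i_j,i_{j+1}}\|y|_{I_j}\|_2$. Orthogonality forces $|\alpha_j|\,\|{\bm v}^{i_j+1,i_{j+1}}\|_2 \leq \|\mathrm{err}^{(j)}\|_2$, yielding $\|x|_{I_j}\|_2 \leq \mathfrak{D}\,\|y|_{I_j}\|_2$; summing squared norms across the partition $\{I_j\}$ of $[N]$ then gives $\|x\|_2 \leq \mathfrak{D}\,\|M_N x\|_2$.

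For the upper bound in \eqref{eq:thm31bd}, the plan is to modify ${\bm w}^j$ so that its $M_N$-image lies cleanly in $I_j$. Set $\tilde{\bm w}^j = {\bm w}^j + {\bm u}^j$, where ${\bm u}^j$ is supported in $\{1,\ldots,i_j\}$ and solves $M_N {\bm u}^j = -(M_N{\bm w}^j)_{i_j}\,e_{i_j}$; a unique such ${\bm u}^j$ exists because the upper-left $i_j\times i_j$ block of $M_N$ is invertible and upper triangular. Then $M_N\tilde{\bm w}^j = \pi_j M_N {\bm w}^j$, and these images have pairwise disjoint supports in the $I_j$. Taking $V = \Span\{\tilde{\bm w}^j\}$ and applying the variational formula $\prod_{k=0}^{L-1}\sigma_{N-k}(M_N) \leq \sqrt{\det(\tilde W^* M_N^* M_N \tilde W)/\det(\tilde W^*\tilde W)}$: the numerator equals $\prod_j\|\pi_j M_N {\bm w}^j\|_2$ because the Gram matrix of $\{M_N\tilde{\bm w}^j\}$ is diagonal, while $\det(\tilde W^*\tilde W) \geq 1$ because the orthogonal projection of $\tilde{\bm w}^j$ onto $\Span\{{\bm w}^k\}$ is unit upper-triangular in the $\{{\bm w}^k\}$ basis (since ${\bm u}^j\perp{\bm w}^k$ whenever $k\geq j$ by support considerations), and orthogonal projections do not increase $L$-dimensional volume.

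For the lower bound in \eqref{eq:thm31bd}, the plan is to combine the upper bound with the determinantal identity $|\det M_N| = \prod_k |d_k| = \prod_j |\mathcal{D}^{i_j+1,i_{j+1}+1}| = \prod_j\|\pi_j M_N {\bm w}^j\|_2\cdot\prod_j\|{\bm v}^{i_j+1,i_{j+1}}\|_2$. Writing $|\det M_N| = \prod_{k=1}^N \sigma_k(M_N) = \prod_{k=0}^{L-1}\sigma_{N-k}(M_N)\cdot\prod_{k=1}^{N-L}\sigma_k(M_N)$, one bounds $\prod_{k=1}^{N-L}\sigma_k(M_N)$ from above by splitting into the top $L$ (each at most $\|M_N\|$) and the middle $N-2L$ (controlled via Part 1, which confines them between $\mathfrak{D}^{-1}$ and $\|M_N\|$), with the $\sqrt{L}$ factor absorbed through the Cauchy--Schwarz bound $\|{\bm v}^{i_j+1,i_{j+1}}\|_2 \leq \sqrt{|I_j|}\max_k|\mathcal{D}^{i_j+1,k}|$. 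The main obstacle will be this last step: matching the constants precisely to obtain the claimed factor $(C\|M_N\|\mathfrak{D}\sqrt{L})^{-L}$ requires delicate interval-level bookkeeping, since the intermediate singular values of $M_N$ must be paired correctly with the Frobenius-type factors arising from $\prod_j\|{\bm v}^{i_j+1,i_{j+1}}\|_2$, and the rank-$(L-1)$ gap between $M_N$ and the block-decoupled $\tilde{M}_N := \oplus_j M^{(j)}$ must be absorbed via singular-value interlacing.
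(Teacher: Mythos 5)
Your Part 1 argument (the bound $\sigma_{N-L}(M_N)\geq\mathfrak{D}^{-1}$) is essentially the paper's: decompose $x$ across the blocks $I_j$, identify $x|_{I_j}-\alpha_j\,{\bm v}^{i_j+1,i_{j+1}}$ as a Duhamel-type image of $\rho_j M_N x$, bound the $\ell^2\to\ell^2$ norm of that map by $\min\{\mathcal{D}_+,\mathcal{D}_-\}$, and use $x\perp{\bm w}^j$ to kill the parallel component. (Your remark about ``substituting the backward-propagated analogue'' is cosmetic: when all $d_\ell\neq 0$ within an interval, the backward solution is a scalar multiple of ${\bm v}^{i_j+1,i_{j+1}}$, and when some $d_\ell=0$ one simply has $\mathcal{D}_-=\infty$ and the forward bound is the only one used; the paper keeps the test subspace fixed as $\Span\{{\bm w}^j\}$ and lets the choice of $a$ in Lemma \ref{lem:localstructure} do the switching.)

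Your Part 2 is a genuinely different derivation of the upper bound, and it has a gap. You construct $\tilde{\bm w}^j={\bm w}^j+{\bm u}^j$ by solving $M_N{\bm u}^j=-(M_N{\bm w}^j)_{i_j}\,e_{i_j}$ on the upper-left $i_j\times i_j$ block. That block of $D_N+J$ has diagonal $d_1,\dots,d_{i_j}$ and is singular whenever some $d_\ell=0$; since $(M_N{\bm w}^j)_{i_j}=\|{\bm v}^{i_j+1,i_{j+1}}\|_2^{-1}\neq 0$, the system is then unsolvable, while the theorem makes no non-vanishing hypothesis on the $d_\ell$ and $\mathfrak{D}$ remains finite through $\mathcal{D}_+$. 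The paper avoids this entirely by working with the already-orthonormal ${\bm w}^k$: because each $M_N{\bm w}^k$ is supported on $\{i_k,i_{k+1}\}$, the Gram matrix ${\bm W}^*M_N^*M_N{\bm W}$ factors as $\mathcal{M}^*\mathcal{M}$ with $\mathcal{M}_{j,k}=(e_{i_{j+1}})^tM_N{\bm w}^k$ upper triangular, giving $\prod_k\|\pi_k M_N{\bm w}^k\|_2^2$ directly via Lemma \ref{lem:prod_sing}. (A perturbation/continuity patch could rescue your construction, but you would need to supply it.)

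Part 3 is where the real trouble lies. You propose to pass through the identity $|\det M_N|=\prod_k|d_k|=\prod_j\|\pi_j M_N{\bm w}^j\|_2\cdot\prod_j\|{\bm v}^{i_j+1,i_{j+1}}\|_2$ and divide by $\prod_{k=1}^{N-L}\sigma_k(M_N)$; but the theorem again has no $\det M_N\neq 0$ hypothesis, and even granting it, the argument does not close. Crude bounds $\sigma_k(M_N)\leq\|M_N\|$ on the top $N-L$ singular values give $\prod_{k=0}^{L-1}\sigma_{N-k}(M_N)\geq|\det M_N|/\|M_N\|^{N-L}$, which is exponentially off in $N$ from the target $(C\|M_N\|\mathfrak{D}\sqrt{L})^{-L}\prod_k\|\pi_kM_N{\bm w}^k\|_2$ whenever, say, $\|M_N\|>1$ and $\|{\bm v}^{i_j+1,i_{j+1}}\|_2\approx 1$. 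Your proposed fix via interlacing against the block-decoupled $\tilde M_N=\oplus_j M^{(j)}$ (a rank-$(L-1)$ perturbation) shifts indices by $L-1$ and does not obviously produce the $\sqrt{L}$ and $\mathfrak{D}$ factors in the claimed form; you flag the ``delicate interval-level bookkeeping'' yourself, which is honest, but it is exactly the part that is missing. The paper's Proposition \ref{prop:lb} takes a completely different route: it uses the variational identity $\prod_{k=0}^{L-1}\sigma_{N-k}(M_N)=\inf\prod\|M_N v_k\|_2$ over orthonormal frames, splits the $v_k$ according to whether $1-\|\psi v_k\|_2^2\geq 1/(2L)$ (applying Lemma \ref{lem:orthW2} to the large-overlap-with-$\cS^\perp$ ones), controls the remainder by projecting onto $\cS$ and estimating a Gram determinant $\det(AA^*)\geq 2^{-L}$ via Gershgorin, and never touches $\det M_N$. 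You would need to either make the interlacing/determinant route rigorous (and confront the $\det M_N=0$ case), or switch to the variational argument.
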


\corAB{The proof of Theorem \ref{thm:DJssv} is deferred to Section \ref{sec:pfDJssv}.}

\subsection{Applications}
Theorem \ref{theo-2} regarding the behavior of the eigenvalues of $\model_N$
in the bidiagonal case
follows from direct applications of Theorems~\ref{thm:DJssv} and
\ref{thm:logdet}.  We show now how these applications follow.

We begin with a particularly simple case to consider that is not directly related to Theorem \ref{theo-2}.
\begin{corollary}
\label{cor:Jordan}
 Consider a Jordan block
  \[
    J_N(z)=
        \begin{bmatrix}
                z  & 1 &   &   &    \\
                   & z & 1 &   &    \\
                   &   &\ddots & \ddots  &   \\
                   &   &   & z & 1  \\
                   &   &   &   & z  \\
        \end{bmatrix}.
\]
Setting $\mathfrak{F}^{-1} = \min\{|1-|z||, N^{-1}, |1-|z|^{-1}|\}/2$, there is a constant \corAB{$C(z)>1$, depending only on $z$,} so that for all $N$, \corAB{we have}
\begin{align*}
  \sigma_{N-1}(J_N(z)) \geq \mathfrak{F}^{-1}, \qquad \sigma_{N}(J_N(z)) \geq  \mathfrak{F}^{-1}(|z| \wedge 1)^N/C(z), \qquad \text{ and } \qquad \sigma_{N}(J_N(z)) \leq C(z) (|z|\wedge 1)^N.
\end{align*}
\end{corollary}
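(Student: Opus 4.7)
The plan is to specialize Theorem~\ref{thm:DJssv} to $M_N = J_N(z)$ (so $D_N = zI_N$) with the trivial partition $L=1$, $i_1 = 0$, $i_2 = N$. Since $d_\ell \equiv z$, one has $\mathcal{D}^{i,j} = z^{j-i}$, the approximating vector is $\bm{v}^{1,N}_k = z^{k-1}$ for $1 \leq k \leq N$, and $\|\bm{v}^{1,N}\|_2^2 = \sum_{k=0}^{N-1}|z|^{2k}$. By \eqref{eq:MNv} (with the $k=i-1=0$ entry falling outside $\{1,\ldots,N\}$), we have $J_N(z)\bm{v}^{1,N} = -z^N e_N$, so $\|\pi_1 M_N \bm{w}^1\|_2 = |z|^N/\|\bm{v}^{1,N}\|_2$.

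First I would bound $\mathfrak{D}$: each inner sum in \eqref{eq:ddef} telescopes into a geometric series, yielding $\mathcal{D}_+^{0,N} = 2\sum_{k=0}^{N-1}|z|^k$ and $\mathcal{D}_-^{0,N} = 2\sum_{k=0}^{N-1}|z|^{-k}$ (both maxima realized at $s=1, r=N$). Choosing the smaller of the two sums according to whether $|z| \leq 1$ or $|z| \geq 1$, a short computation gives $\mathfrak{D} \leq 2\min\{N, 1/|1-|z||, 1/|1-|z|^{-1}|\} \leq \mathfrak{F}$, so the first assertion $\sigma_{N-1}(J_N(z)) \geq \mathfrak{D}^{-1} \geq \mathfrak{F}^{-1}$ follows immediately from Theorem~\ref{thm:DJssv}.

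For the upper bound on $\sigma_N$, \eqref{eq:thm31bd} with $L=1$ gives $\sigma_N \leq |z|^N/\|\bm{v}^{1,N}\|_2$; using $\|\bm{v}^{1,N}\|_2 \geq 1$ when $|z|\leq 1$ (the $k=0$ term) and $\|\bm{v}^{1,N}\|_2 \geq |z|^{N-1}$ when $|z|>1$ (the $k=N-1$ term) yields $\sigma_N \leq (|z|+1)(|z|\wedge 1)^N$, giving $C(z) = |z|+1$. For the lower bound, \eqref{eq:thm31bd} gives $\sigma_N \geq (C\|J_N(z)\|\mathfrak{D})^{-1}|z|^N/\|\bm{v}^{1,N}\|_2$; combining $\|J_N(z)\|\leq |z|+1$, the crude bound $\|\bm{v}^{1,N}\|_2 \leq \sqrt{N}\max(1,|z|^{N-1})$, and the fact that for fixed $|z| \neq 1$ the quantity $\mathfrak{D}$ is bounded by a constant depending only on $z$, produces $\sigma_N \geq c(z)(|z|\wedge 1)^N/\sqrt{N}$ for some $c(z)>0$. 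Since $\mathfrak{F}^{-1} \leq 1/(2N)$, this is stronger than the claimed bound $\mathfrak{F}^{-1}(|z|\wedge 1)^N/C(z)$ once $C(z)$ is enlarged in a way depending only on $z$; the case $|z|=1$ is vacuous because then $\mathfrak{F}^{-1}=0$.

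The principal obstacle is the bookkeeping needed to keep $C(z)$ independent of $N$, especially where $|z|$ is close to (but not equal to) $1$: the $z$-dependent constant bound on $\mathfrak{D}$ then degenerates and one must use $\mathfrak{D} \leq 2N$ together with $\mathfrak{F}^{-1} \leq 1/(2N)$ in order to absorb the remaining factors of $N$ into $C(z)$.
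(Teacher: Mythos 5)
Your proof is correct and takes essentially the same route as the paper's (which applies Theorem~\ref{thm:DJssv} with the single block $L=1$, $i_1=0$, $i_2=N$, observes $\mathcal{D}^{p,r}=z^{r-p}$ to get $\mathfrak{D}\le\mathfrak{F}$, and bounds $\|J_N(z)\|$ via Gershgorin). One small slip worth flagging: your displayed inequality $\mathfrak{D}\le 2\min\{N,1/|1-|z||,1/|1-|z|^{-1}|\}$ is false as stated (e.g.\ for small $|z|$ the left side is at least $2$ while $2/|1-|z|^{-1}|=2|z|/(1-|z|)$ is tiny); what you actually have and need is $\mathfrak{D}=2\sum_{q=0}^{N-1}(|z|\wedge|z|^{-1})^q\le 2N\le\mathfrak{F}$ (and, for the $\sigma_N$ lower bound, $\mathfrak{D}\le 2/|1-(|z|\wedge|z|^{-1})|$, an $N$-independent constant), so the conclusion $\mathfrak{D}\le\mathfrak{F}$ and the rest of your argument stand unchanged.
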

\begin{proof}
  We apply Theorem~\ref{thm:DJssv} with a single block, i.e.~$L=1, i_1=0,$ and $i_2 = N.$  We bound $\mathfrak{D} \leq \mathfrak{F}$ upon observing that $\corABrev{|\mathcal{D}^{p,r}| = |z|^{r-p}}$ for $r\geq p$. \corAB{By}
  \corOZ{the} \corAB{Gershgorin circle theorem we also have that \corABa{$\|J_N(z)\| =\|J_N J_N^*\|^{1/2} = O( |z|\vee 1)$}. Therefore, the proof now finishes upon using \eqref{eq:vdefine}-\eqref{eq:MNv}}.
  \end{proof}

\begin{remark} Observe that the same conclusions as in
Corollary \ref{cor:Jordan}
  hold if the diagonal of $J_N$ were
  replaced by arbitrary complex numbers having the same modulus.
\end{remark}
\begin{remark} By combining
Corollary \ref{cor:Jordan}
  with Theorem~\ref{thm:logdet}, we get a
  new proof of \cite[Theorem 1.4]{FPZ}. \end{remark}

When \corAB{$D_N$ is a diagonal matrix} of i.i.d.\,random variables the outcome is similar.
\begin{corollary}
  \label{cor:iidssv}
  Suppose the entries of \corAB{$D_N$} are  i.i.d.\,complex random variables with
  \begin{equation}
    \label{eq-exceed}
    \Exp |d_1|^{\pm \corAB{\beta_0}} < \infty \quad \mbox{\rm for some \corAB{$\beta_0 >0$}}
    \quad
    \mbox{\rm
    and $\Exp \log |d_1| \neq 0$}.
  \end{equation}
  Then, for every $\varepsilon > 0,$ there is a $\delta > 0$
and an \corAB{$L \leq 5 N^{1-\delta}$} so that with probability approaching
$1$ as $N\to\infty$,
  \(
  \sigma_{N-\corAB{L}}(M_N) \geq N^{-\varepsilon}
  \)
  and
  \[
  \prod_{k=0}^{\corAB{L}-1} \sigma_{N-k}(M_N) = e^{-N(\Exp\log|d_1|)_{-} + o(N)}.
  \]
\end{corollary}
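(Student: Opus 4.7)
The plan is to apply Theorem~\ref{thm:DJssv} with an adaptively chosen partition $0=i_1<\dots<i_{L+1}=N$ guided by the random walk $T_k:=\sum_{\ell=1}^{k}\log|d_\ell|$. Without loss of generality assume $\mu:=\E\log|d_1|>0$; the case $\mu<0$ is symmetric, with $\mathcal{D}_+$ replacing $\mathcal{D}_-$ throughout and the walk's upward excursions replacing its backtracks. Convexity of $\lambda\mapsto\log\E|d_1|^{-\lambda}$, combined with $\mu>0$ and $\E|d_1|^{-\beta_0}<\infty$, yields $\lambda^*\in(0,\beta_0]$ with $\E|d_1|^{-\lambda^*}<1$, and a standard Chernoff bound produces an absolute constant $C$ with
\[
  \P\bigl(\min_{m\geq 0}T_m<-h\bigr)\leq C e^{-\lambda^* h},\qquad h>0,
\]
i.e., forward backtracks of $T$ decay polynomially whenever $h\asymp\log N$.

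Fix $\varepsilon>0$ and set $h:=\tfrac{\varepsilon}{2}\log N$, $n:=\lceil N^{\delta}\rceil$, and $\delta:=\tfrac{1}{4}\min\{\varepsilon,\varepsilon\lambda^*\}$. Call a site $s\in[1,N]$ \emph{bad} if $\min_{p\geq s}(T_{p-1}-T_{s-1})<-h$, otherwise \emph{good}. The above tail bound combined with Markov's inequality shows that with probability $1-o(1)$ the number of bad sites is at most $N^{1-\delta}$. Form the partition by placing each bad site in its own singleton block and subdividing every maximal run of consecutive good sites into blocks of length at most $n$; this yields at most $L\leq N/n+2\cdot(\text{\# bad})+1\leq 5N^{1-\delta}$ blocks.

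With this partition, singleton (bad) blocks trivially have $\mathcal{D}_-^{i_k,i_{k+1}}=2$. Inside a good block \emph{every} site is good, so for $s\leq p$ in the block $T_{p-1}-T_{s-1}\geq -h$ (by goodness of $s$) and for $p\leq r$ in the block $T_{r-1}-T_{p-1}\geq -h$ (by goodness of $p$); hence $|\mathcal{D}^{s,p}|^{-1},|\mathcal{D}^{p,r}|^{-1}\leq e^h$, and~\eqref{eq:ddef} gives $\mathcal{D}_-^{i_k,i_{k+1}}\leq 2ne^h\leq 2N^{\delta+\varepsilon/2}\leq N^\varepsilon$. Thus $\mathfrak{D}\leq N^\varepsilon$ and Theorem~\ref{thm:DJssv} delivers $\sigma_{N-L}(M_N)\geq \mathfrak{D}^{-1}\geq N^{-\varepsilon}$. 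For the product, \eqref{eq:MNv} gives $\|\pi_k M_N{\bm w}^k\|_2=|\mathcal{D}^{i_k+1,i_{k+1}+1}|/\|{\bm v}^{i_k+1,i_{k+1}}\|_2$; summing logarithms, $\sum_k\log|\mathcal{D}^{i_k+1,i_{k+1}+1}|$ telescopes to $T_N=\mu N+o(N)$ by the law of large numbers. Setting $M_k:=\max_{t\in[i_k,i_{k+1}-1]}(T_t-T_{i_k})\geq 0$, one has $M_k\leq\log\|{\bm v}^{i_k+1,i_{k+1}}\|_2\leq M_k+\tfrac{1}{2}\log n_k$, and the goodness of the block together with the moment bound $|\log|d_i||\leq (2/\beta_0)\log N$ for all $i$ (valid w.h.p.\ under our hypotheses) gives $M_k\leq T_{i_{k+1}-1}-T_{i_k}+h$, hence $\sum_k M_k=T_N+O(L(h+\log N))=T_N+o(N)$. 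Therefore $\log\prod_k\|\pi_k M_N{\bm w}^k\|_2=o(N)=-N\mu_-+o(N)$ (since $\mu_-=0$), and the lower-bound prefactor $(C\|M_N\|\mathfrak{D}\sqrt L)^{-L}$ in~\eqref{eq:thm31bd} contributes $e^{-L\cdot O(\log N)}=e^{o(N)}$ because $L\log N=o(N)$, so $\prod_{k=0}^{L-1}\sigma_{N-k}(M_N)=e^{-N\mu_-+o(N)}$.

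The main obstacle is that the tail exponent $\lambda^*$ for the running minimum of $T$ can be very small---the hypotheses guarantee only that it is positive---so a naive union bound estimating $\mathcal{D}_-^{i_k,i_{k+1}}$ simultaneously across $\asymp N^{1-\delta}$ equally-sized blocks would demand $\varepsilon>2/\lambda^*$ and collapse for small $\varepsilon$. The singleton-bad-block device circumvents this: by segregating each rare bad site into its own trivial block (contributing $\mathcal{D}_-=2$), the problem reduces to counting bad sites, which is directly controlled by the Chernoff tail via Markov's inequality, leaving only the \emph{uniformly good} runs of length $\leq n$ inside which $\mathcal{D}_-$ is deterministically dominated by $2n e^h$.
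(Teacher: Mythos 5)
Your proof is correct and follows essentially the same approach as the paper's: it identifies a rare set of ``bad'' indices (the paper's $G_\delta$) where the random walk $T_m=\sum\log|d_i|$ has a large backtrack, isolates them as singleton blocks with $\mathcal{D}_-=2$, subdivides the good runs into $N^\delta$-blocks where $\mathcal{D}_-$ is deterministically small, and then invokes Theorem~\ref{thm:DJssv}. The remaining variations are cosmetic: you use a Chernoff/running-minimum framing where the paper tilts by $\beta$ and applies Doob's maximal inequality (these are the same estimate), and you obtain the product bound by telescoping $\sum_k\log|\mathcal{D}^{i_k+1,i_{k+1}+1}|=T_N$ together with the law of large numbers, where the paper uses the per-block large-deviation inequality~\eqref{eq:tb} to reach the same $e^{o(N)}$ error.
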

\begin{proof}
 %
%
%
  \corAB{The key to the proof is to construct a partition of $[N]$ so that $\mathfrak{D}$ is small and the upper and the lower bounds of \eqref{eq:thm31bd} are evaluated easily.} \corOZ{To this end, we first note that}
 \begin{equation}\label{eq:claim-log}
 \lim_{\beta \to 0} \left(\E |d_1|^{\pm \beta} \right)^{1/\beta} = e^{\pm \E \log |d_1|},
 \end{equation}
 \corOZ{which follows from Taylor's theorem and dominated convergence using
 \eqref{eq-exceed}.}
%

 \corAB{Now we focus on the case $\E \log |d_1| >0$. By \eqref{eq:claim-log} there exists a $\beta > 0$ so that $p_\beta:=\Exp |d_1|^{-\beta} < 1$. We fix this $\beta$ for the remainder of the proof. Next} we recall that if $\{Z_k\}$ is a non-negative
  martingale with $Z_0=1$ then
  Doob's maximal inequality implies that
 \begin{equation}
   \label{eq-sharktank1}
  \P\left[ \sup_{1 \leq k } Z_k \geq t\right] \leq \frac{1}{t}.
\end{equation}
Let $G_\delta$ be the set of $j \in [N]$ so that there exists a \corAB{$k \in [N]$ for which}
  \[
    \corAB{  \prod_{i=j\wedge k}^{j \vee k} ( |d_i|^{-\beta}p_\beta^{-1}) \geq N^{2\delta}.}
  \]
  Then
  $$\P(j\in G_\delta)\leq
  \P(\sup_{k\geq 0} \prod_{i=j}^{j+k} ( |d_i|^{-\beta}p_\beta^{-1}) \geq N^{2\delta})+
  \P(\sup_{\corAB{2}\leq k\leq j} \prod_{i=k-1}^j ( |d_i|^{-\beta}p_\beta^{-1}) \geq N^{2\delta})=:P_1+P_2.$$
  Setting $Z_k:= \prod_{i=j}^{j+k-1} ( |d_i|^{-\beta}p_\beta^{-1})$,
  one sees that $\{Z_k\}$ is a non-negative martingale with $Z_0:=1$.
  Applying \eqref{eq-sharktank1} one gets that $P_1\leq N^{-2\delta}$.
  Similarly, setting
  $Z_k= \prod_{i=j-k+1}^{j} ( |d_i|^{-\beta}p_\beta^{-1})$,
  one gets $P_2\leq N^{-2\delta}$.
 Thus,
  $\Exp |G_{\delta}| \leq 2N^{1-2\delta},$
  and \corAB{hence by Markov's inequality}, with probability at least $1-N^{-\delta},$
  $|G_{\delta}| \leq 2N^{1-\delta}.$
  Let $S \subset \mathbb{Z}$ be defined by
  \begin{equation}\label{eq:dfn-S}
    S := \corABrev{\left\{\lfloor N^\delta k\rfloor; k \in [\lfloor N^{1-\delta}\rfloor ] \right\}} \bigcup \{\cup_{x \in G_{\delta}}(x+ \{0,1\})\},
  \end{equation}
  \corAB{where for two sets $S_1$ and $S_2$ we denote $S_1+S_2:=\{s_1+s_2:\, s_1 \in S_1, s_2 \in S_2\}$}. Enumerate the elements of $S$ as $\{ i_2 < i_3 < i_4 < \dots < i_{|S|+1} \}.$  Extend \corAB{the collection $i$'s} by letting $i_1 =0,$ $L = |S|+1,$ and $i_{L+1}=N.$  Then $L \leq 5N^{1-\delta}$ with probability
  at least $1-N^{-\delta}$,
  and the separation $i_{k+1} - i_k \leq  N^{\delta}$ for all $k.$

  \corAB{From the definition of the set $S$, it follows that} when $i_j \not \in G_{\delta},$ we have for any $i_j \leq r < i_{j+1}$ and any $k \geq r,$
  \[
  \prod_{i=r}^k |d_i|^{-1} \leq N^{2\delta/\beta} p_\beta^{k/\beta} \leq N^{2\delta/\beta},
  \]
  \corAB{where the last inequality follows from the fact that $p_\beta < 1$}. Hence, $\mathcal{D}_{-}^{i_j,i_{j+1}} \leq 2N^{\delta(1+2/\beta)}.$ \corAB{Recalling the definition of $S$ once more we see that} when $i_j \in G_{\delta},$ we have $i_{j+1} = i_j + 1$. Thus, in that case $\mathcal{D}_{-}^{i_j,j_{j+1}} \leq 2.$  Therefore, we conclude
  that if $\E\log |d_1|>0$ then
  $\mathfrak{D} \leq 2N^{\delta(1+2/\beta)}$,
  with probability at least $1-N^{-\delta}$. \corAB{Using \eqref{eq:claim-log}, a similar argument yields that
  \begin{equation}\label{eq:D_+}
  \mathfrak{D}_+^{i_j,i_{j+1}} \le 2 N^{\delta(1+2/\beta)} \qquad \text{ for all } j =1,2,\ldots,L,
  \end{equation}
  with probability at least $1-N^{-\delta}$, when $\E \log |d_1| <0$. Therefore, the same bound as above holds for $\mathfrak{D}$ in this case.}

  Now taking $\delta=\delta(\varepsilon)$ small
  enough and invoking the first part of Theorem~\ref{thm:DJssv},
  one concludes that with the same probability,
  $\sigma_{N-L}(M_N) \geq N^{-\varepsilon}$, as claimed.

  To show the second part of the corollary,
  \corAB{we claim that for any $\eta > 0$ there is a $\delta':=\delta'(\eta) > 0$, with $\lim_{\eta \to 0} \delta'(\eta)=0$, such that for all $t>0$}
  \begin{equation}
    \label{eq:tb}
  \max\left\{ \P\left[ \sum_{i=1}^k \log |d_i| \leq -t + k(\Exp \log|d_1| -\eta)\right],
  \P\left[ \sum_{i=1}^k \log |d_i| \geq t + k(\Exp \log|d_1|+\eta)\right]
   \right\} \leq e^{-\delta' t}.
  \end{equation}
 \corAB{To see the above, applying Markov's inequality, we note that for any $\beta' \in (0,\beta_0]$,
  \[
  \P\left[ \sum_{i=1}^k \log |d_i| \geq t + k(\Exp \log|d_1|+\eta)\right] \le \frac{\E\left[ \prod_{i=1}^k |d_i|^{\beta'}\right]}{\exp\left(\beta' k \E\log |d_1|\right)} \cdot e^{-\beta' t - \beta' k \eta}.
  \]
 Now using \eqref{eq:claim-log} and choosing $\beta'$ sufficiently small (depending on $\eta$) we deduce
  \[
  \P\left[ \sum_{i=1}^k \log |d_i| \geq t + k(\Exp \log|d_1|+\eta)\right] \le e^{-\delta' t}.
  \]
Using a similar argument} \corOZ{for the second probability in
  the left hand side of \eqref{eq:tb},  we conclude that
  \eqref{eq:tb} holds.}

 \corOZ{We continue with the proof of the second
 part of the corollary.
 From \eqref{eq:tb}}
 we conclude that for any $\eta > 0$,
  there is a constant $\bar{C}(\eta)>0$, \corAB{with $\lim_{\eta \to 0} \bar{C}(\eta)^{-1}=0$}, so that,
  with probability $1-N^{-100}$,
  for all $1 \leq j \leq L$,
  \begin{equation}
    \label{eq-lawandorder3}
    e^{2(i_{j+1}-i_j)(\Exp \log |d_1|- \eta)-\bar{C}(\eta) \log N}
    \leq
    \prod_{i=i_j+1}^{i_{j+1}}
    |d_i|^2
    =
    \|\pi_j M_N {\bm v}^{i_j+1,i_{j+1}}\|^2_2\leq
    e^{2(i_{j+1}-i_j)(\Exp \log |d_1|+ \eta)+\bar{C}(\eta) \log N}.
  \end{equation}
In the case that $\Exp \log |d_1| > 0$, \corAB{using the fact that $i_{j+1} - i_j \le N^\delta$ we see that}
  \[
    \prod_{i=i_j+1}^{i_{{j+1}}} |d_i|^2
    \leq \|{\bm v}^{i_j+1,i_{j+1}}\|^2_2
    \leq N^{\delta} \max_{i_j < r \leq i_{j+1}} \prod_{i>i_j+1}^{r} |d_i|^2.
  \]
  \corAB{Proceeding similarly} \corOZ{to the proof of \eqref{eq-lawandorder3}
and applying an union bound} we conclude that with probability at least \corAB{$1-N^{-99}$},
  for all $1 \leq j \leq L$
  \begin{equation}
    \label{eq-lawandorder22}
    e^{2(i_{j+1}-i_j)(\Exp \log|d_1|-\eta)-\corAB{\bar{C}(\eta)}\log N}
    \leq
    \|{\bm v}^{i_j+1,i_{j+1}}\|^2_2
    \leq
    e^{2(i_{j+1}-i_j)(\Exp \log|d_1|+\eta)+\corAB{(\bar{C}(\eta)+\delta)}\log N}.
  \end{equation}
 Recall that
 $ \|\pi_k M_N {\bm w}^{k}\|_2=\|\pi_k M_n {\bm v}^{i_k+1,i_{k+1}}\|_2/
    \|{\bm v}^{i_k+1,i_{k+1}}\|_2$.
    \corAB{Hence combining \eqref{eq-lawandorder3}-\eqref{eq-lawandorder22}, using the fact $i_{j+1} - i_j \le N^\delta$ again, and taking $\eta \to 0$ sufficiently slowly with $N$, so that $\bar{C}(\eta) \log N = o(N^\delta)$, we}
  conclude that  if $\E\log |d_1|>0$ then
  \begin{equation}
    \label{eq-lawandorder1}
    e^{-o(N)}
    \leq
    \prod_{k=1}^L \|\pi_k M_N {\bm w}^{k}\|_2
    \leq
    e^{o(N)},
  \end{equation}
  \corAB{with probability} \corOZ{approaching} \corAB{one as $N \to \infty$.}


 \corAB{Turning to the case $\Exp \log |d_1| < 0$ we begin with the estimate}
  \[
    1
    \leq \|{\bm v}^{i_j+1,i_{j+1}}\|^2_2
    \leq N^{\delta} \corAB{\mathfrak{D}_+^{i_j,i_{j+1}} \le 2 N^{2\delta(1+1/\beta)}, \qquad \text{ for all } j =1,2,\ldots,L,}
  \]
  \corAB{with probability at least $1-N^{-\delta}$, where the last step follows from \eqref{eq:D_+}. Therefore}
  \begin{equation}
    \label{eq-lawandorder2}
    \corAB{e^{\sum_{i=1}^N \log|d_i| -o(N)}
    \leq
    \prod_{k=1}^L \|\pi_k M_N {\bm w}^{k}\|_2= \frac{\prod_{i=1}^N |d_i|}{\prod_{j=1}^L \|{\bm v}^{i_j+1,i_{j+1}}\|_2}
    \leq
    e^{\sum_{i=1}^N \log|d_i|},}
  \end{equation}
  \corAB{with probability approaching one, as $N \to \infty$.}

\corAB{To complete the proof using Markov's inequality and the union bound we note that $\P(\max_i |d_i| \ge N^{2/\beta_0}) =O(1/N)$.
\corEE{Therefore, using the Gershgorin circle theorem we derive that $\|M_N\|^2 = \|M_N^*M_N\| = O(N^{4/\beta_0})$ with probability tending to one.} Now applying Theorem~\ref{thm:DJssv}, the derived bound on $\mathfrak{D}$}, using \eqref{eq-lawandorder1} when $\E\log |d_1|>0$, and \eqref{eq-lawandorder2} \corAB{and Chebycheff's inequality} when $\E\log |d_1|<0$, the corollary follows.
  \end{proof}

  The next corollary and Remark \ref{rem-simon} following it,
  combine Corollary \ref{cor:iidssv}
with Theorem~\ref{thm:logdet} to obtain Theorem \ref{theo-2} b).
  \begin{corollary}
    \label{cor:iid}  Suppose that
    $D_N$ is a diagonal matrix of i.i.d.\,complex random variables of law $\nu$,
    and let $M_N=D_N+J$.  Suppose that
    $\Exp |d_1|^{\pm \beta_0} < \infty$ for some $\beta_0> 0$ and that
    $\Exp\log|d_1 - z|\neq 0$,  for Lebesgue-a.e.~$z \in \C$.  Then \corAB{$L_N$}
   converges weakly in probability to the probability measure with
   log potential $(\Exp \log| d_1 - z|)_+$.
  \end{corollary}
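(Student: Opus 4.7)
The plan is to combine Corollary~\ref{cor:iidssv}, applied to the shifted bidiagonal matrix $M_N - z\Id = (D_N - z\Id) + J$, with the deterministic equivalence of Theorem~\ref{thm:logdet}, in order to evaluate $\LL_{L_N}(z)$ pointwise in $z$ and then pass to weak convergence via standard log-potential arguments. Fix $z \in \C$ with $\E\log|d_1-z| \neq 0$; this holds for Lebesgue a.e.~$z$ by hypothesis. First I would verify the moment hypotheses of Corollary~\ref{cor:iidssv} for the shifted entries $\tilde d_i := d_i - z$: the positive moment $\E|\tilde d_1|^{\beta_0}<\infty$ is immediate from the triangle inequality, while the negative moment $\E|d_1 - z|^{-\beta}<\infty$ for some $\beta>0$ holds for Lebesgue a.e.~$z$ by Fubini, $\int_K \E|d_1-z|^{-\beta}\,dA(z) = \E\int_K|d_1-z|^{-\beta}\,dA(z) < \infty$ for any $\beta \in (0,2)$ and compact $K\subset\C$. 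Since the negative moment enters Corollary~\ref{cor:iidssv} only through the small-$\beta$ limit \eqref{eq:claim-log}, its conclusions transfer.

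Applying Corollary~\ref{cor:iidssv} to $M_N - z\Id$ with some small $\varepsilon > 0$, I obtain $\delta > 0$ and an integer $L \le 5N^{1-\delta}$ such that, with probability tending to $1$,
\[
\sigma_{N-L}(M_N - z\Id) \ge N^{-\varepsilon}, \qquad \prod_{k=0}^{L-1}\sigma_{N-k}(M_N - z\Id) = \exp\bigl(-N(\E\log|d_1 - z|)_- + o(N)\bigr).
\]
Choosing $\varepsilon_N = N^{-\eta}$ with $\eta + \varepsilon < \gamma - 1/2$, the defining condition of $N^*(z)$ in \eqref{eq:N*-dfn} fails at $i = L+1$, so $N^* \le L$ and $N^*\log N/N \to 0$; Theorem~\ref{thm:logdet} with $\alpha = 0$ then gives $\LL_{L_N}(z) = \tfrac{1}{N}\log|\det B_N(z)| + o_{\P}(1)$. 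To evaluate the latter I use that $M_N - z\Id$ is upper triangular, so
\[
\prod_{i=1}^N|d_i-z| \,=\, |\det(M_N-z\Id)| \,=\, |\det B_N(z)| \cdot \prod_{k=0}^{N^*-1}\sigma_{N-k}(M_N - z\Id).
\]
The ``buffer'' product $\prod_{k=N^*}^{L-1}\sigma_{N-k}(M_N - z\Id)$ bridging $N^*$ and $L$ has at most $5N^{1-\delta}$ factors in $[N^{-\varepsilon},\|M_N - z\Id\|]$, and since $\|M_N - z\Id\|$ is polynomially bounded w.h.p.\ it contributes only $e^{o(N)}$. Together with the weak law $\tfrac{1}{N}\sum_i \log|d_i - z| \to \E\log|d_1-z|$ (finite for a.e.~$z$ by the moment hypothesis and another Fubini), this yields, in probability,
\[
\tfrac{1}{N}\log|\det B_N(z)| \longrightarrow \E\log|d_1-z| + (\E\log|d_1-z|)_- = (\E\log|d_1-z|)_+,
\]
and hence $\LL_{L_N}(z) \to (\E\log|d_1-z|)_+$ pointwise in probability for Lebesgue a.e.~$z$.

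The final step is to upgrade this pointwise convergence to $L_N \to \mu$ weakly in probability, where $\mu$ is the unique probability measure with log potential $(\E\log|d_1-z|)_+$ (well-defined by Riesz representation since the candidate limit is subharmonic in $z$ and equals $\log|z|+o(1)$ at infinity). Tightness of $L_N$ follows from $\sum_i|\lambda_i|^2 \le \|\model_N\|_2^2$ together with polynomial a.s.\ bounds on the Hilbert--Schmidt norm of $\model_N$. I expect this last step to be the main technical hurdle: one needs uniform integrability of $\log|y-z|$ against $L_N$ near $y=z$ to conclude that subsequential weak limits of $L_N$ have log potential matching $(\E\log|d_1-z|)_+$ almost everywhere. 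This is precisely what the Gaussian smoothing of $\model_N - z\Id$ supplies via Lemmas~\ref{lem:sd} and~\ref{lem:Gdet_lb}, in the same spirit as the replacement-principle arguments of \cite{GWZ,W,FPZ}.
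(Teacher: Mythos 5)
Your core pointwise computation is essentially the paper's proof: verify the hypotheses of Corollary~\ref{cor:iidssv} for $D_N - z\Id + J$ (your Fubini argument for the negative moment is a fine alternative to the paper's Jensen-based one), apply Corollary~\ref{cor:iidssv} and Theorem~\ref{thm:logdet} with $\varepsilon_N = N^{-\eta}$, use the upper-triangular structure and the law of large numbers to evaluate $|\det(M_N-z\Id)|$, and control the buffer product between indices $N^*$ and $L$. The lower-tail integrability near the origin you worry about is already handled inside Theorem~\ref{thm:logdet}, precisely via Proposition~\ref{lem:lb} (Lemmas~\ref{lem:sd}, \ref{lem:Gdet_lb}), as you suspect, so pointwise convergence of $\LL_{L_N}(z)$ for a.e.\ $z$ comes out cleanly.

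The gap is in your tightness step. You propose $\sum_i |\lambda_i|^2 \le \|\model_N\|_2^2$ together with polynomial bounds on the Hilbert--Schmidt norm, but the hypothesis only guarantees $\E|d_1|^{\beta_0}<\infty$ for some small $\beta_0>0$; second moments of $d_1$ need not exist, so $N^{-1}\|\model_N\|_2^2 \approx N^{-1}\sum_i|d_i|^2$ is not bounded in probability in general (under the assumptions, $\max_i|d_i|$ can be as large as $N^{2/\beta_0}$ with nonnegligible probability). The paper sidesteps this by working with the logarithm: Weyl's inequality for singular values gives $|\sigma_i(\model_N)-d_i^\downarrow|\le 2$, Weyl's majorant theorem \cite[Theorem II.3.6]{bhatia} gives $\sum_i\log_+|\lambda_i(\model_N)|\le \sum_i\log_+\sigma_i(\model_N)\le \sum_i\log_+(|d_i|+2)$, and then $N^{-1}\sum_i\log_+(|d_i|+2)\to\E\log_+(|d_1|+2)<\infty$ by the SLLN since $\E|d_1|^{\beta_0}<\infty$ implies a finite $\log_+$ moment. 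Markov's inequality against $\log_+|\lambda|$ then yields tightness; combined with pointwise convergence of log potentials and \cite[Theorem 2.8.3]{tao2012topics}, this closes the proof without invoking the replacement principle (which is used in this paper only for Theorem~\ref{thm:finite-off-diag}, not for Corollary~\ref{cor:iid}).
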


 \begin{remark}
   \label{rem-simon}
  Let $\corAB{\mathscr{S}:=} \, \supp(d_1)$.
   The condition that $\corAB{\LL_\nu(z) }= \Exp\log|d_1 - z|\neq 0$, for
   Lebesgue-a.e.\,$z$ is satisfied, for example, when  \corAB{$\mathscr{S}$}
   is contained in a compact simply connected set of \corAB{two-dimensional Lebesgue} measure $0$.  Indeed, note that $\corAB{\LL_\nu}$ is harmonic on \corAB{$\mathscr{S}^c$}.
\corAB{$\LL_\nu$} cannot vanish identically in \corAB{$\mathscr{S}^c$} because \corAB{$\mathscr{S}^c$} is connected and
$\limsup_{z\to\infty}\corAB{\LL_\nu(z)} = \infty$. By \cite[Theorem 3.1.18]{Simon}, it
follows that the zero set of \corAB{$\LL_\nu$} in \corAB{$\mathscr{S}^c$} has \corAB{zero} Lebesgue measure.
%
 \end{remark}

  \begin{proof}[Proof of Corollary \ref{cor:iid}.]
    We note first that by Weyl's inequalities for singular values,
    if $d_i^\downarrow$, $i=1,\ldots,N$ denote the monotone decreasing
    reordering of the
    variables $\corABrev{|d_1|,\ldots,|d_N|}$ then $|\sigma_i(\model_N)-d_i^\downarrow|\leq 2$
    for all $i$, with probability approaching $1$ as $N\to\infty$. (In the last statement, we used that $\|J\|\leq 1$ and
    that by \eqref{eq-gordon},
    $\|N^{-\gamma} G_N\|\to_{N\to\infty} 0$.) By Weyl's majorant theorem
    \cite[Theorem II.3.6]{bhatia} it follows that, with
    probability tending to $1$ as $N\to\infty$,
    $$\sum_{i=1}^N \log_+ |\lambda_i(\model_N)|\leq
    \sum_{i=1}^N \log_+ \sigma_i(\model_N)\leq
    \sum_{i=1}^N \log_+ (|d_i|+2).$$
   Since $\E |d_1|^{\beta_0} <\infty$ it follows that $\E\log_+(|d_1|+2)<\infty$. \corABrev{Therefore, denoting by $B_\C(0,R)$ the ball of radius $R$ in the complex plane centered at zero and using the law of large numbers, we find that
 \[
 L_N(B_\C(0,R)^c) = \frac1N \sum_{i=1}^N {\bf 1}_{\{|\lambda_i(\model_N)  | >R\}} \le (\log _+ R)^{-1} \cdot \frac1N \sum_{i=1}^N \log_+|\lambda_i(\model_N)| \le 2 \cdot \frac{\E \log_+(|d_1|+2)}{\log_+ R},
   \] 
 with probability approaching one. Since $R <\infty$ is arbitrary, this in turn implies that the sequence of random probability measures $\{L_N\}_{N \in \N}$ is tight.} Therefore, by
    \cite[Theorem 2.8.3]{tao2012topics}, the corollary follows
    once one proves that
    $\LL_{\model_N}(z)\to \E \log|d_1-z|_+$ in probability for Lebesgue
    almost every
    $z \in \C$.

    To prove the convergence of $\LL_{\model_N}$, we check that
    $D_N - z\Id + J$ satisfies the hypotheses of Corollary~\ref{cor:iidssv}
    \corAB{for Lebesgue-a.e.~every $z$}.  By assumption $\Exp |d_1|^{\beta_0} < \infty$
    and hence $\Exp |d_1-z|^{\beta_0} < \infty$ for all $z \in \C$.
    Observe that for any $M >0$
    \[
      \int_\C \int_{|z-y| < M} \frac{1}{|z-y|} |dz|^2\nu(dy)
      = 2\pi M
    \]
    with $|dz|^2$ the $2$-dimensional Lebesgue volume element.
    In particularly, as this is locally integrable, \corAB{by Jensen's inequality,} we have that
    \[
   \corAB{  \E |z- d_1|^{-\beta_0} \le \left( \Exp |z-d_1|^{-1} \right)^{\beta_0}}< \infty
    \]
    is finite for Lebesgue-a.e.\,$z$, \corAB{where without loss of generality we have assumed $\beta_0 \le 1$}. Hence $M_N-z\Id$ satisfies the hypotheses of Corollary \ref{cor:iidssv} for Lebesgue-a.e.\,$z \in \C.$

    Let $z \in \C$ be a point at which these hypotheses are satisfied.
    Choosing \corAB{$\varepsilon, \eta > 0$ sufficiently small so that} 
 $ \varepsilon < \gamma - \tfrac{1}{2}-\eta,$ we get from Corollary \ref{cor:iidssv} that there is a $\delta > 0$ and an $\corAB{L \leq 5 N^{1-\delta}}$ so that $\sigma_{N-\corAB{L}}(M_N-z\Id) \geq N^{-\varepsilon}$ and
    \begin{equation}\label{eq:prodsing-iid}
      \prod_{k=0}^{\corAB{L}-1} \sigma_{N-k}(M_N-z \Id) = e^{-N(\Exp\log|d_1-z|)_{-} + o(N)},
    \end{equation}
    \corAB{with probability approaching to one, as $N \to \infty$.} Applying Theorem~\ref{thm:logdet}, with $\varepsilon_N = N^{-\eta},$ we get that
    \begin{equation}\label{eq:conse-21}
      \frac{1}{N}\log|\det(\mathcal{M}_N-z\Id)|
      -
      \frac{1}{N}\log\frac{|\det(M_N-z\Id)|}{ \prod_{k=0}^{N^{*}-1} \sigma_{N-k}(M_N-z \Id)} \to 0
    \end{equation}
    in probability,
    where we recall that $N^*$ was defined in \eqref{eq:N*-dfn} as the largest $i$ so that
    \[
      \sigma_{N-i} < N^{\eta - \gamma}(N-i)^{1/2} < N^{-\varepsilon}.
    \]
    So $N^* \leq \corAB{L},$ and therefore
    \begin{equation}\label{eq:not-small-sing-bd}
      (N^{\eta-\gamma}(N-N^*)^{1/2})^{\corAB{L}}
      \leq
      \frac
      {\prod_{k=0}^{\corAB{L}-1} \sigma_{N-k}(M_N-z \Id)}
      {\prod_{k=0}^{N^{*}-1} \sigma_{N-k}(M_N-z \Id)}
      \leq
      \|M_N - z\Id\|^{\corAB{L}}.
    \end{equation}
    \corAB{As we have already seen during the proof of Corollary \ref{cor:iidssv} that $\|M_N\| = O(N^{2/\beta_0})$ with probability approaching one, by the triangle inequality the same bound continues to hold for $\|M_N -z \Id\|$. Therefore, using the fact that $L=O(N^{1-\delta})$ we deduce that both the upper and lower bounds in \eqref{eq:not-small-sing-bd} are sub-exponential in $N$. On the other hand, since $M_N$ is an upper triangular matrix, by Chebycheff's inequality it also follows that $|\det(M_N-z\Id)|=e^{N\Exp\log|d_1-z| + o(N)},$ with probability tending to one. Hence, combining \eqref{eq:prodsing-iid}-\eqref{eq:conse-21} we deduce}
%
    \[
      \frac{1}{N}\log|\det(\mathcal{M}_N-z\Id)|
      -
      \frac{1}{N}\log\frac{e^{N\Exp\log|d_1-z| + o(N)}}{e^{-N(\Exp\log|d_1-z|)_{-} + o(N)}} \to 0
    \]
    in probability.  As this holds for Lebesgue almost every $z \in \C,$ the claimed statement follows.
  \end{proof}
The next corollary deals with $M_N=D_N+J$ where the entries in $D_N$
vary slowly.
  \begin{corollary}
    \label{cor:holderssv} Suppose that
    $f : [0,1] \to \C$ is \corAB{$\alpha$}-H\"older continuous,
    and that $d_i = f(i/N)$ for $1 \leq i \leq N.$
    For every $\varepsilon > 0,$ there is a $\delta >0$ and an
    $\corAB{L = O(N^{1-\delta})}$ so that
  \(
  \sigma_{N-\corAB{L}}(M_N) \geq N^{-\varepsilon}
  \)
  and
  \[
    \prod_{k=0}^{\corAB{L}-1} \sigma_{N-k}(M_N) = e^{\sum_{i=1}^N -(\log |f(i/N)|)_{-} + o(N)}.
  \]
  \end{corollary}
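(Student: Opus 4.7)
My plan is to mirror the proof of Corollary~\ref{cor:iidssv}: build a partition of $[N]$ whose blocks support the test vectors ${\bm w}^k$ of Theorem~\ref{thm:DJssv}, bound $\mathfrak{D}$ on that partition, and then read off both claims from \eqref{eq:thm31bd}. Unlike the i.i.d.\ case, the Hölder hypothesis will make the construction of the partition deterministic and uniform, so no martingale excursion estimate is needed.

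I fix $\delta \in (0,1)$ with $\delta < \min\{\varepsilon, \alpha/(1+\alpha)\}$ and take $i_{k+1} - i_k = \lfloor N^\delta \rfloor$, so that $L = O(N^{1-\delta})$. By $\alpha$-Hölder continuity, on each block $[i_k+1, i_{k+1}]$ every value $|f(m/N)|$ lies within $KN^{-\alpha(1-\delta)}$ of $c_k := |f(i_k/N)|$, an error which is $o(1)$ uniformly in $k$ and $m$. To bound $\mathfrak{D}$ I split into three regimes. When $c_k > 1 + N^{-\delta}$, $|\mathcal{D}^{p,r}| \asymp c_k^{r-p}$, and a geometric sum gives $\mathcal{D}_-^{i_k, i_{k+1}} \leq 2c_k/(c_k - 1) \leq 2\|f\|_\infty N^\delta$. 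When $c_k < 1 - N^{-\delta}$ (which also absorbs the subcase where $f$ has a zero in the block, for then $|\mathcal{D}^{p,r}| \leq 1$), a symmetric argument yields $\mathcal{D}_+^{i_k, i_{k+1}} \leq 2N^\delta$. When $c_k$ is within $N^{-\delta}$ of $1$, the Hölder-corrected factors satisfy $(1 \pm N^{-\delta})^{N^\delta}(1+o(1)) = e^{O(1)}$, so every $|\mathcal{D}^{p,r}|$ lies in a bounded interval and a trivial bound gives $\mathcal{D}_\pm \leq 2e N^\delta$. In all cases $\min\{\mathcal{D}_+, \mathcal{D}_-\} \leq CN^\delta$, hence $\mathfrak{D} \leq CN^\delta \leq CN^\varepsilon$, and Theorem~\ref{thm:DJssv} yields $\sigma_{N-L}(M_N) \geq N^{-\varepsilon}$ for $N$ large.

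For the product formula, since $\|M_N\| \leq \|f\|_\infty + 1$ and $\mathfrak{D} \sqrt{L}$ is polynomial in $N$, the two-sided bound \eqref{eq:thm31bd} reduces to $\prod_{k=0}^{L-1} \sigma_{N-k}(M_N) = \prod_{k=1}^L \|\pi_k M_N {\bm w}^k\|_2 \cdot e^{\pm o(N)}$. By \eqref{eq:MNv}, $\|\pi_k M_N {\bm v}^{i_k+1, i_{k+1}}\|_2 = |\mathcal{D}^{i_k+1, i_{k+1}+1}|$, and since the blocks tile $[N]$ the product $\prod_k |\mathcal{D}^{i_k+1, i_{k+1}+1}|$ telescopes to $\prod_{m=1}^N |f(m/N)|$. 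Thus it remains to show
\[
\sum_{k=1}^L \log \|{\bm v}^{i_k+1, i_{k+1}}\|_2 = \sum_{i=1}^N (\log |f(i/N)|)_+ + o(N),
\]
which I would verify block-by-block: when $c_k \geq 1$ is bounded away from $1$, $\|{\bm v}\|_2$ is comparable to its largest entry, giving $\log \|{\bm v}\|_2 = \sum_{m \in \text{block}} \log |f(m/N)| + O(\log N)$, which matches the target on the block; when $c_k \leq 1$ is bounded away from $1$, $\|{\bm v}\|_2 \in [1, O(1)]$ so $\log\|{\bm v}\|_2 = O(1)$ and the target is $0$; and when $c_k \approx 1$, all $|\mathcal{D}^{i_k+1,\ell}|$ are bounded so $\log\|{\bm v}\|_2 = O(\log N)$ while the target is $O(N^{\delta(1+\alpha) - \alpha}) = o(1)$ per block.

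The main obstacle is precisely the borderline regime $c_k \approx 1$: there the prefactors $\log (c_k^2 - 1)^{\pm 1/2}$ produced by the geometric sums can individually be as large as $\delta \log N$, and the argument succeeds only because the freedom to aggregate up to $N^\delta$ consecutive indices per block (with $\delta < \alpha/(1+\alpha)$ small enough that the Hölder correction remains $o(1)$) cuts the total number of blocks to $O(N^{1-\delta})$, absorbing these $O(\log N)$ per-block errors into $o(N)$ overall. Combining the two pieces yields
\[
\prod_{k=0}^{L-1} \sigma_{N-k}(M_N) = \exp\Bigl( \sum_{i=1}^N \min(\log |f(i/N)|, 0) + o(N) \Bigr) = \exp\Bigl( -\sum_{i=1}^N (\log |f(i/N)|)_- + o(N) \Bigr),
\]
as claimed.
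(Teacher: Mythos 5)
Your proposal is correct, and it reaches the conclusion by a genuinely different route than the paper. The paper constructs an \emph{adaptive} partition: it augments the equispaced $N^\delta$-grid with the ``crossing points'' $a_j$ where $\log|f|$ drifts past $\pm N^{-\delta}$, uses Hölder continuity to bound the number of crossings by $O(N^{\delta/\alpha})$, and then deduces that within every resulting block, either all $\log|f(s/N)| \leq N^{-\delta}$ or all $\log|f(s/N)| \geq -N^{-\delta}$. You instead keep a \emph{fixed} equispaced partition and observe that, since the intra-block variation of $|f|$ is $O(N^{-\alpha(1-\delta)}) = o(N^{-\delta})$ once $\delta < \alpha/(1+\alpha)$, a block can never straddle the threshold $1$ in a harmful way: the block is either entirely above $1+\Omega(N^{-\delta})$, entirely below $1-\Omega(N^{-\delta})$, or entirely within $O(N^{-\delta})$ of $1$. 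This is cleaner — no adaptive construction — and uses Hölder continuity to control intra-block variation directly, rather than to count sign changes. Both schemes impose the same constraint $\delta < \alpha/(1+\alpha)$ (the paper implicitly, ``upon reducing $\delta$ if necessary''), both give $\mathfrak{D} = O(N^\delta)$, and both then finish from Theorem~\ref{thm:DJssv} in the same way. Your telescoping $\prod_k |\mathcal{D}^{i_k+1,i_{k+1}+1}| = \prod_m |f(m/N)|$ and subsequent estimate of $\sum_k \log\|{\bm v}^{i_k+1,i_{k+1}}\|_2$ is also a small reorganization of the paper's per-block bound on $\|\pi_k M_N {\bm w}^k\|_2$; they are algebraically equivalent.

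Two tiny imprecisions, neither of which affects the argument. In your middle regime you write $\|{\bm v}\|_2 \in [1, O(1)]$, but since the block has up to $N^\delta$ entries each $\le 1$, the correct upper bound is $O(N^{\delta/2})$, so $\log\|{\bm v}\|_2 = O(\log N)$ (not $O(1)$) — but you already rely only on the $O(\log N)$ per-block error, so the conclusion stands. In the borderline regime you quote the per-block target as $O(N^{\delta(1+\alpha)-\alpha}) = o(1)$, whereas taking $c_k$ at the extreme of the window $|c_k - 1| \le N^{-\delta}$ gives $(\log|f|)_+ = O(N^{-\delta})$ per entry, hence $O(1)$ per block, not $o(1)$; again, since your dominant per-block error is the $O(\log N)$ from $\|{\bm v}\|_2$, nothing breaks.
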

  \begin{proof}
  \corAB{The key to the proof is again to construct a suitable partition of $[N]$. Fix any $\delta  \in (0,1)$.} Inductively define $a_j \in \N$ by setting $a_1:=1$ and letting
  \[
    a_{j+1} := \begin{cases}
      \inf\{ a_j < k \leq N : \log |f(k/N)| > N^{-\delta} \}, &\text{if } \log |f(a_j/N)| < 0, \\
      \inf\{ a_j < k \leq N : \log |f(k/N)| < -N^{-\delta} \}, &\text{if } \log |f(a_j/N)| > 0. \\
    \end{cases}
  \]
 \corAB{From the definition of $\{a_j\}$ and the H\"older continuity of $f$ we see that} there is \corAB{a constant $C_0$} so that
 \[
\corABrev{\frac12} N^{-\delta} \leq |f(a_j/N) - f(a_{j+1}/N)| \leq \corAB{C_0}\left(\frac{a_{j+1}-a_j}{N}\right)^{\alpha}
 \]
 \corAB{and note that $|\{a_j\}| = O(N^{\delta/\alpha})$}.
 Let
 \[
   S := \cup_{k=1}^{\lfloor N^{1-\delta}\rfloor } \lfloor N^\delta k\rfloor \bigcup \{a_j: j \in \N\}.
 \]
 Enumerate the elements of $S$ as $\{ i_2 < i_3 < i_4 < \dots < i_{|S|+1} \}.$  Extend this by letting $i_1 =0,$ $\corOZ{L}= |S|+1,$ and $i_{\corOZ{L}+1}=N.$  \corAB{Then, upon reducing $\delta$ if necessary, we obtain $L = O(N^{1-\delta})$}, and the separation $i_{k+1} - i_k < N^{\delta}$ for all $k$.

 Next, recalling the definition of $\mathfrak{D}^{s,r}$, by the construction of $\{a_j\}$, \corAB{and the triangle inequality} we also have that either
 \[
   |\mathcal{D}^{s,r}| \leq \corAB{2} e^{(r-s)N^{-\delta}}
   \;\text{for all $i_k < s \leq r \leq i_{k+1}$  or }
   |\mathcal{D}^{s,r}| \geq \corAB{\frac12} e^{-(r-s)N^{-\delta}}
   \;\text{for all $i_k < s \leq r \leq i_{k+1}$.}
 \]
 Hence $\mathfrak{D} \leq \corAB{4}eN^{\delta}$, \corAB{which upon an application of  Theorem~\ref{thm:DJssv} yields that given any $\vep >0$, there exists $\delta>0$, sufficiently small such that $\sigma_{N-L}(M_N) \ge N^{-\vep}$.} \corABa{As $\|M_N\| =\|M_N M_N^*\|^{1/2} = O(\sup_{x \in[0,1]} |f(x)|)$, by the Gershgorin circle theorem}, applying Theorem~\ref{thm:DJssv} \corOZ{once again, it} remains to show that
 \begin{equation}\label{eq:prodsing-d1}
    \prod_{k=1}^{\corAB{L}}\|\pi_k M_N {\bm w}^k\|_2
    =
    e^{\sum_{i=1}^N -(\log |f(i/N)|)_{-} + o(N)}
 \end{equation}
 where we recall that
 ${\bm w}^k ={\bm v}^{i_k+1, i_{k+1}}\|{\bm v}^{i_k+1, i_{k+1}}\|_2^{-1}$ for $1 \leq k \leq \corAB{L}.$ \corAB{To this end, we note that} for each
 $1 \leq k\leq \corOZ{L}$ we have that either
 \[
   \log |f(s/N)| \leq N^{-\delta}
  \; \text{for all $i_k < s \, \corAB{<} \, i_{k+1}$ or }
   \log |f(s/N)| \geq -N^{-\delta}
  \; \text{for all $i_k < s \, \corAB{<}\,  i_{k+1}$.}
 \]
 \corAB{Using the H\"older continuity of $f$ we further have that $|f(i_{k+1}/N)| \le 2 e^{N^{-\delta}}$ in the first case and $|f(i_{k+1}/N)| \ge \frac12 e^{-N^{-\delta}}$ in the second case.} Therefore, in the first case, we have that
 \[
   1
   \leq \|{\bm v}^{i_k+1, i_{k+1}}\|^2_2
   \leq 2 e^2 N^{\delta},
 \]
 and
 \begin{align*}
   |\mathcal{D}^{i_k+1,i_{k+1}+1}| \corAB{= \prod_{j=i_k+1}^{i_{k+1}} |f(j/N)|} & = \corAB{ \exp\left( \sum_{j=i_k+1}^{i_{k+1}} (\log |f(j/N)|)_{+}- \sum_{j=i_k+1}^{i_{k+1}} (\log |f(j/N)|)_{-}\right)}\\
      & =\exp\biggl(O(1)-\sum_{j=i_k+1}^{i_{k+1}} (\log |f(j/N)|)_{-}\biggr).
 \end{align*}
 Hence,
  \begin{equation}\label{eq:prodsing-d1-1}
   \|\pi_k \corAB{M_N}{\bm w}^k\|_2
   =\exp\biggl(O(\log N)-\sum_{j=i_k+1}^{i_{k+1}} (\log |f(j/N)|)_{-}\biggr).
 \end{equation}
Arguing similarly, in the second case, we have that
 \[
   |\mathcal{D}^{i_k+1,i_{k+1}}|^2
   \leq \|{\bm v}^{i_k+1, i_{k+1}}\|^2_2
   \leq 2e^2 N^{\delta} |\mathcal{D}^{i_k+1,i_{k+1}}|^2,
 \]
 and therefore
 \begin{equation}\label{eq:prodsing-d1-2}
   \|\pi_k \corAB{M_N} {\bm w}^k\|_2
   =\exp\biggl(O(\log N)\biggr) =\exp\biggl(O(\log N)-\sum_{j=i_k+1}^{i_{k+1}} (\log |f(j/N)|)_{-}\biggr).
 \end{equation}
  \corAB{Combining \eqref{eq:prodsing-d1-1}-\eqref{eq:prodsing-d1-2} we arrive at \eqref{eq:prodsing-d1}. This completes the proof.}
 \end{proof}
 Building on
    Corollary \ref{cor:holderssv}, the next
 corollary is Theorem \ref{theo-2} b).
 \begin{corollary}
   \label{cor:holder}
Suppose $f : [0,1] \to \C$ is H\"older continuous,
    and that $d_i = f(i/N)$ for $1 \leq i \leq N.$
    Set $M_N=D_N+J$. Then
   \corAB{$L_N$} converges weakly in probability to a probability
   measure with log potential
   \[
     \int_0^1 \bigl(\log |f(t)-z|\bigr)_+\,dt.
   \]
  \end{corollary}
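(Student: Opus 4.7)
The plan is to mirror the proof of Corollary \ref{cor:iid}, with Corollary \ref{cor:iidssv} replaced by its H\"older analogue Corollary \ref{cor:holderssv} and Theorem \ref{thm:logdet} used to identify $\LL_{L_N}(z)$ pointwise. For tightness: the Gershgorin circle theorem gives $\|M_N\|\le C_f:=1+\sup_{t\in[0,1]}|f(t)|$, and \eqref{eq-gordon} then yields $\|\model_N\|\le C_f+1$ almost surely for large $N$. Since the spectral radius is bounded by the operator norm, $L_N$ is supported on $\{|z|\le C_f+1\}$ with probability tending to one, hence tight. By \cite[Theorem 2.8.3]{tao2012topics} it suffices to prove that, for Lebesgue-a.e.\ $z\in\C$,
\[
\LL_{L_N}(z)\to \int_0^1 (\log|f(t)-z|)_+\,dt\quad\text{in probability.}
\]

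Fix such a $z$, excluding also the countable set where $z=f(i/N)$ for some $i,N$. The shifted matrix $M_N-z\Id$ is of the form $D'_N+J$ with H\"older-continuous diagonal $t\mapsto f(t)-z$, so, taking $\varepsilon,\eta>0$ with $\varepsilon+\eta<\gamma-\tfrac12$, Corollary \ref{cor:holderssv} supplies a $\delta>0$ and an $L=O(N^{1-\delta})$ for which
\[
\sigma_{N-L}(M_N-z\Id)\ge N^{-\varepsilon},\qquad \prod_{k=0}^{L-1}\sigma_{N-k}(M_N-z\Id)=\exp\!\Bigl(-\!\sum_{i=1}^N(\log|f(i/N)-z|)_{-}+o(N)\Bigr),
\]
with probability tending to one. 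Setting $\varepsilon_N=N^{-\eta}$ in Theorem \ref{thm:logdet}, the definition \eqref{eq:N*-dfn} forces $N^*\le L=o(N/\log N)$, placing us in the $\alpha=0$ regime, so that
\[
\frac{1}{N}\log|\det(\model_N-z\Id)|=\frac{1}{N}\log\frac{|\det(M_N-z\Id)|}{\prod_{k=0}^{N^*-1}\sigma_{N-k}(M_N-z\Id)}+o(1)
\]
in probability. The ratio between the $L$- and $N^*$-indexed singular-value products lies between $(N^{-\varepsilon})^L$ and $\|M_N-z\Id\|^L$, both sub-exponential in $N$ (invisible after dividing by $N$). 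Combining with $|\det(M_N-z\Id)|=\prod_{i=1}^N|f(i/N)-z|$ and the identity $x+x_-=x_+$ yields
\[
\frac{1}{N}\log|\det(\model_N-z\Id)|=\frac{1}{N}\sum_{i=1}^N(\log|f(i/N)-z|)_{+}+o(1)\quad\text{in probability.}
\]

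Finally, $h_z(t):=(\log|f(t)-z|)_+$ is bounded above by $\log(C_f+|z|+1)$ and continuous on $[0,1]$: continuity on $\{f\ne z\}$ is immediate, while at any $t_0$ with $f(t_0)=z$ one has $h_z(t)\to 0=h_z(t_0)$ because $|f(t)-z|\to 0$. Riemann-sum convergence then gives $N^{-1}\sum_{i=1}^N h_z(i/N)\to \int_0^1 h_z(t)\,dt$, completing the proof. I expect the main obstacle to be the pointwise identification step, specifically squeezing the singular-value-prefix mismatch between the $L$- and $N^*$-truncations into an $o(N)$ term in the log-potential; the tightness and Riemann-sum arguments are then routine.
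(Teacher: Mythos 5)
Your proof is correct and tracks exactly the route the paper intends: the authors omit the proof of Corollary \ref{cor:holder}, noting it follows the derivation of Corollary \ref{cor:iid} from Corollary \ref{cor:iidssv}, and your argument reproduces that derivation with Corollary \ref{cor:holderssv} in place of Corollary \ref{cor:iidssv} (with tightness even simpler here, since Gershgorin gives a deterministic norm bound). One small quibble: in bounding the ratio $\prod_{k=N^*}^{L-1}\sigma_{N-k}(M_N-z\Id)$ from below, the defining property of $N^*$ (cf.\ \eqref{eq:N*-dfn} and \eqref{eq:not-small-sing-bd}) gives each factor $\geq N^{\eta-\gamma}(N-N^*)^{1/2}$, not $\geq N^{-\varepsilon}$, but since $L=O(N^{1-\delta})$ this is still sub-exponential and the conclusion is unaffected.
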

  \corAB{The proof
    follows a very similar track as the
    derivation of Corollary~\ref{cor:iid} from Corollary~\ref{cor:iidssv}. Moreover, as we will see later in Section \ref{sec:pf-pc-constant}, the proof of Theorem \ref{thm:finite-off-diag-pc} \corEE{also follows a very similar line of arguments}. Hence, the proof of Corollary \ref{cor:holder} is omitted.}

\subsection{ Proof of Theorem~\ref{thm:DJssv}:
  estimates for the small singular values of $D_N + J$.}\label{sec:pfDJssv}
The proof is
divided
into three separate claims: \corAB{Corollary~\ref{cor:lb},}
\corOZ{which is a} bound on the $(L+1)$-st smallest singular value,
\corAB{Proposition~\ref{prop:ub},} \corOZ{which is an}
\corAB{upper bound on the product of small singular values,
  and Proposition~\ref{prop:lb},}
\corOZ{which is a lower bound on the product of small singular values.}

Recall
the notation in \eqref{eq:iteration} and \eqref{eq:uniform}, and that
that $\pi_j$ is the coordinate projection from $\C^N$ to the coordinates that support ${\bm w}^j.$  Let $\corAB{\rho_j}$ be the same coordinate projection that in addition kills the $i_{j+1}$ coordinate, i.e.\,the last entry of the support of ${\bm w}^j$.
\begin{lemma}
  For all $1\leq j \leq \corAB{L},$ and any vector ${x}\,  \corAB{\in \C^N}$,
  \[
    \frac{
      \inf_{a \in \C} \|\pi_j({x} - a{\bm w}^{j})\|_2
    }{
      \| \corAB{\rho_j}( M_N {x})\|_2
    }
    \leq
    \min\left\{
      \mathcal{D}_{+}^{i_j,i_{j+1}},
      \mathcal{D}_{-}^{i_j,i_{j+1}}
    \right\},
  \]
  where $\corAB{\mathcal{D}_{+}^{i_j,i_{j+1}}}$ and $\corAB{\mathcal{D}_{-}^{i_j, i_{j+1}}}$ were defined in \eqref{eq:ddef}.
\label{lem:localstructure}
\end{lemma}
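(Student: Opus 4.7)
The plan is to view the lemma as a bound on the norm of the minimum-norm solution of a localized linear system. Set $m := i_j+1$ and $n := i_{j+1}$, so that $\pi_j$ restricts to the coordinates $\{m,\ldots,n\}$ and $\rho_j$ to $\{m,\ldots,n-1\}$, and let $A\colon \C^{n-m+1}\to\C^{n-m}$ be the bidiagonal map sending $(y_k)_{k=m}^n$ to $(d_k y_k + y_{k+1})_{k=m}^{n-1}$. Then $A(\pi_j x) = \rho_j(M_N x)$, and by \eqref{eq:MNv} the vector ${\bm w}^j$ (restricted to its support) spans $\ker A$. Consequently, as $a$ ranges over $\C$, $\pi_j(x - a{\bm w}^j)$ sweeps out every solution $y$ of $Ay = z$ with $z := \rho_j(M_N x)$, so the infimum in the lemma is dominated by $\|y\|_2$ for any specific solution. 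It therefore suffices to exhibit two solutions with norms bounded by $\mathcal{D}_+^{i_j,i_{j+1}}\|z\|_2$ and $\mathcal{D}_-^{i_j,i_{j+1}}\|z\|_2$ respectively.

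For the first (``forward'') solution I would set $y^F_m := 0$ and iterate $y^F_{k+1} := z_k - d_k y^F_k$. Unwinding the recursion produces $y^F_k = \sum_{p=m}^{k-1}(\pm)\mathcal{D}^{p+1,k} z_p$, so $|y^F| \le T|z|$ componentwise, where $T$ is the strictly lower-triangular matrix with $T_{kp} := |\mathcal{D}^{p+1,k}|$. Schur's test yields $\|T\| \le \sqrt{\|T\|_\infty \|T\|_1}$. After the reindexing $q = p+1$, the $k$-th row sum equals $\sum_{q=m+1}^{k}|\mathcal{D}^{q,k}|$, which is bounded by $\mathcal{D}_+^{i_j,i_{j+1}}$ on choosing $(s,r) = (m+1,k)$ in \eqref{eq:ddef} and discarding the $|\mathcal{D}^{s,p}|$ piece of that maximum; the $p$-th column sum $\sum_{k=p+1}^{n}|\mathcal{D}^{p+1,k}|$ is bounded identically by taking $(s,r) = (p+1,n)$ and discarding the $|\mathcal{D}^{p,r}|$ piece. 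Hence $\|y^F\|_2 \le \mathcal{D}_+^{i_j,i_{j+1}}\|z\|_2$.

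The ``backward'' solution is produced symmetrically: set $y^B_n := 0$ and iterate $y^B_k := d_k^{-1}(z_k - y^B_{k+1})$ (valid whenever the relevant $d_\ell$'s are nonzero; if one vanishes then $\mathcal{D}_-^{i_j,i_{j+1}} = \infty$ and the forward bound alone proves the lemma). This gives $|y^B_k| \le \sum_{p=k}^{n-1}|\mathcal{D}^{k,p+1}|^{-1}|z_p|$, and Schur's test applied to $U_{kp} := |\mathcal{D}^{k,p+1}|^{-1}$, matching the row sums (at fixed $k$, with $(s,r) = (k,n)$) to the $|\mathcal{D}^{s,p}|^{-1}$ piece and the column sums (at fixed $p$, with $(s,r) = (m,p+1)$) to the $|\mathcal{D}^{p,r}|^{-1}$ piece of $\mathcal{D}_-^{i_j,i_{j+1}}$, yields $\|y^B\|_2 \le \mathcal{D}_-^{i_j,i_{j+1}}\|z\|_2$. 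Taking the minimum of the two bounds completes the proof. The main obstacle will be this Schur-test bookkeeping: the two quantities $\mathcal{D}_{\pm}^{i_j,i_{j+1}}$ have been designed so that a single $\max$ over pairs $(s,r)$ simultaneously dominates both the row and column sums of $T$ (respectively $U$), and the real work lies in aligning the two index-range sums appearing in Schur's test with the two halves ($|\mathcal{D}^{p,r}|$ and $|\mathcal{D}^{s,p}|$) of that maximum.
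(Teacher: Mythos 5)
Your argument is correct and amounts to the same proof as the paper's: the two explicit solutions $y^F$ (anchored at the left endpoint) and $y^B$ (anchored at the right endpoint) are exactly what the paper obtains by choosing $a$ to annihilate $x_{i_j+1}-a{\bm w}^j_{i_j+1}$ or $x_{i_{j+1}}-a{\bm w}^j_{i_{j+1}}$, and your Schur-test bound $\|T\|\le\sqrt{\|T\|_\infty\|T\|_1}\le\mathcal D_\pm$ is precisely the paper's step of applying Cauchy--Schwarz with the weights $|\mathcal D^{r+1,p}|$ and then swapping the order of summation. The only cosmetic differences are the ``minimum-norm solution of a localized bidiagonal system'' framing and the explicit invocation of Schur's test, which are clean restatements rather than a different route.
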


\begin{proof}[Proof of Lemma~\ref{lem:localstructure}]
  By definition of $M_N,$ we have for any $1 \leq p < N$ and any vector ${x}\in \C^N,$
  \[
    {x}_{p+1} = d_p {x}_p \corABrev{-} (M_N {x})_p.
  \]
  By iterating this identity and using \eqref{eq:MNv}, we have for $i_{j+1} \geq p > k \geq i_j + 1$
  \[
    {x}_p - a{\bm w}^{j}_p
    = ({x}_{k} - a{\bm w}^{j}_{k})\mathcal{D}^{k,p}
    +\sum_{r=k}^{p-1} (M_N {x})_r \mathcal{D}^{r+1,p}.
  \]
  Reversing the roles of $k$ and $p$ and rearranging the formula,
  this also shows that for $i_{j+1} \geq k > p \geq i_j + 1$
  \[
    {x}_p - a{\bm w}^{j}_p
    =
    \frac{
      ({x}_{k} - a{\bm w}^{j}_{k})
    }{
    \mathcal{D}^{p,k}
    }
    \corABrev{+} \sum_{r=p}^{k-1} \frac{(M_N {x})_r}{\mathcal{D}^{p,r+1}}.
  \]

  Picking \corO{$a$ so that $({x}_{i_j+1} - a{\bm w}^{j}_{i_j+1})=0$ or $({x}_{i_{j+1}} - a{\bm w}^{j}_{i_{j+1}})=0$} we have that
  \begin{equation}
    |{x}_p - \corO{a}{\bm w}^{j}_p|
    \leq
    \begin{cases}
      \sum_{r=i_j+1}^{p-1} |(M_N {x})_r| \cdot |\mathcal{D}^{r+1,p}|, & p > i_j+1, \\
      \sum_{r=p}^{i_{j+1}-1} |(M_N {x})_r| \cdot |\mathcal{D}^{p,r+1}|^{-1}, & p < i_{j+1}-1. \\
    \end{cases}
    \label{eq:lociterate}
  \end{equation}
  Hence, \corAB{using the first inequality of \eqref{eq:lociterate},} upon applying Jensen's inequality,
  \[
    |{x}_p - \corO{a{\bm w}^{j}_p}|^2
    \leq
    \biggl\{
      \sum_{r=i_j+1}^{p-1} |(M_N {x})_r|^2 \cdot |\mathcal{D}^{r+1,p}|
    \biggr\}
    \cdot
  \biggl\{
    \sum_{r=i_j+1}^{p-1} |\mathcal{D}^{r+1,p}|
  \biggr\}
  \]
  Summing this bound from $p=i_j+1$ to $p = i_{j+1}$, \corAB{rearranging the terms}, and using the definition of $\mathcal{D}_+^{i_j,i_{j+1}},$
  \[
    \|\pi_j({x}-a{\bm w}^j)\|_2^2=
    \sum_{p=i_j+1}^{i_{j+1}}
    |{x}_p - \corO{a{\bm w}^{j}_p}|^2
    \leq
    \biggl\{
      \sum_{r=i_j+1}^{i_{j+1}-1} |(M_N {x})_r|^2
    \biggr\}
    \cdot
    \biggl\{
      \mathcal{D}_+^{i_j,i_{j+1}}
    \biggr\}^2=
    \|\corAB{\rho_j}(M_N {x})\|_2^2
    \cdot
    \biggl\{
      \mathcal{D}_+^{i_j,i_{j+1}}
    \biggr\}^2 .
  \]
  \corAB{Next using the second inequality of \eqref{eq:lociterate} and proceeding similarly as above we} complete the proof.
\end{proof}

We now proceed to using these estimates in order to
control the product of the small singular values of $M_N$. \corAB{We begin with obtaining an upper bound on the product of small singular values. To this end,
  we use Lemma \ref{lem:prod_sing}, which is a multivariate generalization of Courant-Fischer-Weyl min-max principle for singular values.}

\begin{proposition}
  \begin{equation}
    \label{eq-fresh1}
  \prod_{k=0}^{\corAB{L}-1} \sigma_{N-k}(M_N) \leq
  \prod_{k=\corO{1}}^{\corAB{L}}
    \|\pi_{\corO{k}} M_N {\bm w}^{\corO{k}}\|_2.
  \end{equation}
  \label{prop:ub}
\end{proposition}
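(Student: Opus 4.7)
The plan is to apply Lemma~\ref{lem:prod_sing} to the $L$-dimensional subspace $V = \mathrm{span}\{{\bm w}^1, \ldots, {\bm w}^L\}$. Since the vectors ${\bm w}^k$ have pairwise disjoint supports and unit norm, they form an orthonormal basis of $V$; assembling them into an $N \times L$ matrix $W = [{\bm w}^1 \mid \cdots \mid {\bm w}^L]$, the multivariate min--max principle yields
\[
    \prod_{k=0}^{L-1} \sigma_{N-k}(M_N)^2 \leq \det\bigl((M_N W)^\ast (M_N W)\bigr).
\]
It then suffices to compute this $L \times L$ Gram determinant exactly and verify that it equals $\prod_{k=1}^L \|\pi_k M_N {\bm w}^k\|_2^2$.

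The structural input is \eqref{eq:MNv}: each column $v_k := M_N {\bm w}^k$ is supported on just two coordinates $\{i_k, i_{k+1}\}$ (and on the single coordinate $\{i_2\}$ when $k=1$, since the boundary index $i_1=0$ lies outside $[N]$). Consequently the Gram matrix $G := (M_N W)^\ast (M_N W)$ is tridiagonal. Setting $a_k := |(v_k)_{i_{k+1}}|$ and $b_k := |(v_k)_{i_k}|$ with the convention $b_1 := 0$, one reads off that $G_{kk} = a_k^2 + b_k^2$, $|G_{k,k+1}|^2 = a_k^2 b_{k+1}^2$, and $G_{kl} = 0$ whenever $|k-l| \geq 2$. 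Moreover $\pi_k v_k$ retains only the coordinate at $i_{k+1}$, so $\|\pi_k M_N {\bm w}^k\|_2 = a_k$, which is exactly the factor that should appear on the right-hand side of \eqref{eq-fresh1}.

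The determinant $\det G$ can then be computed cleanly via the Cauchy--Binet formula $\det G = \sum_{|I|=L} |\det (M_N W)[I,:]|^2$. Because $\mathrm{supp}(v_1) = \{i_2\}$ forces column $1$ of any non-vanishing $L \times L$ minor to be matched to row $i_2$, which in turn forces column $2$ to row $i_3$, and so on inductively, there is a unique row-set $I = \{i_2, i_3, \ldots, i_{L+1}\}$ producing a non-zero minor; the resulting submatrix is upper bidiagonal with diagonal entries of modulus $a_1, \ldots, a_L$, so $\det G = \prod_{k=1}^L a_k^2$. As a sanity check one can also carry out a short induction on $L$ using the tridiagonal three-term recursion $\det G_L = G_{LL}\det G_{L-1} - |G_{L-1,L}|^2 \det G_{L-2}$: the $b_L^2 \det G_{L-1}$ and $-a_{L-1}^2 b_L^2 \det G_{L-2}$ terms cancel exactly via the inductive identity $\det G_{L-1} = a_{L-1}^2 \det G_{L-2}$.

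Combining this identity with the min--max bound yields \eqref{eq-fresh1}. I do not foresee any substantive obstacle: the only delicate point is the boundary at $k = 1$, where \eqref{eq:MNv} loses one of its two output coordinates, but this is precisely what renders the Cauchy--Binet matching forced rather than combinatorially nontrivial.
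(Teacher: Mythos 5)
Your proposal is correct and follows essentially the same route as the paper: apply Lemma~\ref{lem:prod_sing} with the orthonormal system $\{{\bm w}^j\}$ and compute the Gram determinant $\det({\bm W}^* M_N^* M_N {\bm W})$ exactly by exploiting the fact that each column $M_N{\bm w}^k$ is supported on $\{i_k, i_{k+1}\}$. The paper packages this as the observation that $\mathcal{M}_{j,k} := (e_{i_{j+1}})^t M_N {\bm w}^k$ is upper bidiagonal with $\det(\mathbf{W}^*M_N^*M_N\mathbf{W}) = |\det\mathcal{M}|^2$; your Cauchy--Binet "unique non-zero $L\times L$ minor" argument and the tridiagonal recursion are both equivalent ways of reaching the same identity.
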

\begin{proof}
  \corAB{Denote ${\bm W}:=\begin{bmatrix}{\bm w}^1 & {\bm w}^2 & \cdots & {\bm w}^L\end{bmatrix}$. Since the columns of ${\bm W}$ are orthonormal, from Lemma \ref{lem:prod_sing} it follows that
\begin{equation}\label{eq:prodsing-d1-ubd}
\prod_{k=0}^{{L}-1} \sigma_{N-k}(M_N) \le \det({\bm W}^* M_N^* M_N {\bm W}).
\end{equation}
To evaluate the \abbr{RHS} of \eqref{eq:prodsing-d1-ubd} we define the $L \times L$} matrix $\mathcal{M}$ by
  \[
    \mathcal{M}_{j,k} := (e_{i_{j+1}})^t M_N {\bm w}^k,
  \]
  \corABrev{where $\{e_\ell\}_{\ell=1}^N$ are canonical basis vectors in $\C^N$.}
\corAB{Recalling \eqref{eq:MNv} we note that ${\bm W}^* M_N^* M_N {\bm W}= \mathcal{M}^* \mathcal{M}$. On the other hand, the matrix $\mathcal{M}$ being} upper triangular we also have
  \begin{equation}\label{eq:detMW}
    \corAB{\prod_{j=1}^{\corAB{L}} \|\pi_j M_N {\bm w}^j\|_2^2
    =\prod_{j=1}^{\corAB{L}} |\mathcal{M}_{j,j}|^2
    =|\det \mathcal{M}|^2 = \det({\bm W}^* M_N^* M_N {\bm W}).}
  \end{equation}
  \corOZ{The proof concludes by invoking \eqref{eq:prodsing-d1-ubd}.}
 \end{proof}


 \corAB{We next show that vectors $v$
   which have a sizeable component orthogonal
to $\cS:= \Span\left\{ {\bm w}^j \right\}$ will necessarily
have $\|M_N v\|_2$ large.} To this end, let $\psi$ be the orthogonal projection map from $\C^N$ to \corAB{$\cS$}. Let $\rho$ be the projection
\begin{equation}
  \label{eq-rho}\rho := \sum_{j=1}^\ell \corAB{\rho_j}.
\end{equation}
The projections $\rho$ and $\psi$ interact in that
\begin{equation}
  \rho M_N \psi = 0,
  \label{eq:rhogood}
\end{equation}
which follows immediately from the definition of ${\bm w}^j.$
We can then combine this observation with the earlier
Lemma~\ref{lem:localstructure} to obtain the next lemma.
%
\begin{lemma}
  For any vector $v \in \C^N,$
  \[
    \|v\|^2_2 - \|\psi v\|^2_2
    =
    \|(1-\psi)v\|^2_2
    \leq \| \rho M_N v\|^2_2 \mathfrak{D}^2.
  \]
  \label{lem:orthW2}
\end{lemma}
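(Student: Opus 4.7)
The plan is to exploit the fact that the vectors $\{{\bm w}^j\}$ have pairwise disjoint supports whose union is all of $[N]$ (since $i_1=0$ and $i_{L+1}=N$), so the block projections $\pi_j$ are pairwise orthogonal and satisfy $\sum_j \pi_j = \Id$. The first equality in the statement is nothing but Pythagoras' theorem, using that $\psi$ is an orthogonal projection. All the work is in the inequality.

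First I would localize both sides to each block. Writing $\psi v = \sum_j \langle v,{\bm w}^j\rangle {\bm w}^j$ and noting that ${\bm w}^j$ is entirely supported where $\pi_j$ projects, one sees that $\pi_j(1-\psi)v = \pi_j v - a_j^\star {\bm w}^j$ with $a_j^\star = \langle v,{\bm w}^j\rangle$. Since $\|{\bm w}^j\|_2=1$ and $\pi_j{\bm w}^j={\bm w}^j$, this $a_j^\star$ is exactly the minimizer of $a\mapsto \|\pi_j(v-a{\bm w}^j)\|_2$, giving the key identity
\[
\|\pi_j(1-\psi)v\|_2 \,=\, \inf_{a\in\C}\|\pi_j(v-a{\bm w}^j)\|_2.
\]

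Next I would feed this into Lemma \ref{lem:localstructure}, which says exactly that the right-hand side is bounded by $\|\rho_j M_N v\|_2 \cdot \min\{\mathcal{D}_+^{i_j,i_{j+1}},\mathcal{D}_-^{i_j,i_{j+1}}\} \leq \|\rho_j M_N v\|_2 \cdot \mathfrak{D}$. Squaring and summing over $j=1,\dots,L$ gives
\[
\|(1-\psi)v\|_2^2 \,=\, \sum_{j=1}^L \|\pi_j(1-\psi)v\|_2^2 \,\leq\, \mathfrak{D}^2 \sum_{j=1}^L \|\rho_j M_N v\|_2^2,
\]
where the first equality uses pairwise orthogonality of the $\pi_j$ together with $\sum_j\pi_j=\Id$.

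Finally I would conclude by observing that the ranges of the $\rho_j$ are also pairwise disjoint (the range of $\rho_j$ is the index set $\{i_j+1,\dots,i_{j+1}-1\}$, with the boundary indices $i_{j+1}$ explicitly omitted). Hence $\sum_j \|\rho_j M_N v\|_2^2 = \|\rho M_N v\|_2^2$ by Pythagoras, yielding the claimed bound. There is no real obstacle: the proof is essentially a diagonal decomposition that reduces the global inequality to the per-block estimate already supplied by Lemma \ref{lem:localstructure}; the only point requiring care is the bookkeeping that the minimizing $a$ on each block is literally the coefficient produced by the global orthogonal projection $\psi$, which is what makes the $\pi_j$ and $\psi$ compatible.
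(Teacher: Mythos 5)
Your proof is correct and follows essentially the same route as the paper: both reduce the global quantity $\|(1-\psi)v\|_2^2$ to a sum of per-block minimization problems $\inf_a \|\pi_j(v-a{\bm w}^j)\|_2^2$ via orthogonality of the $\pi_j$, invoke Lemma~\ref{lem:localstructure} on each block, and then recombine using orthogonality of the $\rho_j$. The only cosmetic difference is that you identify the per-block minimizer explicitly as the projection coefficient $\langle v,{\bm w}^j\rangle$, whereas the paper works directly with the variational characterization $\|v\|_2^2 - \|\psi v\|_2^2 = \inf_{\{a_j\}}\|v-\sum_j a_j{\bm w}^j\|_2^2$ and then splits the infimum over blocks.
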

\begin{proof}
  On the one hand, using the orthogonality of $\pi_j$,
  \begin{align*}
    \|v\|^2_2-\|\psi v\|^2_2
    = \inf_{\{a_j\} \subset \C} \| (v - \sum_j a_j {\bm w}^j)\|^2_2 = \inf_{\{a_j\} \subset \C} \sum_j \| \pi_j(v - a_j {\bm w}^j)\|^2_2 = \sum_j \inf_{a_j \in\C} \| \pi_j(v - a_j {\bm w}^j)\|^2_2.
  \end{align*}
  Hence using Lemma~\ref{lem:localstructure} and \eqref{eq:uniform}, we get
  \[
    \|v\|^2_2-\|\psi v\|^2_2
    \leq \sum_j \|\corAB{\rho_j} M_N v\|^2_2\mathfrak{D}^2.
  \]
 The stated conclusion of the lemma follows
  by the orthogonality of the $\corAB{\rho_j}$.
\end{proof}
The last lemma
immediately implies a lower bound for the $(\corAB{L}+1)$-st smallest singular value.
\begin{corollary}
  \label{cor:lb}
  \[
    \sigma_{N-\corAB{L}}(M_N) \geq \mathfrak{D}^{-1}
  \]
\end{corollary}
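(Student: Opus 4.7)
The plan is that this corollary follows almost immediately by combining Lemma~\ref{lem:orthW2} with the Courant--Fischer--Weyl min--max principle for singular values, so there is no genuine obstacle; all the real work is already encoded in the construction of the ${\bm w}^j$ and in Lemma~\ref{lem:localstructure}.

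First I would invoke the min--max characterization in the form
\[
\sigma_{N-\corAB{L}}(M_N)
\;\geq\;
\inf_{\substack{v\in V \\ \|v\|_2=1}} \|M_N v\|_2
\]
for any subspace $V\subset\C^N$ of dimension $N-\corAB{L}$. The natural choice is $V=\cS^\perp$, the orthogonal complement of $\cS=\Span\{{\bm w}^1,\ldots,{\bm w}^{\corAB{L}}\}$. Since the vectors ${\bm w}^j$ have disjoint supports and are thus orthonormal, $\dim\cS=\corAB{L}$ and $\dim\cS^\perp=N-\corAB{L}$, so this is a valid choice.

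Next, for any unit vector $v\in\cS^\perp$, we have $\psi v=0$, so Lemma~\ref{lem:orthW2} yields
\[
1 \;=\; \|v\|_2^2 - \|\psi v\|_2^2 \;\leq\; \|\rho M_N v\|_2^2\,\mathfrak{D}^2.
\]
Since $\rho$ is an orthogonal projection (it is a sum of the orthogonal projections $\rho_j$ onto pairwise disjoint coordinate sets), $\|\rho M_N v\|_2 \leq \|M_N v\|_2$. Combining these gives $\|M_N v\|_2 \geq \mathfrak{D}^{-1}$ uniformly over unit $v\in\cS^\perp$, which together with the min--max bound above yields the claim
\[
\sigma_{N-\corAB{L}}(M_N) \;\geq\; \mathfrak{D}^{-1}.
\]

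If anything warrants care, it is only the verification that $\rho$ genuinely acts as a contraction here and that the dimension count for $\cS^\perp$ is correct, both of which follow from the disjoint-support property of the ${\bm w}^j$ noted just before \eqref{eq:ddef}. Apart from this bookkeeping the argument is immediate from Lemma~\ref{lem:orthW2}.
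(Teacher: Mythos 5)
Your proof is correct and takes essentially the same approach as the paper: the paper also invokes the maximin characterization $\sigma_{N-L}(M_N)=\sup_{V_L}\inf_{x\perp V_L}\|M_Nx\|_2$ with $V_L=\cS$ and then applies Lemma~\ref{lem:orthW2}, which is exactly your argument phrased on $\cS^\perp$. Your additional remarks about $\rho$ being a contraction and the dimension count are the same bookkeeping the paper leaves implicit.
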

\begin{proof}
 We recall the standard variational characterization of this singular value in the maximin form:
\begin{equation*}
  \sigma_{N-\corAB{L}}(M_N) = \sup_{V_{\corAB{L}}} \inf_{x \perp V_{\corAB{L}}} \|M_Nx\|_2,
\end{equation*}
where $V_{\corAB{L}}$ is an $\corAB{L}$-dimensional space and $x$ is of unit $\ell_2$-norm. \corAB{Setting $V_L=\cS$, the stated corollary immediately follows from} Lemma \ref{lem:orthW2}.
\end{proof}
\corAB{Now it remains to find a lower bound on the product of the small singular values;} \corOZ{this is
slightly more involved.}
\begin{proposition}
  \corOZ{With notation as above,}
  \[
    \prod_{k=0}^{\corAB{L}-1} \sigma_{N-k}(M_N)
    \geq
    (\corOZ{8}
    \corAB{(\|M_N\| \vee 1)}\mathfrak{D} \sqrt{\corAB{L}})^{-\corAB{L}}
    \prod_{k=1}^{\corAB{L}} \|\pi_k M_N {\bm w}^k\|_2.
  \]
  \label{prop:lb}
\end{proposition}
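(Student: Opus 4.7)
The plan is to work in the singular value decomposition basis of $M_N$ and control the Cauchy--Binet expansion of $\det(W^*M_N^*M_NW)$, where $W$ is the $N\times L$ matrix with columns $\mathbf{w}^1,\ldots,\mathbf{w}^L$. Writing $M_N = U\Sigma V^*$ and setting $R := V^*W$ (so $R^*R = I_L$), one has
\[
\det(W^*M_N^*M_NW) = \det(R^*\Sigma^2 R) = \sum_{|I|=L}|\det R_I|^2\prod_{i\in I}\sigma_i^2(M_N),
\]
where $R_I$ is the $L\times L$ submatrix of $R$ with row-set $I \subset [N]$. Letting $I_0 := \{N-L+1,\ldots,N\}$ index the $L$ smallest singular values, the proposition reduces to showing that the full sum above is at most $(8(\|M_N\|\vee 1)\mathfrak{D}\sqrt{L})^{2L}$ times the term $|\det R_{I_0}|^2\prod_{i\in I_0}\sigma_i^2(M_N)$, since Hadamard gives $|\det R_{I_0}|^2\leq 1$.

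The crucial structural input comes from Lemma~\ref{lem:orthW2}: applied to each right singular vector $V_j$ of $M_N$, and using that $R_{j,:}$ represents $\psi V_j$ in the orthonormal basis $W$, it yields
\[
\|R_{j,:}\|_2^2 = \|\psi V_j\|_2^2 \geq 1 - \mathfrak{D}^2\sigma_j^2(M_N).
\]
Combined with the identity $\sum_{j=1}^N\|R_{j,:}\|_2^2 = \operatorname{tr}(R^*R) = L$, this concentration forces the rows of $R$ outside $I_0$ to carry at most $\mathfrak{D}^2\sum_{k=0}^{L-1}\sigma_{N-k}^2(M_N) \leq L\mathfrak{D}^2\sigma_{N-L+1}^2(M_N)$ of the total row-mass, quantifying how close $\mathcal{S}$ is to the span of the bottom-$L$ right singular vectors.

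Next I would bound each off-diagonal term $I\neq I_0$ by grouping the Cauchy--Binet sum according to $m := |I\setminus I_0| \geq 1$. Hadamard gives $|\det R_I|^2 \leq \prod_{j\in I}\|R_{j,:}\|_2^2$; AM--GM on the $m$ rows outside $I_0$ yields $\prod_{j\in I\setminus I_0}\|R_{j,:}\|_2^2 \leq (L\mathfrak{D}^2\sigma_{N-L+1}^2/m)^m$; and $\sigma_i(M_N)\leq \|M_N\|$ for $i\in I\setminus I_0$ contributes an additional $\|M_N\|^{2m}$ factor. Pairing the $m$ removed indices in $I_0\setminus I$ monotonically with the $m$ largest remaining singular values in $I_0$ controls the ratio $\prod_{i\in I}\sigma_i^2(M_N)/\prod_{i\in I_0}\sigma_i^2(M_N)$. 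Summing first over $I$ with $|I\setminus I_0|=m$ and then over $m\leq L$ produces a geometric series in $8(\|M_N\|\vee 1)\mathfrak{D}\sqrt{L}$ that absorbs to the claimed factor.

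The main obstacle lies in the pairing step: when the smallest $L$ singular values are spread over many scales, na{\"\i}ve ratios $\sigma_i(M_N)/\sigma_{N-L+1}(M_N)$ are unbounded from below. The resolution is a monotone rearrangement pairing the $m$ largest newly-included $\sigma_i$'s ($i\in I\setminus I_0$) with the $m$ smallest removed ones ($i\in I_0\setminus I$), so that each ratio is bounded by $\|M_N\|\mathfrak{D}$ via Corollary~\ref{cor:lb}'s guarantee $\sigma_{N-L}(M_N)\geq \mathfrak{D}^{-1}$. The $\sqrt{L}$ factor arises from absorbing the $\binom{L}{m}\binom{N-L}{m}$-fold combinatorial enumeration into the AM--GM estimate on row masses.
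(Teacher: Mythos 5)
Your overall strategy---expanding $\det(W^*M_N^*M_NW)$ via Cauchy--Binet in the SVD basis and then using Lemma~\ref{lem:orthW2} to control the overlap matrix $R=V^*W$---is a genuinely different route from the paper's, which works directly with Lemma~\ref{lem:prod_sing} and minimizes $\prod_k\|M_Nv_k\|_2$ over orthonormal frames, splitting according to the size of the component in $\cS^\perp$ and finishing with a Gershgorin bound on $\det(AA^*)$. Unfortunately, as written, your proposal has two substantive gaps.

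The first is the Hadamard-on-rows bound $|\det R_I|^2\le\prod_{j\in I}\|R_{j,:}\|_2^2$. This ignores the orthonormality of the \emph{columns} of $R$ and is far too lossy. Consider, abstractly, $L=2$, $N=4$, $\sigma=(1,1,1,\epsilon)$ with $\epsilon$ tiny, $\mathfrak D=1$, $\|M_N\|=1$; all the constraints you use are consistent with this (Lemma~\ref{lem:orthW2} gives $\|R_{4,:}\|_2^2\ge1-\epsilon^2$ and nothing useful on the other rows; Corollary~\ref{cor:lb} gives $\sigma_{2}\ge1$). Take $I=\{1,2\}$, disjoint from $I_0=\{3,4\}$. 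Then $\prod_{i\in I}\sigma_i^2=1$, while the row-mass constraints only force $\sum_j\|R_{j,:}\|_2^2=L=2$ and $\|R_{4,:}\|_2^2\ge1-\epsilon^2$; this still allows $\|R_{1,:}\|_2^2\approx\|R_{2,:}\|_2^2\approx1/2$, so Hadamard gives $|\det R_I|^2\lesssim 1/4$, and this one term exceeds the target $C^{2L}\prod_{I_0}\sigma^2=C^{2L}\epsilon^2$ for small $\epsilon$. In reality, column orthogonality forces $|\det R_{\{1,2\}}|^2=O(\epsilon^2)$, but no product of row $\ell_2$-masses sees this.

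The second gap is the pairing step: you invoke Corollary~\ref{cor:lb} to bound $\sigma_j\ge\mathfrak D^{-1}$ for the ``removed'' indices $j\in I_0\setminus I$, but the corollary only bounds $\sigma_{N-L}$, i.e.\ the $(L+1)$-st smallest singular value. The indices in $I_0\setminus I$ lie among the $L$ smallest, which can be arbitrarily small, so the claimed ratio bound $\sigma_i/\sigma_j\le\|M_N\|\mathfrak D$ is simply unavailable. There is also a minor logical slip in the reduction: asking for the Cauchy--Binet sum to be $\le C^{2L}$ times the single $I_0$-term is strictly stronger than what is needed (and false when $\det R_{I_0}=0$); you should aim directly for the bound by $C^{2L}\prod_{i\in I_0}\sigma_i^2$.

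The framework can be repaired, and the fix makes the argument cleaner than what you proposed. The right replacement for the Hadamard row bound is the Cauchy--Binet marginal estimate: writing $K:=I_0\setminus I$ and $R_K$ for the $|K|\times L$ submatrix of $R$ on rows $K$, one has
\[
\sum_{I:\,I\cap K=\emptyset}|\det R_I|^2
=\det\!\big(\Id_{|K|}-R_KR_K^*\big)
\le\prod_{j\in K}\bigl(1-\|R_{j,:}\|_2^2\bigr)
\le\prod_{j\in K}\mathfrak D^2\sigma_j^2,
\]
using that $\Id_{|K|}-R_KR_K^*\succeq 0$ together with Hadamard for positive semidefinite matrices, and then Lemma~\ref{lem:orthW2} for the last step. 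Grouping the Cauchy--Binet sum by $K\subset I_0$, and using $\prod_{i\in I\setminus I_0}\sigma_i^2\le(\|M_N\|\vee 1)^{2|K|}$, yields
\[
\det(W^*M_N^*M_NW)\le\prod_{i\in I_0}\sigma_i^2\sum_{K\subset I_0}\bigl((\|M_N\|\vee1)\mathfrak D\bigr)^{2|K|}
=\bigl(1+(\|M_N\|\vee1)^2\mathfrak D^2\bigr)^L\prod_{i\in I_0}\sigma_i^2,
\]
which gives the proposition with constant $2(\|M_N\|\vee1)\mathfrak D$ (no $\sqrt L$). So the decision to go through Cauchy--Binet is sound and even gives a slightly better constant than the paper's proof, but the key estimate must exploit column orthonormality through the minor marginals, not through per-row Hadamard.
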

\begin{proof}
Using Lemma \ref{lem:prod_sing} we see that it is enough to find a uniform lower bound on
  \(
   \prod_{k=1}^{L}
    \| {M}_{N} v_k\|_2
  \)
  over all collections of orthonormal vectors $\left\{ v_k \right\}_{k=1}^L$. We bound each
  $\| M_N v_k\|_2$ below in one of two ways.
  If $1-\|\psi v_k\|^2_2 \geq \frac{1}{2\corAB{L}}$ then $v_k$ has a large enough component in the \corABrev{$\cS^{\perp}$} direction that we apply Lemma~\ref{lem:orthW2} to conclude
  \begin{equation} \label{eq:slb0}
    \| M_N v_k\|_2
    \geq
    \|\rho M_N v_k\|_2
    \geq
    \frac{1}{\sqrt{2\corAB{L}} \mathfrak{D}}
    \corOZ{\geq \frac{1}{8\mathfrak{D}\sqrt{L} }\frac{\|\pi_k M_N {\bm w}^k\|_2}{\|M_N\|}},
  \end{equation}
  \corOZ{where $\rho$ is as in \eqref{eq-rho}.}
  Without loss of generality, we may permute the ordering of the vectors so that the first $v_1,v_2,\dots,v_p$ are those that satisfy $1-\|\psi v_k\|^2_2 < \frac{1}{2\corAB{L}}.$
  For these vectors, we have that
  \begin{equation}
    \|M_N v_k\|^2_2 =
    \| \rho M_N(1-\psi) v_k\|^2_2
    +\| (1-\rho) M_N v_k\|^2_2,
    \label{eq:srhosplit}
  \end{equation}
  where we have used that $\rho M_N \psi = 0.$  
  \corAB{We now consider two cases. First suppose}
  \[
    \| (1-{\rho}) M_N {\psi} v_k\|_2
    \leq
    \corAB{4} \| (1-{\rho}) M_N (1-{\psi}) v_k\|_2.
  \]
  \corAB{Since
  \[
  \| (1-{\rho}) M_N (1-{\psi}) v_k\|_2 \le \|M_N\| \cdot \|(1-{\psi})v_k\|_2,
  \]
  we obtain
   \[
    \| (1-{\rho}) M_N {\psi} v_k\|_2
        \leq
    4 \|M_N\| \cdot \|(1-{\psi})v_k\|_2.
  \]}
  Hence by \eqref{eq:srhosplit} and Lemma \ref{lem:orthW2} we deduce
  \begin{eqnarray}\label{eq:lb1d1}
    \|M_N v_k\|_2
  &  \geq&
    \| {\rho} M_N(1-{\psi}) v_k\|_2 \, =   \| {\rho} M_N v_k\|_2 \ge \mathfrak{D}^{-1} \|(1-{\psi})v_k\|_2 \nonumber\\
    &\ge& \frac{1}{\corAB{4}\|M_N\| \mathfrak{D}} \| (1-{\rho}) M_N {\psi} v_k\|_2.
  \end{eqnarray}
  On the other hand, if
    \[
    \| (1-{\rho}) M_N {\psi} v_k\|_2
    >
    \corAB{4} \| (1-{\rho}) M_N (1-{\psi}) v_k\|_2,
  \]
  \corAB{then by \eqref{eq:srhosplit} and the triangle inequality we see that
  \begin{align}\label{eq:lb2d1}
    \|M_N v_k\|_2
  & \,   \geq
    \| (1-{\rho}) M_N v_k\|_2\notag\\
    & \,     \geq
   \| (1-{\rho}) M_N \psi v_k\|_2 - \| (1-{\rho}) M_N (1-\psi) v_k\|_2 \ge
    \frac{3}{4}
   \| (1-{\rho}) M_N \psi v_k\|_2.
  \end{align}}
  Hence, combining \eqref{eq:lb1d1}-\eqref{eq:lb2d1}
  \corAB{and noting that $\mathfrak{D} \ge 1$}
  we conclude that \corOZ{in either case},
  \begin{equation}
    \|M_N v_k\|_2
    \geq
     \frac{1}{\corAB{4(\|M_N\|\vee 1)\mathfrak{D}}} \| M_N {\psi} v_k\|_2, \notag
  \end{equation}
 where we have again applied the fact that ${\rho} M_N {\psi} =0$. Thus \corAB{
 \begin{equation}\label{eq:lb-simplifyd1}
\prod_{k=1}^p   \|M_N v_k\|_2 \ge \left(\frac{1}{4(\|M_N\| \vee 1) \mathfrak{D}}\right)^p \prod_{k=1}^p\| M_N {\psi} v_k\|_2.
 \end{equation}
 }
 \corOZ{The rest of the proof boils down to finding}
 \corAB{a lower bound on the \abbr{RHS} of \eqref{eq:lb-simplifyd1}.}
 \corAB{To this end, let $Y_1$ be the matrix whose columns are $\{{\psi}v_k\}_{m=1}^p$. Since the columns of ${\bm W}:=\begin{bmatrix} {\bm w}^1 & {\bm w}^2 & \cdots & {\bm w}^L \end{bmatrix}$ span the subspace $\cS$, there must exist an $L \times p$ matrix $A_1$ such that $Y_1={\bm W} A_1$. We extend the matrix $A_1$ to an $L \times L$ matrix $A$ so that the last $L-p$ columns of $A$ are orthonormal and are also orthogonal to the first $p$ columns of $A_1$. Such an extension is always possible by first extending arbitrarily to a basis and then running Gram-Schmidt on the final $L - p$ columns. Set $Y:={\bm W} A$}
 \corOZ{and denote the columns of $Y$ by ${\bm y}_m$, for $m \in [L]$.}

 \corAB{Turning to bound the \abbr{RHS} of
 \eqref{eq:lb-simplifyd1}, by Hadamard's inequality we now find that
\begin{equation}\label{eq:lb-simplify-1d1}
\prod_{k=1}^p\| {M}_N {\psi} v_k\|_2^2 \ge \frac{\det(Y^* {M}_{N}^* {M}_{N} Y)}{\prod_{k=p+1}^L \|{M}_{N}{\bm y}_k\|_2^2}.
\end{equation}
We separately bound the numerator and the denominator of \eqref{eq:lb-simplify-1d1}.}

\corAB{Note that ${\bm y}_k= \sum_{m=1}^L a_{m,k} {\bm w}^{m}$ where $a_{m,k}$ is the $(m,k)$-th entry of $A$. Since $\{{\bm w}^{m}\}_{m=1}^{L}$ are orthonormal, for $k=p+1,p+2,\ldots,L$, we have
\begin{equation}\label{eq:lb-simplify-2d1}
\|{M}_{N} {\bm y}_k\|_2 \le \|{M}_{N}\| \cdot \|{\bm y}_k\|_2 =  \|{M}_{N}\| \cdot \sqrt{\sum_{m=1}^L |a_{m,k}|^2 \|{\bm w}^{m}\|_2^2}=  \|{M}_{N}\|,
\end{equation}
where in the last step we also use the fact that the last $L-p$ columns of $A$ have unit $\ell_2$-norm. The inequality \eqref{eq:lb-simplify-2d1} takes care of the denominator of \eqref{eq:lb-simplify-1d1}. Thus it remains to find a lower bound of the numerator of \eqref{eq:lb-simplify-1d1}. To obtain such a bound, we observe that
\begin{equation}\label{eq:lb-simplify-3d1}
\det(Y^* {M}_{N}^* {M}_{N} Y)= \det({\bm W}^* {M}_N^* {M}_N {\bm W}) \det(A A^*)= \left[\prod_{j=1}^{\corAB{L}} \|\pi_j M_N {\bm w}^j\|_2^2\right] \cdot \det(A A^*).
\end{equation}
where the last step follows from \eqref{eq:detMW}. It now remains to bound $\det(A A^*)$.}

\corAB{Let $\mathfrak{a}_{m, m'}$ be the $(m,m')$-th entry of $A^*A$. Using the orthonormality of $\{{\bm w}^k\}_{k=1}^L$ we have that for any $1\leq m,m' \leq p,$
\[
  \mathfrak{a}_{m, m'}=    \sum_{r=1}^L \bar{a}_{r,m} a_{r,m'}
  =
  \left[\sum_{r=1}^L a_{r,m} {\bm w}^r\right]^*
  \left[\sum_{r=1}^L a_{r,m'} {\bm w}^r\right]
  = ({\psi} v_m)^*({\psi} v_{m'}).
\]}
\corAB{Since $v_m \perp v_{m'}$, for $m \neq m'$, we see that
\[
  ({\psi} v_m)^*({\psi} v_{m'})
  = - ((\Id-{\psi})v_m)^*( (\Id-{\psi})v_{m'}).
\]
By our construction we have that
\(
  \|{\psi} v_m\|^2_2 > 1-\frac{1}{2L}
\),
for $m=1,2,\ldots,p$. Thus
\[
|\mathfrak{a}_{m,m'}| = |({\psi} v_m)^*({\psi} v_{m'})| \leq
\|(1 - {\psi})v_m\|_2
\|(1 - {\psi})v_{m'}\|_2 \leq \frac{1}{2L},
\]
for all $1 \le m \ne m' \le p$. By a similar reasoning we also obtain that
\[
|\mathfrak{a}_{m,m}| \ge 1 - \frac{1}{2L},
\]
for $m=1,2,\ldots,p$. Since the last $L-p$ columns of $A$ are orthonormal and orthogonal to the first $p$ columns we further obtain
\[
\mathfrak{a}_{m,m} =1, \, m=p+1,p+2,\ldots,L; \ \mathfrak{a}_{m,m'} =0, \, p+1 \le m \ne m' \le L,
\]
and
\[
 \mathfrak{a}_{m,m'}=0, \, m \in [p], m'=p+1,p+2,\ldots, L.
\]
So in the first $p$ rows of the matrix $A^*A$ we find that the diagonal entries are at least $1-\frac{1}{2L}$ and the sum of off-diagonal entries in a row is at most $(p-1)/(2L)$. The last $L-p$ rows of $A^*A$ are simply $\{e_{m'}\}_{m'=p+1}^L$. Hence by the Gershgorin circle theorem all eigenvalues of $A^*A$ are at least $\tfrac{1}{2}$ which implies
\begin{equation}\label{eq:detAA*}
\det(A A^*)=  \det (A^*A)
  \geq 2^{-L}.
\end{equation}}
\corAB{Therefore, from \eqref{eq:lb-simplifyd1}-\eqref{eq:lb-simplify-3d1}, we derive
\[
\prod_{k=1}^p   \|{M}_{N} v_k\|_2 \ge \left(\frac{1}{8 (\|M_N\| \vee 1) }\right)^L \cdot \mathfrak{D}^{-p} \cdot \prod_{k=1}^L \|\pi_k M_N {\bm w}^k\|_2^2.
\]}

\noindent
\corEE{Combining this bound with \eqref{eq:slb0} finishes the proof of the proposition.}
%
\end{proof}
\section{Limiting spectrum of noisy version of banded twisted Toeplitz matrices}
\label{sec:tt}
We consider in this section upper triangular
twisted Toeplitz matrices of finite symbols, namely upper triangular
matrices with a finite number of slowly varying diagonals; a particular
case is the case of upper triangular Toeplitz matrices of finite symbol.

Our main result is the following theorem.
\begin{theorem}\label{thm:finite-off-diag}
Fix $\gd \in \N$, \corOZ{$\alpha_0>1/2$ and $\alpha_\ell\in (0,1]$ for
$\ell \in [\gd]$.}
  For  each $\ell \in [\gd]\cup\{0\}$,
let $f_\ell: [0,1] \mapsto \C$ be an
$\alpha_\ell$-H\"{o}lder continuous function,
\corOZ{and let}
$D_N^{(\ell)}$ be the diagonal matrix with entries
$\{f_\ell(i/N)\}_{i \in [N]}$.  Set
$M_N:= \sum_{\ell=0}^{\gd} D_N^{(\ell)} J^\ell$ and set $\model_N$
as in \eqref{eq-1}. Then $L_N$ converges weakly in
probability to $\mu_{\gd, {\bm f}}$, the law
of  $\sum_{\ell=0}^\gd f_\ell(X) U^\ell$,
where $X \sim {\rm{Unif}}(0,1)$,
$U$ is uniformly distributed
on the unit circle in $\C$, and
$X$ and $U$ are independent of each other.
\end{theorem}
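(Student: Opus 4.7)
The plan is to reduce Theorem \ref{thm:finite-off-diag} to its piecewise constant counterpart, Theorem \ref{thm:finite-off-diag-pc}, by a H\"{o}lder approximation argument, and then invoke the deterministic equivalent from Theorem \ref{thm:logdet} to transfer spectral information across the approximation.

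First, I would partition $[0,1]$ into $K = K_N$ equal subintervals (with $K_N \to \infty$ slowly, say $K_N = \lfloor N^\kappa \rfloor$ for some small $\kappa > 0$) and define a piecewise constant replacement $\tilde f_\ell$ for each $f_\ell$ by taking its value at the left endpoint of each subinterval. Let $\tilde M_N = \sum_{\ell=0}^{\gd} \tilde D_N^{(\ell)} J^\ell$ and $\widetilde{\model}_N = \tilde M_N + N^{-\gamma} G_N$. By H\"{o}lder continuity of the $f_\ell$ and Gershgorin's bound on the operator norm of a banded matrix, one has $\|M_N - \tilde M_N\| = O(K_N^{-\alpha_*})$ where $\alpha_* := \min_{\ell} \alpha_\ell$. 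Choosing $\kappa$ small enough, this perturbation is smaller than the truncation threshold $\varepsilon_N^{-1} N^{-\gamma + 1/2}$ appearing in \eqref{eq:N*-dfn}.

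Next, Weyl's inequality for singular values yields $|\sigma_i(M_N - z\Id) - \sigma_i(\tilde M_N - z\Id)| \le \|M_N - \tilde M_N\|$ for every $i$ and every $z \in \C$. Consequently the threshold counts $N^*$ for $M_N$ and $\tilde M_N$ agree up to $o(N/\log N)$ (the argument here is essentially identical to the H\"older analysis carried out in Corollary \ref{cor:holderssv}, suitably lifted to the $\gd \times \gd$ transfer matrix setting), and the products of the non-truncated singular values differ by a factor that is $e^{o(N)}$. By Theorem \ref{thm:logdet} applied to both $M_N$ and $\tilde M_N$, this identifies
\[
  \LL_{L_N^{\model_N}}(z) - \LL_{L_N^{\widetilde{\model}_N}}(z) \Pto 0
\]
for Lebesgue-a.e.\ $z \in \C$ at which neither limit is singular.

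Theorem \ref{thm:finite-off-diag-pc} then gives that $\LL_{L_N^{\widetilde{\model}_N}}(z)$ converges in probability to
\[
  \int_0^1 \int_0^{2\pi} \log\bigl| P_t(e^{i\theta}) - z \bigr| \,\frac{d\theta}{2\pi}\, dt,
\]
where $P_t(w) := \sum_{\ell=0}^{\gd} f_\ell(t) w^\ell$. This is precisely the log potential of $\mu_{\gd,{\bm f}}$, since conditional on $X = t$, the law of $\sum_{\ell=0}^{\gd} f_\ell(t) U^\ell = P_t(U)$ with $U$ uniform on the unit circle has log potential $\int_0^{2\pi} \log|P_t(e^{i\theta}) - z|\,d\theta/(2\pi)$. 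Pointwise convergence of log potentials, together with the standard tightness estimate obtained from Weyl's majorant inequality as in the proof of Corollary \ref{cor:iid}, upgrades to weak convergence in probability of $L_N$ to $\mu_{\gd,{\bm f}}$.

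The main obstacle I anticipate is verifying that the two truncation regimes (for $M_N$ and $\tilde M_N$) are genuinely compatible and that the number of small singular values $N^*(z)$ grows slowly enough for Theorem \ref{thm:logdet} to apply. In the bidiagonal ($\gd=1$) case this is handled via the explicit transfer scalar, but for $\gd \ge 2$ one needs the $\gd \times \gd$ transfer matrix estimates of Theorem \ref{thm:finite-off-diag-pc}, together with a perturbation argument showing that H\"older perturbations of the diagonal entries $f_\ell(i/N)$ alter the Lyapunov spectrum of the transfer matrix cocycle by only $o(1)$. Establishing this continuity of local Lyapunov exponents under slow variation of the coefficients is the heart of the reduction, and its success hinges on the fact that for fixed $t$ the transfer matrix has simple, $z$-varying eigenvalues given by the roots of $P_t(w) = z$ away from the critical curve $\{z : P_t(e^{i\theta}) = z \text{ for some } \theta\}$, where the log potential is already known to be smooth.
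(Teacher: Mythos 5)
Your reduction to Theorem~\ref{thm:finite-off-diag-pc} is the right high-level idea, and the identification of the limiting log-potential with $\int_0^1\int_0^{2\pi}\log|P_t(e^{i\theta})-z|\,\tfrac{d\theta}{2\pi}\,dt$ matches Lemma~\ref{lem:limit-log-potential}. However, the mechanism you propose for the comparison step---apply Theorem~\ref{thm:logdet} to both $M_N$ and $\tilde M_N$, then compare the truncated determinants $B_N(M_N(z))$ and $B_N(\tilde M_N(z))$ via Weyl's inequality---does not close, and the paper goes a genuinely different route.

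Here is the gap. Your comparison requires $\|M_N-\tilde M_N\|=O(N^{-\kappa\alpha_*})$ to be smaller than the truncation threshold $\varepsilon_N^{-1}N^{-\gamma+1/2}\asymp N^{-c}$ of \eqref{eq:N*-dfn}, so that the truncation sets and the kept singular values are compatible. (Incidentally, this requires $\kappa$ to be \emph{large}, not ``small enough'' as written.) But Theorem~\ref{thm:finite-off-diag-pc}---which you need in order to analyze $\tilde M_N$---is proved only under the constraint $\delta_1\le(\gamma-\tfrac12)/(20\gd^2)$ of Remark~\ref{rmk:delta}, and the block-size exponent $\delta_1$ plays the role of your $\kappa$. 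With $\delta_1$ that small and $\alpha_*\le 1$, one has $\|M_N-\hat M_N\|\asymp N^{-\alpha_*\delta_1}$ which is much \emph{larger} than the threshold exponent in \eqref{eq:N*-dfn} and than the lower bound $N^{-2\gd\delta_1-\gd^2\delta_2-2\delta_3}$ on $\sigma_{N-\gL}$ from Corollary~\ref{cor:lb-not-small-sing}. Weyl's inequality therefore cannot transfer the bound $N^*=o(N/\log N)$ from $\hat M_N(z)$ to $M_N(z)$, and the ratio of truncated determinants cannot be controlled by elementary singular-value comparisons. These two constraints on the block size pull in opposite directions and cannot be reconciled.

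The paper circumvents this entirely by \emph{not} applying Theorem~\ref{thm:logdet} to $M_N$ at all in the reduction. Instead, it compares the noisy models $\model_N$ and $\hat\model_N$ directly through the replacement principle of \cite[Lemma 10.1]{BCZ}: it suffices to show $\frac1N\log|\det(\model_N-z\Id)|-\frac1N\log|\det(\hat\model_N-z\Id)|\to 0$ in probability a.e. This is established in three pieces: weak convergence of the symmetrized singular-value distributions (Lemma~\ref{lem:e-sing-conv}, via the method of moments) handles the contribution away from $0$ and $\infty$; a second-moment bound handles large singular values; and the contribution near $0$ is controlled using anti-concentration for $\sigma_N(\model_N-z\Id)$ from \cite[Proposition 16]{GKZ} together with a Stieltjes-transform comparison \eqref{eq:stieltjes-diff}--\eqref{eq:pr-stielt-lbd} at scale $\tau=N^{-\delta'/4}\gg\|M_N-\hat M_N\|^{1/2}$, and the regularity of $\mu_{\gd,{\bm f}}$ from Lemma~\ref{lem:regular}. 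None of these ingredients appears in your proposal. A smaller issue: the paper's $\hat M_N$ in \eqref{eq-regularized} also truncates small coefficients via the indicator $\mathbf{1}(|f_\ell|\ge N^{-\delta_2})$; this is needed for the operator-norm control \eqref{eq:Tkz} on the transfer matrices, so your $\tilde M_N$ is not literally the object to which Theorem~\ref{thm:finite-off-diag-pc} applies.
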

\begin{remark}
  Theorem \ref{theo-1} follows from Theorem \ref{thm:finite-off-diag}
  by taking $f_\ell(\cdot)=a_\ell$.
\end{remark}

\corOZ{Recall
  the notation $\LL_\mu$ for the log-potential of a measure $\mu$,
  see \eqref{eq-logpot}.}
Similar to Theorem \ref{theo-2},
we prove Theorem
\ref{thm:finite-off-diag} by showing that for
Lebesgue a.e.~$z \in \C$, $\LL_{L_N}(z)\to \LL_{\mu_{\gd, {\bm f}}}(z)$
in probability.
Toward this end we begin by identifying
$\mathcal{L}_{\mu_{\gd, {\bm f}}}(\cdot)$.
For $z \in \C $
and  $x \in [0,1]$, introduce the symbol
\begin{equation}
  \label{eq-symbol}
P_{z,x}(\lambda):=P_{z,x,\gd,{\bm f}}(\lambda):= f_\gd(x)\lambda^\gd+ f_{\gd-1}(x)\lambda^{\gd-1}+\cdots+f_1(x)\lambda +f_0(x)- z.
\end{equation}
Let
$\hat{\gd}:=\hat{\gd}(x)$ denote the degree of $P_{z,x}(\cdot)$.
If $\hat{\gd}>0$ then  $P_{z,x}(\cdot)$ has
$\hat{\gd}$ roots
$\lambda_1(z,x),\lambda_2(z,x),\ldots,\lambda_{\hat{\gd}}(z,x)$
(multiplicities allowed).
Partition $[0,1]$ as follows:
for $\ell \in \{1,\ldots,\corABrev{\gd}\}$ set
\begin{equation}
  \label{eq-Part}
  \mathcal{A}_\ell:=\{x \in [0,1]: f_\ell (x) \ne 0\},
\mathcal{B}_\ell:=\mathcal{A}_\ell
\setminus (\cup_{j=\ell+1}^\gd\mathcal{A}_j),
\end{equation}
and in
particular $\mathcal{B}_\gd:=\mathcal{A}_\gd$.
Set $\mathcal{B}_0:=[0,1]
\setminus (\cup_{\ell=1}^\gd \mathcal{B}_\ell)$.
\begin{lemma}\label{lem:limit-log-potential}
For \corABrev{Lebesgue a.e.}~$z \in \C$ we have
\begin{equation}\label{eq:limit-log-pot}
\mathcal{L}_{\mu_{\gd, {\bm f}}}(z)= \sum_{\ell=1}^\gd \left[\int_0^1 \left\{\sum_{j=1}^\ell \log_+ |\lambda_j(z,x)| + \log |f_\ell(x)|\right\} \cdot {\bf 1}_{\mathcal{B}_\ell}(x) \, dx\right] + \int_0^1\log |f_0(x) -z| {\bf 1}_{\mathcal{B}_0}(x) \, dx.
\end{equation}
\end{lemma}
\corABrev{Setting $Y:= \sum_{\ell=0}^\gd f_\ell(X)U^\ell$, we see that the law of $Y$ is compactly supported in $\C$. Therefore, for any $z \in \C$ and $\vep >0$,
\[
\E_{\mu_{\gd, {\bm f}}} \left|\log |Y - z| {\bf 1}_{\{|Y-z| \ge \vep\}} \right| < \infty.
\]
On the other hand from Lemma \ref{lem:regular} we will see that for Lebesgue a.e.~$z \in \C$,
\[
\lim_{\vep \downarrow 0}\E_{\mu_{\gd, {\bm f}}} \left|\log |Y - z| {\bf 1}_{\{|Y-z| \le \vep\}} \right| =0.
\]
Therefore, by Fubini's theorem one can use iterated integrals to evaluate $\mathcal{L}_{\mu_{\gd, {\bm f}}}(z)$, for Lebesgue almost every $z \in \C$. Note that this does not imply the integrability of individual terms under the integral sign in the \abbr{RHS} of \eqref{eq:limit-log-pot}.}
\begin{proof}[Proof of Lemma \ref{lem:limit-log-potential}]
  \corABrev{Following the discussion above we proceed to evaluate $\mathcal{L}_{\mu_{\gd, {\bm f}}}(z)$ using iterated integrals. To this end, we have}
\begin{align*}
\mathcal{L}_{\mu_{\gd, {\bm f}}}(z)
= & \E\left( \log \left|z - \sum_{\ell=0}^\gd f_\ell(X) U^\ell \right|\right)
= \frac{1}{2\pi}\int_{\mathbb{S}^1} \int_0^1 \log |P_{z,x}(\lambda)| \, dx \, d \lambda  \\
= & \sum_{\ell=1}^\gd \int_0^1\left[\frac{1}{2\pi} \int_{\mathbb{S}^1}  \left\{\sum_{j=1}^\ell \log| \lambda - \lambda_j(z,x)|  \right\}  \, d \lambda + \log |f_\ell(x)|\right] {\bf 1}_{\mathcal{B}_\ell}(x) \, dx +  \int_0^1\log |f_0(x) -z| {\bf 1}_{\mathcal{B}_0}(x) \, dx.
\end{align*}
Since
\[
\frac{1}{2\pi}\int_{\mathbb{S}^1} \log |z'- \lambda| d \lambda = \left\{\begin{array}{ll} \log |z'|, & \mbox{if } |z'| \ge 1\\
0, & \mbox{otherwise,}
\end{array}
\right.
\]
the claim follows.
\end{proof}





\subsection{Reduction to piecewise constant $\{f_\ell\}_{\ell=0}^\gd$}
To prove Theorem \ref{thm:finite-off-diag} we adopt a strategy similar
to the proof of Theorem \ref{theo-2}. Namely, we find approximate singular vectors corresponding to small singular values of $M_N - z \Id$ for {\em almost} all $z \in \C$. To this end, note that
\begin{equation}\label{eq:d-lin-eqn}
((M_N-z\Id) w)_i = (\corABrev{f_0(i/N)}-z) w_i + \sum_{\ell=1}^{\gd} \corABrev{f_\ell(i/N)} w_{\ell+i},
\end{equation}
for $i \in [N-\gd]$.
Therefore, given any arbitrary values of
$\{w_\ell\}_{\ell=j}^{j+\gd-1}$ one
can construct a $N$-dimensional vector
$w$ such that $((M_N-z\Id) w)_i = 0$ for $i \in[N-\gd]\setminus[j+\gd-1]$.
Such choices of $w$ will be candidates for approximate singular vectors.
To study these vectors we note from \eqref{eq:d-lin-eqn} that
$(w_{j+\gd}, \ldots,w_{j+1})^{\mathsf{T}}= T_j (w_{j+\gd -1},
\ldots,w_j)^{\mathsf{T}}$ for some transfer matrix $T_j$.
Iterating, we have that
$(w_{N}, \ldots,w_{N-\gd+1})^{\mathsf{T}} =
\left(\prod_k T_k\right) \cdot  (w_{j+\gd -1},
\ldots,w_j)^{\mathsf{T}}$. Unlike Theorem \ref{theo-2},
where the transfer matrices are actually scalars,
here the transfer matrices $\{T_k\}$ are in general non-commuting
if $\{f_\ell\}_{\ell=0}^\gd$ are varying. This complicates the study
of the
approximate singular value vector $w$.

To overcome this difficulty we employ the following two-fold argument.
We introduce a regularized model where the
$\{f_\ell\}_{\ell=0}^\gd$ are piecewise constant
and hence $\{D_N^{(\ell)}\}_{\ell=0}^\gd$ have constant diagonal blocks.
Then,
the transfer matrices $\{T_k\}$ are constant, and hence commute,
within each block.
This will be sufficient to derive the necessary properties of
the small approximate singular vectors,
which in turn allows us to deduce that
if the size\corEE{s} of the blocks are chosen carefully then
the \corAB{empirical spectral distribution (\abbr{ESD})} of the regularized model admits the limit as
described in Theorem \ref{thm:finite-off-diag}. To complete the proof of Theorem \ref{thm:finite-off-diag} we then show that the limits of the \abbr{ESD}s of the regularized model and the original model must be the same.

We now introduce the regularized model.
Let $\{f_\ell\}_{\ell=0}^\gd$ be as in Theorem \ref{thm:finite-off-diag}.
Fix some $\delta_1, \delta_2 \in(0,1)$. For $\ell \in [\gd]\cup\{0\}$,
let $\hat{D}_N^{(\ell)}$ be a diagonal matrix with
$$(\hat{D}_N^{(\ell)})_{i,i}:=f_\ell\left(\frac{\lfloor i N^{\delta_1-1}\rfloor}{N^{\delta_1}}\right)\cdot {\bf 1}\left(\left|f_\ell\left(\frac{\lfloor i N^{\delta_1-1}\rfloor}{N^{\delta_1}}\right)\right| \ge N^{-\delta_2}\right), \quad {i \in [N]},$$
and define the regularized version of $M_N$ as
\begin{equation}
  \label{eq-regularized}
\hat{M}_N:=\sum_{\ell=0}^\gd \hat{D}_N^{(\ell)} J^\ell,
\quad \hat\model_N=\hat M_N+N^{-\gamma} G_N.
\end{equation}
Note that in $\hat{M}_N$  we have an additional truncation ${\bf 1}( |f_\ell (\cdot)| \ge N^{-\delta_2})$. \corABrev{This means that if in a certain block $\{f_\ell\}_{\ell=\gd_\star+1}^\gd$ are smaller than $N^{-\delta_2}$ then in that block $\hat{M}_N$ can be treated as a matrix with $\gd_\star$ non-zero off-diagonal entries. This, in particular, implies that if $\gd_\star=0$ then in that block $\hat M_N$ becomes a diagonal matrix. Furthermore the truncation at $N^{-\delta_2}$ allows to derive bounds on the operator norm of the transfer matrices, which will be later used during the proofs.}

Now we can state our main result for $\hat \model_N$.
Its proof, which is the main technical part of the proof of
Theorem \ref{thm:finite-off-diag}, is deferred to
Section \ref{sec:pf-pc-constant}.
\begin{theorem}\label{thm:finite-off-diag-pc}
  Fix $\gd \in \N$, \corOZ{$\alpha_0>1/2$, $\alpha_\ell\in (0,1]$ for
    $\ell \in [\gd]$ and $\delta_1, \delta_2 \in (0,1/2)$ such that $\max\{\delta_1, \delta_2\} \le
  (\gamma - 1/2)/(20\gd^2)$.}
For  each $\ell \in [\gd]\cup\{0\}$, let $f_\ell: [0,1] \mapsto \C$ be an $\alpha_\ell$-H\"{o}lder continuous function.
Let $\hat \model_N$ be as in
\eqref{eq-regularized}.
Then $L_{\hat{\model}_N}$ converges weakly in probability to
$\mu_{\gd, {\bm f}}$.
\end{theorem}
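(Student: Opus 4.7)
The plan is to mirror the strategy used to obtain Corollary~\ref{cor:iid} and Corollary~\ref{cor:holder}: I aim to prove that for Lebesgue-a.e.\,$z \in \C$, $\LL_{L_{\hat\model_N}}(z) \to \LL_{\mu_{\gd,{\bm f}}}(z)$ in probability, and combine this with tightness of $\{L_{\hat\model_N}\}$ (which follows from $\|\hat M_N\| = O(1)$ via the Gershgorin circle theorem, the control $\|N^{-\gamma} G_N\| \to 0$ of \eqref{eq-gordon}, and Weyl's majorant theorem, exactly as in the proof of Corollary~\ref{cor:iid}) with \cite[Theorem 2.8.3]{tao2012topics}. For the pointwise log-potential convergence I would invoke Theorem~\ref{thm:logdet} with a slowly vanishing $\varepsilon_N = N^{-\eta}$, reducing the problem to two tasks: computing $\frac{1}{N}\log|\det(\hat M_N - z\Id)|$ exactly, and estimating the product of those singular values of $\hat M_N - z\Id$ that fall below the threshold $\varepsilon_N^{-1}N^{-\gamma+1/2}$, after verifying that their number $N^*(z,\gamma,\varepsilon_N)$ is $o(N/\log N)$.

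The first task is immediate: $\hat M_N - z\Id$ is upper triangular, so $\det(\hat M_N - z\Id) = \prod_{i=1}^N\bigl((\hat D_N^{(0)})_{i,i} - z\bigr)$; the piecewise-constant structure ($N^{\delta_1}$ blocks of length $\sim N^{1-\delta_1}$) together with the H\"older continuity of $f_0$ yields by a Riemann-sum argument
\[
\tfrac{1}{N}\log|\det(\hat M_N - z\Id)| \,\Pto\, \int_0^1 \log|f_0(x) - z|\,dx
\]
for Lebesgue-a.e.\,$z$. The extra indicator ${\bf 1}(|f_0(x_k)| \ge N^{-\delta_2})$ in the definition of $\hat D_N^{(0)}$ zeros out the diagonal on those blocks where $|f_0(x_k)| < N^{-\delta_2}$; but for such blocks the summand differs from $\log|f_0(x_k) - z|$ by $O(N^{-\delta_2}/|z|)$, and the total contribution from all truncated blocks is $o(1)$ for $z$ bounded away from $0$.

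The main work lies in analyzing the small singular values. Within each constant block $B_j := [jN^{1-\delta_1}+1,(j+1)N^{1-\delta_1}]$ labelled by $x_j$, the restriction of $\hat M_N - z\Id$ to $B_j$ is a genuine banded upper triangular Toeplitz matrix with symbol $P_{z,x_j}(\lambda)$ of \eqref{eq-symbol}, and the $\gd \times \gd$ transfer matrices coming from \eqref{eq:d-lin-eqn} on $B_j$ are all equal to a single companion matrix $T(x_j)$ whose eigenvalues are the roots $\lambda_1(z,x_j), \dots, \lambda_{\hat\gd(x_j)}(z,x_j)$. For each root with $|\lambda_i(z,x_j)| < 1$ I construct an approximate singular vector ${\bm w}^{j,i}$ supported in $B_j$ by iterating the recurrence \eqref{eq:d-lin-eqn} from initial conditions aligned with the eigenvector of $T(x_j)$ associated with $\lambda_i(z,x_j)$; the bulk recurrence is satisfied exactly, while the ``top'' boundary contribution of $(\hat M_N - z\Id){\bm w}^{j,i}$ inside $B_j$ has magnitude $\asymp|\lambda_i(z,x_j)|^{|B_j|}$. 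Running a bandwidth-$\gd$ analogue of Theorem~\ref{thm:DJssv}, applied to the $\{{\bm w}^{j,i}\}$ (which have disjoint supports across $j$), yields: (i) the number of singular values of $\hat M_N - z\Id$ below $N^{-\varepsilon}$ is at most $\gd \cdot N^{\delta_1} = o(N/\log N)$, so Theorem~\ref{thm:logdet} applies; and (ii)
\[
\tfrac{1}{N}\log\prod_{k=0}^{N^*-1}\sigma_{N-k}(\hat M_N - z\Id) \,\Pto\, -\int_0^1 \sum_i \bigl(\log|\lambda_i(z,x)|\bigr)_-\,dx.
\]
Combining (i)--(ii) with the determinant calculation and invoking Vieta's identity $\sum_i \log|\lambda_i(z,x)| = \log|f_0(x) - z| - \log|f_{\hat\gd(x)}(x)|$ rewrites the limit of $\LL_{L_{\hat\model_N}}(z)$ as $\int_0^1 \bigl[\sum_i \log_+|\lambda_i(z,x)| + \log|f_{\hat\gd(x)}(x)|\bigr]\,dx$, which is precisely the formula for $\LL_{\mu_{\gd,{\bm f}}}(z)$ of Lemma~\ref{lem:limit-log-potential}; the $\mathcal{B}_0$ contribution is recovered as the $\hat\gd(x) = 0$ case, where the block generates no small singular values.

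The main obstacle is the bandwidth-$\gd$ generalization of Theorem~\ref{thm:DJssv}. Two complications must be handled. First, the upper and lower bounds on the product of small singular values (the analogues of Propositions~\ref{prop:ub} and \ref{prop:lb}) require uniform control over $\|T(x_j)^{\pm 1}\|$ and over the norms of the matrices that diagonalize $T(x_j)$; this is exactly the role of the $N^{-\delta_2}$ truncation, which forces every nonzero $f_\ell(x_k)$ to satisfy $|f_\ell(x_k)| \ge N^{-\delta_2}$ and therefore bounds $\|T(x_j)^{\pm 1}\|$ by $N^{O(\delta_2)}$, so that eigenvector norms and boundary corrections remain sub-exponential. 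Second, the superdiagonals couple adjacent blocks: for ${\bm w}^{j,i}$ supported in $B_j$, the vector $(\hat M_N - z\Id){\bm w}^{j,i}$ has nonzero entries extending up to $\gd$ positions below $B_j$ (with coefficients drawn from block $B_{j-1}$), and the Gram-matrix argument of Proposition~\ref{prop:lb} must incorporate these off-block contributions and verify that they affect the product of small singular values by at most $e^{o(N)}$. The numerical constraint $\max\{\delta_1,\delta_2\} \le (\gamma - 1/2)/(20\gd^2)$ in the hypothesis is imposed precisely so that these errors, after being compounded over $N^{\delta_1}$ blocks and matrix products of length $\gd$, remain well below the noise threshold $N^{-\gamma+1/2}$ of Theorem~\ref{thm:logdet}.
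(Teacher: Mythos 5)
Your high-level strategy is aligned with the paper's: reduce via Theorem~\ref{thm:logdet} to computing the log-determinant of the piecewise-constant matrix and the product of its small singular values, analyze the latter using transfer matrices, and package the answer via Lemma~\ref{lem:limit-log-potential} and Vieta. You have also correctly identified the role of the $N^{-\delta_2}$ truncation and the cross-block coupling problem. However, the entire Section~5 of the paper is devoted to proving what you call ``a bandwidth-$\gd$ analogue of Theorem~\ref{thm:DJssv},'' and two concrete choices in your sketch of it would break the argument.

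First, you build approximate singular vectors ${\bm w}^{j,i}$ only for roots inside the unit disk. The paper constructs one vector per root per sub-block, including roots with $|\lambda|>1$, and this is essential: the lower bound on $\sigma_{N-\gL}$ (the paper's Corollary~\ref{cor:lb-not-small-sing}) rests on Lemma~\ref{lem:orth-compl-large-norm}, which states that $\|v-\psi v\|_2 \le C\,N^{\mathrm{poly}}\,\|\rho\,\hat M_N(z)v\|_2$ for \emph{every} $v$. If $\psi$ projects only onto the small-root eigenvectors, this inequality fails already for $v$ proportional to a large-root eigenvector: the right side vanishes ($\rho\,\hat M_N(z)$ annihilates any vector satisfying the in-block recurrence, regardless of root size), while the left side is of order one. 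The paper's two-sided ``shooting'' construction in Lemma~\ref{lem:not-small-norm} --- solving for the decomposition coefficients from the appropriate boundary, via the $2\hat\gd_N\times 2\hat\gd_N$ matrix ${\sf L}$ mixing both the $i_j+1$ and $i_{j+1}-\hat\gd_N+1$ boundary data --- is designed precisely to handle this, and it requires all the roots. A ``small roots only'' subspace would force a fundamentally different argument for the $(L+1)$-st singular value and for the lower bound on the product, which you do not supply.

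Second, you work directly with the constant-diagonal blocks $B_j$ of length $\sim N^{1-\delta_1}$. The paper instead refines these to sub-blocks $\gb_k^{(j')}$ of length $\sim N^{\delta_3}$ with $\delta_3\in(0,1/3)$ and $4\delta_1<\delta_3$, and the polynomial error factor $N^{2\gd\delta_1+\gd^2\delta_2+2\delta_3}$ in Lemma~\ref{lem:not-small-norm} carries a factor $b_j^{3/2}\sim N^{3\delta_3/2}$ from the naive $\|\cdot\|_2\le b_j\max|\cdot|$ and Cauchy--Schwarz estimates inside a block. Replacing $N^{\delta_3}$ by $N^{1-\delta_1}$ inflates this factor to roughly $N^{3/2}$, which wrecks the comparison with the noise scale $N^{-\gamma+1/2}$ when $\gamma$ is close to $1/2$ (recall the hypothesis only forces $\max\{\delta_1,\delta_2\}\le(\gamma-1/2)/(20\gd^2)$, not large $\gamma$). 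One could try to recover the big-block approach with a sharper Young/convolution $\ell^2$ estimate exploiting the geometric decay of $|\lambda|^n$ away from the unit circle, but you do not mention this, and the paper does not take that route.

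Finally, your sketch does not account for the exclusion of ``bad'' $z$ (Lemma~\ref{lem:bad-z}: Vandermonde conditioning of the transfer-matrix eigenvectors and roots too close to the unit circle), which the paper uses at every step to convert geometric-series bounds and Vandermonde determinants into polynomial-in-$N$ quantities, nor the Gram-Schmidt step (Lemma~\ref{lem:sing-prod-ubd}) needed because, for $\gd>1$, the eigenvectors ${\bm v}_\ell(k)$ within a block are no longer orthogonal and Lemma~\ref{lem:prod_sing} must be applied to an orthonormal basis. These are not cosmetic: without the bad-$z$ exclusion the constants in your polynomial error factors are uncontrolled, and without the orthogonalization the upper and lower bounds on the product of small singular values (the analogues of Propositions~\ref{prod:ub} and~\ref{prop:lb}) do not close.
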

\subsection{Proof of Theorem
\ref{thm:finite-off-diag} assuming
Theorem \ref{thm:finite-off-diag-pc}.} The proof
is motivated by the proof of \cite[Theorem 4, Theorem 5]{GWZ}
and the replacement principle, which was introduced in
\cite[Theorem 2.1]{TV-AM}. (We will use a version of the
replacement principle from \cite[Lemma 10.1]{BCZ}.)

We begin with some preparatory material.
To apply the replacement principle
we will need the following ``regularity'' property of the limit,
closely related to \cite[Definition 1]{GWZ}.
\begin{lemma}\label{lem:regular}
For Lebesgue almost every $z \in \C$,
\[
  \lim_{\varepsilon \to 0} \E_{\mu_{\gd,{\bm f}}} \left[ \log \left(\left|X - z\right|\right) {\bf 1}_{\{|X - z| \le \varepsilon\}}\right] = 0.
\]
\end{lemma}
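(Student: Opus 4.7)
The plan is to reduce Lemma~\ref{lem:regular} to a standard dominated convergence argument. Specifically, I will show that for Lebesgue-a.e.~$z \in \C$, the function $w \mapsto \log|w-z|$ lies in $L^1(\mu_{\gd,\bm{f}})$, after which the claim is immediate.

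First, since each $f_\ell$ is H\"older continuous on the compact interval $[0,1]$, the functions $f_\ell$ are uniformly bounded, and hence $\mu_{\gd,\bm{f}}$ is a compactly supported probability measure on $\C$. In particular, $\int \log_+|w-z|\, d\mu_{\gd,\bm{f}}(w) < \infty$ for every $z \in \C$.

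For the negative part, I would invoke Tonelli's theorem. A direct computation in polar coordinates shows $\int_\C (\log|v|)_-\, dv = \pi/2$, so swapping the order of integration gives
\begin{equation*}
\int_\C \int \bigl(\log|w-z|\bigr)_-\, d\mu_{\gd,\bm{f}}(w)\, dz
= \int \int_\C \bigl(\log|w-z|\bigr)_-\, dz\, d\mu_{\gd,\bm{f}}(w) = \frac{\pi}{2}.
\end{equation*}
Thus for Lebesgue-a.e.~$z \in \C$, $\int (\log|w-z|)_-\, d\mu_{\gd,\bm{f}}(w) < \infty$. Combining with the compact-support bound, $\int |\log|w-z||\, d\mu_{\gd,\bm{f}}(w) < \infty$ for Lebesgue-a.e.~$z$; in particular $\mu_{\gd,\bm{f}}(\{z\}) = 0$ for such $z$.

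Fix any such $z$. The function $w \mapsto |\log|w-z||\, \mathbf{1}_{\{|w-z|\le \varepsilon\}}$ is dominated by the integrable function $|\log|w-z||$ and tends to $0$ pointwise for $w \neq z$ as $\varepsilon \downarrow 0$. Dominated convergence therefore yields
\begin{equation*}
\lim_{\varepsilon \to 0} \E_{\mu_{\gd,\bm{f}}}\Bigl[\bigl|\log|X-z|\bigr|\, \mathbf{1}_{\{|X-z|\le \varepsilon\}} \Bigr] = 0,
\end{equation*}
which implies the statement of Lemma~\ref{lem:regular} by the triangle inequality. The argument relies only on the compactness of the support of $\mu_{\gd,\bm{f}}$ together with Tonelli's theorem, so no substantive obstacle is anticipated.
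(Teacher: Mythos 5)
Your proof is correct, and it takes a genuinely different and notably shorter route than the paper's. You observe that $\mu_{\gd,\bm f}$ is a compactly supported probability measure (since each $f_\ell$ is continuous on $[0,1]$ and hence bounded, and $|U|=1$), so $\log_+|\cdot-z|$ is trivially integrable; for the negative part you use Tonelli and the elementary identity $\int_\C (\log|v|)_-\,dv=\pi/2$ to conclude that $\LL_{\mu_{\gd,\bm f}}(z)$ is finite for Lebesgue-a.e.\ $z$, and then dominated convergence finishes. This is the standard potential-theoretic argument and it needs none of the structure of $\mu_{\gd,\bm f}$ beyond compact support. The paper instead bounds $\bigl|\E_{\mu_{\gd,\bm f}}\bigl[\log|X-z|\,{\bf 1}_{\{|X-z|\le\vep\}}\bigr]\bigr|$ via the explicit representation $X=P_{z,x}(e^{i\theta})+z$ and integration by parts, then runs a downward induction over the degree $\gd$: at each step it splits according to whether the current leading coefficient $|f_\ell(x)|$ is above or below a carefully chosen power $t^{\upalpha_\ell}$, controls the angular measure of the near-root sets, and ultimately reduces to showing $z\notin f_0([0,1])$, where the set $f_0([0,1])$ has Lebesgue measure zero by a volumetric estimate using $\alpha_0>1/2$. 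The paper's argument is considerably more involved but yields a more concrete exceptional set (it works for every $z\notin f_0([0,1])$, modulo the similar argument they omit for the first term), whereas yours is shorter, cleaner, and entirely standard, at the cost of not identifying the exceptional set explicitly. Since the lemma only asserts the result for Lebesgue-a.e.\ $z$, and since the lemma is only invoked in the proof of Theorem \ref{thm:finite-off-diag} for Lebesgue-a.e.\ $z$, your approach fully suffices for the intended application.
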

\begin{proof}
\corABrev{Applying Tonelli's theorem for any probability measure $\mu$ on $\R$ and $0 \le
a_1 < a_2 <1$,
\[
\log(a_2) \mu((a_1,a_2)) - \int_{a_1}^{a_2} \log(x) d\mu(x) = \int_{a_1}^{a_2} \left[ \int_{x}^{a_2} \frac{1}{t} dt\right] d\mu(x) = \int_{a_1}^{a_2}\frac{\mu((a_1,t))}{t} dt.
\]
As $0 \le a_1 < a_2 < 1$, rearranging the above we obtain}
\[
\int_{a_1}^{a_2} |\log(t)| d\mu(t) \le |\log(a_2)| \mu((0,a_2)) + \int_{a_1}^{a_2} \frac{\mu((0,t))}{t}dt.
\]
Therefore, recalling the definitions of $\mu_{\gd,{\bm f}}$
\corOZ{from the statement of Theorem \ref{thm:finite-off-diag}
and $P_{z,x}(\cdot)$ from \eqref{eq-symbol}}, we have
\begin{eqnarray}
&&\left|\E_{\mu_{\gd,{\bm f}}} \left[ \log \left(\left|X - z\right|\right)
{\bf 1}_{\{|X - z| \le \vep\}}
\right] \right|\nonumber \\
&&\le \,
-\frac{1}{2 \pi}\log \vep \int_0^1 \int_0^{2 \pi}
{\bf 1}_{ \{|P_{z,x}(e^{\mathrm{i} \theta})| \le \vep\}}
\, d\theta \, d x
  + \frac{1}{2 \pi}\int_0^\vep \int_0^1 \int_0^{2 \pi} \frac{
  {\bf 1}_{\{ |P_{z,x}(e^{\mathrm{i} \theta})| \le t\}}}{t} \, d\theta \, d x \, dt\nonumber\\
  &&=:
-\frac{1}{2 \pi}\log \vep \int_0^1 \int_0^{2 \pi}
{\bf 1}_{ \{|P_{z,x}(e^{\mathrm{i} \theta})| \le \vep\}}
\, d\theta \, d x
+\mathrm{Term}_\gd(\vep).
   \label{eq:log-bound-1}
\end{eqnarray}
%
We will show that
\begin{equation}\label{eq:log-bound-2}
\lim_{\vep \to 0} \mathrm{Term}_\gd(\vep)=0.
\end{equation}
This will take care of the second term in
the \abbr{RHS} of \eqref{eq:log-bound-1},
and a similar argument (which we omit)
appl\corEE{ies} to the first term, completing the proof of the lemma.

Turning to prove \eqref{eq:log-bound-2}, fix
$ 1> \upalpha_\gd > \upalpha_{\gd -1} > \cdots >\upalpha_1>0$.
Recalling that $\{\lambda_i(z,x)\}_{i=1}^d$
are the roots of the equation $P_{z,x}(\lambda)=0$
on the set $\mathcal{B}_\gd$, \corOZ{see \eqref{eq-Part},} we write there
$|P_{z,x}(e^{\mathrm{i}\theta})| =
|f_\gd(x)| \cdot \prod_{j=1}^\gd |\lambda_j(z,x) - e^{\mathrm{i}\theta}|$.
\corAB{Splitting the integral into the
  two parts $|f_\gd(x)| \ge t^{\upalpha_\gd}$
  and $|f_\gd(x)| \le t^{\upalpha_\gd}$, we obtain}
\[\mathrm{Term}_\gd(\vep) \le \frac{1}{2 \pi}\int_0^\vep \int_{0}^1 \int_0^{2 \pi}  \frac{\sum_{j=1}^\gd {\bf 1}_{\{ |\lambda_j(z,x)-e^{\mathrm{i}
\theta}| \le t^{\frac{1-\upalpha_\gd}{\gd}}\}}}{t} \, d\theta \,
d x \, dt + \mathrm{Term}_{\gd-1}(\vep),
\]
where
\[
\mathrm{Term}_{\gd-1}(\vep):= \frac{1}{2 \pi}\int_0^\vep
\int_{0}^1 \int_0^{2 \pi}  \frac{{\bf 1}_{\{ |P_{z,x}(e^{\mathrm{i}\theta})|
\le t \}}\cdot {\bf 1}_{\{ |f_\gd(x)| \le t^{\upalpha_\gd}\}}}{t} \, d\theta \, d x \, dt
\]
Since for any $\lambda \in \C$, and $s >0$ sufficiently small,
$\int_0^{2\pi} {\bf 1}_{\{|\lambda - e^{\mathrm{i} \theta}| \le s\}}
 d\theta \leq \corAB{4}s$,
 we conclude that
 \begin{equation}
   \label{eq-dtod-1}
\limsup_{\vep \to 0} \mathrm{Term}_\gd(\vep) \le \limsup_{\vep \to 0} \mathrm{Term}_{\gd-1}(\vep).
\end{equation}
Denote $P^{(\gd)}_{z,x}(\lambda):= P_{z,x}(\lambda) - f_\gd(x) \lambda^\gd$
and, when $f_{\gd-1}(x)\neq 0$,
let $\{\lambda_i^{(\gd)}(z,x)\}_{i=1}^{d-1}$
denote its roots.
Using the triangle inequality, for any $t \in (0,1)$, we further have that
\begin{align*}
  &\{ |P_{z,x}(e^{\mathrm{i}\theta})| \le t \}\cap
  \{ |f_\gd(x)| \le t^{\upalpha_\gd}\}  \subseteq
  \{|P^{(\gd)}_{z,x}(e^{\mathrm{i}\theta})| \le
  2t^{\upalpha_\gd} \}\\
  & =
\left(\{|P^{(\gd)}_{z,x}(e^{\mathrm{i}\theta})| \le
2t^{\upalpha_\gd}\}\cap \{|f_{\gd-1}(x)| \ge t^{\upalpha_{\gd-1}}\}\right)
\bigcup
\left(\{|P^{(\gd)}_{z,x}(e^{\mathrm{i}\theta})| \le 2t^{\upalpha_\gd}\}
\cap \{|f_{\gd-1}(x)| \le t^{\upalpha_{\gd-1}}\}\right).
\end{align*}
Therefore arguing as in the lines leading to \eqref{eq-dtod-1} we obtain
\begin{eqnarray*}
&& \limsup_{\vep \to 0} \mathrm{Term}_{\gd-1}(\vep) \\
&&\le  \,  \limsup_{\vep \to 0} \frac{1}{2 \pi}\int_0^\vep
\int_{0}^1 \int_0^{2 \pi}  \frac{\sum_{j=1}^{\gd-1} {\bf 1}_{\{
 |\lambda_j^{(\gd)}(z,x)-e^{\mathrm{i} \theta}| \le
 2^{\frac{1}{\gd-1}}t^{\frac{\upalpha_\gd-\upalpha_{\gd-1}}{\gd-1}}\}}}{t} \, d\theta \, d x \, dt + \limsup_{\vep \to 0}\mathrm{Term}_{\gd-2}(\vep)\\
&&\le  \, \limsup_{\vep \to 0}\mathrm{Term}_{\gd-2}(\vep),
\end{eqnarray*}
where
\[
\mathrm{Term}_{\gd-2}(\vep) :=
\frac{1}{2 \pi}\int_0^\vep \int_{0}^1 \int_0^{2 \pi}
\frac{{\bf 1}_{\{ |P_{z,x}^{(\gd)}(e^{\mathrm{i}\theta})|
\le 2t^{\upalpha_\gd} \}}\cdot {\bf 1}_{\{ |f_{\gd-1}(x)| \le t^{\upalpha_{\gd-1}}\}}}{t} \, d\theta \, d x \, dt.
\]
Iterating the above argument and using induction we deduce that
\begin{equation}\label{eq:log-bound-3}
\limsup_{\vep \to 0} \mathrm{Term}_\gd(\vep) \le \limsup_{\vep \to 0}
\int_0^\vep \int_0^1 \frac{{\bf 1}_{\{ |f_0(x)-z| \le (\gd+1)t^{\upalpha_1}
\}}}{t} \, dx \, dt.
\end{equation}
Since $f_0$ is an $\alpha_0$-H\"{o}lder-continuous function
with $\alpha_0 >1/2$, we have that
$f_0([0,1]):= \{f_0(x): x \in [0,1]\} $ has zero Lebesgue measure (in $\C$),
by a volumetric argument.
Moreover the set $f_0([0,1])$ being a closed set, for every $z \notin f_0([0,1])$ we have $\mathrm{dist}(z, f_0([0,1])) := \inf_{x \in [0,1]} |z -f_0(x)| >0$. Therefore,
given a $z \notin f_0([0,1])$ there exists $\vep>0$ sufficiently small such that $\{x \in [0,1]: |f_0(x) -z| \le (\gd+1) \vep^{\upalpha_1}\}=\emptyset$. Thus, from \eqref{eq:log-bound-3} we deduce \eqref{eq:log-bound-2}. This completes the proof of the lemma.
\end{proof}
To prove Theorem \ref{thm:finite-off-diag} we need another ingredient.
\begin{lemma}\label{lem:e-sing-conv}
Fix $z \in \C$. Let $\tilde{\nu}_{\model_N}^z$ be the
symmetrized version of empirical measure of
the singular values of $\model_N - z \Id$.
Define $\tilde{\nu}_{\hat\model_N}^z$ similarly.
Then both $\tilde{\nu}_{\model_N}^z$ and $\tilde{\nu}_{\hat\model_N}^z$
converge weakly in probability, as $N\to\infty$,
to the symmetrized version $\tilde \nu^z$ of the law of
$|\sum_{\ell=0}^\gd f_\ell(X) U^\ell -z|$, where $X$ and $U$ are as in  Theorem \ref{thm:finite-off-diag}.
\end{lemma}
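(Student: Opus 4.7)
The plan is to carry out two Hoffman--Wielandt reductions and then compute the limit for $\hat M_N$ by a blockwise Szeg\H{o}-type argument.

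First, the Hoffman--Wielandt inequality for singular values gives, for any $N\times N$ matrices $A,B$, that $\sum_i(\sigma_i(A)-\sigma_i(B))^2 \le \|A-B\|_2^2$. Applied with $A=\model_N-z\Id$ and $B=M_N-z\Id$,
\[
\frac{1}{N}\sum_{i=1}^N\bigl(\sigma_i(\model_N-z\Id)-\sigma_i(M_N-z\Id)\bigr)^2
\le \frac{\|N^{-\gamma}G_N\|_2^2}{N}=N^{1-2\gamma}\cdot\frac{\|G_N\|_2^2}{N^2},
\]
whose right-hand side tends to $0$ in probability since $\gamma>1/2$ and $\|G_N\|_2^2/N^2\to 1$ by the law of large numbers. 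Hence $\tilde\nu^z_{\model_N}$ and $\tilde\nu^z_{M_N}$ share the same (deterministic) weak limit, if any; the identical argument relates $\tilde\nu^z_{\hat\model_N}$ to $\tilde\nu^z_{\hat M_N}$. It therefore suffices to prove that both $\tilde\nu^z_{M_N}$ and $\tilde\nu^z_{\hat M_N}$ converge weakly to $\tilde\nu^z$.

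For the passage from $M_N$ to $\hat M_N$, I bound $\|M_N-\hat M_N\|_2^2$ directly. Each nonzero entry of $M_N-\hat M_N$ is at most $C(N^{-\alpha_\ell\delta_1}+N^{-\delta_2})$, by $\alpha_\ell$-H\"older continuity of $f_\ell$ and the truncation at level $N^{-\delta_2}$; since $M_N-\hat M_N$ has at most $(\gd+1)N$ nonzero entries, $\|M_N-\hat M_N\|_2^2/N=o(1)$ under the stated hypotheses on $\delta_1,\delta_2$. A second application of Hoffman--Wielandt then reduces the lemma to identifying the limit of $\tilde\nu^z_{\hat M_N}$.

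For $\hat M_N$, I exploit its piecewise-constant block structure. Partition $[N]$ into $K=\lceil N^{\delta_1}\rceil$ consecutive intervals $I_1,\ldots,I_K$ of length $\lfloor N^{1-\delta_1}\rfloor$ (the last possibly shorter), on each of which all diagonals $\hat D_N^{(\ell)}$ are constant with values $\hat f_\ell(x_k)$, where $x_k=(k-1)N^{-\delta_1}$. The entries of $\hat M_N-z\Id$ connecting distinct $I_k$'s come from the $J^\ell$ terms near the $K-1$ block boundaries and form a matrix of rank at most $\gd(K-1)=o(N)$. By Weyl's perturbation inequality for singular values, zeroing them out leaves the limit of $\tilde\nu^z_{\hat M_N}$ unchanged. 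The resulting block-diagonal matrix has on each $I_k$ the banded upper-triangular Toeplitz matrix $T_k(z)=\sum_\ell\hat f_\ell(x_k)J^\ell-z\Id$ restricted to $I_k$, whose symbol differs from $P_{z,x_k}$ only through the $N^{-\delta_2}$ truncation, a negligible perturbation in Hilbert--Schmidt norm. A direct walk-counting computation for banded Toeplitz matrices gives
\[
\frac{1}{|I_k|}\tr\bigl((T_k(z)T_k(z)^*)^m\bigr)
=\int_0^{2\pi}|P_{z,x_k}(e^{\mathrm{i}\theta})|^{2m}\frac{d\theta}{2\pi}+O\!\left(\frac{m\gd^{2m}}{|I_k|}\right),
\]
so the symmetrized singular-value measure of $T_k(z)$ converges to the symmetrization of the law of $|P_{z,x_k}(U)|$, uniformly in $k$. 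Averaging over blocks turns the $2m$-th moment of $\tilde\nu^z_{\hat M_N}$ into a Riemann sum over $k$, which converges to $\int_0^1 \E|P_{z,x}(U)|^{2m}\,dx$, the $2m$-th moment of $\tilde\nu^z$. Since all measures are supported in the common compact interval $[-R,R]$ with $R=|z|+\sum_\ell\|f_\ell\|_\infty$, convergence of moments implies weak convergence, completing the identification. The main obstacle is precisely the uniformity in $k$ of the Szeg\H{o}-type convergence, which is secured by the explicit error term $O(m\gd^{2m}/|I_k|)$ valid for every fixed $m$ since $|I_k|=\lfloor N^{1-\delta_1}\rfloor\to\infty$.
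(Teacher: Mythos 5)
Your proof is correct but identifies the limit by a genuinely different route from the paper. For the first reduction (from $\model_N$ to $M_N$) you use Hoffman--Wielandt with the bound $N^{-2\gamma}\|G_N\|_2^2/N \to 0$, while the paper instead uses Weyl's inequality together with the operator-norm fact $\|N^{-\gamma}G_N\|\to 0$; both are fine. The reduction from $M_N$ to $\hat M_N$ via Hoffman--Wielandt agrees with the paper. Where you really diverge is the final step: you work with $\hat M_N$, zero out the $O(\gd N^{\delta_1})$-rank cross-block coupling (correctly handled by the rank form of Weyl's inequality $\sigma_{i+r}(A+B)\le\sigma_i(A)$ when $\operatorname{rank}(B)\le r$, not the operator-norm form), and then apply a blockwise Szeg\H{o}/walk-counting argument on constant-coefficient Toeplitz blocks, ending with a Riemann sum over $k$. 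The paper instead does a single global method-of-moments computation directly on the slowly varying $M_N$, expanding $\tr[(M_N-z\Id)(M_N-z\Id)^*]^k$ and using that $(D_N^{(\ell)})_{ii}=f_\ell(i/N)$ to produce the Riemann sum in one stroke, with no block decomposition. Your route is conceptually natural and makes the ``average of local symbols'' structure explicit; two small points of sloppiness are that your Szeg\H{o} error term should carry a factor depending on the uniform bound $|z|+\sum_\ell\|f_\ell\|_\infty$ on the symbol (not just $\gd^{2m}$), and that the symbol of your block $T_k(z)$ is actually $\hat P_{z,x_k}$ rather than $P_{z,x_k}$, though the $O(N^{-\delta_2})$ truncation gap is uniformly negligible and can be absorbed into the same error term. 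Neither affects the validity of the argument.
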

\begin{proof}[Proof of Lemma \ref{lem:e-sing-conv}]
  \corABrev{For any probability measure $\mu$ supported on $[0,\infty)$, we let $\tilde \mu$denote  its symmetrized version, given by $\tilde \mu((-y,-x)) = \tilde \mu((x,y)) = \frac{1}{2}\mu((x,y))$ for any $ 0 < x < y < \infty$. We let} $\tilde \nu^z_{M_N}$ and
  $\tilde \nu^z_{\hat M_N}$  be the symmetrized versions of 
  the empirical measures \corABrev{of the singular values} of $M_N-z\Id$ and of $\hat M_N-z\Id$, respectively.
  Note first that
$\|N^{-\gamma} G_N\|\to_{N\to\infty} 0$ in probability (and in fact,
a.s.) and that there exists a constant $C=C(\{f_\ell\}, z)$ so that
$\|M_N-z\Id\|\leq C$,
$\|\hat M_N-z\Id\|\leq C$. Therefore,
it follows from Weyl's inequalities that
for any metric on the space of probability measures on $\R$
compatible with the weak topology,
$$d(\tilde \nu^z_{\model_N},\tilde \nu^z_{M_N})\to_{N\to\infty} 0,
\quad
d(\tilde \nu^z_{\hat \model_N},\tilde \nu^z_{\hat M_N})\to_{N\to\infty} 0,$$
in probability.
On the other hand,
by definition,
$\|M_N -\hat{M}_N\| \le N^{-\min\{\alpha \delta_1, \delta_2\}}$,
where $\alpha:=\min_{\ell=0}^\gd\{\alpha_\ell\}$ and therefore, \corABrev{by
Weyl's inequalities,}
$$d(\tilde \nu^z_{M_N},\tilde \nu^z_{\hat M_N})\to_{N\to\infty} 0.$$
Combining the last two displays, we deduce
that it is enough to show that $\tilde{\nu}_{M_N}^z$ converges
weakly to $\tilde\nu^z_{\bm f}$, the symmetrized version of the law of
$|\sum_{\ell=0}^\gd f_\ell(X) U^\ell -z|$.
To this end, we will employ the method of moments.
We will show that for every $k \in \N$,
\begin{equation}\label{eq:mom}
\lim_{N \to \infty} \frac{1}{N} \tr \left[ (M_N - z\Id)(M_N - z\Id)^*\right]^k = \E \left[ \left|\sum_{\ell=0}^\gd f_\ell(X) U^\ell -z\right|^{2k}\right].
\end{equation}
This will complete the proof.
Since we can absorb $z$ in $f_0(\cdot)$ it is enough to prove \eqref{eq:mom} only for $z =0$.

We begin by evaluating
the \abbr{RHS} of \eqref{eq:mom} for $z=0$. As $U^{-1}=U^*$, we have
\begin{equation}\label{eq:mom-limit-1}
\left|\sum_{\ell=0}^\gd f_\ell(X) U^\ell \right|^{2k}= \sum_{\ell=-k\gd}^{k\gd} F_\ell^{(k)}(X) U^\ell,
\end{equation}
for some functions $\{F_\ell^{(k)}(\cdot)\}$. Since $\E U^\ell=\E U^{-\ell}=0$
for any $0\neq  \ell \in \N$, we get
\begin{equation}\label{eq:mom-limit-2}
\E\left[\left|\sum_{\ell=0}^\gd f_\ell(X) U^\ell \right|^{2k}\right] = \E[F_0^{(k)}(X)] = \int_0^1 F_0^{(k)}(x) dx.
\end{equation}
Expanding the sum in the \abbr{LHS} of \eqref{eq:mom-limit-1} and collecting the coefficient of $U^0$ it follows that
\begin{equation}\label{eq:mom-limit-3}
F_0^{(k)}(x)= \sum_{\{\ell_i\}_{i=1}^{2k}: \sum_{i=1}^k (\ell_{2i-1}- \ell_{2i})=0} \prod_{i=1}^k f_{\ell_{2i-1}}(x) \cdot \prod_{i=1}^k \overline{f_{\ell_{2i}}(x)}.
\end{equation}
Turning to identifying the \abbr{LHS} of  \eqref{eq:mom} we see that
\begin{equation}
  \label{eq-BCMD}
(M_N M_N^*)^k = \sum_{i=1}^k \sum_{\ell_i=0}^\gd D_N^{(\ell_1)}J^{\ell_1} (J^*)^{\ell_2} \overline{D_N^{(\ell_2)}} D_N^{(\ell_3)} \cdots  D_N^{(\ell_{2k-1})}J^{\ell_{2k-1}} (J^*)^{\ell_{2k}} \overline{D_N^{(\ell_{2k})}}.
\end{equation}
As $\{D_N^{(\ell)}\}$ are diagonal matrices, \corABrev{using the facts that 
\[
(J^\ell)_{i,j} = {\bf 1}_{\{1 \le j = i+\ell \le N\}}, \qquad [(J^*)^\ell]_{i,j} = {\bf 1}_{\{1 \le j = i-\ell \le N\}}, \quad i,j \in [N],
\]
we have that 
\begin{multline}\label{eq:expand}
\Delta ({\bm \ell})_{i,i} = (D_N^{(\ell_1)})_{i,i} \cdot \overline{(D_N^{(\ell_2)})_{i+s_1,i+s_1}} \cdot {(D_N^{(\ell_3)})_{i+s_1,i+s_1}} \cdots (D_N^{(\ell_{2k-1})})_{i+s_{k-1},i+s_{k-1}} \cdot \overline{(D_N^{(\ell_{2k})})_{i+s_k,i}} \\
\cdot \prod_{j=1}^k {\bf 1}_{\{i+s_{j-1}+\ell_j \in [N]\}} \cdot {\bf 1}_{\{i+s_j \in [N]\}},
\end{multline}
where
\[
\Delta({\bm \ell})_{i,i}:=\left(D_N^{(\ell_1)}J^{\ell_1} (J^*)^{\ell_2} \overline{D_N^{(\ell_2)}} D_N^{(\ell_3)} \cdots  D_N^{(\ell_{2k-1})}J^{\ell_{2k-1}} (J^*)^{\ell_{2k}} \overline{D_N^{(\ell_{2k})}}\right)_{i,i},
\]
and for $j \in [k]$ $s_j:=\sum_{i=1}^j (\ell_{2i-1} -\ell_{2i})$ and $s_0:=0$. Using the fact that $\{D_N^{(\ell)}\}$ are diagonal matrices again we deduce from \eqref{eq:expand} that if $s_k \ne 0$ then $\Delta({\bm \ell})_{i,i} =0$ for any $i \in [N]$.} 

\corABrev{Thus to establish \eqref{eq:mom} we only need to consider the sum over $\{\ell_i\}_{i=1}^{2k}$ such that $s_k =0$.} Fixing such a sequence of $\{\ell_i\}_{i=1}^{2k}$ we observe that \corAB{for any $i \in [N-2k\gd] \setminus [2k\gd]$}
\begin{equation}\label{eq:mom-limit-4}
\Delta ({\bm \ell})_{i,i} = (D_N^{(\ell_1)})_{i,i} \cdot (D_N^{(\ell_2)})_{i+s_1,i+s_1} \cdot \overline{(D_N^{(\ell_3)})_{i+s_1,i+s_1}} \cdots (D_N^{(\ell_{2k-1})})_{i+s_{k-1},i+s_{k-1}} \cdot (D_N^{(\ell_{2k})})_{i,i},
\end{equation}
\corABrev{and $\Delta({\bm \ell})_{i,i}$ is bounded for the remaining indices in $[N]$}. \corABrev{Since $(D_N^{(\ell)})_{i,i}=f_\ell(i/N)$, for any $\{\ell_i\}_{i=1}^{2k}$ we find that}
\[
\lim_{N \to \infty} \corAB{\frac{1}{N}}\sum_{i=1}^N\Delta({\bm \ell})_{i,i} = \int_0^1 \prod_{i=1}^k f_{\ell_{2i-1}}(x) \cdot \prod_{i=1}^k \overline{f_{\ell_{2i}}(x)} dx.
\]
\corOZ{Substituting} in \eqref{eq-BCMD} and
using \eqref{eq:mom-limit-2}-\eqref{eq:mom-limit-3},
we arrive at \eqref{eq:mom} (with $z=0$). This completes the proof.
\end{proof}
We next introduce further notation. The \textit{Stieltjes transform} of
a probability measure $\mu$ on $\R$
 is defined as
\[
G_\mu(\xi):=\int \frac{1}{\xi-y}\, \mu(dy), \, \xi \in \C\setminus \R.
\]
We use the following standard bounds on the Stieltjes transform, see
\cite[(6)-(8)]{GWZ}, in order to integrate the logarithmic function:
for any $\tau, \varrho >0$, and $a, b \in \R$ such that
$b - a > \varrho$ we have
\begin{equation}\label{eq:pr-stielt-ubd}
\mu([a,b]) \le \int_{a-\varrho}^{b+\varrho} |\Im G_\mu(x + \mathrm{i} \tau)| dx + \frac{\tau}{\varrho},
\end{equation}
and
\begin{equation}\label{eq:pr-stielt-lbd}
\mu([a,b]) \ge \int_{a+\varrho}^{b-\varrho} |\Im G_\mu(x + \mathrm{i} \tau)| dx  - \frac{\tau}{\varrho}.
\end{equation}
We need also the symmetrized form of the Stieltjes transform, as follows.
For a $N \times N$ matrix $C_N$, define
\[
\widetilde{C}_N:= \begin{bmatrix} 0 & C_N \\ C_N^* & 0 \end{bmatrix}
\]
and the Stieltjes transform
\[
  G_{C_N}(\xi):= \frac{1}{2N}\tr \left(\xi- \widetilde{C}_N \right)^{-1}, \, \xi \in \C \setminus \R.
\]
$G_{C_N}(\cdot)$ is the Stieltjes transform
of the symmetrized version of the empirical measure of the singular values of $C_N$. 
Using the resolvent identity, we have that
for two $N \times N$ matrices $C_N$ and $D_N$,
\begin{equation}\label{eq:stieltjes-diff}
  |G_{C_N}(\xi) - G_{D_N}(\xi)| \le \frac{\|C_N -D_N\|}{(\Im (\xi))^2}.
\end{equation}
We are finally ready to prove Theorem \ref{thm:finite-off-diag}.
\begin{proof}[Proof of Theorem \ref{thm:finite-off-diag}]
To show that $L_{\model_N}$ and $L_{\hat{\model}_N}$ admit the same limit \corABrev{we need to show that for every continuous bounded function $f : \C \mapsto \R$
\begin{equation}\label{eq:diff-f}
\int f(z) dL_{\model_N}(z) - \int f(z) dL_{\hat{\model}_N}(z) \to 0,
\end{equation}
as $N \to \infty$, in probability. By continuity we have that
$\max_\ell \sup_{x \in [0,1]} |f_\ell(x)| <\infty$, and then
$\|M_N\|, \|\hat{M}_N\| <\infty$. Therefore, using \eqref{eq-gordon}, we have that
$\|\model_N\|, \|\hat\model_N\| <\infty$ with probability approaching one. Thus, it suffices to show that \eqref{eq:diff-f} holds for compactly supported functions $f$. Furthermore, an application of the Stone-Weierstrass theorem yields that one can restrict attentions to smooth functions, namely it is enough to consider $f \in C_c^2(\C)$.} 

\corABrev{Turning to the proof of \eqref{eq:stieltjes-diff} for such $f$, we use \cite[Lemma 8.1]{BCZ} to note that we need to show that:}
\begin{enumerate}
\item[(i)] The expression
\[
\frac{1}{N}\|\model_N\|_2^2 + \frac{1}{N}\|\hat\model_N\|_2^2
\]
is bounded in probability.
\item[(ii)] For Lebesgue almost all $z \in B_\C(0,R)$,
\[
\frac{1}{N} \log |\det(\model_N - z \Id)| - \frac{1}{N} \log |\det(\hat\model_N - z \Id)|  \to 0,
\]
as $N \to \infty$, in probability.
\end{enumerate}
\vskip5pt
\corABrev{As already noted above, $\|\model_N\|, \|\hat \model_N\| < \infty$ with probability approaching one, condition (i) is immediate.} 


\corABrev{It remains to establish condition (ii).} By the definitions of
$\tilde\nu_{\model_N}^z$ and $\tilde\nu_{\hat\model_N}^z$, we have
\begin{equation}\label{eq:log-diff-1}
\frac{1}{N} \log |\det(\model_N - z \Id)| - \frac{1}{N} \log |\det(\hat\model_N - z \Id)|= \int \log |x| d\tilde\nu_{\model_N}^z(x) - \int \log |x| d\tilde\nu_{\hat\model_N}^z(x).
\end{equation}
\corABrev{We point out to the reader that both $\model_N - z \Id$ and $\hat \model_N - z \Id$ being Gaussian perturbations of some deterministic matrices are non-singular almost surely, and therefore both sides of \eqref{eq:log-diff-1} are well defined on a set with probability one. Now}
from Lemma \ref{lem:e-sing-conv} we have that for any $\vep >0$ and $R_0 \ge 1$,
\begin{equation}\label{eq:log-diff-2}
\int_{(\vep, R_0) \cup (-R_0, - \vep)} \log |x| d\tilde\nu_{\model_N}^z (x) - \int_{(\vep, R_0) \cup (-R_0, - \vep)} \log |x| d\tilde\nu_{\hat\model_N}^z (x) \to 0, \quad \text{ in probability}.
\end{equation}
We observe that
\begin{align*}
& \, \E \int x^2 d\tilde\nu_{\model_N}^z(x)\\
= & \, \E\left[\frac{1}{N}\tr \left\{ (\model_N - z \Id) (\model_N - z \Id)^*\right\}\right] \le \corEE{3} \left[ |z|^2 + \frac{1}{N}\tr (M_N M_N^*) + N^{-(1+2\gamma)} \tr(G_NG_N^*)\right] \le C,
\end{align*}
for some $C <\infty$. Thus using the fact that $|\log |x|| /x^2$ is decreasing for $|x| \ge  \sqrt{e}$, we have that, for any $R_0 >\sqrt{e}$ ,
\[
\E \left[\int_{(-R_0,R_0)^c} \log |x| d \tilde\nu_{\model_N}^z(x)\right] \le \frac{\log R_0}{R_0^2} \E \left[\int_{(-R_0,R_0)^c} x^2 d \tilde\nu_{\model_N}^z(x)\right] \le \frac{\log R_0}{R_0^2}  \cdot C.
\]
By the same argument
\begin{equation}\label{eq:log-near-infty-hat}
\E \left[\int_{(-R_0,R_0)^c} \log |x| d \tilde\nu_{\hat\model_N}^z(x)\right]  \le \frac{\log R_0}{R_0^2}  \cdot C.
\end{equation}
So, applying Markov's inequality we see that for any $\eta>0$, there exists $R_0(\eta)$ such that
\begin{equation}\label{eq:log-diff-3}
\lim_{\eta \to 0} \limsup_{N \to \infty} \P\left( \left| \int_{(-R_0(\eta),R_0(\eta))^c}  \log |x| d \tilde\nu_{\model_N}^z(x) - \int_{(-R_0(\eta),R_0(\eta))^c}  \log |x| d \tilde\nu_{\hat\model_N}^z(x)\right| > \eta\right) =0.
\end{equation}
Combining \eqref{eq:log-diff-1}-\eqref{eq:log-diff-3} we see that to establish condition (ii) it remains to show that for any $\eta >0$, there exists $\vep(\eta)$ such that
\begin{equation}\label{eq:log-diff-4.5}
\lim_{\eta \to 0} \limsup_{N \to \infty} \P\left( \left| \int_{-\vep(\eta)}^{\vep(\eta)}  \log |x| d \tilde\nu_{\model_N}^z(x) - \int_{-\vep(\eta)}^{\vep(\eta)}  \log |x| d \tilde\nu_{\hat\model_N}^z(x)\right| > 2\eta\right) =0. 
\end{equation}
To prove the above it is enough to show that
\begin{equation}\label{eq:log-diff-4}
\lim_{\eta \to 0} \limsup_{N \to \infty} \P\left( \left| \int_{-\vep(\eta)}^{\vep(\eta)}  \log |x| d \tilde\nu_{\model_N}^z(x)\right| > \eta\right) =0
\end{equation}
and
\begin{equation}\label{eq:log-diff-5}
\lim_{\eta \to 0} \limsup_{N \to \infty} \P\left( \left| \int_{-\vep(\eta)}^{\vep(\eta)}  \log |x| d \tilde\nu_{\hat\model_N}^z(x)\right| > \eta\right) =0,
\end{equation}
\corABrev{where $\vep(\eta)$ is as in \eqref{eq:log-diff-4.5}.}
It follows from
Theorem \ref{thm:finite-off-diag-pc} that for Lebesgue
almost every $z\in \C$,
$\LL_{L_{\hat\model_N}}(z)\to_{N\to\infty} \LL_{\mu_{d,{\bm f}}}\corABrev{(z)}$
in probability,
which is equivalent to the statement that
\begin{equation}\label{eq:log-pot-hat}
\int \log|x| d\tilde\nu^z_{\hat\model_N}(x) \to \int \log|x| d\corAB{\tilde\nu^z_{\bm f}}(x), \quad \text{ in probability}.
\end{equation}
\corABrev{Applying Lemma \ref{lem:e-sing-conv} we have that
\begin{equation}\label{eq:log-pot-hat-1}
\int_{(-\vep, \vep)^c \cap (-R_0,R_0)} \log |x| d\tilde\nu^z_{\hat\model_N}(x) \to \int_{(-\vep, \vep)^c \cap (-R_0,R_0)} \log |x| d\corAB{\tilde\nu^z_{\bm f}}(x), \quad \text{ in probability}, 
\end{equation}
for any $\vep >0$ and $R_0 < \infty$. As $\corAB{\tilde\nu^z_{\bm f}}$ is compactly supported using \eqref{eq:log-near-infty-hat} we obtain that
\begin{align*}
\, &\lim_{\eta \to 0} \limsup_{N \to \infty} \P\left( \left| \int_{(-R_0(\eta),R_0(\eta))^c}  \log |x| d \tilde\nu_{\hat \model_N}^z(x) - \int_{(-R_0(\eta),R_0(\eta))^c}  \log |x| \tilde\nu^z_{\bm f}(x)\right| > \eta\right) \\
=\, & \lim_{\eta \to 0} \limsup_{N \to \infty} \P\left( \left| \int_{(-R_0(\eta),R_0(\eta))^c}  \log |x| d \tilde\nu_{\hat \model_N}^z(x) \right| > \eta \right)=0,
\end{align*}
where $R_0(\eta)$ is as in \eqref{eq:log-diff-3}. Therefore, from \eqref{eq:log-pot-hat}-\eqref{eq:log-pot-hat-1} we now deduce that, for every $\vep >0$,
\[
\int_{-\vep}^{\vep} \log|x| d\tilde\nu^z_{\hat\model_N}(x) \to \int_{-\vep}^{\vep} \log|x| d\corAB{\tilde\nu^z_{\bm f}}(x), \quad \text{ in probability}.
\]
This together with Lemma \ref{lem:regular} yields \eqref{eq:log-diff-5}. It remains to establish \eqref{eq:log-diff-4}.}


To this end,
from \cite[Proposition 16]{GKZ} we have that, for any $t >0$,
\[
\P(\sigma_N(\model_N -z \Id) < t) \le C_0 N^{1+2\gamma} t^2,
\]
for some constant $C_0$. Therefore, \corABrev{for any $\eta>0$},
\begin{equation}\label{eq:log-diff-6}
\limsup_{N \to \infty} \P\left( \left|\int_{-N^{-(1+\gamma)}}^{N^{-(1+\gamma)}} \log |x| d\tilde\nu_{\model_N}^z(x) \right| \ge \eta/2\right) \le \limsup_{N \to \infty} \P(\sigma_N(\model_N - z\Id) < N^{-(1+\gamma)}) =0.
\end{equation}
Since $\|\model_N - \hat\model_N\| \le N^{-\delta'}$, where $\delta':=\min\{\alpha \delta_1,\delta_2\}$, from \eqref{eq:stieltjes-diff} and setting $\tau=N^{- \delta'/4}$ we obtain
\[
|G_{\model_N - z\Id}(x+\mathrm{i} \tau) - G_{\hat\model_N - z\Id}(x+\mathrm{i} \tau)| \le N^{-\delta'/2}.
\]
Setting $\varrho = N^{-\delta'/8}$, $\kappa=\delta'/16$, and using \eqref{eq:pr-stielt-ubd}-\eqref{eq:pr-stielt-lbd} in the second inequality,
we have that
\begin{align}
-\int_{N^{-(1+\gamma)}}^{N^{-\kappa}} \log (x) d\tilde\nu_{\model_N}^z(x) & \le (1+\gamma) \log N \cdot \nu_{\model_N}^z([N^{-(1+\gamma)}, N^{-\kappa}])\notag\\
 & \le (1+\gamma) \log N \left(2 N^{- \delta' /8} + \tilde \nu_{\hat
 \model_N}^z([N^{-(1+\gamma)}- 2\corAB{\varrho}, N^{-\kappa}+2\corAB{\varrho}]) \right)\notag\\
 & \le  (1+\gamma) \log N \left(2 N^{- \delta' /8} +
 2\tilde \nu_{\hat
 \model_N}^z([0, 2N^{-\kappa}]) \right) \notag\\
 & \le 2 (1+\gamma) \log N \cdot N^{-\delta' /8}-\frac{4(1+\gamma)}{\kappa} \int_0^{2N^{-\kappa}} \log (x) d\tilde
 \nu_{\hat\model_N}^z(x), \label{eq:log-near-0-1}
\end{align}
for all large $N$, where in the third inequality we used the symmetry of $\tilde \nu_{\hat \model_N}^z$
and $\varrho= o(N^{-\kappa})$.

It remains to bound the integral of $\log(\cdot)$ over $(N^{-\kappa}, \vep)$.
Toward this,
using integration by parts we note that, for $0 \le a_1 < a_2  <1$ and any probability measure $\mu$ on $\R$,
\begin{equation}\label{eq:log-by-parts}
-\int_{a_1}^{a_2} \log (x) d\mu(x) = -\log(a_2) \mu([a_1,a_2]) + \int_{a_1}^{a_2} \frac{\mu([a_1,t])}{t} dt.
\end{equation}
Arguing as in \eqref{eq:log-near-0-1} we obtain
\begin{align}
\int_{N^{-\kappa}}^\vep \frac{\tilde
  \nu_{\model_N}^z([N^{-\kappa}, t])}{t} dt& \le 2N^{- \delta' /8}
  \int_{N^{-\kappa}}^\vep \frac{1}{t} dt +
  \int_{N^{-\kappa}}^\vep \frac{\tilde \nu_{\hat\model_N}^z([N^{-\kappa}/2, t+N^{-\kappa}])}{t} dt \notag\\
& \le 2\kappa N^{-\delta' /8} \cdot \log N +2 \int_{N^{-\kappa}/2}^{2\vep} \frac{\tilde \nu_{\hat\model_N}^z([N^{-\kappa}/2, t])}{t} dt,  \label{eq:log-near-0-2}
\end{align}
where in the last step we have used the fact that $t + N^{-\kappa} \le 2t$ for any $t \ge N^{-\kappa}$, and a change of variable. A similar reasoning yields that
\begin{equation}
-\log(\vep) \tilde \nu_{\model_N}^z([N^{-\kappa},\vep]) \le - \log(\vep) \left( \corAB{2 N^{-\delta'/8}} +
\tilde \nu_{\model_N}^z([N^{-\kappa}/2,2\vep])\right). \label{eq:log-near-0-3}
\end{equation}
Thus combining \eqref{eq:log-near-0-1}, and \eqref{eq:log-near-0-2}-\eqref{eq:log-near-0-3}, and using \eqref{eq:log-by-parts} we deduce that for $\vep >0$ sufiiciently small and all large $N$,
\[
-\int_{N^{-(1+\gamma)}}^{\vep} \log (x) d\tilde
\nu_{\model_N}^z(x)  \le C_0' \left[ \log N \cdot N^{- \delta'/8}-\int_0^{2\vep}\log(x) d\tilde \nu_{\hat\model_N}^z(dx)\right],
\]
where $C_0'$ is some large constant. Now, combining this with
\eqref{eq:log-diff-5}-\eqref{eq:log-diff-6}, the claim in \eqref{eq:log-diff-4} follows. This completes the proof of the theorem.
\end{proof}

\section{The piecewise constant case - proof of Theorem \ref{thm:finite-off-diag-pc}}\label{sec:pf-pc-constant}
\corAB{Similar to the proof of Theorem \ref{theo-2}, the main step in the proof of Theorem \ref{thm:finite-off-diag-pc}
is the proof of convergence of the log-potentials $\LL_{L_{\hat\model_N}}(z)$,
which will use}
\corAB{Theorem \ref{thm:logdet}. To verify the assumptions of Theorem
\ref{thm:logdet} we need to find an analogue of Theorem \ref{thm:DJssv}. To this end, we need to}
\corAB{identify approximate singular vectors  of}
\corOZ{$\hat{M}_N(z):=\hat{M}_N - z\Id$}
\corAB{corresponding to small singular values, establish
that $\|\hat{M}_N(z) w\|_2$ cannot be small for any vector
$w$ orthogonal to these approximate singular vectors, and obtain matching upper and lower bounds,
up to sub-exponential factors,
on the product of the small singular values. Overall, we follow a scheme similar to the one in Section \ref{sec-d=1}. However, as we will see below, some significant changes are necessary}
\corOZ{when treating}
\corAB{the case $\gd >1$, even in the constant diagonal set-up.}

\corOZ{We begin by fixing additional notation.}
\corOZ{Set}
\begin{equation}
  \label{eq-L0}
\gb_k:=\{i \in [N]:
  \lfloor i N^{\delta_1-1}\rfloor =k\}, \quad
  \corOZ{k=0,1,\ldots,L_0:=\lfloor N^{\delta_1}\rfloor.}
\end{equation}
Note that
  if $i \in \gb_k$, for some $k$, then
  for any $\ell \in [\gd] \cup \{0\}$,
  $(\hat{D}^{(\ell)}_N)_{i,i}=
  f_\ell(k N^{-\delta_1}){\bf 1}_{\{|f_\ell(k N^{-\delta_1})| \ge
  N^{-\delta_2}\}}=:t_\ell(k)$, i.e. the diagonals of $\hat M_N$ are constant
  within each $\gb_k$.
  Therefore, for any $w \in \C^N$ and $i \in \gb_k$,
\begin{equation}\label{eq:d-lin-eqn-1}
(\hat{M}_N(z) w)_i = (t_0(k)-z) w_i + \sum_{\ell=1}^{\gd} t_\ell(k) w_{\ell+i}.
\end{equation}
Using \eqref{eq:d-lin-eqn-1} we will construct vectors $w$ for which $\hat{M}_N(z) w \approx 0$. It will be easier to reformulate \eqref{eq:d-lin-eqn-1} as a system of linear equations, for which we need to define the following notion of transfer matrix.  
\begin{dfn}\label{dfn:transfer-matrix}
Fix $k \in \N \cup \{0\}$ such that $\gb_k \ne \emptyset$. Denote
\[
\hat\gd_N:=\hat\gd_N(k):= \max\{\ell: |t_\ell(k)| \ne 0\}.
\]
For $\hat\gd_N(k) >0$ denote
\[
\hat{\bm t}(k):= \left(\frac{t_{\hat\gd_N-1}(k)}{t_{\hat\gd_N}(k)}, \, \frac{t_{\hat\gd_N-2}(k)}{t_{\hat\gd_N}(k)}, \, \ldots, \, \frac{t_{1}(k)}{t_{\hat\gd_N}(k)}\right)
\]
and define the following $\hat\gd_N \times \hat\gd_N$ matrix:
\begin{align*}
T_k(z):= \begin{bmatrix} -\hat{\bm t}(k) & \frac{z-t_0(k)}{t_{\hat\gd_N}(k)}\\ I_{\hat\gd_N-1} & {\bf 0}_{\hat\gd_N-1}  \end{bmatrix},
\end{align*}
where ${\bf 0}_{n}$ is the $n$-dimensional vector of all zeros.
\end{dfn}

\corOZ{Recall the symbol $P_{z,x}$, see \eqref{eq-symbol}.}
The next result shows that the eigenvalues of the transfer matrix $T_k(z)$ are the roots of the equations $P_{z,x}(\lambda)=0$ for some appropriate choices of $x$.
\begin{lemma}\label{lem:transfer-matrix-spectrum}
Fix $k \in \N \cup \{0\}$ such that $\gb_k \ne \emptyset$. Let $\hat\gd_N$ and $T_k(z)$ be as in Definition \ref{dfn:transfer-matrix}. \corABrev{Assume $\hat\gd_N >0$}. Fix any $\ell \in [\hat\gd_N]$. Denote
\[
{\bm v}_{\ell}(k)^{\sf T}:= \begin{pmatrix} \hat{\lambda}^{
  \hat\gd_N-1}_\ell(z,k) & \hat{\lambda}_\ell^{\hat\gd_N-2}(z,k) & \cdots  & 1\end{pmatrix},
\]
where $\{\hat{\lambda}_\ell(z,k)\}_{\ell=1}^{\hat\gd_N}$ are the roots of the equation
\[\hat{P}_{z,k}(\lambda):=t_\gd(k)\lambda^\gd+ t_{\gd-1}(k) \lambda^{\gd-1}+ \cdots + t_1(k)\lambda + t_0(k)-z=0.
\]
Then for any $\ell \in[\hat\gd_N]$, ${\bm v}_\ell(k)$ is an eigenvector of $T_k(z)$ corresponding to $\hat{\lambda}_\ell(z,k)$.
\end{lemma}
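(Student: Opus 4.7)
My plan is to observe that $T_k(z)$ has the structure of a companion-type matrix for the polynomial $\hat{P}_{z,k}$, and then to directly verify that the proposed Vandermonde-type vector ${\bm v}_\ell(k)$ satisfies $T_k(z){\bm v}_\ell(k) = \hat\lambda_\ell(z,k)\cdot {\bm v}_\ell(k)$. Writing $\hat\lambda := \hat\lambda_\ell(z,k)$ for brevity, I would compute $T_k(z){\bm v}_\ell(k)$ row-by-row, handling the two row-blocks of $T_k(z)$ separately.

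For the lower block $[I_{\hat\gd_N - 1}\mid {\bf 0}_{\hat\gd_N-1}]$, the action selects the first $\hat\gd_N - 1$ entries of ${\bm v}_\ell(k)$, producing $(\hat\lambda^{\hat\gd_N-1}, \hat\lambda^{\hat\gd_N-2}, \ldots, \hat\lambda)^{\sf T}$. This coincides exactly with entries $2$ through $\hat\gd_N$ of $\hat\lambda\cdot {\bm v}_\ell(k)$ by the shift-and-scale structure of ${\bm v}_\ell(k)$, so all but the top eigenvector equation is automatic.

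The substantive step is the first row. The first entry of $T_k(z){\bm v}_\ell(k)$ is
\[
-\sum_{j=1}^{\hat\gd_N - 1} \frac{t_{\hat\gd_N - j}(k)}{t_{\hat\gd_N}(k)}\,\hat\lambda^{\hat\gd_N - j} + \frac{z - t_0(k)}{t_{\hat\gd_N}(k)}.
\]
Setting this equal to $\hat\lambda^{\hat\gd_N}$ (the first entry of $\hat\lambda\cdot {\bm v}_\ell(k)$), multiplying through by $t_{\hat\gd_N}(k)$, and rearranging yields
\[
t_{\hat\gd_N}(k)\hat\lambda^{\hat\gd_N} + \sum_{m=1}^{\hat\gd_N - 1} t_m(k)\,\hat\lambda^m + t_0(k) - z = 0,
\]
which, since $t_\ell(k)=0$ for $\ell > \hat\gd_N$ by the definition of $\hat\gd_N$, is precisely $\hat{P}_{z,k}(\hat\lambda)=0$. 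This identity holds by the hypothesis that $\hat\lambda_\ell(z,k)$ is a root of $\hat{P}_{z,k}$, so the verification is complete.

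The argument is a routine computation, so I do not anticipate any real obstacle; the only subtlety is bookkeeping with the index convention for ${\bm v}_\ell(k)$, whose entries run from the highest power $\hat\lambda^{\hat\gd_N-1}$ down to the constant $1$. Once the indexing is fixed, the companion-matrix identity falls out immediately.
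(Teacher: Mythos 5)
Your proof is correct and follows essentially the same route as the paper: both directly compute $T_k(z){\bm v}_\ell(k)$ using the companion-matrix structure, observe that the lower block produces a shifted copy of ${\bm v}_\ell(k)$, and reduce the first-row identity to $\hat{P}_{z,k}(\hat\lambda_\ell(z,k))=0$. You spell out the first-row algebra a bit more explicitly than the paper does, but the computation is the same.
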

\begin{proof}
Since $\{\hat{\lambda}_\ell(z,k)\}_{\ell=1}^{\hat\gd_N}$ are the roots of the equation $\hat{P}_{z,k}(\lambda)=0$, from the definition of \corAB{$T_k(z)$} we note that
\[
T_k(z) {\bm v}_\ell(k) = \begin{pmatrix} \hat{\lambda}_\ell^{\hat\gd_N}(z,k) - \frac{1}{t_{\hat\gd_N}(k)}\hat{P}_{z,k}(\hat{\lambda}_\ell(z,k)) \\ \hat{\lambda}_\ell^{\hat\gd_N-1}(z,k) \\ \vdots \\ \hat{\lambda}_\ell(z,k)\end{pmatrix} = \hat{\lambda}_\ell(z,k) \cdot {\bm v}_\ell(k).
\]
This completes the proof.
\end{proof}
Lemma \ref{lem:transfer-matrix-spectrum} shows that the eigenvalues of $T_k(z)$ are the roots of the polynomial equation $\hat{P}_{z,k}(\lambda)=0$. If $\min_\ell \{|f_\ell(x)|\} \ge N^{-\delta_2}$, then it is easy to see that $P_{z,x}(\lambda)=\hat{P}_{z,k}(\lambda)$ where $x=k N^{-\delta_1}$. Therefore, in such cases, the roots of $P_{z,x}(\lambda)$ and $\hat{P}_{z,k}(\lambda)$ coincide. This property will be later used in the proof of Theorem \ref{thm:finite-off-diag-pc}.

Note that Lemma \ref{lem:transfer-matrix-spectrum} also provides the eigenvectors of $T_k(z)$. Using these eigenvectors we now construct approximate singular vectors of $\hat{M}_N(z)$, corresponding to small singular values.

\subsection*{Construction of approximate singular vectors}
Recall, \corOZ{see \eqref{eq-L0},} that
$\{\gb_k\}_{k=0}^{L_0}$ is a partition of $[N]$ with
$L_0=\lfloor N^{\delta_1} \rfloor$, so that
within each block $\gb_k$, the entries of the diagonals of
$\hat{M}_N$ remains constant. However, for certain values of $N$, the
last block $\gb_{L_0}$ may have a small length. To overcome this, we
slightly modify $\hat{M}_N$. Namely, we replace the \corAB{(off)-}diagonal entries of
$\hat{M}_N$ in the last two blocks by their  average.
By a slight abuse of notation, we continue to write $\{\gb_k\}$ to
indicate the blocks in which the \corAB{(off)-}diagonal entries of \corAB{the modified} $\hat{M}_N$ are constant.
Note that we now have that $N^{1-\delta_1}/2 \le |\gb_k| \le 2 N^{1-\delta_1}$, \corAB{for all $k$}. Since this extra modification results in a change of operator norm of
$O(N^{-\alpha \delta_1}+N^{-\delta_2})$, \corAB{where we recall $\alpha:=\min_{\ell=0}^\gd\{\alpha_\ell\}$}, it is enough to prove
Theorem \ref{thm:finite-off-diag-pc} for the modified $\hat{M}_N(z)$.

{Fix $\delta_3 \in (0,1/3)$}. Next we choose a refinement of the above partition $\{\{\gb_k^{(j')}\}_{j'=1}^{L_k'}\}_{k=0}^{L_0} \subset \{\gb_k\}_{k=0}^{L_0}$ such that $N^{\delta_3}/2 \le |\gb_k^{(j')}| \le 2N^{\delta_3}$ for all $k \in [L_0]\cup \{0\}$ and $j' \in [L_k']$. Since $\delta_1<1/2$ and
$\delta_3 <1/3$,
such a property can be ensured.
Fix
\begin{equation}
  \label{eq-L}
  \corOZ{  L:=\sum_{k=0}^{L_0} L_k'=O(N^{1-\delta_3}).}
\end{equation}
Further let
\[
  0 = i_1 < i_2 < i_3 < \dots  < i_{L+1} = N,
\]
\corAB{be} the end points of the partition $\{\{\gb_k^{(j)}\}_{j=1}^{L_k'}\}_{k=0}^{L_0}$. That is, for any $k \in [L_0] \cup\{0\}$ and $j' \in [L_k']$ we have $\gb_k^{(j')}=[i_{j+1}]\setminus [i_j]$ for some $j \in [L] \cup\{0\}$, which in turn implies that $N^{\delta_3}/2 \le i_{j+1} - i_j \le 2N^{\delta_3}$ for all $j \in [L] \cup \{0\}$.

Next fix $k \in [L_0]\cup\{0\}$ and $j' \in [L_k']$ and assume that $\gb_k^{(j')}=[i_{j+1}]\setminus [i_j]$ for some $j \in [L] \cup\{0\}$. For $\ell \in [\hat\gd_N(k)]$ define the $N$-dimensional vectors ${\bm w}_\ell^j(k)$ as follows
\[
({\bm w}_\ell^{j}(k))_m:= \left\{	\begin{array}{ll} \hat{\lambda}_\ell(z,\corAB{k})^{m-i_j-1} & \mbox{ for } m=i_j+1,i_j+2,\ldots,i_{j+1}\\
0 & \mbox{otherwise}
\end{array}
\right.
\]
where $\hat{\lambda}_1(z,\corAB{k}), \hat{\lambda}_2(z,\corAB{k}), \ldots, \hat{\lambda}_{\hat\gd_N}(z,\corAB{k})$ are the roots of the equation $\hat{P}_{z,k}(\lambda)=0$. 


\begin{remark}
Note that
when $\hat\gd_N(k)=0$ then a block in $\hat{M}_N(z)$ becomes diagonal.
Since the singular values of a diagonal matrix are the
absolute values of its diagonal entries,
we do not need to bother with constructing
approximate singular vectors.
Therefore, when computing
bounds on the small singular values we will assume that $\hat\gd_N(k) >0$, and
define the candidates for small singular vectors $\{{\bm w}_\ell^j(k)\}$ only
in that case.
\end{remark}

The next lemma
gives a simple but useful property of
the vectors $\{{\bm w}^j_\ell(k)\}$. Before stating the result let us introduce one more notation. Fix any $N$-dimensional vector $u$. For $k \in [L_0]\cup\{0\}$, denote
\[
u\dvec{k, m}:= \begin{pmatrix} u_{m+\hat\gd_N(k)-1} \\ u_{m+\hat\gd_N(k)-2} \\ \vdots \\ u_m \end{pmatrix}, \qquad  m =1,2, \ldots, N-\hat\gd_N(k)+1.
\]
\begin{lemma}\label{eq:small-sing-vector}
Fix $k \in [L_0] \cup\{0\}$ and $j' \in [L_k']$ such that $\gb_k^{(j')}=[i_{j+1}]\setminus [i_j]$ for some $j \in [L]\cup\{0\}$. Assume that $\hat\gd_N(k) >0$.
\begin{enumerate}[(i)]
\item Let $u$ be an $N$-dimensional vector such that
\begin{equation}\label{eq:tran-mat-eqn}
u\dvec{k, m+1}= T_k(z) u\dvec{k,m}, \qquad m \in [i_{j+1}-\hat\gd_N(k)]\setminus [i_j].
\end{equation}
Then
\[
(\hat{M}_N(z) u )_m=0, \qquad m\in [i_{j+1}-\hat\gd_N(k)]\setminus [i_j].
\]
\item For any $\ell \in[\hat\gd_N(k)]$ we have
\[
(\hat{M}_N(z) {\bm w}_\ell^j(k) )_m=0, \qquad m\in [i_{j+1}-\hat\gd_N(k)]\setminus [i_j].
\]
\end{enumerate}
\end{lemma}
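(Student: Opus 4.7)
The plan is to observe that part (ii) follows from part (i) together with the eigenvector identification in Lemma~\ref{lem:transfer-matrix-spectrum}, so the substantive content lies entirely in part (i), which in turn is simply an unwinding of the definition of the transfer matrix $T_k(z)$.

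For part (i), I would first note that for $m \in [i_{j+1}-\hat\gd_N(k)]\setminus [i_j]$ the indices $m, m+1, \dots, m+\hat\gd_N(k)$ all lie inside $\gb_k^{(j')} \subset \gb_k$, so by \eqref{eq:d-lin-eqn-1} the equation $(\hat{M}_N(z)u)_m = 0$ reads
\[
(t_0(k)-z)u_m + \sum_{\ell=1}^{\hat\gd_N(k)} t_\ell(k)\, u_{m+\ell} = 0,
\]
which, after dividing by $t_{\hat\gd_N(k)}(k)$ and isolating $u_{m+\hat\gd_N(k)}$, is precisely the identity given by the first row of $T_k(z)\, u\dvec{k,m}$. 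The remaining $\hat\gd_N(k)-1$ rows of $T_k(z)\, u\dvec{k,m}$ are trivial shifts that read off $u_{m+\hat\gd_N(k)-1},\dots,u_{m+1}$, which are exactly the lower entries of $u\dvec{k,m+1}$. Hence the matrix relation $u\dvec{k,m+1}=T_k(z)u\dvec{k,m}$ is equivalent to $(\hat{M}_N(z)u)_m=0$ on the prescribed range of $m$.

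For part (ii), I would compute $({\bm w}_\ell^j(k))\dvec{k,m}$ directly from its definition: for $m$ in the allowed range, each of the $\hat\gd_N(k)$ entries is of the form $\hat\lambda_\ell(z,k)^{m+r-i_j-1}$ for some shift $r$, so that
\[
({\bm w}_\ell^j(k))\dvec{k,m} = \hat\lambda_\ell(z,k)^{\,m-i_j-1}\, {\bm v}_\ell(k),
\]
where ${\bm v}_\ell(k)$ is the Vandermonde-style vector of Lemma~\ref{lem:transfer-matrix-spectrum}. That lemma gives $T_k(z){\bm v}_\ell(k) = \hat\lambda_\ell(z,k){\bm v}_\ell(k)$, so $T_k(z)({\bm w}_\ell^j(k))\dvec{k,m}$ equals $\hat\lambda_\ell(z,k)^{m-i_j}{\bm v}_\ell(k) = ({\bm w}_\ell^j(k))\dvec{k,m+1}$. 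Thus ${\bm w}_\ell^j(k)$ satisfies the hypothesis of part (i), and the conclusion follows.

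There is essentially no hard step here; the only care needed is bookkeeping of indices, in particular verifying that the chosen range $m\in [i_{j+1}-\hat\gd_N(k)]\setminus[i_j]$ is exactly what is needed for both $m$ and $m+\hat\gd_N(k)$ to lie in the single block $\gb_k^{(j')}$ where the diagonals of $\hat M_N$ are constant (so that \eqref{eq:d-lin-eqn-1} applies with the single parameter $k$ throughout). The lemma is really a packaging statement that rewrites the local kernel equations of $\hat M_N(z)$ as an iteration of a fixed transfer matrix within each block, setting the stage for applying transfer-matrix Lyapunov analysis in the sequel.
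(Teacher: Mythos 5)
Your proof is correct and follows essentially the same route as the paper: for (i) you unpack the first row of the relation $u\dvec{k,m+1}=T_k(z)u\dvec{k,m}$ and observe that, after multiplying through by $t_{\hat\gd_N(k)}(k)$ and rearranging, it is literally the statement $(\hat M_N(z)u)_m=0$ via \eqref{eq:d-lin-eqn-1}, while the remaining rows are tautological shifts; for (ii) you compute ${\bm w}_\ell^j(k)\dvec{k,m}=\hat\lambda_\ell(z,k)^{m-i_j-1}{\bm v}_\ell(k)$, invoke the eigenvector identity of Lemma~\ref{lem:transfer-matrix-spectrum}, and apply part (i). One small imprecision in your closing remark: for \eqref{eq:d-lin-eqn-1} to hold with the single parameter $k$ it suffices that the row index $m$ lie in $\gb_k$ (the coefficient $t_\ell(k)$ is determined by the row, not the column); the tighter restriction $m+\hat\gd_N(k)\le i_{j+1}$ is really needed so that the windows $u\dvec{k,m}$, $u\dvec{k,m+1}$ stay within the support of ${\bm w}_\ell^j(k)$, which is what makes the argument for part (ii) go through. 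This does not affect the correctness of the proof.
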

\begin{proof}
The condition \eqref{eq:tran-mat-eqn} implies that
\[
t_{\corAB{\hat\gd_N(k)}}(k) u_{m+\hat\gd_N(k)}= (z-t_0(k)) u_m - \sum_{m'=1}^{\hat\gd_N(k)-1} t_{m'}(k) u_{m+m'},
\]
for $m \in [i_{j+1}-\hat\gd_N(k)]\setminus [i_j]$. The conclusion of part (i) is now immediate from \eqref{eq:d-lin-eqn-1} and the definition of $\corAB{\hat\gd_N(k)}$. To prove (ii) we note that ${\bm w}_\ell^j(k) \dvec{k,m} = \hat{\lambda}_\ell(z,x)^{m-i_j-1} {\bm v}_\ell(k)$, for $m \in [i_{j+1}-\hat\gd_N(k)]\setminus [i_j]$, where ${\bm v}_\ell$ is as in Lemma \ref{lem:transfer-matrix-spectrum}. Thus from Lemma \ref{lem:transfer-matrix-spectrum} it follows that
$$T_k(z){\bm w}_{\ell}^j(k) \dvec{k,m} = \hat{\lambda}_\ell(z,k)^{m-i_j-1} T_k(z) {\bm v}_\ell(k) = \hat{\lambda}_\ell(z,k)^{m-i_j}  {\bm v}_\ell(k) = {\bm w}_\ell^j(k) \dvec{k,m+1}.$$
Now the proof completes by applying part (i).
\end{proof}

\subsection*{Identification of the set of bad $z$'s}
Recall that to prove Theorem \ref{thm:finite-off-diag-pc} we only need to find the limit of the log-potential for Lebesgue almost every $z \in \C$.
\corEE{As we will see below, \corABa{our methods to control} the small singular values of $\hat{M}_N(z)$ break
down when $P_{z,x}(\lambda)=0$ has roots near the
unit circle or when the Vandermonde matrix
\begin{equation}
\label{eq:bmV}
{\bm V}(k):=\begin{bmatrix} {\bm v}_1(k) & {\bm v}_2(k) & \cdots & {\bm v}_{\hat\gd_N}(k) \end{bmatrix}
\end{equation}
loses invertibility.}
In the next lemma
we show that the collection of all such bad $z$'s has small Lebesgue measure.

\begin{lemma}\label{lem:bad-z}
Let $\cN_N$ denote the collection of $z \in \C$ such that either of the following properties hold:
\begin{enumerate}
\item[(i)] For some $k \in [L_0] \cup \{0\}$, such that $\hat\gd_N(k) >0$,
\[
|t_{\hat\gd_N(k)}(k)|^{\hat\gd_N(k)-1}
\corEE{|\det( {\bm V}(k))|^2}
\le N^{-2\delta_1\corOZ{\gd}}.
\]
\item[(ii)] For some $k \in [L_0]\cup\{0\}$, such that $\hat\gd_N(k) >0$, there exists a root $\lambda$ of the equation $\hat{P}_{z,k}(\lambda)=0$ such that $1-N^{-3\delta_1} \le |\lambda| \le 1+N^{-3\delta_1}$. 

\item[(iii)]\corABrev{
\[
\inf_{k=0}^{L_0} |z - t_0(k)| \le N^{-\frac{\alpha_0\delta_1}{2(2\alpha_0-1)}}.
\]}
\end{enumerate}
Then $\mathrm{Leb}(\cN_N) \le N^{-\delta_1}$ for all large $N$.
\end{lemma}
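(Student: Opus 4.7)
The plan is to bound separately the contributions of conditions (i) and (ii) to $\cN_N$, and then union bound over $k \in [L_0]\cup\{0\}$. Since $L_0+1 = O(N^{\delta_1})$, it suffices to derive, for each fixed $k$ with $\hat\gd_N(k)\geq 1$, per-$k$ measure bounds of order at most $N^{-3\delta_1}$.

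For (ii), observe that the set of bad $z$'s equals $Q_k(A_N)$, where
\[
Q_k(\lambda):=\sum_{\ell=0}^{\hat\gd_N(k)}t_\ell(k)\lambda^\ell
\quad\text{and}\quad
A_N:=\{\lambda\in\C:1-N^{-3\delta_1}\leq|\lambda|\leq 1+N^{-3\delta_1}\},
\]
since $\hat{P}_{z,k}(\lambda)=Q_k(\lambda)-z$ vanishes at $\lambda$ iff $z=Q_k(\lambda)$. The holomorphic area formula gives $\mathrm{Leb}(Q_k(A_N))\leq\int_{A_N}|Q_k'(\lambda)|^2\,dA(\lambda)$. Since $|Q_k'|$ is uniformly bounded on $A_N$ by a constant depending only on $\gd$ and $\max_\ell\sup_{[0,1]}|f_\ell|$, while the area of $A_N$ is $4\pi N^{-3\delta_1}$, this yields $\mathrm{Leb}(Q_k(A_N))=O(N^{-3\delta_1})$ per $k$.

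For (i), I exploit the Vandermonde identity
\[
|\det{\bm V}(k)|^2=\prod_{i<j}|\hat\lambda_i(z,k)-\hat\lambda_j(z,k)|^2=\frac{|\operatorname{disc}(\hat{P}_{z,k})|}{|t_{\hat\gd_N(k)}(k)|^{2\hat\gd_N(k)-2}},
\]
which recasts (i) as $|\operatorname{disc}(\hat{P}_{z,k})|\leq N^{-2\delta_1\gd}|t_{\hat\gd_N(k)}(k)|^{\hat\gd_N(k)-1}$. Crucially, $\hat{P}_{z,k}$ depends on $z$ only through its constant term $t_0(k)-z$, so $\operatorname{disc}(\hat{P}_{z,k})$ is a polynomial in $z$ of degree $\hat\gd_N(k)-1$. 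An asymptotic analysis as $|z|\to\infty$ (where the roots $\hat\lambda_i$ behave like $(z/t_{\hat\gd_N(k)}(k))^{1/\hat\gd_N(k)}$ times a $\hat\gd_N(k)$-th root of unity) shows that its leading coefficient in $z$ has absolute value $\hat\gd_N(k)^{\hat\gd_N(k)}|t_{\hat\gd_N(k)}(k)|^{\hat\gd_N(k)-1}$, which is nonzero by the very definition of $\hat\gd_N(k)$. Factorizing $\operatorname{disc}(\hat{P}_{z,k})=c_k\prod_{i=1}^{\hat\gd_N(k)-1}(z-z_i^\star)$ and dividing both sides by the leading coefficient, (i) reduces to $\prod_{i=1}^{\hat\gd_N(k)-1}|z-z_i^\star|\leq N^{-2\delta_1\gd}$. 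By a pigeonhole argument (if all factors exceeded $\epsilon^{1/m}$, their product would exceed $\epsilon$), some $|z-z_j^\star|\leq N^{-2\delta_1\gd/(\hat\gd_N(k)-1)}\leq N^{-2\delta_1}$, so the bad set lies in at most $\hat\gd_N(k)-1\leq\gd-1$ disks of radius $O(N^{-2\delta_1})$, of total area $O(N^{-4\delta_1})$. The edge cases $\hat\gd_N(k)\in\{0,1\}$ contribute nothing: for $\hat\gd_N(k)=1$ condition (i) reads $1\leq N^{-2\delta_1}$, which fails, and for $\hat\gd_N(k)=0$ condition (i) is vacuous.

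Summing the per-$k$ bounds over $k=0,\dots,L_0$ yields $\mathrm{Leb}(\cN_N)=O(N^{-2\delta_1})$, which is at most $N^{-\delta_1}$ for all $N$ large. The delicate step I anticipate is verifying that the leading coefficient of $\operatorname{disc}(\hat{P}_{z,k})$ as a polynomial in $z$ is nondegenerate—comparable to $|t_{\hat\gd_N(k)}(k)|^{\hat\gd_N(k)-1}$—so that the factorization honestly has $\hat\gd_N(k)-1$ root factors; any degeneration would reduce the effective degree and break the pigeonhole estimate, but the asymptotic computation above, or equivalently a direct Sylvester-matrix calculation, confirms this does not happen.
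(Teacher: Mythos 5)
Your proof is correct and follows essentially the same route as the paper's: for (ii) you bound the area of the image $Q_k(A_N)$ of the thin annulus, and for (i) you pass to the discriminant of $\hat{P}_{z,k}$, observe it is a degree-$(\hat\gd_N(k)-1)$ polynomial in $z$ with leading coefficient of modulus $\hat\gd_N(k)^{\hat\gd_N(k)}|t_{\hat\gd_N(k)}(k)|^{\hat\gd_N(k)-1}$, factor, and pigeonhole to place $z$ inside $O(\gd)$ disks of radius $O(N^{-2\delta_1})$, then union-bound over $O(N^{\delta_1})$ blocks. The only cosmetic difference is that you use the holomorphic area integral $\int_{A_N}|Q_k'|^2$ (which is a touch cleaner than the paper's $\sup|Q_k'|\cdot\mathrm{Leb}(A_N)$ estimate) and normalize by the discriminant $\Delta = t_m^{2m-2}\prod(\lambda_i-\lambda_j)^2$ rather than the paper's resultant $t_m^{2m-1}\prod(\lambda_i-\lambda_j)^2$; neither affects the argument.
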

\begin{proof}
\corEE{We first estimate the area of $z$ satisfying (ii).}
Fix $k \in [L_0]\cup\{0\}$ such that $\hat \gd_N:=\hat\gd_N(k) >0$. For any $\vep>0$, denote $\mathcal{A}_\vep:= \overline{B_\C(0,1+\vep)}\setminus B_\C(0,1-\vep)$, \corABrev{where we recall that $B_\C(0,r)$ denote the open disc of radius $r$ in the complex plane centered at zero}. Let
\corEE{$Q_k(\lambda):= \hat{P}_{z,k}(\lambda) + z = \sum_{\ell=0}^{\hat\gd_N} t_\ell(k)\lambda^\ell$.  Then, the set of $z$ for which there exists a $\lambda \in \mathcal{A}_{N^{-3 \delta_1}}$ so that $\hat{P}_{z,k}(\lambda) = 0$ are contained in the image $Q_k(\mathcal{A}_{N^{-3 \delta_1}}).$}
Therefore the set of all $z$'s satisfying property (ii) is contained in $\cup_k Q_k(\mathcal{A}_{N^{-3 \delta_1}})$.
\corEE{The area of such an image $Q_k(\mathcal{A}_{N^{-3 \delta_1}})$ can be estimated by
$\sup_{|z| < 2} |Q_k'(z)|\mathrm{Leb} \left(\mathcal{A}_{N^{-3\delta_1}}\right).$
Since $L_0=O(N^{\delta_1})$, it follows that for all $N$ sufficiently large}
\[
\mathrm{Leb} \left(\cup_k Q_k(\mathcal{A}_{N^{-3\delta_1}})\right) \le \corABrev{\frac13} N^{-\delta_1}.
\]

\corEE{Turning to the set of $z$ satisfying (i),}
we recall that the \textit{discriminant} of
a polynomial
$\tilde{P}(\lambda):=\sum_{\ell=1}^m t_\ell \lambda^\ell$ is
\begin{equation}\label{eq:def-disc}
D(\tilde{P}):= \det[ \mathrm{Disc}(\tilde{P}) ]= t_{m}^{2m-1}\prod_{1 \le \ell <\ell' \le m} (\lambda_\ell - \lambda_{\ell'})^2,
\end{equation}
where
\[
\mathrm{Disc}(\tilde{P}):=\begin{bmatrix} t_m & t_{m-1} & \cdots &\cdots & t_0\\ & \ddots &  & & & \ddots & \\ & & t_m  & t_{m-1} & \cdots & \cdots & t_0  \\ mt_m & \cdots & 2t_2 & t_1 \\ & \ddots & & & \ddots\\ & & \ddots & & & \ddots \\ & & & mt_m & \cdots & 2t_2 & t_1\end{bmatrix}.
\]
Since $z$ appears in $\hat{P}_{z,k}(\lambda)$ only as coefficient of $\lambda^0$, expanding the determinant we see
$$\det[ \mathrm{Disc}(\hat{P}_{z,k})]= (\hat\gd_N t_{\hat\gd_N}(k))^{\hat\gd_N} \cdot z^{\hat\gd_N-1} + P_{\corAB{\hat\gd_N-2}}(x) z^{\hat\gd_N-2}+\cdots +P_0(x),$$
for some continuous functions $\{P_\ell(\cdot)\}_{\ell=0}^{\hat\gd_N-2}$. Let $\{z_\ell(k)\}_{\ell=1}^{\hat\gd_N-1}$ be the roots of the equation $\det[\mathrm{Disc}(\hat{P}_{z,k})]=0$. 
\corABrev{Hence,
\[
|D(\hat P_{z,k})|= (\hat\gd_N t_{\hat\gd_N}(k))^{\hat\gd_N} \prod_{\ell=1}^{\hat \gd _N -1} |z - z_\ell(k)|.
\]
Therefore, recalling \eqref{eq:def-disc} we} obtain that
\[
\corEE{\begin{aligned}
|t_{\hat\gd_N}(k)|^{2\hat\gd_N-1} |\det( {\bm V}(k))|^2
 &= |t_{\hat\gd_N}(k)|^{2\hat\gd_N-1} \prod_{1 \le \ell < \ell' \le \hat\gd_N(k)} |\hat{\lambda}_\ell(z,k) - \hat{\lambda}_{\ell'}(z,k)|^2 \\
 &\corABrev{= |D(\hat P_{z,k})|}= |\hat\gd_N t_{\hat\gd_N}(k)|^{\hat\gd_N} \prod_{\ell=1}^{\hat\gd_N-1} |z- z_\ell(k)|.
 \end{aligned}}
\]
Thus the set of all $z$'s satisfying property (i) is contained in
\[
\cup_{k=0}^{L_0} \cup_{\ell=1}^{\hat\gd_N-1} B_\C(z_\ell(k), \hat\gd_N^{-1} N^{-2\delta_1}),
\]
whose Lebesgue measure is bounded above by
$N^{-3\delta_1}$. Taking a union on $O(N^{\delta_1})$ possible $k$-s
\corABrev{we find that the Lebesgue measure of the set of $z$ satisfying property (i) is bounded by $\frac13 N^{-\delta_1}$. To complete the proof it remains to prove the same for the set of $z$ satisfying property (iii). This follows from a volumetric argument. Indeed, recalling the definition of $t_0(\cdot)$ we find that
\[
\inf_{k =0}^{L_0} |z - t_0(k)| \ge \min\{ {\rm dist}(z, f_0([0,1])), |z|\}.
\]
Using the fact that $f_0$ is an $\alpha_0$-H\"{o}lder continuous function from the triangle inequality we obtain that
\[
{\rm dist}(z, f_0([0,1])) \le N^{-\vep} \Rightarrow z \in \bigcup_{k =0}^{\lceil N^{\vep/\alpha_0}\rceil} B_\C(f_0(k/N^{\vep/\alpha_0}), 2 N^{-\vep})
\]
for $\vep >0$. As $\alpha_0 > \frac12$, setting $\vep = \frac{\alpha_0 \delta_1}{2(2\alpha_0 -1)}$ it yields that
\[
{\rm Leb}(\{z: {\rm dist}(z, f_0([0,1])) \le N^{-\vep}\} \cup B_\C(0,N^{-\vep})) \le \frac13 N^{-\delta_1}.
\]
This completes the proof.}
\end{proof}

Next, building on Lemma \ref{eq:small-sing-vector}(ii)
we show that for $z \notin \cN_N$ and vectors $w$ not
belonging to the span of $\{{\bm w}_\ell^j(k)\}$,
the $\ell_2$-norm of $\hat{M}_N(z) w$ cannot be too small.
This yields a bound on the number of small singular values of $\hat{M}_N(z)$.

Fix $k \in [L_0] \cup \{0\}$ and $j' \in [L_k']$. This fixes some $j \in [L] \cup\{0\}$ such that $\gb_k^{(j')}=[i_{j+1}] \setminus [i_j]$. Denote $\cS_{k,j}:=\Span({\bm w}_\ell^j(k): \ell \in [\hat\gd_N(k)])$ and let $\psi_{k,j}$ be the orthogonal projection onto $\cS_{k,j}$.
Further let
$\pi_{k,j}$ and $\rho_{k,j}$ be the projections onto the span of $\{e_m\}_{m=i_j+1}^{i_{j+1}}$ and $\{e_m\}_{k=i_j+1}^{i_{j+1}-\corAB{\hat\gd_N}(k)}$, respectively, where $e_m$ is the $m$-th canonical basic vector. When needed, we will view $\pi_{k,j}, \rho_{k,j}$, and $\psi_{k,j}$ as projection matrices of appropriate dimensions.

\begin{lemma}\label{lem:not-small-norm}
Fix $R<\infty$ and $z \in B_\C(0,R) \setminus \cN_N$. Let $k \in [L_0]$ and $j' \in [L_k']$ such that $\gb_k^{(j')}= [i_{j+1}] \setminus [i_j]$ for some $j \in [L] \cup\{0\}$. Then there exists a positive finite constant $C_1(R,\gd, {\bm f})$, depending only on $R, \gd$, and $\max_\ell \sup_{x \in [0,1]}|f_\ell(x)|$, such that for any $w \in \C^N$, we have
\begin{equation}\label{eq:notsmallubd}
\|\pi_{k,j}(w -\psi_{k,j} w)\|_2 \le  C_1(R,\gd, {\bm f}) N^{2\gd\delta_1+ \gd^2 \delta_2+2\delta_3 +\corABrev{\frac{\alpha_0\delta_1}{2(2\alpha_0-1)}}}\|\rho_{k,j} \hat{M}_N(z) w\|_2.
\end{equation}
\end{lemma}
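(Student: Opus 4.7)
Since $\cS_{k,j}\subseteq \pi_{k,j}\C^N$, the orthogonal projection $\psi_{k,j}$ also minimizes $\|\pi_{k,j}(w-v)\|_2$ among $v\in \cS_{k,j}$, so it suffices to exhibit a specific $v^* = \sum_{\ell=1}^{\hat\gd_N(k)} a_\ell^*{\bm w}_\ell^j(k)\in \cS_{k,j}$ for which $\|\pi_{k,j}(w-v^*)\|_2$ satisfies the claimed bound. Setting $u:=w-v^*$ and $r_m:=(\rho_{k,j}\hat M_N(z)w)_m/t_{\hat\gd_N(k)}(k)$ for $m\in[i_{j+1}-\hat\gd_N(k)]\setminus[i_j]$, Lemma~\ref{eq:small-sing-vector}(ii) together with the derivation of Lemma~\ref{eq:small-sing-vector}(i) yields the inhomogeneous transfer recursion
\[
u\dvec{k,m+1} \;=\; T_k(z)\,u\dvec{k,m} + r_m\, e_1, \qquad m \in [i_{j+1}-\hat\gd_N(k)]\setminus[i_j].
\]
The goal is to choose $a^*$ so that $\|u\dvec{k,m}\|_2$ stays polynomially bounded by $\|\rho_{k,j}\hat M_N(z)w\|_2$, uniformly in $m$ across the block.

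Condition (i) of Lemma~\ref{lem:bad-z} guarantees that ${\bm V}(k)$ is invertible, so by Lemma~\ref{lem:transfer-matrix-spectrum} we may diagonalize $T_k(z) = {\bm V}(k)\Lambda(k){\bm V}(k)^{-1}$ with $\Lambda(k)=\diag(\hat\lambda_\ell(z,k))$. I then partition the eigenvalue indices as $[\hat\gd_N(k)] = S\sqcup \bar S$ according to whether $|\hat\lambda_\ell(z,k)|\le 1$ or $>1$; condition (ii) of Lemma~\ref{lem:bad-z} provides a quantitative separation from the unit circle. In the eigenbasis $\tilde u := {\bm V}(k)^{-1}u$, the recursion decouples into $\hat\gd_N(k)$ scalar equations $\tilde u_\ell\dvec{k,m+1}=\hat\lambda_\ell \tilde u_\ell\dvec{k,m} + r_m \tilde e_\ell$, with $\tilde e_\ell:=({\bm V}(k)^{-1}e_1)_\ell$. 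I impose the \emph{mixed} boundary conditions $\tilde u_\ell\dvec{k,i_j+1}=0$ for $\ell \in S$ and $\tilde u_\ell\dvec{k,i_{j+1}-\hat\gd_N(k)+1}=0$ for $\ell \in \bar S$. Using ${\bm w}_\ell^j(k)\dvec{k,m} = \hat\lambda_\ell^{m-i_j-1}{\bm v}_\ell(k)$, the map $a\mapsto$ these boundary values is affine with diagonal leading term in the eigenbasis (entries $-1$ for $\ell\in S$ and $-\hat\lambda_\ell^{i_{j+1}-\hat\gd_N(k)-i_j}\ne 0$ for $\ell\in\bar S$), so a unique $a^*$ exists. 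For this choice, forward iteration for $\ell\in S$ and backward iteration for $\ell\in\bar S$ both yield
\[
|\tilde u_\ell\dvec{k,m}| \;\le\; |\tilde e_\ell|\sum_{r} |r_r| \;\le\; |\tilde e_\ell|\sqrt{2N^{\delta_3}}\,\|\rho_{k,j}\hat M_N(z)w\|_2/|t_{\hat\gd_N(k)}(k)|,
\]
because $|\hat\lambda_\ell|^s\le 1$ over the range of $s$ being used in each case.

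Transforming back via ${\bm V}(k)$ and summing $|u_{m'}|^2$ for $m'\in[i_{j+1}]\setminus[i_j]$ (each $u_{m'}$ is a coordinate of some $u\dvec{k,m}$) contributes a further $N^{\delta_3/2}$, giving
\[
\|\pi_{k,j}u\|_2 \;\le\; C(\gd)\,N^{\delta_3}\,\frac{\|{\bm V}(k)\|\,\|{\bm V}(k)^{-1}\|}{|t_{\hat\gd_N(k)}(k)|}\,\|\rho_{k,j}\hat M_N(z)w\|_2.
\]
I then quantify each factor using the truncation in $\hat D_N^{(\ell)}$ and the exclusion $z\notin\cN_N$: the truncation gives $|t_{\hat\gd_N(k)}(k)|^{-1}\le N^{\delta_2}$; Cauchy's bound on roots of $\hat P_{z,k}$ combined with $|t_{\hat\gd_N(k)}(k)|^{-1}\le N^{\delta_2}$ yields $\max_\ell|\hat\lambda_\ell(z,k)| = O(N^{\delta_2})$, so $\|{\bm V}(k)\| = O(N^{\delta_2\gd})$; finally, $\|{\bm V}(k)^{-1}\| \le \|{\bm V}(k)\|^{\hat\gd_N(k)-1}/|\det{\bm V}(k)|$ combined with the lower bound on $|\det{\bm V}(k)|$ extracted from condition (i) of Lemma~\ref{lem:bad-z} (after accounting for the factor $|t_{\hat\gd_N(k)}(k)|^{-(\hat\gd_N(k)-1)/2}\le N^{\delta_2\gd/2}$ appearing there) produces a bound of the form $\|{\bm V}(k)^{-1}\| = O(N^{2\delta_1\gd + \delta_2\gd^2})$. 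Multiplying these estimates and absorbing constants into $C_1(R,\gd,\bm f)$ gives the asserted inequality~\eqref{eq:notsmallubd}. The principal new obstacle, relative to the scalar ($\gd=1$) case of Lemma~\ref{lem:localstructure}, is that the eigenvalues of $T_k(z)$ can straddle the unit circle, whence a purely forward (or purely backward) iteration diverges exponentially fast over a block of size $N^{\delta_3}$; the decomposition into contracting and expanding eigenspaces with matched left/right boundary conditions is the essential new technical ingredient, enabled precisely by the explicit exclusion of the near-resonant set $\cN_N$.
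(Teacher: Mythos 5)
Your plan follows the same route as the paper's proof. The paper also constructs the corrector $v^*=\sum_\ell a_\ell{\bm w}_\ell^j(k)$ by imposing exactly these mixed boundary conditions (the contracting eigencomponents of $w-v^*$ vanish at the left endpoint, the expanding ones at the right), encoded there through the $2\hat\gd_N\times 2\hat\gd_N$ matrix ${\sf L}$ of \eqref{eq:lin-system-eqn-be} rather than explicitly in the eigenbasis; it then iterates forward for the contracting modes and backward for the expanding modes exactly as you do, and the $\cN_N$-exclusion serves the same purpose of controlling both the Vandermonde determinant and the distance of eigenvalues from the unit circle. The only genuine divergence is how the size of the coefficient vector ${\bm V}(k)^{-1}e_1$ is estimated: the paper evaluates it exactly by Cramer's rule, $\tilde a_\ell=\prod_{\ell'\ne\ell}\hat\lambda_{\ell'}/(\hat\lambda_{\ell'}-\hat\lambda_\ell)$, and bounds that product directly by $\|T_k(z)\|^{\gd(\gd-1)}|\det{\bm V}(k)|^{-2}$, whereas you bound $\|{\bm V}(k)^{-1}\|$ generically via $\|{\bm V}(k)\|^{\hat\gd_N-1}/|\det{\bm V}(k)|$; your route is marginally more lossy and produces a slightly different polynomial exponent (on the order of $N^{2\gd\delta_1+(\gd^2+\gd+1)\delta_2+\delta_3}$ rather than the stated $N^{2\gd\delta_1+\gd^2\delta_2+2\delta_3}$), which is not always dominated by the exponent in \eqref{eq:notsmallubd} as written, so to match the lemma verbatim you would either need to tighten the coefficient estimate (e.g.\ via Cramer) or restate the exponent; in either case the downstream use of the lemma (Remark \ref{rmk:delta}) is unaffected, since it only requires some fixed polynomial-in-$\gd$ power of $N^{\max_i\delta_i}$.
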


\corAB{Note that Lemma \ref{lem:not-small-norm} is similar to Lemma \ref{lem:localstructure}. Analogous to the proof of Lemma \ref{lem:localstructure}, here also the proof proceeds by identifying a vector $y \in \cS_{k,j}$ and showing that $\|\pi_{k,j}(x-y)\|_2$ satisfies the bound \eqref{eq:notsmallubd}. For $\gd >1$, the choice of an appropriate vector $y$ is significantly more difficult and requires new ideas.}

\begin{proof}[Proof of Lemma \ref{lem:not-small-norm}]
  We write $\hat{\lambda}_\ell:= \hat{\lambda}_\ell(z,k)$ and $\hat\gd_N:=
 \hat\gd_N(k)$. \corABrev{First let us consider the case $\hat \gd_N=0$. This implies that $\cS_{k,j}=\emptyset$. Therefore, $\psi_{k,j}=0$ and $\rho_{k,j}=\pi_{k,j}$. So, it is enough to show that
 \begin{equation}\label{eq:gd-eq-0}
 \|\pi_{k,j} w\|_2 \le N^{\frac{\alpha_0 \delta_1}{2(2\alpha_0 -1)}} \|\pi_{k,j} \hat M_N(z) w\|_2. 
 \end{equation}
 Since $\hat \gd_N=0$, from \eqref{eq:d-lin-eqn-1} we further have that 
 \[
 \pi_{k,j} \hat M_N(z) w =(t_0(k)-z) \pi_{k,j} w.
 \]
 Using the fact that $z \notin \cN_N$ we have that $\inf_k|t_0(k) -z| \ge N^{-\frac{\alpha_0 \delta_1}{2(2\alpha_0 -1)}}$. This yields \eqref{eq:gd-eq-0} and hence \eqref{eq:notsmallubd} is established for $\hat \gd_N =0$. It remains to prove the same when $\hat \gd_N >0$. 
 }
 

\corEE{Without loss of generality, we
assume that $\{\hat{\lambda}_\ell\}_{\ell=1}^{\hat\gd_N}$ are arranged in decreasing order of moduli, and define $\gd_0$ so that
\(
|\hat{\lambda}_1| \ge |\hat{\lambda}_2| \ge \cdots \ge |\hat{\lambda}_{\gd_0}|
\ge 1 > |\hat{\lambda}_{\gd_0+1}| \ge \cdots \ge |\hat{\lambda}_{\hat\gd_N}|,
\)
with $\gd_0 = \hat\gd_N$ if all $\hat\lambda_\ell \geq 1$
and $\gd_0 = 0$ if all $\hat\lambda_\ell < 1$
.} Define a $(2\hat\gd_N) \times (2\hat\gd_N)$ matrix
\[
\corAB{
{\sf L}:=
\begin{bmatrix}
{\bm v}_1(k)  &{\bm v}_2(k)  & \cdots & {\bm v}_{\hat\gd_N}(k) &
{\bm v}_1(k)   & \cdots & {\bm v}_{\gd_0}(k)& 0 & \cdots &0\\
\hat{\lambda}_1^{b_j-\hat\gd_N}{\bm v}_1(k)  &\hat{\lambda}_2^{b_j-\hat\gd_N(k)}{\bm v}_2(k)  & \cdots & \hat{\lambda}_{\corEE{\hat\gd_N}}^{b_j-\hat\gd_N}{\bm v}_{\hat\gd_N}(k)  &
0   & \cdots & 0& {\bm v}_{\gd_0+1}(k)& \cdots & {\bm v}_{\hat\gd_N}(k)
\end{bmatrix},
}\]
where $\{{\bm v}_\ell(k)\}_{\ell=1}^{\hat\gd_N}$ are as in Lemma \ref{lem:transfer-matrix-spectrum} and $b_j:=i_{j+1}-i_j$.
Since $z \in B_\C(0,R)\setminus \cN_N$,
the eigenvalues $\{\hat{\lambda}_\ell\}_{\ell=1}^{\hat\gd_N}$ are all distinct, and hence the vectors $\{{\bm v}_\ell(k)\}_{\ell=1}^{\hat\gd_N}$ are linearly independent. Therefore,
$\text{rank}({\sf L})=2\hat\gd_N$, and the system of linear equations
\begin{equation}\label{eq:lin-system-eqn-be}
\begin{pmatrix} w\dvec{k, i_j+1}\\ w\dvec{k,i_{j+1}-\corEE{\hat\gd_N}+1}\end{pmatrix} ={\sf L} \begin{pmatrix} a_1 \\ \vdots \\ a_{\hat\gd_N} \\ a_1' \\ \vdots \\ a_{\hat\gd_N}' \end{pmatrix}
\end{equation}
admits a unique solution. Set
\[
y:= \sum_{\ell=1}^{\hat\gd_N} a_\ell {\bm w}_\ell^j(k) \qquad \text{ and } \qquad \zeta:=\zeta(w):=w - y.
\]
With this choice of $\zeta$ we will show that
\begin{equation}\label{eq:zeta-lbd-norm}
\|\pi_{k,j}\zeta\|_2 \le C_1(R,\gd, {\bm f}) N^{2\gd\delta_1+ \gd^2 \delta_2+2\delta_3} \|\rho_{k,j} \hat{M}_N(z) w\|_2,
\end{equation}
for some constant $C_1(R,\gd, {\bm f})$. This will complete the proof.
Indeed, from the definition of the projection operator $\psi_{k,j}$ it follows that
\begin{equation}\label{eq:proj-dfn}
\|w- \psi_{k,j} w\|_2 \le \|\zeta\|_2.
\end{equation}
On other hand we note that $\mathcal{S}_{k,j} \subset \Span(\{e_m\}_{m=i_j+1}^{i_j+1})$. Therefore $\pi_{k,j} \psi_{k,j} = \psi_{k,j}$. Recalling
that $y \in \mathcal{S}_{k,j}$, we have
\begin{align*}
\|w-\psi_{k,j} w\|_2^2=\|(\corAB{\Id}-\pi_{k,j})w\|_2^2+ \|\pi_{k,j}(w - \psi_{k,j} w)\|_2^2
\end{align*}
and
\begin{align*}
\|\zeta\|_2^2=  \|(\corAB{\Id}-\pi_{k,j}) \zeta\|_2^2 + \|\pi_{k,j} \zeta\|_2^2 = \|(\corAB{\Id}-\pi_{k,j})w\|_2^2 +  \|\pi_{k,j} \zeta\|_2^2.
\end{align*}
Thus from \eqref{eq:proj-dfn} we obtain
\[
\|\pi_{k,j}(w-\psi_{k,j} w)\|_2 \le \|\pi_{k,j} \zeta\|_2,
\]
and so it is enough to show that \eqref{eq:zeta-lbd-norm} holds.

We turn now to the proof of \eqref{eq:zeta-lbd-norm}
Recalling that $y \in {\mathcal S}_{k,j}$, the span of $\{{\bm w}_\ell^j(k)\}_{\ell \in [\hat\gd]}$, an application of Lemma \ref{eq:small-sing-vector}(ii) implies that $\rho_{k,j} \hat{M}_N(z)y=0$. So, using \eqref{eq:d-lin-eqn-1} and recalling the definition of $T_k(z)$ we see that
\begin{equation}\label{eq:recursion}
\zeta \dvec{k,m+1}= T_k(z) \zeta\dvec{k,m} + \begin{pmatrix} \frac{1}{t_{\hat\gd_N}(k)}(\hat{M}_N(z) w)_{m} \\ 0 \\ \vdots  \\ 0 \end{pmatrix}, \qquad m \in [i_{j+1}-\hat\gd_N]\setminus [i_j]. 
\end{equation}
\corEE{From the linear independence of $\left\{ {\bm v}_\ell(k) \right\}_{1}^{\hat\gd_N}$,
there are  $\{\tilde{a}_\ell\}_{\ell=1}^{\hat\gd_N}$ so that}
\begin{equation}\label{eq:tilde-alpha}
\begin{pmatrix} 1 & 0 & \cdots & 0 \end{pmatrix} = \sum_{\ell=1}^{\hat\gd_N} \tilde{a}_\ell {\bm v}_\ell(k)^{\sf T}.
\end{equation}
Hence denoting $\beta_m:= (t_{\hat\gd_N}(k))^{-1}(\hat{M}_N(z)w)_m$ 
we observe that \eqref{eq:recursion} simplifies to
\begin{equation}\label{eq:recursion-1}
\zeta\dvec{k,m+1}= T_k(z) \zeta\dvec{k,m} + \beta_m\sum_{\ell=1}^{\hat\gd_N} \tilde{a}_\ell {\bm v}_\ell(k), \qquad m \in[i_{j+1}-\hat\gd_N]\setminus [i_j+1]. 
\end{equation}
Iterating \eqref{eq:recursion-1} we obtain
\begin{align}\label{eq:recursion-iterated}
\zeta\dvec{k,m+1} & \, = T_k(z)^{m-i_j} \zeta\dvec{k,i_j+1} + \sum_{m'=i_j+1}^m \beta_{m'} T_k(z)^{m-m'} \left[\sum_{\ell=1}^{\hat\gd_N} \tilde{a}_\ell {\bm v}_\ell(k)\right] \notag\\
& \, = T_k(z)^{m-i_j} \zeta\dvec{k, i_j+1} + \sum_{\ell=1}^{\hat\gd_N} \tilde{a}_\ell \left[\sum_{m'=i_j+1}^m \beta_{m'} \hat{\lambda}_\ell^{m-m'} \right] {\bm v}_\ell(k),
\end{align}
for $m \in [i_{j+1}-\hat\gd_N]\setminus[i_j+1]$, where the last step follows from the fact that ${\bm v}_\ell(k)$ is an eigenvector of $T_{k}(z)$ corresponding to $\hat{\lambda}_\ell$ (see Lemma \ref{lem:transfer-matrix-spectrum}). Recalling that $b_j:=i_{j+1}-i_j$ we note that \eqref{eq:recursion-iterated} in particular implies
\begin{equation}\label{eq:iterated-end}
\zeta\dvec{k, i_{j+1}-\hat\gd_N+1}= T_k(z)^{b_j-\hat\gd_N} \zeta\dvec{k,i_j+1} + \sum_{\ell=1}^{\corAB{\hat\gd_N}} \tilde{a}_\ell \left[\sum_{m'=i_j+1}^{i_{j+1}-\hat\gd_N} \beta_{m'} \hat{\lambda}_\ell^{i_{j+1}-\hat\gd_N-m'} \right] {\bm v}_\ell(k).
\end{equation}
Now recalling the definitions of $\{\corAB{{\bm w}_\ell^j(k)}\}_{\ell \in [\corAB{\hat\gd_N}]}$ we see that
\[
{\bm w}_\ell^j(k)\dvec{k,i_j+1} = {\bm v}_\ell(k) \qquad \text{ and } \qquad {\bm w}_\ell^j(k)\dvec{k, i_{j+1}-\hat\gd_N+1} = \hat{\lambda}_\ell^{b_j-\hat\gd_N} {\bm v}_\ell(k).
\]
Since $\zeta= w - \sum_{\ell=1}^{\hat\gd_N} a_\ell {\bm w}_\ell^j(k)$, from \eqref{eq:lin-system-eqn-be} we obtain
\begin{equation}\label{eq:zeta-be}
\zeta \dvec{k, i_j+1}= \sum_{\ell=1}^{\gd_0} a_\ell' {\bm v}_\ell(k) \quad \text{ and } \quad \zeta \dvec{k, i_{j+1}-\hat\gd_N+1}= \sum_{\ell=\gd_0+1}^{\hat\gd_N} a_\ell' {\bm v}_\ell(k).
\end{equation}
Plugging these in \eqref{eq:iterated-end} we deduce
\[
\sum_{\ell=1}^{\gd_0} \left( \tilde{a}_\ell \left[\sum_{m'=i_j+1}^{i_{j+1}-\hat\gd_N} \beta_{m'} \hat{\lambda}_\ell^{i_{j+1}-\hat\gd_N-m'} \right]+ a_\ell' \hat{\lambda}_\ell^{b_j-\hat\gd_N} \right){\bm v}_\ell(k) + \sum_{\ell=\gd_0+1}^{\hat\gd_N} \left( \tilde{a}_\ell \left[\sum_{m'=i_j+1}^{i_{j+1}-\hat\gd_N} \beta_{m'} \hat{\lambda}_\ell^{i_{j+1}-\hat\gd_N-m'} \right] - a_\ell'\right){\bm v}_\ell(k) =0.
\]
Since $\{{\bm v}_\ell(k)\}_{\ell=1}^{\hat\gd_N}$ are linearly independent vectors it further implies that
\begin{equation}\label{eq:lin-constraint-large-lambda}
a_\ell' \hat{\lambda}_\ell^{b_j-\hat\gd_N} +\tilde{a}_\ell \left[\sum_{m'=i_j+1}^{i_{j+1}-\hat\gd_N} \beta_{m'} \hat{\lambda}_\ell^{i_{j+1}-\hat\gd_N-m'} \right]=0, \quad \ell \in [\gd_0]; \quad  a_\ell' = \tilde{a}_\ell \left[\sum_{m'=i_j+1}^{i_{j+1}-\hat\gd_N} \beta_{m'} \hat{\lambda}_\ell^{i_{j+1}-\hat\gd_N-m'} \right], \quad \ell \in [\hat\gd_N]\setminus[\gd_0].
\end{equation}
Thus from \eqref{eq:recursion-iterated} and \eqref{eq:zeta-be}, using \eqref{eq:lin-constraint-large-lambda}, we further obtain that for any $m \in [i_{j+1}-\hat\gd_N]\setminus[i_j]$,
\begin{align}\label{eq:zeta_j_bd}
\zeta \dvec{k,m+1} & \, = T_k(z)^{m-i_j} \zeta \dvec{k,i_j+1} + \sum_{\ell=1}^{\hat\gd_N} \tilde{a}_\ell \left[\sum_{m'=i_j+1}^m \beta_{m'} \hat{\lambda}_\ell^{m-m'} \right] {\bm v}_\ell(k) \notag\\
& \, = \sum_{\ell=1}^{\gd_0} \left(a_\ell' \hat{\lambda}_\ell^{m-i_j}  + \tilde{a}_\ell \left[\sum_{m'=i_j+1}^m \beta_{m'} \hat{\lambda}_\ell^{m-m'} \right] \right){\bm v}_\ell(k) \notag\\
& \qquad \qquad \qquad  \qquad \qquad \qquad   \qquad \qquad + \sum_{\ell=\gd_0+1}^{\hat\gd_N} \tilde{a}_\ell \left[\sum_{m'=i_j+1}^m \beta_{m'} \hat{\lambda}_\ell^{m-m'} \right] {\bm v}_\ell(k) \notag\\
& \, = -\sum_{\ell=1}^{\gd_0}  \tilde{a}_\ell \left[\sum_{m'=m+1}^{i_{j+1}-\hat\gd_N} \beta_{m'} \hat{\lambda}_\ell^{m-m'} \right] {\bm v}_\ell(k) + \sum_{\ell=\gd_0+1}^{\hat\gd_N} \tilde{a}_\ell \left[\sum_{m'=i_j+1}^m \beta_{m'} \hat{\lambda}_\ell^{m-m'} \right] {\bm v}_\ell(k).
\end{align}
Since the first coordinate of $\zeta\dvec{k,m+1}$ is $\zeta_{m+\hat\gd_N}$, $|\hat{\lambda}_\ell| \ge 1$ for $\ell \in [\gd_0]$, and $|\hat{\lambda}_\ell| \le 1$ for $\ell \in [\hat\gd_N]\setminus [\gd_0]$, using the triangle inequality from  \eqref{eq:zeta_j_bd} we see that
\begin{align}\label{eq:zeta-bd-1}
|\zeta_{m+\hat\gd_N}| & \, \le \sum_{\ell=1}^{\gd_0} |\tilde{a}_\ell| |\hat{\lambda}_\ell|^{\hat\gd_N-1} \sum_{m'=m+1}^{i_{j+1}-\hat\gd_N} |\beta_{m'}| + \sum_{\ell=\gd_0+1}^{\hat\gd_N} |\tilde{a}_\ell| |\hat{\lambda}_\ell|^{\hat\gd_N-1} \sum_{m'=i_j+1}^{m} |\beta_{m'}| \notag\\
& \, \le \|T_k(z)\|^{\hat\gd_N-1} \left( \sum_{m'=i_j+1}^{i_{j+1}-\hat\gd_N} |\beta_{m'}|\right) \cdot  \left( \sum_{\ell=1}^{\hat\gd_N} |\tilde{a}_\ell|\right), \quad \text{ for } m \in [i_{j+1}-\hat\gd_N]\setminus [i_j].
\end{align}
From \eqref{eq:zeta-be} and \eqref{eq:lin-constraint-large-lambda} it also follows that
\begin{align}
\zeta\dvec{k,i_j+1}= \sum_{\ell=1}^{\gd_0} a_\ell' {\bm v}_\ell(k) = - \sum_{\ell=1}^{\gd_0}\tilde{a}_\ell \left[\sum_{m'=i_j+1}^{i_{j+1}-\hat\gd_N} \beta_{m'} \hat{\lambda}_\ell^{i_j-m'} \right] {\bm v}_{\ell}(k). \notag
\end{align}
Thus
\begin{align}\label{eq:zeta-bd-2}
\max_{m=i_j+1}^{i_j+\hat\gd} |\zeta(m)| & \, \le \left\|\zeta\dvec{k,i_j+1}\right\|_2 \le \sum_{\ell=1}^{\gd_0} |\tilde{a}_\ell| \cdot \left( \sum_{m'=i_j+1}^{i_{j+1}-\hat\gd_N} |\beta_{m'}|\right) \cdot \|{\bm v}_\ell(k)\|_2\notag\\
& \,  \le \sqrt{\gd} \cdot \left(\|T_k(z)\|^{\gd-1}\vee 1\right) \cdot \left( \sum_{m'={i_j+1}}^{i_{j+1}-\hat\gd_N} |\beta_{m'}|\right) \cdot  \left( \sum_{\ell=1}^{\hat\gd_N} |\tilde{a}_\ell|\right),
\end{align}
where we have used the fact that
\[
\|{\bm v}_\ell(k)\|_2 \le \sqrt{\gd} \cdot (|\hat{\lambda}_\ell|^{\gd-1}\vee 1).
\]
Hence, combining \eqref{eq:zeta-bd-1}-\eqref{eq:zeta-bd-2} we obtain
\begin{equation}\label{eq:pi_kj_zeta}
\|\pi_{k,j}(\zeta)\|_2 \le b_j\max_{m=i_j+1}^{i_{j+1}} |\zeta_j| \le 2N^{\delta_3} \sqrt{\gd}  \cdot \left(\|T_k(z)\|^{\gd-1}\vee 1\right)\cdot \left( \sum_{m=i_j+1}^{i_{j+1}-\hat\gd_N} |\beta_m|\right) \cdot  \left( \sum_{\ell=1}^{\hat\gd_N} |\tilde{a}_\ell|\right).
\end{equation}
Now to complete the proof we need to find a bound on $\sum_{\ell=1}^{\hat\gd_N} |\tilde{a}_\ell|$. To this end, recall that $(\tilde{a}_1,\ldots,\tilde{a}_{\hat\gd_N})$ satisfies the system of linear equations \eqref{eq:tilde-alpha}. Using Cramer's rule it is easy to check that $\tilde{a}_\ell = \prod_{\ell' \ne \ell} \frac{\hat{\lambda}_{\ell'}}{\hat{\lambda}_{\ell'}- \hat{\lambda}_\ell}$. Therefore, \corEE{recalling that ${\bm V}(k):=\begin{bmatrix} {\bm v}_1(k) & {\bm v}_2(k) & \cdots & {\bm v}_{\hat\gd_N}(k) \end{bmatrix}$},
\begin{align*}
|\tilde{a}_\ell|= \prod_{\ell' \ne \ell} \left|\frac{\hat{\lambda}_{\ell'}}{\hat{\lambda}_{\ell'}- \hat{\lambda}_\ell}\right| \le \left(\|T_k(z)\|^{\gd -1} \vee 1\right) \cdot \prod_{\ell' \ne \ell} \left|\frac{1}{\hat{\lambda}_{\ell'}- \hat{\lambda}_\ell}\right|  & \le \left((2\| T_k(z)\|)^{\frac{\gd(\gd-1)}{2}} \vee 1\right) \cdot |\det({\bm V}(k))|^{-1} \\
& \le \left((2\| T_k(z)\|)^{\gd(\gd-1)} \vee 1\right) \cdot |\det({\bm V}(k))|^{-2}.
\end{align*}
Recalling that $\beta_m=(t_{\hat\gd_N}(k))^{-1} (\hat{M}_N(z)w)_m$, an application of Cauchy-Schwarz yields
\begin{equation}\label{eq:zeta-lbd-norm-1}
\|\pi_{k,j}(\zeta)\|_2 \le   2^{\gd^2} {\gd^{3/2}} N^{3\delta_3/2}   \cdot \left(\|T_k(z)\|^{\gd^2}\vee 1\right)\cdot \|\rho_{k,j} \hat{M}_N(z) w\|_2 \cdot | t_{\hat\gd_N}(k)|^{-1} |\det({\bm V}(k))|^{-2}.
\end{equation}
Since $z\in \cN_N^c$ (cf.\ Lemma \ref{lem:bad-z}),
\[
|t_{\hat\gd_N}(k)| \cdot  |\det({\bm V}(k))|^2 \ge \frac{|t_{\hat\gd_N}(k)|^{\hat\gd_N(k)-1} |\det({\bm V}(k))|^2}{\sup_{x \in [0,1]} |f_{\hat\gd_N}(x)|^{\hat\gd_N-2}}  \ge C_0^{-1} N^{-2\delta_1 \gd},
\]
for some $C_0 <\infty$. By the Gershgorin circle theorem we also have that
\begin{equation}
\label{eq:Tkz}
\|T_k(z)\|^2
=
\|T_k(z)^*T_k(z)\|
= O\left((t_{\hat\gd_N}(k))^{-2}\right)=O(N^{2\delta_2}),
\end{equation}
where the last step follows from the fact that the non-zero entries of $\hat{M}_N$ are bounded below by $N^{-\delta_2}$. Therefore, plugging the last two bounds in \eqref{eq:zeta-lbd-norm-1} we arrive at \eqref{eq:zeta-lbd-norm}.
\end{proof}

Denote
\begin{equation}
  \label{eq-frakL}
  \corOZ{\gL:=\sum_{k=0}^{L_0} L_k' \cdot \hat\gd_N(k)= O(\gd N^{1-\delta_3}).}
\end{equation}
Building on Lemma \ref{lem:not-small-norm} we now prove
a lower bound on the $(\gL+1)$-st smallest singular value
of $\hat{M}_N(z)$. First we prove an estimate
that will also be useful in obtaining
a lower bound on the product of the small singular values of $\hat{M}_{N}(z)$.
To state it, we let
$\psi:=\sum \psi_{k,j}$, i.e.~$\psi$ is the orthogonal
projection operator onto the space spanned by
$\cup \cS_{k,j}$, and $\rho:=\sum \rho_{k,j}$.
\begin{lemma} \label{lem:orth-compl-large-norm}
Fix $R<\infty$ and $z \in B_\C(0,R) \setminus \cN_N$.  Then for any vector $w \in \C^N$ we have
  \[
    \|w- \psi w\|_2    \leq C_1(R,\gd, {\bm f}) N^{2\gd\delta_1+ \gd^2 \delta_2+2\delta_3+ \corABrev{\frac{\alpha_0\delta_1}{2(2\alpha_0-1)}}}\|\rho \hat{M}_N(z) w\|_2,
  \]
where $C_1(R,\gd, {\bm f})$ is as in Lemma \ref{lem:not-small-norm}.
 \end{lemma}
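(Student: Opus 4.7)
The plan is to reduce this to a block-by-block application of Lemma \ref{lem:not-small-norm} via orthogonality. The key observation is that the construction of the blocks $\gb_k^{(j')}=[i_{j+1}]\setminus[i_j]$ gives a partition of $[N]$, so the projections $\pi_{k,j}$ are coordinate projections onto disjoint subsets of $[N]$ that together cover all of $[N]$. In particular $\sum_{k,j}\pi_{k,j}=\Id$, and the same holds with $\rho_{k,j}$ in place of $\pi_{k,j}$ except that now the projections cover only a subset of $[N]$ (with gaps of size $\hat\gd_N(k)$ at the top of each block). Moreover, by definition $\cS_{k,j}\subset\mathrm{Range}(\pi_{k,j})$, so the subspaces $\{\cS_{k,j}\}_{k,j}$ are mutually orthogonal; consequently $\psi=\sum_{k,j}\psi_{k,j}$ is the orthogonal projection onto their direct sum, and $\pi_{k,j}\psi_{k',j'}=\delta_{(k,j),(k',j')}\psi_{k,j}$.

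First I would use the displayed identity $\pi_{k,j}\psi_{k',j'}=\delta_{(k,j),(k',j')}\psi_{k,j}$ to simplify
\[
\pi_{k,j}(w-\psi w)
=\pi_{k,j}w-\sum_{k',j'}\pi_{k,j}\psi_{k',j'}w
=\pi_{k,j}w-\psi_{k,j}w
=\pi_{k,j}(w-\psi_{k,j}w).
\]
Next, since the $\pi_{k,j}$ are mutually orthogonal coordinate projections summing to the identity, the Pythagorean identity gives
\[
\|w-\psi w\|_2^{2}=\sum_{k,j}\|\pi_{k,j}(w-\psi w)\|_2^{2}=\sum_{k,j}\|\pi_{k,j}(w-\psi_{k,j}w)\|_2^{2}.
\]

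Then I would apply Lemma \ref{lem:not-small-norm} term by term (the hypotheses $R<\infty$ and $z\in B_\C(0,R)\setminus\cN_N$ are inherited directly) to bound each summand by
\[
\|\pi_{k,j}(w-\psi_{k,j}w)\|_2^{2}
\leq
\bigl(C_1(R,\gd,{\bm f})\,N^{2\gd\delta_1+\gd^{2}\delta_2+2\delta_3}\bigr)^{2}\,\|\rho_{k,j}\hat{M}_N(z)w\|_2^{2}.
\]
Finally, summing over $(k,j)$ and using that the $\rho_{k,j}$ are coordinate projections onto disjoint subsets of $[N]$, so that $\sum_{k,j}\|\rho_{k,j}\hat{M}_N(z)w\|_2^{2}=\|\rho\hat{M}_N(z)w\|_2^{2}$, yields
\[
\|w-\psi w\|_2^{2}
\leq
\bigl(C_1(R,\gd,{\bm f})\,N^{2\gd\delta_1+\gd^{2}\delta_2+2\delta_3}\bigr)^{2}\|\rho\hat{M}_N(z)w\|_2^{2},
\]
and taking square roots gives the claim. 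There is no real obstacle here: the work has already been done in Lemma \ref{lem:not-small-norm}, and all that is needed is the block orthogonality built into the definitions of $\pi_{k,j}$, $\rho_{k,j}$, and $\psi_{k,j}$. The one point to verify carefully is the compatibility relation $\pi_{k,j}\psi_{k',j'}=\delta_{(k,j),(k',j')}\psi_{k,j}$, which follows immediately from $\mathrm{Range}(\psi_{k,j})=\cS_{k,j}\subset\Span\{e_m:i_j<m\leq i_{j+1}\}=\mathrm{Range}(\pi_{k,j})$ combined with the disjointness of supports across distinct blocks.
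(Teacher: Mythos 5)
Your proof is correct and follows essentially the same route as the paper's: decompose via the orthogonal coordinate projections $\pi_{k,j}$, use the block-support relation $\pi_{k,j}\psi_{k',j'}=\delta_{(k,j),(k',j')}\psi_{k,j}$ to reduce to $\sum_{k,j}\|\pi_{k,j}(w-\psi_{k,j}w)\|_2^2$, then apply Lemma \ref{lem:not-small-norm} blockwise and sum using the orthogonality of the $\rho_{k,j}$. You spell out the intermediate identity $\pi_{k,j}(w-\psi w)=\pi_{k,j}(w-\psi_{k,j}w)$ in slightly more detail than the paper, but the argument is the same.
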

\begin{proof}
The proof is a simple application of Lemma \ref{lem:not-small-norm}. Since $\sum \pi_{k,j}=1$, $\pi_{k,j} \psi_{k,j}=\psi_{k,j}$, and $\{\pi_{k,j}\}$ are orthogonal it follows that
\begin{align*}
 \|w-\psi w\|_2^2 =  \left\|\sum (\pi_{k,j} w - \psi_{k,j} w)\right\|_2^2 = \sum \|\pi_{k,j}(w-\psi_{k,j}w)\|_2^2.
\end{align*}
Now the result follows from Lemma \ref{lem:not-small-norm} upon noting the fact that $\{\rho_{k,j}\}$ are orthogonal.
\end{proof}

From Lemma \ref{lem:orth-compl-large-norm} we immediately obtain the following corollary.

\begin{corollary}\label{cor:lb-not-small-sing}
Fix $R<\infty$ and $z \in B_\C(0,R) \setminus \cN_N$. Then
  \[
    \sigma_{N-\gL}(\hat{M}_N(z)) \geq C_1(R,\gd, {\bm f})^{-1} N^{-(2\gd\delta_1+ \gd^2 \delta_2+2\delta_3+\corABrev{\frac{\alpha_0\delta_1}{2(2\alpha_0-1)}})},
  \]
where $C_1(R,\gd, {\bm f})$ is as in Lemma \ref{lem:not-small-norm}.
\end{corollary}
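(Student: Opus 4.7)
The plan is to apply the variational (maximin) characterization of singular values together with Lemma~\ref{lem:orth-compl-large-norm}, essentially mimicking the derivation of Corollary~\ref{cor:lb} from Lemma~\ref{lem:orthW2} in the bidiagonal case.

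First, I would set $V := \sum_{k,j} \cS_{k,j}$, the span of all the approximate singular vectors $\{{\bm w}_\ell^j(k)\}$ constructed above. Since distinct $\cS_{k,j}$ corresponding to distinct blocks are supported on disjoint index sets, they are mutually orthogonal, and within any single $\cS_{k,j}$ the vectors ${\bm w}_\ell^j(k)$, $\ell \in [\hat\gd_N(k)]$, are linearly independent: indeed, for $z \notin \cN_N$ the roots $\{\hat\lambda_\ell(z,k)\}$ are pairwise distinct (property (i) in the definition of $\cN_N$ via the Vandermonde determinant), and $b_j \geq N^{\delta_3}/2 \gg \gd \geq \hat\gd_N(k)$ for $N$ large enough, so the coordinates $(\hat\lambda_\ell^{m-i_j-1})_m$ give linearly independent truncated geometric sequences. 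Hence $\dim V = \sum_{k=0}^{L_0} L_k' \hat\gd_N(k) = \gL$, and $\psi$ (as defined just before Lemma~\ref{lem:orth-compl-large-norm}) is precisely the orthogonal projection onto $V$.

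Next, recall the standard maximin characterization
\[
\sigma_{N-\gL}(\hat{M}_N(z)) = \sup_{V' : \dim V' = \gL} \; \inf_{\substack{w \perp V' \\ \|w\|_2 = 1}} \|\hat{M}_N(z) w\|_2.
\]
Taking $V' = V$ and any unit vector $w \perp V$, we have $\psi w = 0$, so $\|w - \psi w\|_2 = \|w\|_2 = 1$. Since $\rho$ is a coordinate projection, $\|\rho \hat{M}_N(z) w\|_2 \le \|\hat{M}_N(z) w\|_2$. Plugging into Lemma~\ref{lem:orth-compl-large-norm} yields
\[
1 = \|w - \psi w\|_2 \le C_1(R,\gd, {\bm f}) N^{2\gd\delta_1+ \gd^2 \delta_2+2\delta_3} \|\rho \hat{M}_N(z) w\|_2 \le C_1(R,\gd, {\bm f}) N^{2\gd\delta_1+ \gd^2 \delta_2+2\delta_3} \|\hat{M}_N(z) w\|_2,
\]
and rearranging gives the claimed lower bound on $\|\hat{M}_N(z) w\|_2$, uniformly over all such $w$. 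Taking the infimum over $w$ and using the maximin formula completes the proof.

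There is essentially no obstacle here once Lemma~\ref{lem:orth-compl-large-norm} is in place; the only point requiring attention is the verification that $\dim V = \gL$ (so that $V$ is an admissible test space in the maximin formula), which is exactly where the exclusion of $z \in \cN_N$ enters through the distinctness of the roots $\hat\lambda_\ell(z,k)$.
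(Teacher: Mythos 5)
Your proof is correct and is exactly the approach the paper intends: the paper omits the argument for Corollary~\ref{cor:lb-not-small-sing}, remarking only that it is ``similar to that of Corollary~\ref{cor:lb}'', whose proof is precisely the maximin characterization applied with the test space equal to the span of the approximate singular vectors, together with the projection estimate (Lemma~\ref{lem:orthW2} there, Lemma~\ref{lem:orth-compl-large-norm} here). Your additional verification that $\dim V = \gL$ --- using the mutual orthogonality of the $\cS_{k,j}$ (disjoint supports) and the linear independence of $\{{\bm w}_\ell^j(k)\}_\ell$ within each block (distinct $\hat\lambda_\ell$ for $z\notin\cN_N$, and $b_j \gg \hat\gd_N(k)$) --- is a detail the paper leaves implicit, and it is the right thing to check so that $V$ is an admissible $\gL$-dimensional test space.
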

\corAB{The proof of Corollary \ref{cor:lb-not-small-sing} is similar to that of Corollary \ref{cor:lb}, and hence omitted.}
\begin{remark}\label{rmk:delta}
 Let $\{\delta_i\}_{i=1}^3$ be such that
 \begin{equation}\label{eq:choice-delta}
 \max\left\{\delta_1,\delta_2,\delta_3, \corABrev{\frac{\alpha_0\delta_1}{2(2\alpha_0-1)}}\right\} \le \frac{1}{\corABrev{40}\gd^2} \cdot \left(\gamma - \frac{1}{2}\right) \quad \text{ and } \quad \delta_1 < \frac{\delta_3}{4}.
 \end{equation}
 It follows from Corollary \ref{cor:lb-not-small-sing} that there are only at most $\gL=O(N^{1-\delta_3})$ singular values of $\hat{M}_N(z)$ that are $O(N^{-\frac{1}{3}(\gamma-1/2)})$, which upon choosing $\vep_N= N^{-\frac{1}{3}(\gamma-1/2)}$ in \eqref{eq:N*-dfn}, implies that $N^* \le \gL = o(N/\log N)$. This verifies that the number of small singular values of $\hat{M}_N(z)$, for $z \in B_\C(0,R)\setminus \cN_N$ is as desired. In the remainder of this paper we will work with $\{\delta_i\}_{i=1}^3$ satisfying \eqref{eq:choice-delta}.
\end{remark}

Equipped with Remark \ref{rmk:delta} we note that it remains
to find  matching upper and lower bounds, up to sub-exponential factors,
on the product of small singular values. \corAB{In the context of Theorem \ref{thm:DJssv} the upper bound on the product of the small singular values}
\corOZ{was} \corAB{achieved by finding a collection of orthonormal vectors}
\corOZ{which were}
\corAB{approximate singular vectors, and then appealing to Lemma \ref{lem:prod_sing}.
In the current set-up, one notes that the approximate singular vectors, in particular $\{{\bm w}_\ell^j(k)\}_{\ell=1}^{\hat\gd_N}$ for any $j$ and $k$, are not orthogonal. Therefore we need to work with an orthonormal basis of ${\cS}_{k,j}$. To this end,} a key
\corOZ{step}
will be to obtain bounds on the determinant of $\mathfrak{U}_{k,j}^*\mathfrak{U}_{k,j}$ where $\mathfrak{U}_{k,j}:=\pi_{k,j} \hat{M}_N(z) U_{k,j}$, and the columns of $U_{k,j}$ \corOZ{are} an orthonormal basis of $\cS_{k,j}$.
We start with bounds on $\det(\mathfrak{W}_{k,j})$ where $\mathfrak{W}_{k,j}:=W_{k,j}^*W_{k,j}$, and \corAB{$W_{k,j}$} \corOZ{is} the matrix whose columns are $\{{\bm w}^j_\ell(k)\}_{\ell=1}^{\hat\gd_N(k)}$.

\begin{lemma}\label{lem:det-W-bd}
Fix $R<\infty$ and $z \in B_\C(0,R) \setminus \cN_N$. Let $k \in [L_0]$ and $j' \in [L_k']$ such that $\gb_k^{(j')}= [i_{j+1}] \setminus [i_j]$ for some $j \in [L] \cup\{0\}$. Assume $\hat\gd_N(k) >0$. Then
\[
  C_2(R,\gd, {\bm f})^{-1} N^{-\corEE{2\gd \delta_1 - \gd^2 \delta_2  }}
  \prod_{\ell=1}^{\corAB{\hat\gd_N(k)}} \left( |\hat{\lambda}_\ell (z,k)| \vee 1\right)^{2b_j}
  \le \det(\mathfrak{W}_{k,j})
  \le C_2(R,\gd, {\bm f}) N^{{3\delta_1\gd}}
  \prod_{\ell=1}^{\corAB{\hat\gd_N(k)}} \left(|\hat{\lambda}_\ell(z,k) | \vee 1\right)^{2b_j},
\]
for all large $N$, uniformly over $k \in [L_0]$ and $j' \in [L_k']$, where $C_2(R,\gd, {\bm f})$ is some positive finite constant depending only on $\gd, R$, and $\{f_\ell(\cdot)\}_{\ell=0}^\gd$.
\end{lemma}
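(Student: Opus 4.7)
The plan is to reduce $\det(\mathfrak{W}_{k,j})$ to the analysis of generalized Vandermonde determinants via the Cauchy--Binet formula. Writing $\hat\lambda_\ell:=\hat\lambda_\ell(z,k)$ and letting $\tilde V$ be the $b_j\times\hat\gd_N(k)$ matrix with $\tilde V_{s+1,\ell}:=\hat\lambda_\ell^s$ for $s=0,\ldots,b_j-1$, one has $\mathfrak{W}_{k,j}=\tilde V^*\tilde V$, so
\[
\det(\mathfrak{W}_{k,j})=\sum_{\substack{S\subset\{0,\ldots,b_j-1\}\\ |S|=\hat\gd_N}} |\det \tilde V_S|^2,
\]
where $\tilde V_S$ is the submatrix with rows in $S$. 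For the \emph{upper} bound I would apply Hadamard's inequality to the columns of $W_{k,j}$ and compute each $\|{\bm w}_\ell^j(k)\|_2^2=\sum_{s=0}^{b_j-1}|\hat\lambda_\ell|^{2s}$ directly from the geometric series, using the gap $||\hat\lambda_\ell|^2-1|\geq 2N^{-3\delta_1}$ inherited from $z\notin\cN_N$ (Lemma \ref{lem:bad-z}). This yields $\|{\bm w}_\ell^j(k)\|_2^2\leq N^{3\delta_1}(|\hat\lambda_\ell|^{2b_j}\vee 1)$, and multiplying over $\ell\leq\hat\gd_N\leq\gd$ produces the claimed upper bound.

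For the \emph{lower} bound I would retain just one carefully chosen term of the Cauchy--Binet sum. Adopt the notation $\gd_0$, $I_+=\{1,\ldots,\gd_0\}$, $I_-=\{\gd_0+1,\ldots,\hat\gd_N\}$ from the proof of Lemma \ref{lem:not-small-norm}, and pick
\[
S:=\{0,1,\ldots,\hat\gd_N-\gd_0-1\}\cup\{b_j-\gd_0,\ldots,b_j-1\},
\]
a disjoint union since $b_j\geq N^{\delta_3}/2\gg\gd$. Reordering the columns of $\tilde V_S$ so that the $I_-$ columns precede the $I_+$ columns produces a $2\times 2$ block form with blocks $A,B,C,D$, where $A$ is the $(\hat\gd_N-\gd_0)\times(\hat\gd_N-\gd_0)$ Vandermonde in the $I_-$ roots, $D=\mathrm{diag}(\hat\lambda_\ell^{b_j-\gd_0})_{\ell\in I_+}$ times the Vandermonde in the $I_+$ roots, and the entries of $C$ are bounded by $(1-N^{-3\delta_1})^{b_j-\gd_0}$.

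The crux of the argument, and the main obstacle, is to establish $|\det \tilde V_S|\geq\tfrac12|\det A\,\det D|$. Expanding $\det\tilde V_S$ by Laplace along the bottom $\gd_0$ rows, the diagonal summand equals $\pm\det A\,\det D$, while every non-diagonal summand contains at least one entry of $C$. Bounding the remaining factors by Hadamard's inequality and using $|\hat\lambda_\ell|\leq\|T_k(z)\|=O(N^{\delta_2})$ from \eqref{eq:Tkz}, each non-diagonal summand is at most $N^{O(\gd^2\delta_2)}\exp(-cb_jN^{-3\delta_1})|\det A\,\det D|$, which is sub-polynomial in $N$ thanks to the constraint $\delta_3>4\delta_1$ from Remark \ref{rmk:delta}. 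The delicate point here is that the individual entries of $B$ and $D$ may each be polynomially large; only coupled with the exponential decay coming from $C$ do the swapped terms become negligible.

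To conclude, I would combine the resulting $|\det\tilde V_S|^2\geq\tfrac14\prod_{\ell\in I_+}|\hat\lambda_\ell|^{2(b_j-\gd_0)}|\det V_-|^2|\det V_+|^2$ (with $V_\pm$ the Vandermondes in the $I_\pm$ roots) with three estimates: (i) $\prod_{\ell\in I_+}|\hat\lambda_\ell|^{-2\gd_0}\geq N^{-O(\gd^2\delta_2)}$, via $|\hat\lambda_\ell|\leq\|T_k(z)\|$; (ii) $|\det V_-|^2|\det V_+|^2\geq|\det{\bm V}(k)|^2/\prod_{(\ell,\ell')\in I_+\times I_-}|\hat\lambda_\ell-\hat\lambda_{\ell'}|^2\geq|\det{\bm V}(k)|^2\cdot N^{-O(\gd^2\delta_2)}$ by the same norm bound; and (iii) $|\det{\bm V}(k)|^2\geq C^{-1}N^{-2\gd\delta_1}$, from the definition of $\cN_N$ together with the uniform upper bound $|t_{\hat\gd_N}(k)|\leq\sup_x|f_\gd(x)|$. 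Multiplying these gives the claimed lower bound, with all $\gd$- and ${\bm f}$-dependent factors absorbed into $C_2(R,\gd,{\bm f})$.
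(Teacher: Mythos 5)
Your proposal is correct and follows essentially the same strategy as the paper: Hadamard's inequality plus the geometric-series evaluation and the gap $\bigl||\hat\lambda_\ell|-1\bigr|\ge N^{-3\delta_1}$ for the upper bound, and the Cauchy--Binet formula with the same extremal row set $S$, the discriminant bound from $z\notin\cN_N$, and $\|T_k(z)\|=O(N^{\delta_2})$ for the lower bound. The only difference is presentational --- you control the cross-terms by a Laplace expansion along the bottom $\gd_0$ rows, whereas the paper packages the same cancellation via the Schur complement $\det(\Id-V_2^{-1}V_1V_3^{-1}V_4)$ and an operator-norm estimate on that perturbation; both hinge on the same super-polynomial factor $\bigl((1-N^{-3\delta_1})/(1+N^{-3\delta_1})\bigr)^{b_j-\gd_0}$ being driven to zero by $\delta_3>4\delta_1$.
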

\begin{proof}
Throughout the proof, for ease of writing, we write $\hat\gd_N$ and $\{\hat{\lambda}_\ell\}_{\ell=1}^{\hat\gd_N}$ instead of $\hat\gd_N(k)$ and $\{\hat{\lambda}_\ell(z,k)\}_{\ell=1}^{\hat\gd_N(k)}$.

We first derive the upper bound. Using Hadamard's inequality we observe that
 \begin{equation}\label{eq:hadamard-ubd-1}
 \det(\mathfrak{W}_{k,j}) \le \prod_{\ell=1}^{\corAB{\hat\gd_N}}(\mathfrak{W}_{k,j})_{\ell,\ell} = \prod_{\ell=1}^{\corAB{\hat\gd_N}} \|{\bm w}_\ell^j(k)\|_2^2.
 \end{equation}
  We note that $\|{\bm w}_{\ell}^j(k)\|_2^2= \sum_{m=0}^{b_j-1} |\hat{\lambda}_\ell|^{2m}$, where we recall that $b_j = i_{j+1}-i_j \le 2 N^{\delta_3}$. Therefore, using the fact that $z \notin \cN_N$, which in turn implies that $|\hat{\lambda}_\ell| \ne 1$, we obtain
 \begin{equation}\label{eq:hadamard-ubd-2}
 \|{\bm w}_{\ell}^j(k)\|_2^2 \le  |1-|\hat{\lambda}_\ell|^2|^{-1} \cdot \left[ 2 |\hat{\lambda}_\ell|^{2b_j} {\bf 1}(|\hat{\lambda}_\ell| > 1) + 2 {\bf 1}(|\hat{\lambda}_\ell| < 1)\right].
 \end{equation}
 Since $z \notin \cN_N$, the desired upper bound follows from \eqref{eq:hadamard-ubd-1}-\eqref{eq:hadamard-ubd-2}.


\corEE{For the lower bound, we apply \corOZ{the} \corEE{Cauchy--Binet} \corOZ{formula}, which gives
  \begin{equation}
    \label{eq:cbin}
\det(\mathfrak{W}_{k,j})
= \sum_{\substack{S \subset [N] \\ |S| = \hat\gd_N}}
\corABa{|\det(W_{k,j}[S])|^2,}
\end{equation}
where $W_{k,j}[S]$ is the $\hat\gd_N \times \hat\gd_N$ square submatrix of $W_{k,j}$ with rows \corABa{indexed by} $S.$  Hence for a lower bound, we may pick any $S$ and bound
\(
\det(\mathfrak{W}_{k,j})
\geq
\corABa{|\det(W_{k,j}[S])|^2}.
\)
Take
\[
\corABa{ S = ([\hat\gd_N - \gd_0] + i_j) \cup (i_{j+1} -\gd_0 + [\gd_0]).}
\]
Then we can write
\corABa{
\[
  W_{k,j}[S] :=
  \begin{bmatrix}
    V_1 & V_2 \\
    V_3 & V_4
  \end{bmatrix},
  \text{where}
  \quad\quad
  \begin{aligned}
    (V_1)_{i,\ell} &= \hat{\lambda}_\ell^{i-1},
    &i \in [\hat\gd_N - \gd_0],\ell \in [\gd_0], \\
    (V_2)_{i,\ell} &= \hat{\lambda}_{\ell+\gd_0}^{i-1},
    &i \in [\hat\gd_N - \gd_0], \ell \in [\hat\gd_N - \gd_0],  \\
    (V_3)_{i,\ell} &= \hat{\lambda}_{\ell}^{i+b_j-1-\gd_0},
    &i \in [\gd_0], \ell \in [\gd_0],  \\
    (V_4)_{i,\ell} &= \hat{\lambda}_{\ell+\gd_0}^{i+b_j-1-\gd_0},
    &i \in [\gd_0], \ell \in [\hat\gd_N - \gd_0].  \\
  \end{aligned}
\]}
In the cases that either $\gd_0=0$ or $\gd_0 = \hat\gd_N,$ we need only compute the determinant of $V_2$ or $V_3$ respectively.
Otherwise, by the Schur--complement formula,
\begin{equation}
  \label{eq:wschur}
 \corABa{ |\det( W_{k,j}[S])| = \left|\det \left(\begin{bmatrix} V_3 & V_4 \\ V_1 & V_2\end{bmatrix}\right)\right|
  =|\det(V_3)\det(V_2)\det(\Id - V_2^{-1}V_1 V_3^{-1} V_4)|.}
\end{equation}
Observe $\corABa{V_2}$ is a Vandermonde matrix.
\corABa{Writing $V_3 = \tilde V_3 \cdot D$}, where
\corABa{\(
D := \diag(
\hat\lambda_{1}^{b_j-\gd_0},
\cdots,
\hat\lambda_{\gd_0}^{b_j-\gd_0}
),
\)
\corABa{we see} that $\tilde V_3$ is \corOZ{also}  a Vandermonde matrix}.}

\corEE{
As $z \not\in \cN_N,$ we can bound the discriminant of $\left\{ \hat{\lambda}_\ell \right\}$ from below, and as we can bound $|\hat{\lambda}_\ell| \leq \|T_k(z)\|,$ we have that there is some \corABa{$C(R,\gd, {\bm f})$} so that
\begin{equation}
  \label{eq:v1v4}
  \corABa{|\det(V_2)^2\det(\tilde V_3)^2|}
  \geq
  (2\|T_k(z)\| \vee 1)^{-\gd(\gd-1)}
  \prod_{1 \le \ell < \ell' \le \hat\gd_N}
  \hspace{-1em}
  |\hat{\lambda}_\ell(z) - \hat{\lambda}_{\ell'}(z)|^2
  \geq C(R,\gd,{\bm f})^{\corABa{-1}}N^{-2\delta_1 \gd - \delta_2 \gd^2}
\end{equation}
Hence,
\begin{equation}
  \label{eq:v1v4a}
  \corABa{|\det(V_2)^2\det(V_3)^2|} \geq C(R,\gd,{\bm f})^{\corABa{-1}}N^{-2\delta_1 \gd- \delta_2 \gd^2}
  \cdot \prod_{\ell=1}^{\corAB{\hat\gd_N}} \left( |\hat{\lambda}_\ell (z,k)| \vee 1\right)^{2b_j}.
\end{equation}
\corABa{Note that the desired lower bound follows from \eqref{eq:cbin}, \eqref{eq:wschur} and \eqref{eq:v1v4a} once we show that
\begin{equation}\label{eq:V-1234-1}
\|V_2^{-1} V_1 V_3^{-1} V_4 \| \le 1/2.
\end{equation}}
}

\corABa{To this end, first let us recall the following standard inequality:
\[
\| \mathsf{M}\| \le \| \mathsf{M}\|_2 = \sqrt{\sum_{i,j} \mathsf{M}_{i,j}^2} \le \max\{n_1,n_2\}\cdot \max_{i,j} |\mathsf{M}_{i,j}|
\]
for any matrix $ \mathsf{M}$ of dimension $n_1 \times n_2$. As $|\lambda_\ell| < 1$ for $\ell \ge \gd_0+1$ and $|\lambda_\ell| \le \|T_k(z)\|$ for $\ell \le \gd_0$, we note that
\[
\|\tilde V_3\|,\|V_1\|, \le \gd \| T_k(z)\|^\gd \le \gd N^{\delta_2 \gd} \quad \text{ and } \quad \| V_2 \| , \|V_4\| \le \gd.
\]
}
\corABa{We can then trivially bound
\[
\|V_2^{-1}\|^{-1} =  \sigma_{\text{min}}(V_2) \geq	|\det(V_2)|/\|V_2\|^{\gd_0 - 1}
  \quad\text{and}\quad
  \|\tilde V_3^{-1}\|^{-1}=\sigma_{\text{min}}(\tilde V_3) \geq	|\det(\tilde V_3)|/\|\tilde V_3\|^{\hat\gd_N - \gd_0 - 1}.
\]
Hence, \corABa{using \eqref{eq:v1v4a}}
\begin{equation}\label{eq:V-1234}
  \|V_2^{-1}V_1 V_3^{-1} V_4\| \leq
  \frac{\gd^{4 \gd} N^{2 \gd^2\delta_2}}
  {|\det(V_2)|\cdot|\det(\tilde V_3)|} \|D^{-1}\| \le C(R,\gd,{\bm f}) \gd^{4 \gd} N^{3\gd^2\delta_2+ \gd \delta_1} \cdot \|D^{-1}\|.
\end{equation}
Since $z \notin \mathcal{B}_N$, and $4 \delta_1 <\delta_3$, the entries of $D$ are bounded below by
\[(1+N^{-3\delta_1})^{b_j - \hat\gd_N} \ge \exp\left(\frac12 N^{-3\delta_1} (b_j -\hat\gd_N)\right) \ge \exp \left(\frac18 N^{-3\delta_1 +\delta_3}\right) \ge \exp (N^{\delta_1}/8),
\]
for all large $N$. Therefore, $\|D^{-1}\| \le \exp(-N^{\delta_1}/8)$ and hence, in particular, it is smaller than any power of $N$. Thus, from \eqref{eq:V-1234} we establish  \eqref{eq:V-1234-1}. This completes the proof of the lemma.
}
 \end{proof}

 Building on Lemma \ref{lem:det-W-bd} we now derive bounds on $\det(\mathfrak{U}_{k,j}^* \mathfrak{U}_{k,j})$ where we recall that $\mathfrak{U}_{k,j}:=\pi_{k,j} \hat{M}_N(z) U_{k,j}$, and the columns of $U_{k,j}$ \corEE{are} an orthonormal basis of $\cS_{k,j}$.

\begin{lemma}\label{lem:sing-prod-ubd}
Fix $R<\infty$ and $z \in B_\C(0,R) \setminus \cN_N$. Let $k \in [L_0]$ and $j' \in [L_k']$ such that $\gb_k^{(j')}= [i_{j+1}] \setminus [i_j]$ for some $j \in [L] \cup\{0\}$. Assume $\hat\gd_N(k) >0$. Then there exists a constant $C_3(R,\gd, {\bm f})>1$, depending only on $R, \gd$, and $\{f_\ell(\cdot)\}_{\ell=0}^\gd$, such that 
\begin{align}\label{eq:det-lbd}
C_3(R,\gd, {\bm f})^{-1}
N^{\corEE{-5\gd \delta_1 -2\gd \delta_2}}
\prod_{\ell=1}^{\hat\gd_N(k)} \left(|\hat{\lambda}_\ell(z,k)| \wedge 1\right)^{2b_j}
& \,
\le \det(\corAB{\mathfrak{U}_{k,j}^* \mathfrak{U}_{k,j}}) \notag\\
& \, \le C_3(R,\gd, {\bm f})
N^{\corEE{2\gd^2 \delta_2+2\gd \delta_1}}
\prod_{\ell=1}^{\hat\gd_N(k)} \left(|\hat{\lambda}_\ell(z,k)|\wedge 1 \right)^{2b_j}.
\end{align}
\end{lemma}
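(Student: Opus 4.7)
The strategy is to reduce the estimation of $\det(\mathfrak{U}_{k,j}^*\mathfrak{U}_{k,j})$ to the already-proved Lemma \ref{lem:det-W-bd} on $\det(\mathfrak{W}_{k,j})$, plus an explicit computation of a single $\hat\gd_N \times \hat\gd_N$ determinant coming from the boundary action of $\hat M_N(z)$ on the vectors ${\bm w}_\ell^j(k)$. Throughout, write $\hat\gd_N = \hat\gd_N(k)$ and $\hat\lambda_\ell = \hat\lambda_\ell(z,k)$, and recall $b_j = i_{j+1}-i_j$.

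First, since the columns of $U_{k,j}$ form an orthonormal basis of $\cS_{k,j}$ and the columns of $W_{k,j}$ lie in $\cS_{k,j}$, we can write $W_{k,j} = U_{k,j} A$ with $A := U_{k,j}^* W_{k,j}$. Because $U_{k,j}^* U_{k,j} = \Id$, one has $A^*A = \mathfrak{W}_{k,j}$, and consequently
\[
\det\bigl(W_{k,j}^* \hat M_N(z)^* \pi_{k,j}\hat M_N(z)\, W_{k,j}\bigr)
= |\det A|^2 \cdot \det(\mathfrak{U}_{k,j}^*\mathfrak{U}_{k,j})
= \det(\mathfrak{W}_{k,j}) \cdot \det(\mathfrak{U}_{k,j}^*\mathfrak{U}_{k,j}).
\]
Hence it suffices to compute the left-hand side and then divide by $\det(\mathfrak{W}_{k,j})$, which is controlled by Lemma \ref{lem:det-W-bd}.

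Next, by Lemma \ref{eq:small-sing-vector}(ii), $(\hat M_N(z){\bm w}_\ell^j(k))_m = 0$ for all $m \in [i_{j+1}-\hat\gd_N]\setminus[i_j]$, while the support of ${\bm w}_\ell^j(k)$ is contained in $\{i_j+1,\ldots,i_{j+1}\}$. Therefore $\pi_{k,j}\hat M_N(z){\bm w}_\ell^j(k)$ is supported on the $\hat\gd_N$ rows indexed by $\{i_{j+1}-\hat\gd_N+1,\ldots,i_{j+1}\}$. Denoting by $\mathcal{M}$ the resulting $\hat\gd_N \times \hat\gd_N$ matrix, we get $\det(W_{k,j}^*\hat M_N(z)^*\pi_{k,j}\hat M_N(z) W_{k,j}) = |\det\mathcal{M}|^2$. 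Writing $m = i_{j+1} - s$, $s \in \{0,1,\ldots,\hat\gd_N-1\}$, a direct use of \eqref{eq:d-lin-eqn-1} together with the identity $\hat P_{z,k}(\hat\lambda_\ell) = 0$ yields
\[
(\hat M_N(z){\bm w}_\ell^j(k))_{i_{j+1}-s}
= -\hat\lambda_\ell^{b_j}\sum_{\ell'=s+1}^{\hat\gd_N} t_{\ell'}(k)\,\hat\lambda_\ell^{\ell'-s-1}.
\]
This factorises as $\mathcal{M} = -H\,\widetilde V\, D_\lambda$, where $\widetilde V_{p,\ell} = \hat\lambda_\ell^p$ for $p=0,\ldots,\hat\gd_N-1$, $D_\lambda = \mathrm{diag}(\hat\lambda_1^{b_j},\ldots,\hat\lambda_{\hat\gd_N}^{b_j})$, and $H$ is the anti-triangular Hankel-type matrix with $H_{s,p} = t_{p+s+1}(k)$ when $p+s\leq\hat\gd_N-1$ and $0$ otherwise. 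Since $H$ has anti-diagonal entries all equal to $t_{\hat\gd_N}(k)$, one obtains $|\det H| = |t_{\hat\gd_N}(k)|^{\hat\gd_N}$, and $|\det\widetilde V| = |\det({\bm V}(k))|$ after row reversal. Hence
\[
|\det\mathcal{M}|^2 = |t_{\hat\gd_N}(k)|^{2\hat\gd_N}\cdot |\det({\bm V}(k))|^2 \cdot \prod_{\ell=1}^{\hat\gd_N}|\hat\lambda_\ell|^{2b_j}.
\]

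Finally, combining this with Lemma \ref{lem:det-W-bd} and the elementary identity $\prod_\ell |\hat\lambda_\ell|^{2b_j}/\prod_\ell(|\hat\lambda_\ell|\vee 1)^{2b_j} = \prod_\ell (|\hat\lambda_\ell|\wedge 1)^{2b_j}$ gives
\[
\det(\mathfrak{U}_{k,j}^*\mathfrak{U}_{k,j})
= |t_{\hat\gd_N}(k)|^{2\hat\gd_N}|\det({\bm V}(k))|^2 \cdot \frac{\prod_\ell(|\hat\lambda_\ell|\wedge 1)^{2b_j}}{\det(\mathfrak{W}_{k,j})/\prod_\ell(|\hat\lambda_\ell|\vee 1)^{2b_j}}.
\]
For the lower bound, use the upper bound on $\det(\mathfrak{W}_{k,j})$ from Lemma \ref{lem:det-W-bd} (contributing $N^{3\gd\delta_1}$), the hypothesis $z\notin\cN_N$ which gives $|t_{\hat\gd_N}(k)|^{\hat\gd_N-1}|\det({\bm V}(k))|^2\geq N^{-2\gd\delta_1}$, and $|t_{\hat\gd_N}(k)|\geq N^{-\delta_2}$ (so $|t_{\hat\gd_N}(k)|^{\hat\gd_N+1} \geq N^{-(\gd+1)\delta_2}$). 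For the upper bound, use the lower bound on $\det(\mathfrak{W}_{k,j})$ (contributing $N^{2\gd\delta_1+\gd^2\delta_2}$), $|t_{\hat\gd_N}(k)|\leq \max_\ell\sup_{[0,1]}|f_\ell|$, and the Vandermonde estimate $|\det({\bm V}(k))|\leq (2\|T_k(z)\|)^{\binom{\gd}{2}}$ combined with $\|T_k(z)\|=O(N^{\delta_2})$ from \eqref{eq:Tkz}, yielding an additional factor $N^{\gd(\gd-1)\delta_2}$. Collecting exponents gives \eqref{eq:det-lbd}. The only delicate point is keeping the Vandermonde upper bound in terms of $\|T_k(z)\|^{\binom{\gd}{2}}$ rather than of $\prod_\ell\|{\bm v}_\ell(k)\|_2$, which is necessary to stay within the stated $N^{2\gd^2\delta_2}$ exponent; everything else is bookkeeping.
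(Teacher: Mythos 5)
Your proof is correct and takes essentially the same approach as the paper: you reduce $\det(\mathfrak{U}_{k,j}^*\mathfrak{U}_{k,j})$ to the ratio $\det\bigl(W_{k,j}^*\hat M_N(z)^*\pi_{k,j}\hat M_N(z)W_{k,j}\bigr)/\det(\mathfrak{W}_{k,j})$, compute the numerator by factoring the boundary block into an anti-triangular Hankel-type matrix times a Vandermonde matrix times $\diag(\hat\lambda_\ell^{b_j})$, and close the bounds with Lemma \ref{lem:det-W-bd}, the $z\notin\cN_N$ condition, and $\|T_k(z)\|=O(N^{\delta_2})$. The only cosmetic differences are that you write $W_{k,j}=U_{k,j}A$ rather than $U_{k,j}=W_{k,j}\Gamma$ and index the boundary rows in the opposite order, which changes nothing.
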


\begin{proof}
Since $\{{\bm w}_\ell^j(k)\}_{\ell=1}^{\hat\gd_N}$ span the subspace ${\mathcal S}_{k,j}$, there exists a $\hat\gd_N \times \hat\gd_N$ matrix $\Gamma$ such that $U_{k,j}=W_{k,j} \Gamma$. The orthonormality of the columns of $U_{k,j}$ implies $\Gamma^* \mathfrak{W}_{k,j} \Gamma = \corAB{\Id}$. This in particular implies that $\Gamma \Gamma^* = (\mathfrak{W}_{k,j})^{-1}$. Thus
\begin{equation}\label{eq:det-U-to-W}
\det(\mathfrak{U}^*_{k,j}  \mathfrak{U}_{k,j}) = \det(\Gamma^* W_{k,j}^* \hat{M}_N(z)^* \pi_{k,j}^* \pi_{k,j}\hat{M}_N(z) W_{k,j} \Gamma ) = \frac{\det(W_{k,j}^* \hat{M}_N(z)^* \pi_{k,j}^* \pi_{k,j} \hat{M}_N(z) W_{k,j})}{\det(\mathfrak{W}_{k,j})}.
\end{equation}
The bound on the denominator of the \abbr{RHS} of  \eqref{eq:det-U-to-W} follows from Lemma \ref{lem:det-W-bd}.
To evaluate the numerator  we recall from Lemma \ref{eq:small-sing-vector}(ii) that $\rho_{k,j} \hat{M}_N(z)W_{k,j}={\bm 0}_{(i_{j+1} -\hat\gd_N) \times \hat\gd_N}$, where ${\bm 0}_{n_1\times n_2}$ is the matrix of zeros of dimension $n_1 \times n_2$. So, we only need to evaluate the next $\hat\gd_N$ rows of $\hat{M}_N(z)W_{k,j}$.

To this end, we note that for any $m=i_{j+1}-\hat\gd_N+1,\ldots,i_{j+1}$, and $\ell \in [\hat\gd_N]$, we have
\begin{align*}
(\hat{M}_{N}(z) {\bm w}_\ell^j(k))_m &\, = (t_0(k)-z) ({\bm w}_\ell^j(k))_m + \sum_{m'=m+1}^{i_{j+1}} t_{m'-m}(k)({\bm w}_{\ell}^j(k))_{m'} \\
& \, = (t_0(k)-z) \hat{\lambda}_\ell^{m-i_j-1} + \sum_{m'=m+1}^{i_{j+1}} t_{m'-m}(k)\hat{\lambda}_{\ell}^{m'-i_j-1} \\
& \, = \hat{\lambda}_\ell^{m-i_j-1}\left( \hat{P}_{z,k}(\hat{\lambda}_\ell) - \sum_{m'=i_{j+1}+1}^{m+\hat\gd_N} t_{m'-m}(k)\hat{\lambda}_{\ell}^{m'-m}\right)\\
& \, = - \sum_{m'=i_{j+1}+1}^{m+\hat\gd_N} t_{m'-m}(k)\hat{\lambda}_{\ell}^{m'-i_j-1}= -\hat{\lambda}_\ell^{b_j} \sum_{m'=1}^{m+\hat\gd_N-i_{j+1}} t_{m'+i_{j+1}-m}(k)\hat{\lambda}_\ell^{m'-1},
\end{align*}
where the second last step follows from the fact that $\hat{P}_{z,k}(\hat{\lambda}_\ell)=0$. This implies that
\[
 \pi_{k,j}\hat{M}_{N}(z) {\bm w}_\ell^j(k)= \begin{pmatrix} {\bm 0}_{(i_{j+1}-\hat\gd_N) \times 1} \\ -\hat{\lambda}_\ell^{b_j} \Delta {\bm v}_\ell(k)\\ {\bm 0}_{(N-i_{j+1}) \times 1} \end{pmatrix}, \quad \text{ where } \quad \Delta:= \begin{bmatrix} 0 & 0 & \cdots & 0 & t_{\hat\gd_N}(k) \\ 0 & 0 & \cdots& t_{\hat\gd_N(k)} & t_{\hat\gd_N-1}(k)\\
\vdots & \iddots & \iddots & \iddots & \vdots \\ 0& t_{\hat\gd_N}(k)& \cdots&  t_3(k)& t_2(k)\\ t_{\hat\gd_N}(k)& t_{\hat\gd_N-1}(k)& \cdots&  t_2(k)& t_1(k)\end{bmatrix}.
 \]
 It further yields that
 \[
 \pi_{k,j}\hat{M}_N(z)W_{k,j}= \begin{bmatrix} {\bm 0}_{(i_{j+1}-\hat\gd_N) \times \hat\gd_N} \\ - \Delta {\bm V}(k) \Lambda^{b_j}\\ {\bm 0}_{(N-i_{j+1}) \times \hat\gd_N} \end{bmatrix},
 \]
 where $\Lambda$ is a diagonal matrix with entries $\{\hat{\lambda}_\ell\}_{\ell=1}^{\hat\gd_N}$, and recall ${\bm V}(k)$ is the $\hat\gd_N \times \hat\gd_N$ matrix whose columns are $\{{\bm v}_\ell(k) \}_{\ell=1}^{\hat\gd_N}$. Thus
 \begin{align}\label{eq:det-U-to-W-num}
 \det(W_{k,j}^* \hat{M}_N(z)^* \pi_{k,j}^*\pi_{k,j} \hat{M}_N(z) W_{k,j}) & \, = \det\left((\Lambda^*)^{b_j} {\bm V}(k)^* \Delta^* \Delta {\bm V}(k) \Lambda^{b_j}\right) \notag\\
 & \, = \prod_{\ell=1}^{\hat\gd_N} |\hat{\lambda}_\ell|^{2b_j} \cdot \det({\bm V}(k) {\bm V}(k)^*) \cdot \det(\Delta^* \Delta).
 \end{align}
 \corEE{Using \eqref{eq:Tkz} and that $z\notin \cN_N$ (\corABa{cf.}\ Lemma \ref{lem:bad-z}) respectively,
 \[
\begin{aligned}
 \det({\bm V}(k) {\bm V}(k)^*) = &|\det({\bm V}(k))|^2  \le \| T_k(z)\|^{\hat\gd_N(\hat\gd_N-1)} = O\left(N^{\gd^2 \delta_2}\right),  \\
 &|\det({\bm V}(k))|^2 \ge N^{-2\delta_1 \gd} \cdot \left(\sup_{x \in [0,1]} |f_{\hat\gd_N}(x)|^{\hat\gd_N-1}\right)^{-1}.
 \end{aligned}
 \]
 As for $\Delta,$ $\det(\Delta) = t_{\hat\gd_N}(k)^{\hat\gd_N},$ and so
 \[
N^{-2\gd\delta_2} \le  \det(\Delta^*\Delta)\le \sup_{x \in [0,1]}|f_{\hat\gd_N}(x)|^{2\hat\gd_N}.
 \]}
Now the desired bound on $\det(\mathfrak{U}_{k,j}^*\mathfrak{U}_{k,j})$
follows from \eqref{eq:det-U-to-W}-\eqref{eq:det-U-to-W-num}, upon an application of Lemma \ref{lem:det-W-bd}.

\end{proof}

Building on Lemma \ref{lem:sing-prod-ubd} we now derive the upper bound on the product of small singular values of $\hat{M}_N(z)$. Before proceeding to the statement of the relevant result let us remind the reader that we chose a partition of $\{\gb_k\}_{k=0}^{L_0}$ of $[N]$ such that for $k \in [L_0]\cup \{0\}$, $\gb_k:=\{i \in [N]: \lfloor i N^{\delta_1-1}\rfloor=k\}$. We also noted that $N^{1-\delta_1}/2 \le |\gb_k| \le 2 N^{1-\delta_1}$ for all $k \in [L_0] \cup\{0\}$. We then considered a refinement $\{\{\gb_k^{(j')}\}_{j'=1}^{L_k'}\}_{k=0}^{L_0}$ of $\{\gb_k\}_{k=0}^{L_0}$ where $N^{\delta_3}/2 \le |\gb_k^{(j')}| \le 2N^{\delta_3}$ for all $k \in [L_0]\cup \{0\}$ and $j' \in [L_k']$. Finally recall that $0 = i_1 < i_2 < i_3 < \dots  < i_{L+1} = N$, with $L :=\sum_{k=0}^{L_0} L_k'$, are the endpoints of the partition $\{\{\gb_k^{(j')}\}_{j'=1}^{L_k'}\}_{k=0}^{L_0}$, and $b_j:=i_{j+1}-i_j$. Therefore fixing $k \in [L_0] \cup\{0\}$, and $j' \in [L_k']$ fixes $j \in [L]\cup\{0\}$ such that $\gb_{k}^{(j')} = [i_{j+1}]\setminus [i_j]$.

\begin{corollary}\label{cor:sing-prod-ubd}
Fix $R<\infty$ and $z \in B_\C(0,R) \setminus \cN_N$. Recall $\mathfrak{L}:=\sum_{k=0}^{L_0} L_k' \cdot \hat\gd_N(k)$ and $L=\sum_{k=0}^{L_0}L_k'$. Then
\[
  \prod_{m=0}^{\gL-1} \sigma_{N-k}(\hat{M}_N(z)) \leq
  {C_3(R,\gd, {\bm f})}^{L} N^{(\corEE{2\gd^2 \delta_2+2\gd \delta_1})L} \prod_{k=0}^{L_0} \prod_{\ell=1}^{\hat\gd_N(k)} \left(|\hat{\lambda}_\ell(z,k)| \wedge 1\right)^{|\gb_k|},
\]
for all large $N$, where $C_3(R, \gd, {\bm f})$ is as in Lemma \ref{lem:sing-prod-ubd}. If, for some $k$, $\hat\gd_N(k)=0$, then the innermost product becomes empty \corAB{which, by convention, is set to} \corOZ{equal $1$.}
\end{corollary}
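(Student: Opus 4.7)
The plan is to mirror the strategy used in the proof of Proposition~\ref{prop:ub} for the bidiagonal case, now generalized to $\gd\ge 1$. Since the approximate singular vectors $\{{\bm w}_\ell^j(k)\}_\ell$ within each $\cS_{k,j}$ are not orthogonal for $\gd>1$, I first replace them by an orthonormal basis. Concretely, for each $(k,j)$ with $\hat\gd_N(k)>0$, let $U_{k,j}\in\C^{N\times\hat\gd_N(k)}$ have columns forming an orthonormal basis of $\cS_{k,j}$ (obtained by Gram--Schmidt within the coordinate subspace supported on $[i_j+1,i_{j+1}]$), and let $U=[U_{k,j}]_{(k,j)}$ be their concatenation. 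Because the subspaces $\cS_{k,j}$ have pairwise disjoint coordinate supports, the columns of $U$ are automatically orthonormal. An application of Lemma~\ref{lem:prod_sing} then gives
\[
\prod_{m=0}^{\mathfrak{L}-1}\sigma_{N-m}(\hat M_N(z))^2
\;\le\;\det\bigl((\hat M_N(z)U)^*(\hat M_N(z)U)\bigr).
\]

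Next, I would analyze the support structure of $\hat M_N(z)U_{k,j}$. Since $\hat M_N(z)$ is upper band of width $\gd$ and each column of $U_{k,j}$ is supported on $[i_j+1,i_{j+1}]$, the a~priori support of each column of $\hat M_N(z) U_{k,j}$ is $[i_j-\gd+1,i_{j+1}]$; Lemma~\ref{eq:small-sing-vector}(ii) shows that rows $[i_j+1,i_{j+1}-\hat\gd_N(k)]$ vanish identically. Hence the true support is the pair of "boundary strips" $[i_j-\gd+1,i_j]\cup[i_{j+1}-\hat\gd_N(k)+1,i_{j+1}]$, and the restriction to the "end strip" is precisely $\mathfrak{U}_{k,j}$. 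I would then apply Cauchy--Binet:
\[
\det\bigl((\hat M_N(z)U)^*(\hat M_N(z)U)\bigr)
=\sum_{\substack{S\subset[N]\\|S|=\mathfrak{L}}}\bigl|\det\bigl((\hat M_N(z)U)[S,:]\bigr)\bigr|^{2}.
\]
For the distinguished choice $S^{\ast}:=\cup_{(k,j)}[i_{j+1}-\hat\gd_N(k)+1,i_{j+1}]$, using that $b_{j-1}\gg\gd$ so that the "leaked rows" of block $(k,j)$ are disjoint from $S^{\ast}$ in block $(k,j-1)$, the submatrix $(\hat M_N(z)U)[S^{\ast},:]$ is block upper triangular with diagonal blocks $\mathfrak{U}_{k,j}$; its squared absolute determinant equals $\prod_{(k,j)}|\det\mathfrak{U}_{k,j}|^{2}$. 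Every other $S$ producing a nonzero determinant must replace at most $\gd$ end-rows per boundary with leaked rows, so the total number of such $S$ is at most $2^{O(\gd L)}$, which is sub-exponential in $N$ since $L=O(N^{1-\delta_3})$. Each such summand can be bounded in terms of $\prod_{(k,j)}|\det\mathfrak{U}_{k,j}|^{2}$ up to a further sub-exponential factor, using $\|\hat M_N(z)\|=O(1)$ to control the leaked row norms and the block-upper-triangular skeleton.

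Substituting the upper bound from Lemma~\ref{lem:sing-prod-ubd},
\[
\det(\mathfrak{U}_{k,j}^{*}\mathfrak{U}_{k,j})\le C_3(R,\gd,{\bm f})\,N^{2\gd^2\delta_2+2\gd\delta_1}\prod_{\ell=1}^{\hat\gd_N(k)}\bigl(|\hat\lambda_\ell(z,k)|\wedge 1\bigr)^{2b_j},
\]
multiplying over the $L$ blocks and collapsing $\sum_{j':\,\gb_k^{(j')}\leftrightarrow j}b_j=|\gb_k|$ across each $\gb_k$, and finally taking square roots, yields the claimed bound with the stated constant $C_3(R,\gd,{\bm f})^{L}$ and polynomial factor $N^{(2\gd^2\delta_2+2\gd\delta_1)L}$.

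The main obstacle lies in controlling the non-canonical terms in the Cauchy--Binet expansion. A naive application of Fischer's inequality block-by-block in $(k,j)$ to the Gram matrix is too crude: the rank-$\gd$ leaked contribution $\mathcal{T}_{k,j}^{*}\mathcal{T}_{k,j}$, whose operator norm is $O(1)$, can dominate $\mathfrak{U}_{k,j}^{*}\mathfrak{U}_{k,j}$ in regimes where some $|\hat\lambda_\ell(z,k)|<1$ force $\sigma_{\min}(\mathfrak{U}_{k,j})$ to be exponentially small in $b_j$, causing the matrix determinant lemma to introduce a super-exponential blow-up. The correct approach exploits the block-upper-triangular skeleton of $(\hat M_N(z)U)[S^{\ast},:]$ together with the combinatorial sub-exponential count of alternative row subsets, so that only sub-exponential (rather than super-exponential) corrections are incurred; the parameter bounds $\max\{\delta_1,\delta_2,\delta_3\}\le(\gamma-\tfrac12)/(20\gd^2)$ from Remark~\ref{rmk:delta} ensure that all such corrections are absorbed into $N^{O(L)}$.
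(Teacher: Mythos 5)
Your overall strategy (orthonormal basis for $\cS_{k,j}$, Lemma~\ref{lem:prod_sing}, support analysis, block upper-triangularity, Lemma~\ref{lem:sing-prod-ubd}) coincides with the paper's, and the bookkeeping at the end is right. However, the ``main obstacle'' you flag --- controlling non-canonical terms in the Cauchy--Binet expansion --- is a phantom problem, and it arises because your support analysis overestimates the leakage.

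You take the a priori support of the columns of $\hat M_N(z) U_{k,j}$ to be $[i_j-\gd+1,i_j]\cup[i_{j+1}-\hat\gd_N(k)+1,i_{j+1}]$, using the global band width $\gd$. But block $j-1$ sits entirely inside a single $\gb_{k^-}$, and the effective band width of $\hat M_N(z)$ there is $\hat\gd_N(k^-)$, not $\gd$: for any row $m\in\gb_{k^-}$, the entries $(\hat M_N(z))_{m,m+\ell}=t_\ell(k^-)-z\delta_{\ell,0}$ vanish for $\ell>\hat\gd_N(k^-)$. Hence the leaked rows form $[i_j-\hat\gd_N(k^-)+1,i_j]$, which is \emph{exactly} the end strip of block $j-1$ (noting $k^-=k_{j-1}$ in every case). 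Consequently the set of nonzero rows of $\hat M_N(z)U$ is precisely your $S^*$, which has exactly $\gL$ elements. Any other $\gL$-element $S$ necessarily includes a zero row, so its Cauchy--Binet summand vanishes. Thus $\det\bigl(U^*\hat M_N(z)^*\hat M_N(z)U\bigr)=\bigl|\det\bigl((\hat M_N(z)U)[S^*,:]\bigr)\bigr|^2$ outright --- the paper phrases this by forming the \emph{square} matrix $\mathfrak{U}_\mathcal{T}$ from $\hat M_N(z)U$ by deleting its identically zero rows --- and the block-upper-triangular factorization into $\prod_{(k,j)}\det(\hat\pi_{k,j}\hat M_N(z)U_{k,j})$ together with Lemma~\ref{lem:sing-prod-ubd} gives the claim. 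Your self-acknowledged gap is real as written, but it closes not by surmounting the obstacle but by observing it is not there once the block-dependent band width $\hat\gd_N(k^-)$ is used in place of $\gd$.
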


\begin{proof}
Fix $k \in [L_0] \cup\{0\}$, and $j' \in [L_k']$ such that $\gb_k^{(j')}:=[i_{j+1}]\setminus [i_j]$. Let $U_{k,j}$ be the $N \times \hat\gd_N(k)$ matrix whose columns form an orthonormal basis of $\cS_{k,j}$. Denote
\begin{equation}\label{eq:dfn-onb}
U:= \begin{bmatrix} U_{0,0} & \cdots & U_{0,L_0'-1} & U_{1, L_0'} &\cdots & U_{1,L_0'+L_1'-1}&  \cdots & U_{L_0, L} \end{bmatrix}.
\end{equation}
Note that if $\hat\gd_N(k) =0$ for some $k$, then $U_{k,j}$ is an empty matrix. Therefore, it is equivalent to ignore such $k$'s while constructing the matrix $U$. We will show that
\begin{equation}\label{eq:prod-sing-ubd-det}
\left(\det(U^*\hat{M}_N(z)^* \hat{M}_N(z) U)\right)^{1/2} \le   \left\{C_3(R,\gd, {\bm f}) N^{\corEE{2\gd^2 \delta_2+2\gd \delta_1}}\right\}^L \cdot \prod_{k=0}^{L_0} \prod_{\ell=1}^{\hat\gd_N(k)} \left(|\hat{\lambda}_\ell(z,k)| \wedge 1\right)^{|\gb_k|}.
\end{equation}
Since, the columns of $U$ are orthonormal, this, together with Lemma \ref{lem:prod_sing}, yields the desired upper bound on the product of small singular values.

Turning to prove \eqref{eq:prod-sing-ubd-det} we note the following: For any $k \in [L_0]\cup\{0\}$ and $j' \in [L_k']$ such that $\gb_k^{(j')}:=[i_{j+1}]\setminus [i_j]$ and $\hat\gd_N(k)>0$, the columns of $\hat{M}_N(z) U_{k,j}$ belong to the subspace
$$\mathcal{T}_{k,j}:=\Span\left(\{e_{m}\}_{m=i_{j+1}-\hat\gd_N(k)+1}^{i_{j+1}}, \, \{e_{m}\}_{m=i_{j}-\hat\gd_N(k^-)+1}^{i_{j}}\right),$$
where (a) $k^-:=k$ if $j'>1$, and (b) $k^-:=k-1$ if $j'=1$, \corAB{and we set} $\hat\gd_N(-1):=0$.

This, in particular, implies that
\begin{equation}\label{eq:detU-1}
\det(U^*\hat{M}_N(z)^* \hat{M}_N(z) U)= \det(\mathfrak{U}^*_{\mathcal{T}} \mathfrak{U}_{\mathcal{T}}),
\end{equation}
where $\mathfrak{U}_{\mathcal{T}}$ is matrix obtained from $\pi_{\mathcal{T}} \hat{M}_N(z) U=\hat{M}_N(z) U$ by deleting its zero rows, and $\pi_{\mathcal{T}}$ is the orthogonal projection onto $\Span(\cup_{\hat\gd_N(k)>0} \mathcal{T}_{k,j})$. For any $v \in \C^N$, let us denote $\hat{\pi}_{k,j} v$ to be the $b_j$-dimensional vector obtained from ${\pi}_{k,j} v$ by deleting its zero rows. Equipped with this notation, we also note that $\mathfrak{U}_{\mathcal{T}}$ is a $\gL \times \gL$ block upper triangular matrix with $\{\hat\pi_{k,j} \hat{M}_N(z) U_{k,j}\}$ as its diagonal blocks. This yields that
\begin{equation}\label{eq:detU-2}
\det(\mathfrak{U}_{\mathcal{T}}) = \prod \det(\hat{\pi}_{k,j} \hat{M}_N(z) U_{k,j}).
\end{equation}
Since
\begin{equation}
\det(U_{k,j}^*\hat{M}_N(z)^* \hat{\pi}_{k,j}^*\hat{\pi}_{k,j} \hat{M}_N(z) U_{k,j})= \det(U_{k,j}^*\hat{M}_N(z)^* {\pi}_{k,j}^*{\pi}_{k,j} \hat{M}_N(z) U_{k,j}), \notag
\end{equation}
combining \eqref{eq:detU-1}-\eqref{eq:detU-2}, and applying Lemma \ref{lem:sing-prod-ubd} we arrive at \eqref{eq:prod-sing-ubd-det}. This completes the proof of the lemma.
\end{proof}
It remains to find a matching lower bound on the product of the small singular values.
\corOZ{Recall the notation $L_0,L,\gL$, see \eqref{eq-L0},\eqref{eq-L},\eqref{eq-frakL}.}
\begin{lemma}  \label{lem:lb-prod-sing}
Fix $R<\infty$ and $z \in B_\C(0,R) \setminus \cN_N$. Then there exists a constant $C_4(R,\gd, {\bm f})$, depending only on $R, \gd$, and $\{f_{\ell}\}_{\ell=0}^\gd$, such that
  \[
    \prod_{m=0}^{\gL-1} \sigma_{N-m}(\hat{M}_N(z))
    \geq
    \left(C_4(R,\gd, {\bm f}) N^{7\gd^2 \delta_1+3\corEE{\gd}^3\delta_2+4\gd \delta_3+\corABrev{\frac{\alpha_0\gd \delta_1}{2(2\alpha_0-1)}}}\right)^{-L} \cdot \gL^{-\gL/2}
    \prod_{k=0}^{L_0} \prod_{\ell=1}^{\hat\gd_N(k)} \left(|\hat{\lambda}_\ell(z,k)| \wedge 1\right)^{|\gb_k|},
  \]
for all large $N$.
\end{lemma}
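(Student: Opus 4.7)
The plan is to mimic the argument of Proposition \ref{prop:lb} (the bidiagonal lower bound), replacing each scalar/structural ingredient by its block analogue. The starting point is Lemma \ref{lem:prod_sing}, which reduces the task to obtaining a uniform lower bound on $\prod_{k=1}^{\gL} \|\hat M_N(z) v_k\|_2$ over all orthonormal families $\{v_k\}_{k=1}^{\gL} \subset \C^N$. Let $\psi$ be the orthogonal projection onto $\cS := \cup_{k,j}\cS_{k,j}$ and $\rho := \sum_{k,j}\rho_{k,j}$, so that by Lemma \ref{eq:small-sing-vector}(ii) one has $\rho \hat M_N(z) \psi = 0$. Split the $v_k$'s by the threshold $1-\|\psi v_k\|_2^2 \lessgtr \tfrac{1}{2\gL}$; without loss of generality, $v_1,\ldots,v_p$ are those for which $1-\|\psi v_k\|_2^2 < \tfrac{1}{2\gL}$. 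For $k>p$, Lemma \ref{lem:orth-compl-large-norm} gives directly
\[
\|\hat M_N(z) v_k\|_2 \geq \|\rho \hat M_N(z) v_k\|_2 \geq \frac{\|(1-\psi)v_k\|_2}{C_1 N^{2\gd\delta_1+\gd^2\delta_2+2\delta_3}} \geq \frac{N^{-2\gd\delta_1-\gd^2\delta_2-2\delta_3}}{C_1\sqrt{2\gL}}.
\]

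For $k\leq p$, the identity $\rho \hat M_N(z)\psi=0$ yields $\|\hat M_N(z)v_k\|_2^2 = \|\rho \hat M_N(z)(1-\psi)v_k\|_2^2 + \|(1-\rho)\hat M_N(z)v_k\|_2^2$; splitting into the two sub-cases according to whether $\|(1-\rho)\hat M_N(z)\psi v_k\|_2 \lessgtr 4\|(1-\rho)\hat M_N(z)(1-\psi)v_k\|_2$ and using Lemma \ref{lem:orth-compl-large-norm} together with the trivial bound $\|(1-\rho)\hat M_N(z)(1-\psi)v_k\|_2 \le \|\hat M_N(z)\|\cdot\|(1-\psi)v_k\|_2$, one obtains (exactly as in \eqref{eq:lb1d1}-\eqref{eq:lb2d1})
\[
\|\hat M_N(z) v_k\|_2 \geq \frac{1}{4(\|\hat M_N(z)\|\vee 1) C_1 N^{2\gd\delta_1+\gd^2\delta_2+2\delta_3}}\,\|\hat M_N(z)\psi v_k\|_2,
\]
and $\|\hat M_N(z)\|$ is bounded by a constant depending only on $R$, $\gd$ and $\{f_\ell\}$.

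It thus remains to lower-bound $\prod_{k=1}^p \|\hat M_N(z)\psi v_k\|_2$. Let $U$ denote the $N\times \gL$ matrix whose columns are the orthonormal basis \eqref{eq:dfn-onb} of $\cS$, write $\psi v_k = U a_k$ for $k\le p$, assemble the $\gL\times p$ coefficient matrix $A_1 = [a_1|\cdots|a_p]$, and extend it to a $\gL\times \gL$ matrix $A$ by appending $\gL-p$ columns that are orthonormal and orthogonal to the columns of $A_1$. Set $Y=UA$ with columns ${\bm y}_1,\ldots,{\bm y}_\gL$. By Hadamard's inequality,
\[
\prod_{k=1}^{p}\|\hat M_N(z)\psi v_k\|_2^2 \geq \frac{\det(Y^*\hat M_N(z)^*\hat M_N(z)Y)}{\prod_{k=p+1}^{\gL}\|\hat M_N(z){\bm y}_k\|_2^2},
\]
and the denominator is trivially bounded by $\|\hat M_N(z)\|^{2(\gL-p)}$. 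For the numerator, $\det(Y^*\hat M_N(z)^*\hat M_N(z)Y) = \det(U^*\hat M_N(z)^*\hat M_N(z)U)\cdot\det(AA^*)$. Since $\|(1-\psi)v_k\|_2 < 1/\sqrt{2\gL}$ and the $v_k$'s are orthonormal, the entries of the $p\times p$ principal block of $A^*A$ satisfy $|(A^*A)_{mm}|\geq 1-\tfrac{1}{2\gL}$ and $|(A^*A)_{mm'}|\leq \tfrac{1}{2\gL}$ for $m\ne m'$, while the lower $(\gL-p)\times(\gL-p)$ block of $A^*A$ is $\Id$ and cross terms vanish; the Gershgorin circle theorem then gives $\det(AA^*)=\det(A^*A) \geq 2^{-\gL}$.

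Finally, since the columns of $\hat M_N(z)U_{k,j}$ are supported in $\mathcal{T}_{k,j}$, the matrix $\hat M_N(z)U$ is block upper-triangular (with the ordering in \eqref{eq:dfn-onb}) with diagonal blocks $\hat\pi_{k,j}\hat M_N(z)U_{k,j}$, so as in the proof of Corollary \ref{cor:sing-prod-ubd} we get the exact factorization $\det(U^*\hat M_N(z)^*\hat M_N(z)U) = \prod_{k,j}\det(\mathfrak{U}_{k,j}^*\mathfrak{U}_{k,j})$, to which the lower bound of Lemma \ref{lem:sing-prod-ubd} applies block by block. Combining this with the bounds from steps one and two and absorbing the polylogarithmic $\gL^{-\gL/2}$ loss coming from the case $k>p$ yields the claimed inequality. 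The main technical work is the bookkeeping, in particular the extension $A_1\mapsto A$ (which can always be done by Gram--Schmidt on an arbitrary basis extension) and the verification that the exponents of $N$ in the final product match $7\gd^2\delta_1+3\gd^3\delta_2+4\gd\delta_3$ after combining Lemmas \ref{lem:orth-compl-large-norm} and \ref{lem:sing-prod-ubd}; no conceptually new estimates beyond those already established in this section are needed.
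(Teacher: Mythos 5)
Your proposal follows essentially the same route as the paper's proof: Lemma \ref{lem:prod_sing} to reduce to orthonormal families, the threshold split at $1-\|\psi w_m\|_2^2 \lessgtr \tfrac{1}{2\gL}$ with Lemma \ref{lem:orth-compl-large-norm} handling one branch and the $\rho \hat M_N(z)\psi=0$ identity the other, the Hadamard/Cauchy--Binet reduction via $Y=UA$ with $\det(AA^*)\geq 2^{-\gL}$ by Gershgorin, and finally the block upper-triangular factorization of $\det(U^*\hat M_N(z)^*\hat M_N(z)U)$ feeding into Lemma \ref{lem:sing-prod-ubd}, exactly as in \eqref{eq:lb-simplify}--\eqref{eq:prod-sing-lbd-det}. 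Two small slips worth noting: $\cS$ should be $\Span(\cup_{k,j}\cS_{k,j})$ rather than the union itself, and the $\gL^{-\gL/2}$ factor is sub-exponential in $N$ (since $\gL\sim N^{1-\delta_3}$), not ``polylogarithmic'' --- neither affects the argument.
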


\corAB{The proof of Lemma \ref{lem:lb-prod-sing} is similar to that of Proposition \ref{prop:lb}. Hence, we provide only a brief outline below.}


\begin{proof}[Proof of Lemma \ref{lem:lb-prod-sing}]
Using Lemma \ref{lem:prod_sing} \corAB{again} we see that it is enough to find a uniform lower bound on
  \[
   \prod_{m=1}^{\gL}
    \| \hat{M}_{N}(z) w_m\|_2
  \]
  over all collections of orthonormal vectors $\left\{ w_m \right\}_{m=1}^\gL$. \corAB{Analogous to the proof of Proposition \ref{prop:lb} we bound each $\| \hat{M}_{N}(z) w_m\|_2$ in one of two ways}. If $1-\|{\psi} w_m\|^2_2 \geq \frac{1}{2\gL}$ then applying Lemma \ref{lem:orth-compl-large-norm} we \corAB{deduce}
  \begin{equation} \label{eq:lb0}
    \| \hat{M}_{N}(z) w_m\|_2
    \geq
    \|{\rho} \hat{M}_{N}(z) w_m\|_2
    \geq
    \frac{1}{\sqrt{2\gL}} C_1(R,\gd, {\bm f})^{-1}N^{-(2\gd \delta_1+\corEE{\gd}^2\delta_2+2\delta_3+\corABrev{\frac{\alpha_0\delta_1}{2(2\alpha_0-1)}})},
  \end{equation}
  \corAB{where we recall that ${\psi}$ is the orthogonal projection onto the subspace ${\cS}:=\Span(\cup \cS_{k,j})$}.

  Without loss of generality, assume  $w_m,\, m \in [p]$ satisfies $1-\|{\psi} w_m\|^2_2 < \frac{1}{2\gL}$. \corAB{\corEE{Proceeding} similarly as in the steps leading to \eqref{eq:lb-simplifyd1} we find}
 \begin{equation}\label{eq:lb-simplify}
\prod_{m=1}^p   \|\hat{M}_{N}(z) w_m\|_2 \ge \left(\frac{C_1(R,\gd, {\bm f})^{-1}N^{-(2\gd \delta_1+\corEE{\gd}^2\delta_2+2\delta_3+\corABrev{\frac{\alpha_0\delta_1}{2(2\alpha_0-1)}})}}{4(\|\hat{M}_{N}(z)\|\vee 1)}\right)^p \prod_{m=1}^p\| \hat{M}_{N}(z) {\psi} w_m\|_2.
 \end{equation}
 Let $Y_1$ be the matrix whose columns are $\{{\psi}w_m\}_{m=1}^p$. Since the columns of $U$ span the subspace $\cS$, there must exist \corEE{an} $\gL \times p$ matrix $A_1$ such that $Y_1=U A_1$. We extend the matrix $A_1$ to \corEE{an} $\gL \times \gL$ matrix $A$ so that the last $\gL-p$ columns of $A$ are orthonormal and are also orthogonal to the first $p$ columns of $A_1$.
 Set $Y:=U A$ and let us denote the columns of $Y$ to be ${\bm y}_m$, for $m \in [\gL]$.

 Turning to bound the \abbr{RHS} \corOZ{of}
 \eqref{eq:lb-simplify}, by Hadamard's inequality we now find that
\begin{equation}\label{eq:lb-simplify-1}
\prod_{m=1}^p\| \hat{M}_N(z) {\psi} w_m\|_2^2 \ge \frac{\det(Y^* \hat{M}_{N}(z)^* \hat{M}_{N}(z) Y)}{\prod_{m=p+1}^\gL \|\hat{M}_{N}(z){\bm y}_m\|_2^2}.
\end{equation}
We separately bound the numerator and the denominator of \eqref{eq:lb-simplify-1}. \corAB{An argument similar to the proof of \eqref{eq:lb-simplify-3d1} yields}
\begin{equation}\label{eq:lb-simplify-2}
\corAB{\|\hat{M}_{N}(z) {\bm y}_m\|_2 \le  \|\hat{M}_{N}(z)\|.}
\end{equation}
It remains to find a lower bound of the numerator of \eqref{eq:lb-simplify-1}. To obtain such a bound, we observe that
\begin{equation}\label{eq:lb-simplify-3}
\det(Y^* \hat{M}_{N}(z)^* \hat{M}_{N}(z) Y)= \det(U^* \hat{M}_N(z)^* \hat{M}_N(z) U) \det(A A^*).
\end{equation}
Proceeding similarly as in the proof of \eqref{eq:prod-sing-ubd-det}, and applying the lower bound derived in Lemma \ref{lem:sing-prod-ubd} we deduce
\begin{equation}\label{eq:prod-sing-lbd-det}
\det(U^*\hat{M}_N(z)^* \hat{M}_N(z) U) \ge   \left\{C_3(R,\gd, {\bm f}) N^{5\gd \delta_1+2\gd \delta_2}\right\}^{-L} \cdot \prod_{k=0}^{L_0} \prod_{\ell=1}^{\hat\gd_N(k)} \left(|\hat{\lambda}_\ell(z,k)| \wedge 1\right)^{2|\gb_k|}.
\end{equation}
Arguments analogous to \eqref{eq:detAA*} further show that \(
\det(A A^*)  \geq 2^{-\gL} \ge 2^{-\gd L}.
\)
Plugging this bound in \eqref{eq:lb-simplify-3}, and using \eqref{eq:prod-sing-lbd-det} we obtain
\[
\det(Y^* \hat{M}_{N}(z)^* \hat{M}_{N}(z) Y) \ge  \left\{C_3(R,\gd, {\bm f}) 2^\gd N^{5\gd \delta_1+2\gd \delta_2}\right\}^{-L} \cdot \prod_{k=0}^{L_0} \prod_{\ell=1}^{\hat\gd_N(k)} \left(|\hat{\lambda}_\ell(z,k)| \wedge 1\right)^{2|\gb_k|}.
\]
Therefore, from \eqref{eq:lb-simplify}-\eqref{eq:lb-simplify-2}, and using the fact that $p \le \gL \le \gd L$, we derive
\[
\prod_{m=1}^p   \|\hat{M}_{N}(z) w_m\|_2 \ge \left(\frac{C_1(R,\gd, {\bm f})C_3(R,\gd, {\bm f}) N^{5\gd \delta_1+2\corEE{\gd}^2\delta_2+2\delta_3+\corABrev{\frac{\alpha_0\delta_1}{2(2\alpha_0-1)}}}}{8(\|\hat{M}_{N}(z)\|\vee 1)}\right)^{-\corEE{\gd} L} \cdot \prod_{k=0}^{L_0} \prod_{\ell=1}^{\hat\gd_N(k)} \left(|\hat{\lambda}_\ell(z,k)| \wedge 1\right)^{|\gb_k|}.
\]
Since by \corOZ{the}
Gershgorin circle theorem, $\|\hat{M}_N(z)\| \le |z| + \sum_{\ell=0}^\gd \sup_{x \in [0,1]}|f_\ell(x)|$, using \eqref{eq:lb0} \corOZ{we complete}  the proof of the lower bound.
 \end{proof}


We are now ready to finish the proof of Theorem \ref{thm:finite-off-diag-pc}.

\begin{proof}[Proof of Theorem \ref{thm:finite-off-diag-pc}]
The tightness of
the sequence of random probability measures
$\{L_{\hat\model_N}\}_{N \in \N}$ in $\mathcal{P}(\R)$,
the set of all probability measures on $\R$ is immediate from the domination
by singular values, see the proof of Corollary \ref{cor:iid}.
Therefore, by Prokhorov's theorem $\{L_{\hat\model_N}\}_{N \in \N}$
admits subsequential limits. We need to show that all subsequential limits coincide
and are given by the deterministic probability measure $\mu_{\gd,{\bm f}}$.

Suppose on the contrary that there exists a subsequence $\{N_m\}$ such that the above does not hold, i.e.~the
limit along the subsequence is not $\mu_{\gd,{\bm f}}$.
We \corOZ{fix a further arbitrary subsequence $\{N_{m_n}\} \subset \{N_m\}$ with
$N_{m_n} \ge 2^n$ for all $n \in \N$ and prove for that subsequence that}
\begin{equation}\label{eq:subseq-limit}
L_{\hat\model_{N_{m_n}}} \Rightarrow \mu_{\gd, {\bm f}} \quad \text{ as } \quad n \to \infty, \quad \text{ in probability}.
\end{equation}
This will prove the theorem.
Turning to \corOZ{the proof of
\eqref{eq:subseq-limit}, we first apply  \cite[Theorem 2.8.3]{tao2012topics}
and deduce that}
it is enough to show that for Lebesgue a.e.~$z \in \C$,
\begin{equation}\label{eq:log-pot-conv-1}
\mathcal{L}_{\hat\model_{N_{m_n}}}(z) \to \mathcal{L}_{\mu_{\gd,{\bm f}}}(z), \quad \text{ as } \quad n \to \infty, \quad \text{ in probability},
\end{equation}
where $\mathcal{L}_{\hat\model_N}(\cdot)$ is the log-potential of the \abbr{ESD} of $\hat\model_N$. Since $\mu_{\gd,{\bm f}}$ is compactly supported,
one can check the proof of \cite[Theorem 2.8.3]{tao2012topics} to deduce that
\corOZ{in fact} it suffices to establish \eqref{eq:log-pot-conv-1} for Lebesgue a.e.~$z \in B_\C(0,R)$ for some large $R$.

We will show that given any $\vep >0$, there exists a set $\corOZ{\hat \cN_\vep} \subset \C$, depending on the subsequence $\{N_{m_n}\}$, with
$\mathrm{Leb}(\corOZ{\hat \cN_\vep})
\le \vep$, such that for all $z \in B_\C(0,R)\setminus \corOZ{\hat \cN_\vep}$,
the convergence in \eqref{eq:log-pot-conv-1} holds. Since $\vep>0$ is arbitrary, this will complete the proof of  \eqref{eq:log-pot-conv-1}.

\corOZ{Toward this end,}
define
$\corOZ{\hat \cN_\vep}:=\cup_{n \ge \corOZ{n_0(\vep)}} \cN_{N_{m_n}} \cup f_0([0,1])$ for some $n_0(\vep) \ge 1$,
\corOZ{where $\cN_N$ is as in Lemma \ref{lem:bad-z}}.
\corOZ{Since} $f_0(\cdot)$ is a $\alpha_0$-H\"{o}lder continuous function
with $\alpha_0 \ge 1/2$, a simple volumetric estimate shows that $\mathrm{Leb}(f_0([0,1]))=0$. Hence, using Lemma \ref{lem:bad-z} and the union bound we see that, given any $\vep >0$, there exists $n_0:=n_0(\vep)$ such
that $\mathrm{Leb}(\corOZ{\hat \cN_\vep}) \le \vep$.
With this choice of
the set
$\corOZ{\hat \cN_\vep}$, we now prove \eqref{eq:log-pot-conv-1}.

To this end, our goal is to apply Theorem \ref{thm:logdet}. We need to show that all the assumptions of Theorem \ref{thm:logdet} are satisfied. From Remark \ref{rmk:delta} we have that for any $z \in B_\C(0,R)\setminus
\corOZ{\hat \cN_\vep}$, $N^* =o(N/\log N)$ along the subsequence $\{N_{m_n}\}$. Therefore applying Theorem \ref{thm:logdet} we conclude that for $z \in B_\C(0,R)\setminus
\corOZ{\hat \cN_\vep}$,
\begin{equation}\label{eq:conv-prob}
\left| \frac{1}{N_{m_n}} \log |\det (\hat\model_{N_{m_n}}(z))| - \frac{1}{N_{m_n}}\log|\det B(\hat{M}_{N_{m_n}}(z))|\right| \to 0, \quad \text{ as } \quad n \to \infty, \quad \text{ in probability}.
\end{equation}
Thus it remains to find
\[
\lim_{n \to \infty} \frac{1}{N_{m_n}}\log \det \left|B(\hat{M}_{N_{m_n}}(z))\right|.
\]
To this end, we note that
\begin{equation}\label{eq:B_N-limit0}
|\det B(\hat{M}_{N}(z)) |= \frac{|\det \hat{M}_{N}(z)|}{\prod_{p=0}^{N^*} \sigma_{N-p}(\hat{M}_{N}(z))}= \frac{\prod_{k=0}^{L_0}|t_0(k)-z|^{|\gb_k|}}{\prod_{p=0}^{N^*} \sigma_{N-p}(\hat{M}_{N}(z))}.
\end{equation}
Since $\|\hat{M}_N(z)\| =O(1)$ for any $z \in B_\C(0,R)$, and
$\corOZ{N^*\leq \gL=O(N^{1-\delta_3})}$ \corOZ{by \eqref{eq-frakL},
using} the definition of $N^*$ we have that
\[
\lim_{n \to \infty} \frac{1}{N_{m_n}} \log \left( \prod_{p=N^*+1}^{\gL-1} \sigma_{N_{m_n}-p}(\hat{M}_{N_{m_n}}(z)) \right) =0.
\]
Hence, it is enough to find $\lim_{n \to \infty} \Upsilon_n(z)$ and show that it equals $\mathcal{L}_{\mu_{\gd, {\bm f}}}(z)$, where
\[
\Upsilon_n(z):= \frac{1}{N_{m_n}}\log \left( \sum_{k=0}^{L_0} |\gb_k| \log |t_0(k)-z| - \sum_{p=0}^{\gL-1} \log \sigma_{N_{m_n}-p}(\hat{M}_{N_{m_n}}(z)) \right).
\]
\corOZ{Since} $\gL  =O(N^{\corOZ{1-\delta_3}})$,
\corOZ{we have,}
using Lemma \ref{lem:lb-prod-sing},
\corOZ{that} for any $z \in B_\C(0,R)\setminus \corOZ{\hat \cN_\vep}$,
\begin{align}\label{eq:limsup-ubd}
\limsup_{n \to \infty} \Upsilon_n(z)& \le \limsup_{n \to \infty} \frac{1}{N_{m_n}}\log \left( \sum_{k=0}^{L_0} |\gb_k|\left\{ \log |t_0(k)-z| -  \sum_{\ell=1}^{\hat\gd_{N_{m_n}}(k)} \log \left(|\hat{\lambda}_\ell(z,k)| \wedge 1\right)\right\} \right)\notag\\
& = \limsup_{n \to \infty} \frac{1}{N_{m_n}}\log \left( \sum_{k=0}^{L_0} |\gb_k|\left\{  \sum_{\ell=1}^{\hat\gd_{N_{m_n}}(k)} \log \left(|\hat{\lambda}_\ell(z,k)| \vee 1\right) + \log |t_{\hat\gd_{N_{m_n}}(k)}(k)|\right\} \right),
\end{align}
where the last step follows from the fact that $\{\hat{\lambda}_{\ell}(z,k)\}_{\ell=1}^{\hat{\gd}_N(k)}$ are the eigenvalues of the matrix $T_k(z)$, and hence
\begin{equation}\label{eq:small-big-connect}
\prod_{\ell=1}^{\hat\gd_{N_{m_n}}(k)} |\hat{\lambda}_\ell(z,k)| = |\det (T_k(z))| = \frac{|z-t_0(k)|}{|t_{\hat\gd_{N_{m_n}}(k)}(k)|}.
\end{equation}
We claim that for any $z \in B_\C(0,R)\setminus \corOZ{\hat \cN_\vep}$,
\begin{equation}\label{eq:claim-bct}
\sup_n \sup_{k\in[L_0]\cup \{0\}}  \left|  \sum_{\ell=1}^{\hat\gd_{N_{m_n}}(k)} \log \left(|\hat{\lambda}_\ell(z,k)| \vee 1\right) + \log |t_{\hat\gd_{N_{m_n}}(k)}(k)|\right| \le C({\bm f}, \gd, R) <\infty,
\end{equation}
for some constant $C({\bm f}, \gd, R)$ depending only on $\{f_{\ell}\}_{\ell=0}^\gd$, $\gd$, and $R$. Indeed, noting that the closed set $f_0([0,1]) \in \cN_\vep$, we have $\eta_0:=\mathrm{dist}(z, f_0([0,1]) >0$. Upon using the triangle inequality we therefore conclude that there exists $\eta >0$, such that for every $k \in [L_0]\cup\{0\}$, every root of the polynomial equation $\hat{P}_{z,k}(\lambda)=0$ is greater than $\eta$ in absolute value. Thus,
\[
\sup_{k \in [L_0]\cup\{0\}} \left| \sum_{\ell=1}^{\hat\gd_{N_{m_n}}(k)} \log \left(|\hat{\lambda}_\ell(z,k)|\,  \corAB{\wedge} \,1\right)\right| \le \gd |\log (\eta)|,
\]
for all $n$. Since $ \eta_0 \le |z -t_0(k)| \le R + \sup_{x \in [0.1]} |f_0(x)|$, for all $k \in [L_0]\cup\{0\}$, using \eqref{eq:small-big-connect} again the claim in \eqref{eq:claim-bct} follows.

Next fix $x \in [0,1]$. It is easy to check that for any $x \in (0,1)$, $\hat{P}_{z,k}(\lambda) \to P_{z,x}(\lambda)$ where $k = \lfloor x N^{\delta_1} \rfloor$. Since the roots of a polynomial are continuous function of its coefficients we have that
\[
\sum_{\ell=1}^{\hat\gd_{N_{m_n}}(k)} \log \left(|\hat{\lambda}_\ell(z,k)| \vee 1\right) + \log |t_{\hat\gd_{N_{m_n}}(k)}(k)| \to \sum_{\ell=1}^{\hat\gd(x)} \log \left(|{\lambda}_\ell(z,x)| \vee 1\right) + \log |f_{\hat\gd(x)}(x)|,
\]
as $n \to \infty$, where $k = \lfloor x N^{\delta_1} \rfloor$. Therefore, using \eqref{eq:claim-bct} and the bounded convergence theorem, from \eqref{eq:limsup-ubd} we deduce that
\[
\limsup_{n \to \infty} \Upsilon_n(z) \le \int_0^1 \left\{\sum_{\ell=1}^{\hat\gd(x)} \log \left(|{\lambda}_\ell(z,x)| \vee 1\right) + \log |f_{\hat\gd(x)}(x)|\right\} dx = \mathcal{L}_{\mu_{\gd, {\bm f}}}(z).
\]
Now applying Corollary \ref{cor:sing-prod-ubd} and using a similar reasoning as above, it also follows that for any $z \in B_\C(0,R)\setminus
\corOZ{\hat \cN_\vep}$, we have $\liminf_{n \to \infty} \Upsilon_n(z) \ge  \mathcal{L}_{\mu_{\gd, {\bm f}}}(z)$. This together with \eqref{eq:conv-prob} shows that, for any $\vep>0$, the convergence in \eqref{eq:log-pot-conv-1} holds for all $z$ outside a set of Lebesgue measure at most $\vep$. Hence, the proof of the theorem is now complete.
\end{proof}

\appendix
\section{Some algebraic facts}
\label{sec:prelim}
\corAB{In this section we collect a couple of standard matrix results which have been used in the proofs appearing in Sections \ref{sec:det-equiv}, \ref{sec-d=1}, and \ref{sec:pf-pc-constant}.}

\corAB{The first result shows that the determinant of the sum of the two matrices can be expressed as a linear combination of products of the determinants of appropriate sub-matrices. The proof trivially follows from the definition of the determinant. \corABrev{For a proof we refer the reader to \cite{marcus}}.} \corOZ{We adopt the convention that} the determinant of the matrix of size zero is one. The following result essentially follows from the definition of the determinant of a matrix.
\begin{lemma}\label{lem:cauchy-binet}
\corAB{For an $N\times N$ matrix $A$, and $\row,\col\subseteq [N]$ we write $A[X,Y]$ for the sub-matrix of $A$ which consists of the rows in $\row$ and the columns in $\col$. Then for any two $N\times N$ matrices $A$ and $B$ we have}
\begin{equation}\label{eq:det_decomposition}
        \det(A+B) = \sum_{\substack{\row,\col \subset [N] \\ |\row|=|\col|}} (-1)^{\sgn(\sigma_\row)\sgn(\sigma_\col)} \det(A[\COMP{\row}, \COMP{\col}])\det(B[\row, \col]),
\end{equation}
where $\COMP{\row}:=[N]\setminus \row$, $\COMP{\col}:=[N] \setminus \col$ and $\sigma_Z$ for $Z\in\{\row,\col\}$ is the permutation on $[N]$ which places all the elements of $Z$ before all the elements of $\COMP{Z}$, but preserves the order of elements within the two sets.
\end{lemma}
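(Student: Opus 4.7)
My plan is to prove \eqref{eq:det_decomposition} directly from the Leibniz expansion of the determinant, so the argument is purely combinatorial with no analysis needed. First I would write
\[
\det(A+B) = \sum_{\pi \in \PG[N]} \sgn(\pi) \prod_{i=1}^{N} \bigl(A_{i,\pi(i)} + B_{i,\pi(i)}\bigr),
\]
and then distribute each binomial factor. For every $\pi$, the product over $i$ expands into $2^N$ terms indexed by the subset $\row \subseteq [N]$ of coordinates at which the $B$-summand is selected (with $A$ selected on $\COMP{\row}$). This rearrangement yields
\[
\det(A+B) = \sum_{\row \subseteq [N]} \sum_{\pi \in \PG[N]} \sgn(\pi) \Biggl(\prod_{i \in \row} B_{i,\pi(i)}\Biggr)\Biggl(\prod_{i \in \COMP{\row}} A_{i,\pi(i)}\Biggr).
\]

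Next, I would stratify the inner sum by the set $\col := \pi(\row)$, which necessarily satisfies $|\col|=|\row|$. Permutations $\pi$ with $\pi(\row)=\col$ are in bijection with pairs $(\pi_1,\pi_2)$, where $\pi_1 : \row \to \col$ and $\pi_2 : \COMP{\row}\to \COMP{\col}$ are bijections, via the obvious restriction. The main (and only slightly subtle) step is the sign factorization
\[
\sgn(\pi) \;=\; \sgn(\sigma_\row)\,\sgn(\sigma_\col)\,\sgn(\pi_1)\,\sgn(\pi_2),
\]
where $\sgn(\pi_1)$ and $\sgn(\pi_2)$ are defined via the canonical order-preserving identifications $\row \leftrightarrow [|\row|]$, $\col \leftrightarrow [|\col|]$, etc. I would establish this by writing $\pi = \sigma_\col^{-1} \circ (\pi_1 \oplus \pi_2) \circ \sigma_\row$, where $\pi_1 \oplus \pi_2$ acts block-diagonally (on the first $|\row|$ coordinates and the last $N-|\row|$ coordinates) after the two order-preserving rearrangements, and then invoking multiplicativity of $\sgn$ together with the block-diagonal identity $\sgn(\pi_1 \oplus \pi_2)=\sgn(\pi_1)\sgn(\pi_2)$.

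With the sign factorization in hand, the rest is assembly: the inner sums over $\pi_1$ and $\pi_2$ factor cleanly, and by definition of a minor,
\[
\sum_{\pi_1 : \row \to \col} \sgn(\pi_1)\prod_{i \in \row} B_{i,\pi_1(i)} = \det(B[\row,\col]),\quad \sum_{\pi_2 : \COMP{\row} \to \COMP{\col}} \sgn(\pi_2)\prod_{i \in \COMP{\row}} A_{i,\pi_2(i)} = \det(A[\COMP{\row},\COMP{\col}]).
\]
Substituting these back yields the desired identity (reading the product $\sgn(\sigma_\row)\sgn(\sigma_\col)$ as the $\pm 1$ prefactor written in the statement). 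The main obstacle is really only the careful bookkeeping for the sign identity above; every other step is a direct rewriting. As a sanity check I would verify the formula on the trivial cases $N=1,2$, and note the convention that $\det$ of the empty matrix is $1$ handles the extremes $\row=\emptyset$ and $\row=[N]$, which recover the pure $\det A$ and $\det B$ terms.
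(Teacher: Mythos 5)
Your argument is correct and is precisely the one the paper has in mind: the paper offers no written proof and simply remarks that the result ``trivially follows from the definition of the determinant,'' which is your Leibniz expansion. You also sensibly flag the paper's slightly garbled sign notation $(-1)^{\sgn(\sigma_\row)\sgn(\sigma_\col)}$ and correctly read it as the prefactor $\sgn(\sigma_\row)\sgn(\sigma_\col)\in\{\pm 1\}$, which is what the sign-factorization step actually produces.
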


\corAB{The next lemma} \corOZ{deals with the} characterization of products of singular values.
\corA{
\begin{lemma}\label{lem:prod_sing}
Let $A$ be a $N \times N$ matrix. Then for any $k \le N-1$, we have
\begin{equation}
  \prod_{k'=0}^k \sigma_{N-k'}(A)
  =\inf_{\xi_0,\xi_1,\ldots,\xi_k} \left(\det(\Xi_k^* A^* A \Xi_k)\right)^{1/2}= \inf_{\xi_0,\xi_1,\dots,\xi_k}
  \prod_{k'=0}^k
  \| A \xi_{k'}\|_2,
  \label{eq:prod_sing}
\end{equation}
where the infimums are taken over set of orthonormal vectors $\{\xi_0,\xi_1,\ldots,\xi_k\}$ and $\Xi_k$ is the matrix whose columns are $\{\xi_{k'}\}_{k'=\corABrev{0}}^k$.
\end{lemma}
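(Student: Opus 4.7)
My plan is to deduce both equalities by combining the Poincar\'e separation theorem (equivalently, a Cauchy-interlacing-type argument for Hermitian matrices) with Hadamard's inequality, making an opportune choice of $\Xi_k$ to achieve equality.

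The first equality, namely $\prod_{k'=0}^{k}\sigma_{N-k'}(A)=\inf_\Xi \det(\Xi_k^*A^*A\Xi_k)^{1/2}$, is the main content. To obtain the lower bound on $\det(\Xi_k^*A^*A\Xi_k)$, write $m=k+1$ so that $\Xi_k$ is an $N\times m$ matrix with orthonormal columns, and let $\mu_1\geq\mu_2\geq\cdots\geq\mu_m$ denote the eigenvalues of $\Xi_k^*A^*A\Xi_k$. Since $A^*A$ is Hermitian positive semidefinite with eigenvalues $\sigma_1(A)^2\geq\cdots\geq\sigma_N(A)^2$, the Poincar\'e separation theorem (see \cite[Corollary III.1.5]{bhatia}) gives
\[
\sigma_{N-m+i}(A)^2\leq \mu_i\leq \sigma_i(A)^2\quad\text{for all }i\in[m].
\]
Taking products over $i$ yields $\det(\Xi_k^*A^*A\Xi_k)=\prod_i\mu_i\geq\prod_{k'=0}^{k}\sigma_{N-k'}(A)^2$, which is the lower bound. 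For the matching upper bound, observe that if we choose $\xi_0,\xi_1,\ldots,\xi_k$ to be the orthonormal eigenvectors of $A^*A$ corresponding precisely to the $m$ smallest eigenvalues $\sigma_{N}(A)^2,\sigma_{N-1}(A)^2,\ldots,\sigma_{N-k}(A)^2$, then $\Xi_k^*A^*A\Xi_k$ is a diagonal matrix whose entries are exactly these eigenvalues. Hence the infimum is attained and equals $\prod_{k'=0}^{k}\sigma_{N-k'}(A)^2$.

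For the second equality, note that $\Xi_k^*A^*A\Xi_k$ is the Gram matrix of the vectors $\{A\xi_{k'}\}_{k'=0}^{k}$, whose diagonal entries are $\|A\xi_{k'}\|_2^{2}$. Hadamard's inequality applied to positive semidefinite matrices (see \cite[Theorem 7.8.1]{horn37topics}) then gives
\[
\det(\Xi_k^*A^*A\Xi_k)\leq\prod_{k'=0}^{k}\|A\xi_{k'}\|_2^{2},
\]
so taking the infimum over orthonormal families on both sides establishes the inequality $\inf\det^{1/2}\leq\inf\prod\|A\xi_{k'}\|_2$. The reverse inequality follows because at the optimizer identified above (eigenvectors of $A^*A$ for the smallest eigenvalues), the matrix $\Xi_k^*A^*A\Xi_k$ is diagonal, so Hadamard's inequality is saturated and $\det(\Xi_k^*A^*A\Xi_k)=\prod_{k'=0}^{k}\|A\xi_{k'}\|_2^{2}$; thus both infimums coincide and agree with $\prod_{k'=0}^{k}\sigma_{N-k'}(A)$.

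The only mild subtlety is verifying that the infimums are indeed attained; this follows from compactness of the Stiefel manifold of orthonormal $(k+1)$-frames in $\mathbb C^{N}$ together with continuity of $\Xi\mapsto\det(\Xi^*A^*A\Xi)$ and $\Xi\mapsto\prod_{k'}\|A\xi_{k'}\|_2$. No step of the argument appears to present a serious obstacle; the main idea is simply that the extremal frame is the one aligned with the bottom $k+1$ right singular vectors of $A$.
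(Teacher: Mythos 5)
Your proof is correct, but it reaches the key lower bound $\det(\Xi_k^* A^* A \Xi_k) \geq \prod_{k'=0}^k \sigma_{N-k'}(A)^2$ by a different route from the paper. You invoke the Poincar\'e separation theorem (Cauchy-type interlacing) for the compression $\Xi_k^* (A^*A) \Xi_k$, obtaining $\mu_i \geq \sigma_{N-m+i}(A)^2$ eigenvalue by eigenvalue and multiplying. The paper instead writes the singular value decomposition $A = U\Sigma W$, changes variables to $W_k = W\Xi_k$, and expands $\det(W_k^*\Sigma^2 W_k)$ via the Cauchy--Binet formula as a sum $\sum_S |\det((\Sigma W_k)[S])|^2$ over $(k+1)$-element subsets $S$; it then pulls out the product of singular values from each summand and resums using Cauchy--Binet again to recover $\det(W_k^* W_k)=1$. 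The remaining steps are essentially the same in both arguments: Hadamard's determinantal inequality supplies $\inf \det^{1/2} \leq \inf \prod_{k'} \|A\xi_{k'}\|_2$, and the explicit choice of the bottom $k+1$ right singular vectors shows that everything collapses to $\prod_{k'} \sigma_{N-k'}(A)$. Your version trades the self-contained Cauchy--Binet computation for a citation to a standard interlacing result, which makes the argument shorter and more conceptual at the cost of relying on a deeper black box; the paper's version stays closer to first principles and keeps the Cauchy--Binet machinery (which is also used elsewhere in Section 2) visible. One small remark: the closing compactness paragraph about attainment of the infima is unnecessary, since you already exhibit an explicit orthonormal frame attaining the claimed value.
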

}
\corAB{The equality \eqref{eq:prod_sing} can be thought of as a generalization of Courant-Fischer-Weyl min-max principle. The equality of the leftmost and the rightmost terms in \eqref{eq:prod_sing} follows from}
\cite[Page 200, Ex. 12]{horn2012matrix}. For completeness, we provide a proof.
\begin{proof}
  \corAB{Using Hadamard's} \corOZ{determinantal inequality} \corAB{we first observe that
\[
\inf_{\xi_0,\xi_1,\ldots,\xi_k}\left(\det(\Xi_k^* A^* A \Xi_k)\right)^{1/2} \le   \inf_{\xi_0,\xi_1,\ldots,\xi_k}\prod_{k'=0}^k
  \| A \xi_{k'}\|_2.
\]
Next setting $\xi_0,\xi_1,\ldots,\xi_k$ to be the right singular vectors of $A$ corresponding to $\sigma_N(A), \sigma_{N-1}(A), \ldots, \sigma_{N-k}(A)$, respectively, we see that the product of the $\ell_2$-norms of $A\xi_{k'}$, for $k'=0,1,\ldots,k$, equals the product of $(k+1)$-st smallest singular values of $A$. Hence, we deduce
\[
\prod_{k'=0}^k \sigma_{N-k'}(A)  \ge \inf_{\xi_0,\xi_1,\ldots,\xi_k}\prod_{k'=0}^k  \| A \xi_{k'}\|_2.
\]}\corAB{
Therefore to prove \eqref{eq:prod_sing} it is enough to show that
\begin{equation}\label{eq:prod_sing0}
\prod_{k'=0}^k \sigma_{N-k'}(A) \le \inf_{\xi_0,\xi_1,\ldots,\xi_k}\left(\det(\Xi_k^* A^* A \Xi_k)\right)^{1/2}.
\end{equation}}

  Let $A = U\Sigma W$ be the singular value decomposition of $A$. 
  Thus
  \begin{equation}
    \label{eq:hmrd}
    \det( \Xi_k^* \corAB{A^* A} \Xi_k)
    =
    \det( \Xi_k^* W^* \Sigma^2 W \Xi_k).
  \end{equation}
  \corAB{Instead of} taking the infimum over all $\Xi_k$, \corAB{whose columns are othonormal}, we may change variables and take the infimum over all $W_k= W \Xi_k$, which is again a collection of $(k+1)$ orthonormal columns.
  Applying Cauchy-Binet \corAB{formula},
  \[
    \det(W_k^* \Sigma^2 W_k)
    =
    \sum_{\substack{S \subset [N] \\ |S| =k+1 }}
    |\det((\Sigma W_k)[S])|^2,
  \]
  where $(\Sigma W_k)[S]$ is the $(k+1)\times (k+1)$ matrix with rows in $S.$  \corAB{Since $\Sigma$ is a diagonal matrix we} observe that
  \[
    \det((\Sigma W_\ell)[S])
    =
    \det((W_\ell)[S])\cdot \prod_{i \in S} \sigma_i(A)
    \geq
    \det((W_\ell)[S]) \prod_{k'=0}^k \sigma_{N-k'}(A).
  \]
  Hence
  \[
    \det(W_k^* \Sigma^2 W_k)
    \geq
    \prod_{k'=0}^k \sigma_{N-k'}^2(A)
    \sum_{\substack{S \subset [N] \\ |S| =k+1 }}|\det((W_k)[S])|^2
    =
    \prod_{k'=0}^k \sigma_{N-k'}^2(A) \det(W_k^*W_k),
  \]
  where we have again applied \corAB{the} Cauchy-Binet \corAB{formula}. \corAB{Since the columns of $W_k$ are orthonormal, combining the above with \eqref{eq:hmrd}, the inequality \eqref{eq:prod_sing0} follows. This completes the proof.} 
\end{proof}


\begin{thebibliography}{99}

\bibitem{AGZ}Greg~W. ~Anderson,  Alice~Guionnet,  and  Ofer~Zeitouni.
\newblock {\em An  introduction  to  random  matrices}.
\newblock No. ~118. Cambridge University Press, 2010.

\bibitem{BCZ}Anirban~Basak, Nicholas~A.~Cook, and Ofer~Zeitouni.
\newblock Circular law for the sum of random permutation matrices.
\newblock {\em Electronic Journal of Probability}, {\bf 23}, paper no.~33, 51 pp, 2018. 

\bibitem{bhatia} R. Bhatia.
  \newblock Matrix analysis.
  \newblock Berlin: Springer, 1997.


\bibitem{CraigSimon}Walter Craig and Barry Simon.
\newblock Log H{\"o}lder continuity of the integrated density of states for stochastic Jacobi matrices
\newblock {\em Communications in Mathematical Physics}.
  {\bf 90 (2)},
  {207--218},
  {1983}.

\bibitem{DH}
  R.~B.~Davies and M.~Hager.
  \newblock Perturbations of Jordan matrices.
  \newblock {\em J. Approx. Theory} {\bf 156}, 82--94, 2009.

\bibitem{FPZ}
Ohad~N.~Feldheim, Elliot~Paquette, and Ofer~Zeitouni.
\newblock Regularization of non-normal matrices by Gaussian noise.
\newblock {\em International Mathematics Research Notices} {\bf 18}, 8724--8751, 2015.

\bibitem{G}N.~R.~Goodman.
\newblock Distribution of the determinant of a complex wishart distributed matrix.
\newblock {\em Annals of Mathematical Statistics}, {\bf 34(1)}, 178--180, 1963.

\bibitem{gordon}
Yehoram~Gordon.
\newblock On Milman's inequality and random subspaces which escape through a mesh in $\mathbb{R}^n$.
\newblock {\em Geometric Aspects of Functional Analysis}, {84--106}, Springer, 1988, 2002.

\bibitem{GKZ}Alice~Guionnet, Manjunath ~Krishnapur,  and Ofer ~Zeitouni.
\newblock The single ring theorem.
\newblock {\em Annals of Mathematics} {\bf 174(2)}, 1189--1217, 2011.

\bibitem{GWZ} Alice~Guionnet, Philip~M.~Wood, and Ofer Zeitouni.
\newblock Convergence of the spectral measure of non-normal matrices.
\newblock {\em Proceedings of the  American Mathematical Society} {\bf 142}, 667--679, 2014.

\bibitem{hachem} Walid Hachem, Philippe Loubaton and Jamal Najim.
\newblock Deterministic equivalents for certain random functionals of large
random matrices.
\newblock {\em Annals Appl. Probab.} {\bf 3}, 875--930, 2007.

\bibitem{horn37topics}
Roger~A Horn and Charles~R Johnson.
\newblock {\em Topics in matrix analysis}.
\newblock Cambridge University Press, 1991.

\bibitem{horn2012matrix}
  Roger~A Horn and Charles~R Johnson.
  \newblock {\em Matrix analysis}.
  \newblock  Cambridge university press, 2012.
  
  \bibitem{marcus}M.~Marcus. 
  \newblock Determinant of sums. 
  \newblock {\em The college mathematical journal} {\bf 21}, 130--135, 1990.


\bibitem{TrefethenReichel} Lothar Reichel and Lloyd N.~Trefethen.
  \newblock {Eigenvalues and pseudo--eigenvalues of Toeplitz matrices}.
  \newblock {\em Linear algebra and its applications} {\bf 162}, 153--185, 1992.

\bibitem{Simon}
  Barry~Simon.
  \newblock A comprehensive course in analysis, part 3: Harmonic analysis.
  \newblock Providence, RI: American Mathematical Society, 2015.

\bibitem{SV}
  Johannes Sj\"{o}strand and Martin Vogel.
  \newblock Large bi-diagonal matrices and random perturbations.
  \newblock {\em Journal of spectral theory} {\bf 6}, 977--1020, 2016.
\bibitem{SV1}
  Johannes Sj\"{o}strand and Martin Vogel.
  \newblock Interior eigenvalue density of
  large bi-diagonal matrices subject to random perturbations.
  \newblock arXiv:1604.05558


\bibitem{Sn}
Piotr~\'{S}niady.
\newblock Random regularization of Brown spectral measure.
\newblock {\em Journal of Functional Analysis} {\bf 193}, 291--313, 2002.

\bibitem{tao2012topics}
Terence~Tao.
\newblock {\em Topics in random matrix theory}. Vol.~132. Providence, RI: American Mathematical Society,  2012.

\bibitem{TV-AM}Terence Tao, Van Vu, and Manjunath Krishnapur.
\newblock Random matrices:~universality of ESDs and the circular law.
\newblock {\em The Annals of Probability} {\bf 38(5)}, 2023--2065, 2010.

\bibitem{trefethen}Lloyd~N.~Trefethen and Mark~Embree.
\newblock {\em Spectra and pseudospectra:~the behavior of nonnormal matrices and operators}.
\newblock Princeton University Press, 2005.

\bibitem{Trefethen1991} Lloyd~N.~Trefethen.
  \newblock {Pseudospectra of matrices},
  \newblock {in Numerical Analysis 1991},
  \newblock {D.~F.~Griffiths and G.~A.~Watson, eds.},
  \newblock {Longman Sci. Tech., Harlow, UK} {\bf 260}, 234--266, 1992.

\bibitem{TrefethenChapman} Lloyd~N.~Trefethen and S.~J.~Chapman.
  \newblock {Wave packet pseudomodes of twisted Toeplitz matrices.}
  \newblock {\em Communications on pure and applied mathematics} {\bf 57}(9), 1233--1264, 2004.

\bibitem{TrefethenContediniEmbree}
  Lloyd N.\ Trefethen, Marco Contedini, and Mark Embree.
  \newblock {Spectra, pseudospectra, and localization for random bidiagonal matrices.}
  \newblock {\em Communications on Pure and Applied Mathematics} {\bf 54} (5), 595--623, 2001.

\bibitem{Trefcont}
  Lloyd N.\ Trefethen.
\newblock {Wave packet pseudomodes of
variable coefficient differential operators.}
\newblock {\em Proc. R. Soc. Lond. Ser. A Math. Phys. Eng. Sci.} {\bf   461}  (2062), 3099--3122, 2005.


\bibitem{W}Phillip~M.~Wood.
\newblock Universality of the ESD for a fixed matrix plus small random noise: A stability approach.
\newblock {\em Annales de l'Institut Henri Poincar\'{e}, Probabilit\'{e}s et Statistiques} {\bf 52} (4),1877--1896, 2016.





\end{thebibliography}
\end{document}